\renewcommand{\b}{\beta}
\newcommand\e {\varepsilon}
\newcommand\stiff{{\rm stiff}}
\newcommand\soft{{\rm soft}}
\newcommand{\ext}[1]{\widetilde{#1}}
\def\rme{{\rm e}}
\def\rmi{{\rm i}}
\def\RR{\R}
\newcommand{\R}{\mathbf{R}}
\def\C{\mathbf {C}}
\def\M{\mathcal{M}}
\def\AA{\mathcal{A}}
\def\SS{\mathcal{S}}
\def\l{\lambda}
\def\hom{{\rm hom}}
\def\dom{{\rm dom}}
\newcommand\wtto{\xrightharpoonup{2}}
\newcommand\stto{\xrightarrow{2}}
\DeclareMathOperator{\Sp}{Sp}
\newcommand\G{G}
\newcommand\A{\mathcal A}
\newcommand\N{\mathbf{N}}
\newcommand\Z{\mathbf{Z}}
\newcommand\esssup{\mathop{\operatorname{ess\,sup}}}
\newcommand\supp{\mathop{\operatorname{supp}}}
\newcommand\dist{\operatorname{dist}}
\def\b{\beta}
\def\XXint#1#2#3{{\setbox0=\hbox{$#1{#2#3}{\int}$}
     \vcenter{\hbox{$#2#3$}}\kern-.5\wd0}}
\newcounter{bei}
\newcommand{\eq}[1]{(\ref{#1})}
\newtheorem{theorem}{Theorem}[section]
\newtheorem{lemma}[theorem]{Lemma}
\newtheorem{proposition}[theorem]{Proposition}
\newtheorem{corollary}[theorem]{Corollary}
\newtheorem{assumption}[theorem]{Assumption}
\newtheorem{definition}[theorem]{Definition}
\newtheorem{remark}[theorem]{Remark}
\theoremstyle{remark}
\newcommand\restrict[2]{{
  \left.\kern-\nulldelimiterspace 
  #1 
  \vphantom{\big|} 
  \right|_{#2} 
  }}
\begin{document}
	
\title{{\sc Homogenisation and spectral convergence of high-contrast convolution type operators}}


\author[1]{Mikhail Cherdantsev}
\author[2]{Andrey Piatnitski}
\author[3]{Igor Vel\v{c}i\'{c}\,}

\affil[1]{School of Mathematics, Cardiff University, Senghennydd Road, Cardiff, CF24 4AG, United Kingdom, CherdantsevM@cardiff.ac.uk, ORCID 0000-0002-5175-5767}
\affil[2]{Higher School of Modern Mathematics MIPT, 1st Klimentovskiy per., 115184 Moscow, Russia and The Arctic University of Norway, Campus in Narvik, P.O. Box 385, Narvik 8505, Norway, apiatnitski@gmail.com, ORCID 0000-0002-9874-6227}
\affil[3]{Faculty of Electrical Engineering and Computing, University of Zagreb, Unska 3, 10000 Zagreb, Croatia, Igor.Velcic@fer.hr, ORCID 0000-0003-2494-2230}

\date{}
\maketitle

\vspace{-6mm}

\noindent {\bf Acknowledgements:} 

The work of Igor Velcic was supported by the Croatian Science Foundation under the project number HRZZ-IP-2022-10-518.

The work was partially supported by Pure Maths in Norway Foundation and UiT Aurora project MASCOT.

\begin{abstract}
 The paper deals with homogenisation problems for high-contrast
symmetric convolution-type operators with integrable kernels in media
with a periodic microstructure.  We adapt the two-scale convergence method
to nonlocal convolution-type operators and obtain the homogenisation result
both for problems stated in the whole space and in bounded domains with the homogeneous Dirichlet
boundary condition.

Our main focus is on spectral  analysis. We describe the spectrum
of the limit two-scale operator  and characterize the limit behaviour of the spectrum of the original problem
as the microstructure period tends to zero. It is shown that the spectrum of the limit operator  is a subset
the limit  of the spectrum of the original operator, and that  they need not coincide.
\vskip 0.15cm

\noindent {\bf Keywords:} Homogenisation, Integral Operators, Spectral Convergence, High-contrast Problems

\vskip 0.15cm

\noindent {\bf Mathematics Subject Classification (2020):}  35B27,   45H99, 45M05, 45M15, 45P05

\vskip 0.4cm

\end{abstract}

\tableofcontents

\section{Introduction}

This work is devoted to the homogenisation of high-contrast symmetric convolution-type operators with integrable kernels in periodic media. In the first part of the paper, we show that the two-scale convergence method (see \cite{Allaire}) applies to the class of operators under consideration and present several technical results  that help us to homogenise this family of operators. We then analyse the corresponding spectral problems in two settings: problems posed in the whole space, and boundary value problems in bounded Lipschitz domains. In the latter case, we impose homogeneous Dirichlet boundary conditions in the complement of the domain.

The  spectrum of the limit homogenised operator is non-trivial. Following the ideas of \cite{Zhikov2000} we introduce an auxiliary   Zhikov's $\beta$-function,  and,   describe the spectrum of the limit two-scale operator in terms of this function.

Next we study the limit behaviour of the spectrum of the original operator in the whole space setting and characterize the Hausdorff limit of this spectrum as the microstructure period tends to zero. In particular, we show that the spectrum of the limit operator is always a subset of the said limit, and that the opposite inclusion need not hold, neither in the whole space nor in a bounded domain. We show that the additional limit spectrum is associated with the the quasiperiodic quasi-modes supported on the soft component. Remarkably, the soft component need not be infinite for this to happen (cf. the discussion below). Indeed, in the case of disconnected soft inclusions it is enough for the convolution kernel to have sufficiently large support in order that inclusions ``communicate'' with their neighbours.

For the generic Lipschitz domain we face the usual in periodic homogenisation problem of disagreement between the periodic microstructure and the domain boundary. In particular, the Hausdorff limit of the spectra does not exist in general in the presence of the boundary. However, the Hausdorff limit of the spectra may exist for domains of particular shape.  To illustrate this, we study a special case when the domain is rectangular  and assume that the small parameter $\e$ goes to zero along a discrete subsequence so that the geometry of the microstructure in the boundary layer is self-congruent along this subsequence. We show that in this case the limit of the spectra exists in the Hausdorff  sense and provide its characterisation.

Finally, in the whole space setting, adapting approach of \cite{CKS2023}, we establish norm-resolvent convergence result with explicit bounds via scaled Gelfand transform, and, as a consequence, obtain bounds on the rate of spectral convergence.

Various processes in the models of population biology, porous media and chemistry of polymers
are often described in terms of evolution equations of the form $\partial_t u=A u$ with a non-local convolution type operator $A$ and the corresponding stationary equations. The non-locality of $A$ reflects the  non-local nature of the interaction in these models.
One of the models of this type, the so-called contact model in $\R^d$,  has been actively studied in the existing literature, see \cite{KoKuPir2008}, \cite{KMPZ2017}, \cite{KoPirZh2016}.

In a typical case the kernel of the operator $A$ is a product of a convolution kernel
$a(x-y)$ and a coefficient $\Lambda(x,y)$.  The kernel $a(x-y)$  specifies the intensity of interaction in the model depending on the distance. It determines the localization properties of $A$. The coefficient $\Lambda(x,y)$ represents
the local characteristics of the environment.
Raising the question of the long-time behaviour of these processes and of the macroscopic description of models with
a microstructure, we arrive at the upscaling or homogenisation problems for non-local convolution-type operators.

Rigorous homogenisation results for moderate-contrast zero order convolution type operators in periodic
environments have been obtained in
\cite{PiZhiz2017}, \cite{PiZhiz2019}. In \cite{PiZhiz2017} it was shown that under natural moment and
coerciveness conditions a family of symmetric operators with periodic coefficients admits homogenisation, the effective
operator being a second order elliptic differential operator with constant coefficients. For a non-symmetric operators
the homogenisation takes place in moving coordinates, see \cite{PiZhiz2019}.
Sharp in order estimates for the rate of convergence in the operator norms have been obtained in \cite{PSSZ2023}.
Homogenization problems for symmetric convolution-type operators with random statistically homogeneous coefficients have been considered in \cite{PiZhiz2020}. It was proved that under the same moment and coerciveness conditions as in the
periodic case  the almost sure homogenisation result holds, and the limit operator is a second order elliptic differential
operator with constant coefficients. In the ergodic media the limit operator is deterministic.

Non-local operators and functionals of convolution type in perforated domains have been investigated by the variational methods in \cite{BrP2021}--\cite{BrP2022} and, in the case of more complicated geometry, in \cite{BCE2021}.
Homogenization result for a high-contrast convolution type evolution equation was obtained in the recent
work \cite{PiZhiz_arxiv},  where  the correctors and semigroup approximation techniques were used.

High-contrast differential equations with rapidly oscillating coefficients have been widely studied in the
existing literature starting from   \cite{ADH1990}. At present there are many works devoted to this topic. 
However, it turned out that 
the asymptotic behaviour of the spectrum
of these operators is a rather delicate problem. It was addressed successfully in \cite{Zhikov2000}. 

The approach developed in this article relies on the two-scale convergence technique.
However, the two-scale resolvent convergence result we obtain implies only ``half'' of the Hausdorff convergence of the spectra, namely, that the limiting spectrum contains the    spectrum of the limit operator. The inverse inclusion (let us focus on the whole space setting at the moment, to avoid boundary layer effects) requires additional assumptions on the operator and / or geometry of the soft component, and is not true in general. For example, in the periodic setting of \cite{Zhikov2000},\cite{Zhikov2004} the Hausdorff convergence of the spectra holds provided that the soft component is a collection of disconnected finite size inclusions. When such assumptions are not satisfied, it may happen that the limiting spectrum is strictly larger than that of the limit operator. In high-contrast problem this situation was first rigorously analysed in \cite{Coo2018}, see also \cite{CKS2023}, which provides deeper insight in the setting of \cite{Coo2018}, where the ``additional'' spectrum is attributed to the quasiperiodic quasi-modes supported on the {\it infinite} soft component, which are not captured by the two-scale limit. For other approaches to norm resolvent estimates for high contrast PDEs we refer to \cite{CC,CKVZ,gloria1}.

A similar picture can be observed in other settings.  In \cite{BCVZ2022} the authors investigate the limit resolvent equation, limit spectrum and limit evolution for high contrast thin elastic plates. It turns out that in one of the regimes the limit spectrum is strictly larger than the spectrum of two-scale limit operator. The limit behaviour of the  spectrum of high-contrast elliptic differential operators in random statistically
homogeneous environments was studied in  \cite{CCV23}. There, the additional limiting spectrum not accounted by the two-scale limit operator is of a different nature and is due to stochastic fluctuations of arrangement of inclusions from the ergodic average. However, in case of a bounded domain this extra spectrum is not present in the limit \cite{CCV2019}. We also mention \cite{APZ} for results on semigroup convergence and the spectrum of the limit operator.




In the context of boundary layer spectrum we mention \cite{AlCo1998}, which focuses on the high frequency spectrum  for moderate-contrast elliptic PDEs in a bounded domain.
Making use of the Floquet-Bloch transform  the authors characterize the limit spectrum; they also characterize the limit boundary spectrum for a rectangular domain and a discrete subsequence of the microscopic parameter.

Finally, we note that this work contains new extension result, which is simpler and, in fact, more natural for the framework of integral operators, compared to the one used in previous works in the area. Moreover, the `minimal' assumptions on the geometry of the soft / stiff components necessary for the extension result have been relaxed, in particular, they do not require any regularity of the boundary of the sets. We also develop a regularisation technique for the `bounded energy' sequences of functions allowing for $H^1$ bounds, which leads to an elegant proof of compactness results.

{\bf Structure of the paper}

In the next section we set the problem, define the family of operators $\A_\e$, and state our main results: in Section \ref{s2.1} we describe the limit two-scale operator and its spectrum and state the spectral inclusion result; in Section \ref{s2.2} we state spectral convergence results for the whole space setting and a rectangular domain; finally, in Section \ref{s2.2} we bounds for the norm resolvent and spectral convergence for the whole space setting.

In Section \ref{two-scalecon} we establish well-posedness of the corrector problem \eqref{ex102} and  prove the first main result of the paper Theorem \ref{thmandrei1}.

Section \ref{s5} is devoted to the analysis of the spectrum of the limit operator $\AA$. There we study properties of function $\beta$, prove Theorem \ref{thm2.3} and provide a number of example for possible structure of the spectrum of the operator $\AA_\soft$.

In Section \ref{s:6} we address the question of spectral convergence and prove Theorems \ref{th2.5}, \ref{th2.6} and \ref{th2.7}.

Finally, in Section \ref{s7} we prove our norm-resolvent and spectral convergence bounds --- Theorem \ref{th2.8} for the case $S=\R^d$.

In Appendix \ref{a1} we provide a new simplified (compared to previous works in the area) extension  theorem; Appendix \ref{a2} provides regularisation and compactness for `bounded energy' functions; finally, in Appendix \ref{a3} we establish two-scale convergence properties for convolution energies.

\bigskip

\section{Problem setting and main results} \label{secpbmsetting}

We begin with the description of the geometry of the medium. We work with the periodicity cell $Y=[0,1)^d$ and denote by $Y^{\#}=\R^d/\Z^d$  the corresponding flat torus with quotient topology. We will use $\#$ in the subscript or superscript to denote periodic sets, spaces of periodic functions and  associated operators. Let $Y_\stiff^{\#}$ and $Y_\soft^{\#}$   be open disjoint periodic sets such that $\overline{Y_\stiff^{\#} \cup Y_\soft^{\#}} = \R^d$. They represent ``stiff'' and ``soft'' components of the medium respectively. Denote $Y_\stiff:= Y_\stiff^{\#}\cap Y$ and $Y_\soft:= Y_\soft^{\#}\cap Y$.

By $L^2_\#(Y)$ we denote the space of $L^2(Y)$ functions extended by periodicity to  $\R^d$.  By $L^2(Y_\soft)$ and $L^2(Y_\stiff)$ we denote the subspaces of $L^2(Y)$ whose elements vanish on $Y_\stiff$ and $Y_\soft$ respectively;  $L^2_\#(Y_\soft)$ and $L^2_\#(Y_\stiff)$ denote the spaces of their periodic extensions. For a measurable set $S\in \R^d$ we denote $L^2_\#(S\times Y_\soft) : = L^2(S;L^2_\#(Y_\soft))$, i.e. the space of functions from $L^2(S\times Y)$ which vanish for $y\in Y_\stiff$ and periodically extended in $y$ variable.

For $A \subset \RR^d$, $\mathbf{1}_A$ denotes the characteristic function of the set $A$, and $|A|$ stands for its Lebesgue measure. For $r>0$ we denote $A^r:= \{x\in \R^d: \dist(x,A) < r\}$ and $A_r:= \{x\in A: \dist(x,\partial A) > r\}$.
By
$C^k(A)$ we denote the set of $k$ times continuously differentiable functions on $A$, and  by $C_0^k(A)$ we denote the set of $k$ times continuously differentiable functions which are compactly supported in $A$. $H_0^k(A)$ denotes the closure of $C_0^k(A)$ with respect to $H^k$ norm.
By
${H}^k_\#(Y)$ we denote the Sobolev space of periodic functions on the torus.
  For $x \in \RR^d$ and $m>0$, $\square_x^m$ denotes the cube $[x-m,x+m]^d$. We also set $\square^m:=\square_0^m $ and $\square_x:=\square_x^1 $. $B_r(x)$ denotes the open ball of radius $r$ centred at $x$, and $B_r:=B_r(0)$. We define
$$D_r:=\{(x,y) \in \R^d \times \R^d: |x-y|<r\}.   $$

Next we describe the operator. Let  $S$  denote  either $\RR^d$ or its open bounded Lipschitz subset. We consider a bounded   operator  $\AA_\e : L^2(S) \to L^2(S) $ (as above, we identify $L^2(S)$ with the subspace of $L^2(\R^d)$ whose elements vanish on the complement of $S$) defined according to
$$
\AA_\e u(x)= \frac{2}{\e^{d+2}} \int_{\mathbf{R}^d}a \Big(\frac{x-y }\e \Big)
\Lambda_\e  (x,y ) (u(x)-u(y ))dy.
$$
We make the following assumptions on the integral kernel.
\begin{assumption}
\begin{equation*}
	a \geq 0, \quad a \mbox{ is even, i.e. } a(x)=a(-x) \quad \forall x \in \mathbf{R}^d;
\end{equation*}
\begin{equation}\label{a-2}
	\mbox{there exists an ellipticity radius } r_a > 0 \mbox{ such that }	a(x) \geq c_a>0, \mbox{ for } |x|<r_a;
\end{equation}
\begin{equation*}
	x\mapsto a(x)(1+|x|^2) \in L^1(\R^d).
\end{equation*}
The function $\Lambda_\e$ encodes the periodicity and high-contrast of the medium. We put
$$\Lambda_\e (x,y )=\Lambda_0 \left(\frac{x}\e ,\frac{y }\e  \right)+\e^2p\left(\frac{x}\e , \frac{y }\e  \right), $$
where $\Lambda_0$, $p$ are symmetric $Y$-periodic functions in each argument such that
\begin{equation*} 
	\Lambda_0(x,y)=0 \textrm{ outside } Y_\stiff \times Y_\stiff,  
\end{equation*}
\begin{equation*} 
	p(x,y)= w(x,y) (1-\mathbf{1}_{Y_\stiff}(x)\mathbf{1}_{Y_\stiff}(y)),  
	 \end{equation*}
\begin{equation*} 
	0<\alpha_1<w,\,\Lambda_0\vert_{Y_\stiff \times Y_\stiff}<\alpha_2<\infty
\end{equation*}
for some $\alpha_1,\alpha_2>0$.
\end{assumption}

In terms of the geometry of the sets $Y_\stiff$ and $Y_\soft$ we require a very simple property, which in plain language can be expressed as ``the stiff component $Y_\stiff^\#$ must be ``connected'' via the convolution kernel $a$''. No other conditions, such as regularity of the boundary, are needed. 

First observe that for any open (non-empty) periodic set $Y_\stiff^\#$ the following holds: there exist $r_0, \kappa_0 >0$ such that
\begin{equation*}
	 \frac{|Y^\#_\stiff  \cap B_{r_0}(x)|}{|B_{r_0}|} \geq \kappa_0 \qquad\forall \, x \in Y_\stiff.
\end{equation*}
Furthermore, there exist $r_1>0$, $k, \overline{N}\in \N$ such that    for any two points $\eta', \eta''\in Y^\#_\stiff  \cap \square$ there exists a discrete path from $\eta'$ to $\eta''$ contained in $Y^\#_\stiff \cap \square^k$, \textit{i.e.} a set of points
\begin{equation}\label{8a}
	\{\eta_0=\eta', \eta_1, \dots, \eta_N, \eta_{N+1}=\eta''\} \subset Y^\#_\stiff \cap \square^k,
\end{equation}
such that $N\le \overline{N}$ and 	 $|\eta_{j+1}-\eta_{j}|\leq r_1$, for $j=0,1,\dots, N$.
We make the following

\begin{assumption}\label{assumptionmisja1} 
 There exist numbers $r_0, \kappa_0,r_1, k, \overline{N}$, as above such that the following inequality holds:
 \begin{equation*}
 	r_a\ge 2r_0+r_1.
 \end{equation*}
\end{assumption}
One can choose $r_0$ to be any number greater than
\begin{equation*}
\inf \Big\{r>0 :\,\, \inf_{ x \in Y_\stiff }	\frac{|Y^\#_\stiff  \cap B_{r}(x)|}{|B_{r}|} > 0\Big\}.
\end{equation*}
We can choose $r_1$ in a similar way. Both $r_0$ and $r_1$ encode geometrical properties of the set $Y_\stiff$.

Since the integral kernel is symmetric, the operator $\AA_\e$ is self-adjoint. The associated bilinear form is given by 
\begin{eqnarray} \nonumber
	a_\e  (u,v) = \frac{1}{\e^{d+2}} \int_{\mathbf{R}^d}\int_{\mathbf{R}^d} a \Big(\frac{x-\eta }\e \Big) \Lambda_\e  (x,\eta ) (u(x)-u(\eta ))(v(x)-v(\eta ))dx d\eta, \quad u,v \in L^2(S).
\end{eqnarray}
It is convenient to work with the weak formulation of the resolvent problem for $\AA_\e$: for $f_\e  \in L^2(S)$ and  $\lambda<0$ find $u_\e\in L^2(S)$ such that
\begin{eqnarray}\label{starting}
	a_\e(u_\e,v) - \lambda \int_{S} u_\e v =\int_{S} f_\e v \qquad\hbox{for all }v\in L^2(S).
\end{eqnarray}
In case of a bounded domain $S$ the assumption that $u_\e$ vanishes outside $S$ represents homogeneous Dirichlet boundary condition.

\begin{remark}
	We do not assume any regularity of the sets representing the stiff and the soft components beyond them being open. While not surprising, this is in a stark contrast with the PDEs case, where some boundary regularity is required to guarantee existence of extension from the stiff into the soft components with the control of $H^1$-norm. In the present setting, we only need to control the convolution energy, cf. \eqref{stimagrad}. It turns out that a simple piecewise constant extension by local averages does the job! The only regularity we require in Assumption \ref{assumptionmisja1} is that the stiff component is ``connected'' through the convolution kernel $a$ --- no geometrical connectedness of the stiff component is required.
	
	In the case of high-contrast PDEs ($-\nabla\cdot a_\e \nabla$) it is important for the structure of the limit spectrum whether the soft component comprises infinite connected sets or a collection of disconnected inclusions. In particular, the limit spectrum is strictly larger than the spectrum of the limit two-scale operator in the case of the former. In the present setting even if the soft component consists of disconnected inclusions they still may ``communicate'' with each other if the support of the convolution kernel $a$ is sufficiently large, cf. operators $\A_\soft^\#$ and $\A_\soft$ below.
\end{remark}

\begin{remark}
	In the case of a bounded domain $S$, one can  also study  the  problem with homogenous Neumann boundary condition. In this case, the integration in the bilinear form $a_\e$  is taken over the set $S \times S$ rather than $\R^d \times \R^d$:
	\begin{eqnarray*}
		a_\e  (u,v) = \frac{1}{\e^{d+2}} \int_{S}\int_{S} a \Big(\frac{x-\eta }\e \Big) \Lambda_\e  (x,\eta ) (u(x)-u(\eta ))(v(x)-v(\eta ))dx d\eta, \quad u,v \in L^2(S).
	\end{eqnarray*}
	The analysis and results for the Neumann problem would be analogous to  the ones obtained in the case of the  Dirichlet condition, including the analysis of the boundary spectrum, see the discussion below.
\end{remark}

\subsection{Two-scale limit operator and its spectrum}\label{s2.1}

Our first result is concerned with the two-scale resolvent limit for the operator $\AA_\e$.
We denote by $\AA$ the unbounded self-adjoint operator acting in the space $L^2(S)+L^2_\#(S\times Y_\soft)$ and associated with the bilinear form
\begin{multline} \label{6}
	a(u+z,v+b): =  a_{\rm hom}(u,v) + \int_{S} a_\soft^\#(z(x,\cdot),b(x,\cdot))dx,
	\\
	u+z, v+b\in H:=H^1_0(S)+L^2_\#(S\times Y_\soft),
\end{multline} 	
where
\begin{equation}\label{10}
	a_{\rm hom}(u,v):=\int_S  A^{\rm hom} \nabla u \cdot \nabla v \, dx, \quad u,v \in H^1_0(S),
\end{equation}
and
\begin{equation} \label{ruc11}
	a_\soft^\#(z,b):= \int_{\mathbf{R}^d}\int_{Y}  a(\xi)p(y, y+\xi)(z(y+\xi)-z(y))(b(y+\xi)-b(y)) \,dy\,d\xi, \quad z,b \in L^2_\#(Y_\soft).
\end{equation} 	
Here $A^{\rm hom}$ is the homogenised matrix of the stiff component,
\begin{equation} \label{defA}
	A^{\rm hom}_{ij}:=\int_{Y} \int_{\R^d} a(\xi) \Lambda_0 (y,y+\xi)\left(\xi_i+\chi^i(y+\xi)-\chi^i(y) \right)\xi_j\,d\xi\,dy,
\end{equation}
where $\chi^i \in L^2_\#(Y_\stiff),$ $i=1,\dots,d,$ are the corresponding homogenisation correctors   defined as the unique up to a constant (cf. Lemma \ref{lmdefA} below) solutions of the corrector problem
\begin{equation} \label{ex102}
	\int_{Y} \int_{\R^d} a(\xi) \Lambda_0 (y,y+\xi)\left(\xi_i+\chi^i(y+\xi)-\chi^i(y) \right)\left(b(y+\xi)-b(y) \right) \,d\xi\, dy=0\quad \forall b \in L^2_\#(Y).
\end{equation}
We denote by  $\AA_{\rm hom}$
and  $\AA_\soft^\# $ the self-adjoint operators associated with the forms \eqref{10} and \eqref{ruc11} respectively.

The resolvent problem for the operator $\AA$ associated with the form \eqref{6} reads
\begin{equation}\label{limit_pr}
	a(u+z,v+b) - \l \int_{S} \int_{Y} (u+z)(v+b) = \int_{S} \int_{Y} f (v+b) \qquad \forall v+b\in H.
\end{equation}
This equation can be equivalently written as the following coupled system:
\begin{eqnarray} \nonumber
	& &\int_{\R^d}  A^{\rm hom} \nabla u(x) \cdot \nabla v(x) \, dx -\lambda\int_{\R^d} \left(u(x)+\int_{Y} z(x,y)\, dy\right)v(x)\, dx\\ & & \label{ruc1} \hspace{+40ex}=\int_{\R^d} \int_{Y} f(x,y)\,dy\,  v(x) \, dx \quad \forall v \in H^1_0(S), \\ \nonumber
	& & \int_{\R^d}\int_{Y}  a(\xi)p(y, y+\xi)(z(x,y+\xi)-z(x,y))(b(y+\xi)-b(y)) \,dy\,d\xi\, \\  & &  - \lambda  \int_{Y} (u(x)+z(x,y)) b(y)dy= \int_{Y} f(x,y) b(y)dy \quad \forall b\in L^2_\#(Y_\soft)  \textrm{ for a.e. } x \in S. 	\label{ex200}
\end{eqnarray}

\begin{theorem} \label{thmandrei1}
	Let $(f_\e )_{\e>0}$ be a bounded sequence in $L^2(S)$ such that $f_\e  \xrightharpoonup{2} (\xrightarrow{2})f(x,y)\in L^2(S\times Y)$.  Then for the solution $u_\e $ of problem \eqref{starting} with $\lambda<0$ we have
	$$ u_\e  \xrightharpoonup{2} (\xrightarrow{2})\,u+z , \quad u + z \in H,   $$
	where $u+z$ is the solution to \eqref{limit_pr}.
	
\end{theorem}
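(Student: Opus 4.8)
The plan is to run the classical two-scale homogenisation scheme --- a priori estimates, compactness, passage to the limit in the weak formulation \eqref{starting}, and an energy argument for the strong case --- adapted to the nonlocal high-contrast setting by means of the extension and compactness tools of Appendices \ref{a1}--\ref{a3}.

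First I would establish the a priori bounds. Testing \eqref{starting} with $v=u_\e$ and using that $a_\e(u_\e,u_\e)\ge 0$ (nonnegativity of $a$ and $\Lambda_\e$) together with $\lambda<0$ gives $(-\lambda)\|u_\e\|_{L^2(S)}^2\le \|f_\e\|_{L^2(S)}\|u_\e\|_{L^2(S)}$, so $\|u_\e\|_{L^2(S)}\le \|f_\e\|_{L^2(S)}/(-\lambda)$ is bounded, and then $a_\e(u_\e,u_\e)\le \|f_\e\|_{L^2(S)}\|u_\e\|_{L^2(S)}$ is bounded too. Splitting $a_\e=a_\e^\stiff+a_\e^\soft$ according to $\Lambda_\e=\Lambda_0(\cdot/\e,\cdot/\e)+\e^2 p(\cdot/\e,\cdot/\e)$, each piece is separately bounded. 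The $O(\e^{-2})$-scaled stiff part is a bounded discrete Dirichlet energy over $Y_\stiff^\#$; by the extension theorem of Appendix \ref{a1} (the piecewise-constant extension by local averages, legitimate precisely under Assumption \ref{assumptionmisja1}), the restrictions of $u_\e$ to the $\e$-stiff component admit extensions $\t u_\e\in H^1$ with $\|\nabla\t u_\e\|_{L^2}^2\le C\,a_\e^\stiff(u_\e,u_\e)$, with $\|\t u_\e-u_\e\|$ vanishing on the stiff set, and vanishing outside a neighbourhood of $S$ since $u_\e$ does. The regularisation/compactness results of Appendix \ref{a2} then give, along a subsequence, $\t u_\e\weakly u$ in $H^1_\loc$ with $u\in H^1_0(S)$; hence the whole sequence two-scale converges to some $u(x)+z(x,y)$ with $z\in L^2_\#(S\times Y_\soft)$, and by Appendix \ref{a3} the rescaled increments $\e^{-1}\big(u_\e(\cdot)-u_\e(\cdot-\e\xi)\big)$ two-scale converge, on the stiff part, to $\nabla u(x)\cdot\xi+\chi(x,y+\xi)-\chi(x,y)$ for some $\chi(x,\cdot)\in L^2_\#(Y_\stiff)$.

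Next I would pass to the limit in \eqref{starting} using two families of oscillating test functions that together are dense in $H$. For the macroscopic equation take $v_\e(x)=\psi(x)+\e\sum_i\partial_i\psi(x)\,\chi^i(x/\e)$ with $\psi\in C_0^\infty(S)$ and $\chi^i$ the correctors solving \eqref{ex102} (well-posed by Lemma \ref{lmdefA}); after the change of variables $\eta=x-\e\xi$ the rescaled increment of $v_\e$ two-scale converges to $\nabla\psi(x)\cdot\xi$ plus $\chi^i$-increments, so $a_\e^\stiff(u_\e,v_\e)\to\int_S A^\hom\nabla u\cdot\nabla\psi$ upon recognising \eqref{defA}--\eqref{ex102} (the corrector minimises the cell energy, annihilating the $\chi(x,y)$ contribution and leaving exactly $A^\hom$), while $a_\e^\soft(u_\e,v_\e)\to 0$ since the soft term only sees $O(1)$ increments. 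For the soft equation take $v_\e(x)=b(x,x/\e)$ with $b(x,\cdot)\in L^2_\#(Y_\soft)$ smooth in $x$; the factor $\e^2$ in $p$ combines with $\e^{-(d+2)}$ to produce the $\e^{-d}$ scaling, and the change of variables $\eta=x-\e\xi$ gives $a_\e^\soft(u_\e,v_\e)\to\int_S a_\soft^\#(z(x,\cdot),b(x,\cdot))\,dx$ (evenness of $a$ used to reindex $\xi$), while $a_\e^\stiff(u_\e,v_\e)\to 0$ because $b$ is supported on $Y_\soft$ in $y$ and $\Lambda_0$ on $Y_\stiff$. The zero-order and data terms converge by the two-scale convergence of $u_\e$ and $f_\e$: $-\lambda\int_S u_\e v_\e\to-\lambda\int_S\int_Y(u+z)(\psi+b)$ and $\int_S f_\e v_\e\to\int_S\int_Y f(\psi+b)$. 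Collecting both families yields precisely \eqref{limit_pr}, equivalently \eqref{ruc1}--\eqref{ex200}. Since $a(\cdot,\cdot)+(-\lambda)(\cdot,\cdot)_{L^2(S\times Y)}$ is coercive on $H$ for $\lambda<0$, the limit problem has a unique solution, so the limit is subsequence-independent and the whole family $u_\e$ converges. For the strong case $f_\e\stto f$, compare the identity $a_\e(u_\e,u_\e)+(-\lambda)\|u_\e\|_{L^2(S)}^2=\int_S f_\e u_\e\to\int_S\int_Y f(u+z)=a(u+z,u+z)+(-\lambda)\|u+z\|_{L^2(S\times Y)}^2$ with the lower semicontinuity under two-scale convergence of $a_\e^\stiff$ (via the corrector lower bound), $a_\e^\soft$ and the $L^2$ norm; this forces convergence of each functional, hence strong two-scale convergence of $u_\e$.

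The main obstacle is the compactness step: extracting from a bounded convolution energy a two-scale limit of the structured form $u(x)+z(x,y)$ with $u\in H^1_0(S)$. This requires an $H^1$ extension of the stiff restriction controlled purely by the nonlocal energy --- which is exactly what Assumption \ref{assumptionmisja1} secures --- together with capturing the homogeneous Dirichlet condition in the limit and identifying the two-scale limit of the rescaled stiff increments; once Appendices \ref{a1}--\ref{a3} supply these, the remaining passage to the limit is routine bookkeeping with oscillating test functions and changes of variables.
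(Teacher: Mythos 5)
Your proposal is correct in substance and shares the paper's backbone: the same a priori bounds, the same extension-plus-mollification decomposition of $u_\e$ (Corollary \ref{corcan3}), and the same two families of microscopic test functions for the soft component. Where you genuinely diverge is in the macroscopic limit passage and in the strong-convergence step. The paper first identifies the corrector structure of the two-scale limit itself, by testing \eqref{ex0} with $\e\varphi(x)b(x/\e)$, $b\in C_\#(Y)$, concluding $u_1=\chi\cdot\nabla u_0$, and then passes to the limit with a \emph{plain} test function $v\in C_0^\infty$, using \eqref{estimate72} to recognise $A^{\rm hom}$; you instead use corrector-corrected oscillating test functions $\psi+\e\,\partial_i\psi\,\chi^i(\cdot/\e)$, letting the cell problem \eqref{ex102} annihilate the unknown microscopic part of the limit. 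Both are legitimate; yours avoids identifying $u_1$, but note that $\chi^i$ is only in $L^2_\#(Y_\stiff)$, so your oscillating factor is not covered verbatim by Lemma \ref{twoscalelimit1}/Corollary \ref{corandrei1} (which take $b\in C_\#(Y)$), and you would need an extra approximation/density step there, whereas the paper's plain-test-function route uses the appendix lemmas exactly as stated. For the strong case, the paper simply invokes the classical equivalence of weak and strong two-scale resolvent convergence (Proposition \ref{prop510}, from \cite{Past}), while you run an energy comparison requiring weak two-scale lower semicontinuity of both the stiff and soft convolution energies; that is standard by convexity but is an additional verification the paper's route sidesteps. Neither difference is a gap, but if you keep your route you should spell out these two technical points.
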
 	

In what follows we will often use the notation $\langle f \rangle = \int_Y f dy$.

\begin{remark} \label{remandrei3}
	Notice that while the limit operator $\AA$  acts in the space $L^2(S)+L^2_\#(S\times Y_\soft)$, the weak equation \eqref{limit_pr} (\eqref{ruc1}-\eqref{ex200}) makes sense for any right hand side $f\in L^2(S\times Y)$. The problem \eqref{limit_pr} may be written in the operator form with the help of projection operator  $\mathcal{P}:L^2(S\times Y)\to L^2(S)+L^2(S\times Y_\soft)$. It is not difficult to see that for an element $f\in L^2(S\times Y)$ one has
	\begin{equation*}
		\mathcal{P}f = |Y_\stiff|^{-1} \langle f \,\mathbf{1}_{Y_\stiff}\rangle + \mathbf{1}_{Y_\soft} (f - |Y_\stiff|^{-1} \langle f\, \mathbf{1}_{Y_\stiff}\rangle).
	\end{equation*}
	Thus \eqref{limit_pr}  reads $(\AA - \l I)(u+z) = \mathcal{P} f$.
	In particular,   Theorem \ref{thmandrei1} can be rephrased as follows:
	
	\noindent If $f_\e  \xrightharpoonup{2} (\xrightarrow{2}) f(x,y)$, then for $\lambda<0$ we have
	$$ (\AA_\e-\lambda I)^{-1} f_\e  \xrightharpoonup{2} (\xrightarrow{2})(\AA-\lambda I)^{-1} \mathcal{P}f(x,y). $$
	This property is commonly known as the weak (strong) two-scale resolvent convergence.
	
	It is well known that the 	strong (two-scale) resolvent convergence entails ``spectral inclusion''. In particular, for the operators $\AA_\e$ one has
	\begin{equation}\label{14a}
		\Sp (\AA) \subset \lim_{\e \to 0} \Sp (\AA_\e ).
	\end{equation}
	The argument is classical and rather straightforward. In the two-scale convergence context we refer to e.g. \cite{zhikov2}, and to \cite{Past} in a more general setting.

\end{remark} 	

	In \eqref{14a} and in what follows the limit notation for a sequence of sets is understood in the sense of the following
\begin{definition}
	For a family of sets $\SS_\e \subset \R$ the notation $\lim_{\e \to 0} \SS_\e$ stands for the set of all limit points of $\SS_\e$ in the sense that for any $\l \in \lim_{\e \to 0} \SS_\e$ there exists a subsequence $\e_k \to 0$ and $\l_{\e_k} \in \SS_{\e_k}$ such that $\l_{\e_k} \to \l$, and vice versa, for any converging subsequence $\l_{\e_k} \in \SS_{\e_k}$ the limit is in $\lim_{\e \to 0} \SS_\e$.
	
	 In case if a  sequence $\SS_\e$ has a limit in the sense of Hausdorff, we will write ${\rm H}\mbox{-}\lim_{\e \to 0} \SS_\e$. 	
	We recall that a set $\SS\subset\RR$ is the Hausdorff limit  of a family of sets $\SS_\e$    if
	\begin{enumerate}
		\item 	for any $\lambda \in \mathcal{S}$ there exists a sequence $(\lambda_\e)_{\e>0}$ such that  $\lambda_\e\in \SS_\e$ and $\lim\limits_{\e\to0} \lambda_\e=\lambda$;
		\item if $\lambda_\e \in \SS_\e$ is such that $\lim_{\e\to 0} \lambda_\e=\lambda$, then $\lambda
		\in \SS$.
	\end{enumerate}
\end{definition}

\begin{remark} \label{remopeq}
	It is not difficult to see that the operator $\AA_\soft^\# : L^2_\#(Y_\soft) \to L^2(Y_\soft)$ is given by
	\begin{equation}\label{14}
		\AA_\soft^\# z(y) = 2 \int_{\mathbf{R}^d}   a(\xi -y)p(y, \xi)d\xi\, z(y) - 2 \int_{\mathbf{R}^d}   a(\xi -y)p(y, \xi) \mathbf{1}_{Y_\soft^{\#}}(\xi) \mathbf{1}_{Y_\soft}(y) z(\xi) d\xi.
	\end{equation}
	We emphasise that the operator $\AA_\soft^\#$ acts on the space of periodic functions defined on $\R^d$. The target space, however is defined only over the single cell $Y$. The same is true for other operators acting on spaces of periodic functions.
	
	In the operator form the equations \eqref{ruc1}-\eqref{ex200} read
	\begin{eqnarray*}
		\AA_\hom  u-\lambda \left(u+ \langle z \rangle  \right)&=&\langle f \rangle, \\ 
		\AA_\soft^\#  z(x,\cdot)-\lambda (u(x)\mathbf{1}_{Y_\soft }(\cdot)+z(x,\cdot))&=& f(x,\cdot)\mathbf{1}_{Y_\soft }(\cdot), \quad u \in H_0^1(S), \ z \in L^2(S \times Y_\soft^{\#}).
	\end{eqnarray*} 	
\end{remark}

In order to characterize the spectrum of $\AA$ we introduce the function $\beta: \R\setminus \Sp (\AA_\soft^\# ) \to \R$:
\begin{equation} \label{ruc22}
	\beta(\lambda):= \lambda+\lambda^2 \Big\langle (\AA_\soft^\# -\l I)^{-1}  \mathbf{1}_{Y_\soft }\Big\rangle = \lambda+\lambda^2 \langle b_\l \rangle, \quad \l \in \mathbf{R}^+_0 \backslash \Sp (\AA_\soft^\# ),
\end{equation} 	
where we denote $b_\l:=(\AA_\soft^\# -\l I)^{-1}  \mathbf{1}_{Y_\soft }$.
The spectrum of the limit two-scale operator can be fully characterised by the spectrum of $\AA_\hom$, function $\beta$ and the spectrum of $\AA_\soft^\# $:
\begin{theorem}\label{thm2.3}
	\begin{equation}\label{57}
		\Sp (\AA)= \{\beta(\l) \in \Sp (\AA_\hom)\}\cup \Sp  (\AA_\soft^\# )  .
	\end{equation} 	
\end{theorem}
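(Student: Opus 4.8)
The plan is to characterise $\Sp(\AA)$ by solving the coupled resolvent system of Remark~\ref{remopeq} and splitting according to whether or not the spectral parameter lies in $\Sp(\AA_\soft^\#)$. Throughout one uses that $\AA$, $\AA_\hom$ and $\AA_\soft^\#$ are self-adjoint and non-negative and that $\AA_\soft^\#$ is bounded (see \eqref{14}); write $\mathcal H:=L^2(S)+L^2_\#(S\times Y_\soft)$ for the space in which $\AA$ acts.

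First I would treat the case $\lambda\notin\Sp(\AA_\soft^\#)$ via a Schur complement. Since $(\AA_\soft^\#-\lambda I)^{-1}$ is then bounded, the soft equation in Remark~\ref{remopeq} can be solved for the soft component, $z(x,\cdot)=\lambda u(x)b_\lambda+(\AA_\soft^\#-\lambda I)^{-1}g_\soft(x,\cdot)$, where $b_\lambda=(\AA_\soft^\#-\lambda I)^{-1}\mathbf 1_{Y_\soft}$ as in \eqref{ruc22} and $g_\soft$ denotes the soft right-hand side. Substituting $\langle z(x,\cdot)\rangle$ into the macroscopic equation and using the definition \eqref{ruc22} of $\beta$ collapses the system to $(\AA_\hom-\beta(\lambda)I)u=\tilde g$, where $\tilde g\in L^2(S)$ depends boundedly on the data and, conversely, every element of $L^2(S)$ arises this way; since the reconstruction $u\mapsto z$ is bounded and keeps $u+z$ in $\mathcal D(\AA)$ (the form \eqref{6} is block diagonal and $\AA_\soft^\#$ is bounded), $\AA-\lambda I$ is boundedly invertible on $\mathcal H$ if and only if $\AA_\hom-\beta(\lambda)I$ is boundedly invertible on $L^2(S)$. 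Hence, for $\lambda\notin\Sp(\AA_\soft^\#)$, $\lambda\in\Sp(\AA)\iff\beta(\lambda)\in\Sp(\AA_\hom)$. This already gives the inclusion $\Sp(\AA)\subset\{\beta(\lambda)\in\Sp(\AA_\hom)\}\cup\Sp(\AA_\soft^\#)$ and the $\{\beta(\lambda)\in\Sp(\AA_\hom)\}$ half of the reverse inclusion.

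The harder part is $\Sp(\AA_\soft^\#)\subset\Sp(\AA)$; I would prove it by a case distinction for $\lambda_0\in\Sp(\AA_\soft^\#)$, fixing a normalised $\phi\in C_0^\infty(S)$ and testing with functions of product form $\phi\otimes z$. A direct computation from \eqref{6}--\eqref{ruc11} shows that $(\AA-\lambda_0 I)(\phi\otimes z)$ has soft component controlled by $\|(\AA_\soft^\#-\lambda_0 I)z\|$ and a macroscopic component proportional to $\langle\AA_\soft^\# z\rangle=\langle(\AA_\soft^\#-\lambda_0 I)z\rangle+\lambda_0\langle z\rangle$. If $\lambda_0$ lies in the essential spectrum of $\AA_\soft^\#$, take a singular Weyl sequence $z_n\rightharpoonup0$, $\|z_n\|=1$, $(\AA_\soft^\#-\lambda_0 I)z_n\to0$; then $\langle z_n\rangle=(z_n,\mathbf 1_{Y_\soft})\to0$, so $\phi\otimes z_n$ is a singular Weyl sequence for $\AA$ at $\lambda_0$. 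If $\lambda_0$ is an isolated eigenvalue of $\AA_\soft^\#$ whose eigenspace meets $(\mathbf 1_{Y_\soft})^\perp$ (automatic when its multiplicity is at least $2$, and when $\lambda_0=0$), the same computation shows $\phi\otimes\psi$ is an exact eigenfunction of $\AA$. The remaining case is $\lambda_0\ne0$ a simple isolated eigenvalue with eigenvector $\psi$ ($\|\psi\|=1$, $\langle\psi\rangle\ne0$): here a constant soft mode feeds a non-vanishing source into the macroscopic equation, so the product test functions fail, and instead I would show $\AA-\lambda_0 I$ is not surjective. Taking the right-hand sides in Remark~\ref{remopeq} to be $0$ (macroscopic) and $\rho(x)\psi$ (soft) with $\rho\in L^2(S)\setminus H^1(S)$, the Fredholm solvability condition for $(\AA_\soft^\#-\lambda_0 I)z(x,\cdot)=\lambda_0 u(x)\mathbf 1_{Y_\soft}+\rho(x)\psi$ (its range being $(\Span\psi)^\perp$) forces $u(x)=-\rho(x)/(\lambda_0\langle\psi\rangle)\notin H^1_0(S)$, contradicting $u+z\in\mathcal D(\AA)\subset H$; hence this datum is not attained and $\lambda_0\in\Sp(\AA)$.

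Combining the two steps gives $\Sp(\AA)=\Sp(\AA_\soft^\#)\cup\{\lambda\notin\Sp(\AA_\soft^\#):\beta(\lambda)\in\Sp(\AA_\hom)\}$, which is \eqref{57}. I expect the main obstacle to be the last case of the second step: unlike in the essential-spectrum situation, a simple soft eigenmode that is not orthogonal to $\mathbf 1_{Y_\soft}$ cannot be promoted to an approximate eigenfunction of $\AA$ by tensoring with a macroscopic profile, and one must instead argue by non-surjectivity, which relies on the soft Fredholm alternative pinning down a macroscopic profile of insufficient Sobolev regularity. One should also note that $\lambda<0$ is automatically excluded from both sides, since all three operators are non-negative, so no separate argument is needed there.
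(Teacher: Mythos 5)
Your proposal is correct, and while your first step coincides with the paper's treatment (solve the soft equation via $b_\lambda$, cf.\ \eqref{ruc22}, and collapse the coupled system to $(\AA_\hom-\beta(\lambda)I)u=\tilde g$, exactly as in \eqref{71}), your proof of the inclusion $\Sp(\AA_\soft^\#)\subset\Sp(\AA)$ takes a genuinely different route. The paper proves this as Proposition \ref{propsri1} by a single uniform argument with no case distinction: assuming $\lambda\notin\Sp(\AA)$, it feeds special data into the coupled system (first $f\in L^2(S)\setminus H^1(S)$ constant in $y$, then $f=g\psi$) and integrates the resulting soft components in $x$ against suitable weights to produce, for \emph{every} $\psi\in L^2(Y_\soft)$, a solution of $\AA_\soft^\#\breve z-\lambda\breve z=\psi$; surjectivity plus self-adjointness of $\AA_\soft^\#-\lambda I$ then yields $\lambda\notin\Sp(\AA_\soft^\#)$. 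You instead classify $\lambda_0\in\Sp(\AA_\soft^\#)$: tensorised Weyl sequences for the essential spectrum (where $\langle z_n\rangle\to0$ kills the macroscopic coupling), exact tensor eigenfunctions $\phi\otimes\psi$ when $\langle\psi\rangle=0$ or $\lambda_0=0$, and non-surjectivity via the Fredholm compatibility condition (forcing $u=-\rho/(\lambda_0\langle\psi\rangle)\notin H^1(S)$) in the remaining simple-eigenvalue case; your trichotomy is exhaustive and each case is sound, including the domain check that purely soft tensor elements lie in $\dom(\AA)$. The paper's argument is shorter and avoids the case analysis; yours is more explicit about the mechanism (it shows when points of $\Sp(\AA_\soft^\#)$ carry genuine (approximate) eigenfunctions of $\AA$ and when they are detected only through failure of surjectivity), and your last case is close in spirit to the paper's use of data lacking $H^1$ regularity.

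One point in your Schur-complement step should be made explicit: the assertion that every $g\in L^2(S)$ arises as $\tilde g=\langle F\rangle+\lambda\big\langle(\AA_\soft^\#-\lambda I)^{-1}(F\,\mathbf{1}_{Y_\soft})\big\rangle$ for admissible data $F$. With data constant in $y$ this requires $1+\lambda\langle b_\lambda\rangle\neq0$, which is precisely the fact the paper records after \eqref{71}; if $1+\lambda\langle b_\lambda\rangle=0$ (i.e.\ $\beta(\lambda)=0$ with $\lambda\neq0$, which can occur on branches of $\beta$ away from the origin), purely macroscopic data only produces $\tilde g=0$. The claim is nevertheless true: taking $F=h(x)\psi(y)$ gives $\tilde g=h\big(\langle\psi\rangle+\lambda(\psi,b_\lambda)_{L^2(Y_\soft)}\big)$, and this bracket can vanish for all $\psi\in L^2(Y_\soft)$ only if $\mathbf{1}_{Y_\soft}+\lambda b_\lambda=0$, which would force $1+\lambda\langle b_\lambda\rangle=1-|Y_\soft|>0$, a contradiction. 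So the gap is two lines to fill, but it is the one place in your first step that genuinely needs an argument.
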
 	

Here and in what follows, when we write $\b(\l)$ we tacitly assume that $\l$ belongs to the domain of $\b$, i.e. $\l \in \R\setminus \Sp (\AA_\soft^\# )$.

\subsection{Spectral convergence} \label{s2.2}

The spectral inclusion inverse to \eqref{14a} is not always the case. For it to hold for high-contrast problems one needs some sort of locality property for the soft component. For example, in the case of periodic elliptic PDEs, the inverse to \eqref{14a} inclusion holds only if the the soft component consists of disconnected inclusions, see \cite{Zhikov2004}. In the present setting the inverse to \eqref{14a} does not hold in general even under the mentioned geometric assumption. Indeed, if the support of the convolution kernel $a$ is sufficiently large to guarantee the nearby inclusions to ``communicate'', the limiting spectrum is strictly larger than $\Sp(\AA)$. In other words, the two-scale resolvent convergence is too restrictive and does not fully recover the asymptotic behaviour of $\AA_\e$. More specifically, instead of the `periodic' operator $\AA_\soft^\#$ one needs to consider its whole space counterpart $\AA_\soft$ defined below. Moreover, in the case when $S$ is a bounded domain, the spectrum arising from the interaction of the soft component with the boundary of $S$ persists in the limit, but is not accounted for by the operator $\AA$. It is, however, seems impossible to characterise the part of the limiting spectrum arising from the boundary for a general domain $S$.  In what follows, we define relevant objects and summarise our main results concerning spectral convergence.

We define the operator $\AA_\soft:\,L^2(Y^\#_\soft) \to L^2(Y^\#_\soft)$ as the symmetric operator associated with the bilinear form
\begin{eqnarray*}
	a_\soft(z,b)= \int_{\R^n}\int_{\R^n}  a(\xi)p(y, y+\xi)(z(y+\xi)-z(y))(b(y+\xi)-b(y)) \,dy\,d\xi,
	\quad z,b \in L^2(Y^\#_\soft).
\end{eqnarray*}
This operator has important role in characterisation of the limiting spectrum in both cases: when $S$ is bounded or $S=\R^d$.
Note that in contrast to the operator $\AA_\soft^\#$, which acts in the space of periodic functions $L^2_\#(Y_\soft)$, the operator $\AA_\soft$ acts in the space    $L^2(Y_\soft^\#)$.

It is not difficult to see that
\begin{equation*}
	\Sp(\AA_\soft^\#) \subset \Sp(\AA_\soft).
\end{equation*}
Indeed, if $\l \in \Sp(\AA_\soft^\#)$ the one can use a corresponding (approximate) periodic eigenfunction and the cut off function technique analogous to the one in the proof of Theorem \ref{th5.5} below, in order to construct an approximate $L^2$ eigenfunction for $\AA_\soft$. Alternatively, one can employ Gelfand transform resulting in $\Sp(\AA_\soft) = \cup_\theta \Sp(\AA_\soft^\theta)$, see Section  \ref{s7} for the definition of the quasi-periodic operators $\AA_\soft^\theta$. Then the claim follows directly from the observation that $\AA_\soft^\# = \AA_\soft^0$.

Further, we define the operator  $\AA_{\e,\rm soft}:\,L^2(\e Y^\#_\soft \cap S) \to L^2(\e Y^\#_\soft\cap S)$ via the associated bilinear form
\begin{multline}\label{21}
	a_{\e,\rm soft}(z,b)= \int_{\R^d}\int_{\R^d}  a(\xi)p\Big(\frac{x}\e , \frac{x+\e\xi}\e \Big)\left(z(x+\e \xi)-z(x)\right)\left(b(x+\e\xi)-b(x)\right) \,dx\,d\xi,
	\\
	z,b \in L^2(\e Y^\#_\soft \cap S).
\end{multline} 	
 (Note that in the case when $S=\R^d$ the operator $\AA_{\e,\rm soft}$ is unitarily equivalent to  $\AA_\soft$ via the rescaling, hence they have identical spectra.)

The following two assertions hold.

\begin{theorem}\label{th2.5}
	\begin{equation}\label{20}
		\{\beta(\l) \in \Sp (\AA_\hom)\} \cup \Sp(\AA_\soft) \subset \lim_{\e\to 0} \Sp(\AA_\e).
	\end{equation}
\end{theorem}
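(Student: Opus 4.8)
The plan is to prove the two inclusions separately. For the inclusion $\{\beta(\l)\in\Sp(\AA_\hom)\}\subset\lim_{\e\to0}\Sp(\AA_\e)$, I would note that by Theorem \ref{thm2.3} the left-hand set is contained in $\Sp(\AA)$, so it suffices to invoke the spectral inclusion \eqref{14a}, which in turn follows from the strong two-scale resolvent convergence established in Theorem \ref{thmandrei1} (see Remark \ref{remandrei3}). So the only real content is the inclusion $\Sp(\AA_\soft)\subset\lim_{\e\to0}\Sp(\AA_\e)$, and this is where the work lies: it is precisely the ``extra'' spectrum that is NOT seen by the two-scale limit, so a direct quasi-mode construction is needed.

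First I would reduce to constructing, for each fixed $\l\in\Sp(\AA_\soft)$ and each $\delta>0$, a function $u_\e\in L^2(S)$ with $\|u_\e\|_{L^2(S)}=1$ and $\|(\AA_\e-\l I)u_\e\|_{L^2(S)}\le\delta$ for all sufficiently small $\e$; since $\AA_\e$ is self-adjoint this forces $\dist(\l,\Sp(\AA_\e))\le\delta$, and letting $\delta\to0$ along with $\e\to0$ gives $\l\in\lim_{\e\to0}\Sp(\AA_\e)$. Because $\AA_\soft$ acts on $L^2(Y^\#_\soft)$ and (for $S=\R^d$) is unitarily equivalent by the rescaling $x\mapsto x/\e$ to $\AA_{\e,\soft}$ on $L^2(\e Y^\#_\soft)$, I can start from an approximate eigenfunction $v$ of $\AA_\soft$ at $\l$: by definition of the spectrum there is $v\in L^2(Y^\#_\soft)$, $\|v\|=1$, with $\|(\AA_\soft-\l I)v\|$ small, and by density I may take $v$ to have compact support (this uses that compactly supported functions are a form core, which follows from the integrability of $a$ and a routine truncation estimate on $a_\soft$). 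Rescaling gives $v_\e(x):=\e^{-d/2}v(x/\e)$ supported in a ball of radius $O(\e)$, normalised in $L^2$, with $\|(\AA_{\e,\soft}-\l I)v_\e\|$ small uniformly in $\e$; extending $v_\e$ by zero to all of $\e Y^\#_\soft$ and then to $S$ gives the candidate. The point of taking compact support is that for $S$ a bounded domain, a translate of the (tiny) support fits inside $S$ and lies at distance $\gg\e$ from $\partial S$ for small $\e$, so the same construction works verbatim; this is the analogue of the cut-off argument mentioned in the excerpt.

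The remaining step, and the main obstacle, is to pass from $\AA_{\e,\soft}$ (integration and the quadratic form restricted to the soft component, with coefficient $p$) to the full operator $\AA_\e$ (integration over all of $\R^d$, coefficient $\Lambda_\e=\Lambda_0+\e^2 p$). Since $v_\e$ is supported in the soft part $\e Y^\#_\soft$, on the region $Y_\stiff\times Y_\stiff$ (where $\Lambda_0$ lives and $p$ vanishes) the differences $v_\e(x)-v_\e(y)$ contribute only through $y$ in the soft support and $x$ in the stiff part, producing boundary-type terms; I would estimate these by the integrability hypothesis $a(\cdot)(1+|\cdot|^2)\in L^1$ together with the fact that $\|\nabla v\|_{L^2}$ is controlled (so the rescaled $v_\e$ has convolution energy $O(\e^{-2})$ matching the $\e^{-d-2}$ prefactor), and show the mismatch terms are lower order in $\e$ — the $\e^2 p$ piece is manifestly $O(\e^2)$ relative to the leading term after the $\e^{-d-2}$ scaling, and the $\Lambda_0$ piece is supported away from the soft core so its contribution to $\AA_\e v_\e$ is small in $L^2$ by the decay of $a$. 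Care is needed because $\AA_\e v_\e$ is a priori large on the stiff region near $\mathrm{supp}\,v_\e$, but that region has measure $O(\e^d)$ and the values there are $O(1)$ after normalisation, so its $L^2$ contribution is $O(\e^{d/2})\cdot O(1)\to0$; making all of these bookkeeping estimates precise, uniformly in $\e$, is the delicate part. Once that is done, $\|(\AA_\e-\l I)u_\e\|_{L^2(S)}\le\|(\AA_{\e,\soft}-\l I)v_\e\|+o(1)$, which completes the argument.
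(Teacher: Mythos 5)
The final estimate $\|(\AA_\e-\l I)u_\e\|_{L^2(S)}\le\|(\AA_{\e,\soft}-\l I)v_\e\|+o(1)$ is false, and the scaling heuristic you offer for it is off by a factor of $\e^{-d/2}$. The operators $\AA_\e$ and $\AA_{\e,\soft}$ agree as bilinear forms restricted to the soft subspace, but not as operators: even with $v_\e$ supported in $\e Y_\soft^\#$, the function $\AA_\e v_\e$ is nontrivial on $\e Y_\stiff^\#$ because the $p$-coupling between stiff and soft survives (it is the $\Lambda_0$ piece, not the $p$ piece, that vanishes identically). Indeed, for $x\in\e Y_\stiff^\#$ one has $\AA_\e v_\e(x)=-\frac{2}{\e^d}\int a\big(\frac{x-y}{\e}\big)p\big(\frac{x}{\e},\frac{y}{\e}\big)v_\e(y)\,dy=-2\e^{-d/2}\int a(\zeta-\eta)p(\zeta,\eta)v(\eta)\,d\eta$ with $\zeta=x/\e$, which is of order $\e^{-d/2}$ pointwise --- the same order as $v_\e$ itself --- not $O(1)$ as you claim. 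Its $L^2$ norm is in fact $\e$-independent: $\|\mathbf 1_{\e Y_\stiff^\#}\AA_\e v_\e\|^2_{L^2}=4\int_{Y_\stiff^\#}\big|\int a(\zeta-\eta)p(\zeta,\eta)v(\eta)\,d\eta\big|^2 d\zeta$, which is generically bounded away from zero. So your direct quasi-mode estimate produces an $O(1)$ error and cannot close.

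The paper avoids exactly this by replacing the pointwise operator estimate with the form criterion of Lemma~\ref{l5.9}: one shows $|a_\e(w_\e,v)-\l(w_\e,v)_{L^2}|\le\Upsilon_\e\sqrt{a_\e(v,v)+\|v\|^2_{L^2}}$ for all $v\in L^2(S)$ with $\Upsilon_\e\to0$. Every test function is decomposed via Corollary~\ref{corcan3} as $v=\bar v_\e+\e\hat v_\e+z_\e$ with $z_\e$ on the soft component and $\bar v_\e$ controlled in $H^1$ by the right-hand side; testing against $z_\e$ reproduces $a_{\e,\soft}(w_\e,z_\e)$ exactly, while testing against $\bar v_\e+\e\hat v_\e$ contributes only $O(\e)$ terms because $\bar v_\e$ has a bounded gradient and $w_\e$ has vanishing support. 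This is the missing idea: the $O(1)$ spillover of $\AA_\e w_\e$ onto the stiff set is harmless when tested against functions whose stiff part carries $H^1$-type regularity, which the weight $\sqrt{a_\e(v,v)+\|v\|^2}$ precisely encodes. Your construction of the rescaled, compactly supported approximate eigenfunction is sound in spirit (the paper's truncation of the kernel and careful cube-selection play the same role), but the passage from $\AA_{\e,\soft}$ to $\AA_\e$ requires the weak/form criterion rather than a direct norm estimate of $(\AA_\e-\l)v_\e$.
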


\begin{theorem}\label{th2.6}
	\begin{equation*}
		\lim_{\e\to 0} \Sp(\AA_\e) \subset	\{\beta(\l) \in \Sp (\AA_\hom)\} \cup  \lim_{\e\to 0}\Sp (\AA_{\e,\rm soft}).
	\end{equation*}
\end{theorem}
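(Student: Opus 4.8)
The plan is to take a sequence $\lambda_\e \in \Sp(\AA_\e)$ with $\lambda_\e \to \lambda$ and show that either $\lambda = \beta(\la_0)$ for some $\la_0 \in \Sp(\AA_\hom)$, or $\lambda \in \lim_{\e\to 0}\Sp(\AA_{\e,\rm soft})$. Since each $\AA_\e$ is self-adjoint, for every $\e$ there is an approximate eigenfunction: $u_\e \in L^2(S)$ with $\|u_\e\|_{L^2(S)} = 1$ and $\|(\AA_\e - \lambda_\e I)u_\e\|_{L^2(S)} \to 0$ (one may even take genuine eigenfunctions when $\lambda_\e$ is an eigenvalue, and Weyl sequences otherwise). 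Setting $f_\e := (\AA_\e - \lambda_\e I)u_\e$, we have $f_\e \to 0$ strongly in $L^2(S)$, hence $f_\e \xrightarrow{2} 0$. The first move is to pass to a subsequence along which $u_\e \xrightharpoonup{2} u + z$ with $u + z \in H$ (using the compactness results of Appendix \ref{a2}, after first establishing the necessary energy bound $a_\e(u_\e,u_\e) \le C$; this bound follows by testing the approximate eigenvalue equation with $u_\e$ itself, using $\lambda_\e \to \lambda$ bounded and $\|f_\e\|\to 0$). Splitting $u_\e = \text{(part on stiff)} + \text{(part on soft)}$ and applying Theorem \ref{thmandrei1} (in the form of Remark \ref{remandrei3}, since $f_\e\to 0$ two-scale) gives $(\AA - \lambda I)(u+z) = 0$.

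The dichotomy then comes from whether the two-scale limit $u+z$ is nonzero. \emph{Case 1: $u + z \ne 0$.} Then $\lambda \in \Sp(\AA)$, and by Theorem \ref{thm2.3} either $\lambda \in \Sp(\AA_\soft^\#) \subset \Sp(\AA_\soft) \subset \lim_{\e\to 0}\Sp(\AA_{\e,\rm soft})$ (the last inclusion because $\AA_{\e,\rm soft}$ is unitarily equivalent to $\AA_\soft$ when $S = \R^d$ and contains it spectrally in the bounded case — more precisely one shows $\Sp(\AA_\soft)\subset \lim \Sp(\AA_{\e,\rm soft})$ by a cut-off argument), or $\lambda = \beta(\la_0)$ for some $\la_0 \in \Sp(\AA_\hom)$, and we are done. \emph{Case 2: $u + z = 0$.} This is the substantive case: the energy of $u_\e$ "escapes" the two-scale limit, and one must show it is asymptotically concentrated on the soft component and is (approximately) an eigenfunction of $\AA_{\e,\rm soft}$. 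Here I would argue that the restriction of $u_\e$ to the stiff part $\e Y^\#_\stiff \cap S$ must tend to zero in $L^2$: the strong-two-scale convergence of the stiff part to $u \equiv 0$ (which is how Theorem \ref{thmandrei1} should be applied, using that the stiff extension is controlled) forces $\|u_\e \mathbf{1}_{\e Y^\#_\stiff}\|_{L^2}\to 0$. Consequently $\|u_\e \mathbf{1}_{\e Y^\#_\soft}\|_{L^2} \to 1$. Writing $z_\e := u_\e \mathbf{1}_{\e Y^\#_\soft \cap S}$, one estimates $\|(\AA_{\e,\rm soft} - \lambda_\e I)z_\e\|_{L^2(\e Y^\#_\soft\cap S)}$ by comparing the bilinear forms $a_\e$ and $a_{\e,\rm soft}$: the difference involves only interactions across the stiff–soft interface and the stiff–stiff interactions, and these carry the small factor $\e^2$ in $\Lambda_\e$ (for the stiff-soft and soft-soft part $p$ is order one, but the genuinely stiff contributions $\Lambda_0$ vanish off $Y_\stiff\times Y_\stiff$), so after rescaling $x \mapsto x/\e$ the leftover terms are controlled by $\|u_\e\mathbf{1}_{\e Y^\#_\stiff}\|_{L^2} \to 0$ times bounded energy. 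Renormalising $z_\e/\|z_\e\|$ then produces a genuine Weyl sequence for $\AA_{\e,\rm soft}$ at the value $\lambda$, whence $\lambda \in \lim_{\e\to 0}\Sp(\AA_{\e,\rm soft})$.

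The main obstacle I anticipate is precisely this comparison in Case 2: making rigorous the claim that the convolution energy $a_\e(u_\e, u_\e)$ differs from $a_{\e,\rm soft}(z_\e, z_\e)$ by a quantity that is small once we know $u_\e$ is small on the stiff part. One has to carefully bound the "cross terms" $\int\int a((x-\eta)/\e)\Lambda_\e(x,\eta)(u_\e(x)-u_\e(\eta))^2$ where exactly one of $x,\eta$ lies in $\e Y^\#_\soft$: these terms are not manifestly small (they contain $u_\e(x)$ on the soft side, which is order one), so one must expand the square and absorb the genuinely soft $u_\e(\eta)^2$-type contributions into $a_{\e,\rm soft}$, leaving the truly mixed products $u_\e(x)u_\e(\eta)$ (one factor on each side) to be handled by Cauchy–Schwarz against $\|u_\e\mathbf{1}_{\e Y^\#_\stiff}\|_{L^2}$ and the bounded energy; getting the integrable-tail bookkeeping right (using $a(x)(1+|x|^2)\in L^1$) is where the care is needed. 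A secondary technical point is that in the bounded-domain case the boundary layer $\e Y^\#_\soft \cap S$ genuinely differs from a rescaled copy of $Y^\#_\soft$, so the unitary-equivalence shortcut is unavailable and one must keep the $\e$-dependence in $\AA_{\e,\rm soft}$ throughout, which is exactly why the statement of the theorem retains $\lim_{\e\to 0}\Sp(\AA_{\e,\rm soft})$ rather than $\Sp(\AA_\soft)$.
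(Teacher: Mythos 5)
Your proposal is correct in substance and relies on the same ingredients as the paper's own argument — the decomposition of Corollary~\ref{corcan3}, the two-scale limits, the Rellich-type strong $L^2$ compactness of the stiff extension on the bounded domain, and the reduction of the equation tested against soft-supported $v$ to a near-eigenvalue identity for $\AA_{\e,\soft}$ — but organises them as a direct dichotomy rather than the paper's contrapositive. The paper assumes $\lambda\notin\lim_{\e}\Sp(\AA_{\e,\soft})$, which yields a uniform bound on $(\AA_{\e,\soft}-\lambda_\e)^{-1}$, and establishes $u_0\neq 0$ by a nested contradiction: if $u_0=0$, then $\widetilde u_\e\to 0$ strongly and the identity $a_{\e,\soft}(z_\e,v)-\lambda_\e(z_\e,v)=(f_\e+\tilde f_\e,v)$ with $\|\tilde f_\e\|\to 0$ forces $z_\e\to 0$, contradicting $\|u_\e\|=1$. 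You instead split on whether $u+z$ vanishes; when it does, you use the very same identity to conclude that $z_\e$ (normalised) is a Weyl sequence for $\AA_{\e,\soft}$. These are contrapositive reformulations of the same fact, so the underlying mathematics coincides; your organisation avoids the nested contradiction at the cost of a slightly redundant case split — it is cleaner to split on $u_0$ rather than on $u+z$, since your Case~2 computation needs only $u_0=0$, while your Case~1 with $u_0=0$, $z\neq 0$ passes unnecessarily through Theorem~\ref{thm2.3} and Proposition~\ref{pr6.1}.

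Three corrections. First, the ``main obstacle'' you anticipate — controlling cross terms via expansion of squares and ``integrable-tail bookkeeping'' — dissolves once you compare the right objects: you should not try to compare the quadratic energies $a_\e(u_\e,u_\e)$ and $a_{\e,\soft}(z_\e,z_\e)$ (indeed that comparison fails, because the $\Lambda_0$-part of $a_\e[u_\e]$ is an $O(1)$ quantity). What one needs is only the equation tested against $v$ supported in $\e Y_\soft^\#\cap S$; there the $\Lambda_0$-part of $a_\e(\cdot,v)$ vanishes identically (both $\Lambda_0\neq 0$ and $v(x)-v(\eta)\neq 0$ cannot hold simultaneously), so $a_\e(u_\e,v)=a_{\e,\soft}(z_\e,v)+a_{\e,\soft}(\widetilde u_\e,v)$, and the extra term is at once $\leq C\|\widetilde u_\e\|_{L^2}\|v\|_{L^2}$ since $a_{\e,\soft}$ is a bounded bilinear form on all of $L^2$ ($a\in L^1$, $p\in L^\infty$). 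No further work with the kernel tail is required. Second, you read $\{\beta(\lambda)\in\Sp(\AA_\hom)\}$ as the image $\beta(\Sp(\AA_\hom))$; it is the preimage $\{\lambda:\beta(\lambda)\in\Sp(\AA_\hom)\}$, so the target statement is $\beta(\lambda)\in\Sp(\AA_\hom)$, which is exactly what Theorem~\ref{thm2.3} delivers; the phrase ``$\lambda=\beta(\lambda_0)$ for some $\lambda_0\in\Sp(\AA_\hom)$'' is a different assertion. Third, invoking Theorem~\ref{thmandrei1} to obtain the limit eigenvalue equation is not immediate because that theorem is stated for $\lambda<0$; one must either shift the spectral parameter (rewriting $(\AA_\e-\lambda_\e)u_\e=f_\e$ as $(\AA_\e+1)u_\e=f_\e+(\lambda_\e+1)u_\e$ and using that $u_\e\xrightharpoonup{2}u+z$) or re-run the two-scale passage to the limit on the Weyl sequence directly, as the paper does.
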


In the case when $S = \R^d$, the inclusion \eqref{20} becomes  equality, see Theorem \ref{th2.8} below. On the other hand, for a general domain $S$ the ``boundary layer'' spectrum may behave unpredictably. In general, the task of characterising the boundary layer and the associated spectrum is extremely challenging. In Section \ref{s:6} we provide its analysis for a special case when the set $S$ is a rectangular box with vertices in $\Z^d$, see \eqref{69}, and the sequence $\e = \frac{1}{N}, N\in \N$, so that the geometry of the soft component in the boundary layer is congruent for all  $\e = \frac{1}{N}$.  In this case the limiting spectrum exists in the sense of Hausdorff, and we have the following
\begin{theorem}\label{th2.7}
	\begin{equation*}
		{\rm H}\mbox{-}\lim_{N\to \infty} \Sp(\AA_{1/N}) =	\{ \beta(\l) \in \Sp (\AA_\hom)\}  \cup \left( \cup_{i=1,\dots,2^d} \Sp(\AA_\soft^{v_i})\right).
	\end{equation*}
\end{theorem}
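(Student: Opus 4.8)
\textbf{Proof plan for Theorem \ref{th2.7}.}

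The plan is to combine the two general inclusions already available (Theorems \ref{th2.5} and \ref{th2.6}) with a careful analysis of the boundary layer geometry that is forced by the special shape $S = $ box and the restricted subsequence $\e = 1/N$. The upper inclusion from Theorem \ref{th2.6} tells us that $\lim_{N\to\infty}\Sp(\AA_{1/N})$ is contained in $\{\beta(\l)\in\Sp(\AA_\hom)\}\cup\lim_{N\to\infty}\Sp(\AA_{\e,\soft})$, while the lower inclusion from Theorem \ref{th2.5} gives $\{\beta(\l)\in\Sp(\AA_\hom)\}\cup\Sp(\AA_\soft)\subset\lim_{N\to\infty}\Sp(\AA_{1/N})$. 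Since $\Sp(\AA_\soft)=\Sp(\AA_\soft^{v_1})$ for the quasi-periodicity parameter $v_1 = 0$ (or whichever $v_i$ corresponds to the zero quasimomentum), the entire content of the theorem reduces to two things: (i) identifying $\lim_{N\to\infty}\Sp(\AA_{1/N,\soft})$ exactly as $\cup_{i=1}^{2^d}\Sp(\AA_\soft^{v_i})$, and (ii) upgrading the two one-sided inclusions to a genuine Hausdorff limit, i.e. checking that no limit point is lost in the limsup direction and that the liminf direction is also covered.

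First I would analyse the operator $\AA_{1/N,\soft}$, which by the rescaling $x\mapsto x/\e$ is unitarily equivalent to the convolution-type operator on $L^2(\frac1\e(Y_\soft^\#\cap \e^{-1}S)) = L^2(Y_\soft^\# \cap N S)$ with coefficients $p(y,y+\xi)$ — that is, the ``soft'' operator on the dilated box $NS$ with Dirichlet condition outside. Because $S$ is a box with integer vertices and $\e = 1/N$, the dilated box $NS$ is again a box with integer vertices, so $NS$ is a union of exactly $N^d \cdot \vol(S)$ unit periodicity cells and the soft component inside it is genuinely periodic up to the boundary. The key geometric point is that the set $Y_\soft^\#\cap NS$ near each of the $2^d$ corners of $NS$ looks (after translation) like $Y_\soft^\#$ intersected with a quarter-space (more precisely an orthant), and near each face like a half-space; and crucially, because of the $\e = 1/N$ restriction, this local picture is \emph{exactly the same} for every $N$. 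I would then show that the spectrum of this box operator converges, as $N\to\infty$, to the union of spectra of the model operators on the various limiting unbounded configurations: the full-space operator $\AA_\soft$ (bulk), the half-space operators (faces), the edge operators, and finally the $2^d$ orthant operators (corners). The claim of the theorem is effectively that the corner operators $\AA_\soft^{v_i}$ — i.e. $\AA_\soft$ with quasi-periodicity data $v_i$ running over $\{0,1\}^d$ or $\{0,\pi\}^d$-type reflection vectors — already contain the spectra of all the lower-dimensional boundary model operators as well as the bulk. This is plausible since a half-space is a ``degenerate corner'' and a reflection-symmetric quasimode on an orthant restricts appropriately; I would make this precise by a reflection/unfolding argument: Dirichlet data outside the box can be implemented by odd reflection across each face, which turns the half-space / orthant problems into quasi-periodic problems on the torus with quasimomenta in $\{0,\pi\}^d$, giving exactly the $2^d$ operators $\AA_\soft^{v_i}$ via Gelfand transform (recall $\Sp(\AA_\soft)=\cup_\theta\Sp(\AA_\soft^\theta)$ and $\AA_\soft^\#=\AA_\soft^0$, as noted in the excerpt).

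For the matching upper bound I would argue by contradiction: suppose $\l_N\in\Sp(\AA_{1/N})$ with $\l_N\to\l$ and $\l\notin\{\beta(\mu)\in\Sp(\AA_\hom)\}\cup\bigcup_i\Sp(\AA_\soft^{v_i})$. Take approximate eigenfunctions $u_N$, normalised in $L^2$, with $\|(\AA_{1/N}-\l_N)u_N\|\to 0$. Using the two-scale compactness and regularisation tools (Theorem \ref{thmandrei1} and Appendices \ref{a2}, \ref{a3}), split $u_N$ into a part that two-scale converges — which must then produce a genuine eigenelement of $\AA$ at $\l$, forcing $\l\in\Sp(\AA)=\{\beta(\mu)\in\Sp(\AA_\hom)\}\cup\Sp(\AA_\soft^\#)$, contradiction — and a ``purely oscillatory'' remainder concentrated on the soft component. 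For the remainder one performs a partition of unity on $\e Y_\soft^\#\cap S$ into interior cubes, face-layer cubes, edge cubes and corner cubes; on each type, after rescaling, the localised piece is an approximate eigenfunction of the corresponding model operator (bulk $\AA_\soft$, face, edge, or corner operator), hence $\l$ must lie in the spectrum of one of those — and by the reflection argument above all of these spectra sit inside $\bigcup_i\Sp(\AA_\soft^{v_i})$. The partition of unity must be chosen so that the commutator errors (the convolution acting across the cutoffs) are $o(1)$; this uses integrability of $a$ together with $|x|^2 a\in L^1$ exactly as in the proofs of the earlier spectral theorems. Finally the liminf direction of the Hausdorff limit is the easy one: it is contained in Theorem \ref{th2.5} once we note $\Sp(\AA_\soft)\subset\bigcup_i\Sp(\AA_\soft^{v_i})$ and that approximate corner/face quasimodes can be transplanted into the box (they are compactly supported near the appropriate corner, so they fit inside $\e Y_\soft^\#\cap S$ for $N$ large) to realise every point of each $\Sp(\AA_\soft^{v_i})$ as a limit of points of $\Sp(\AA_{1/N})$.

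\textbf{Main obstacle.} The hard part will be the partition-of-unity / localisation step for the oscillatory remainder near the boundary, and in particular controlling the nonlocal cross-terms generated by the convolution kernel straddling the cutoff interfaces and the box boundary simultaneously; one must show these errors vanish uniformly in $N$, which is delicate because the boundary layer has fixed width (of order $\e$ in original variables, order $1$ after rescaling) while the kernel has infinite support. The self-congruence coming from $\e=1/N$ is what makes this tractable — it freezes the local boundary geometry so that a single family of model operators suffices and the error estimates are uniform — and the rest is a matter of organising the reflection argument so that the $2^d$ corner operators genuinely absorb the spectra of all the intermediate-dimensional boundary operators.
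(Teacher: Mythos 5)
Your overall skeleton does match the paper's: reduce the theorem, via Theorem \ref{th2.6} together with the identification ${\rm H}\mbox{-}\lim_N \Sp(\AA_{1/N,\soft})=\cup_i\Sp(\AA_\soft^{v_i})$ (this is the paper's Theorem \ref{th5.5}), the inclusion \eqref{14a} with Theorem \ref{thm2.3}, and a remaining lower inclusion for the corner spectra. But the central mechanism you propose for both directions is wrong. The operators $\AA_\soft^{v_i}$ in the statement are not Bloch fibers of $\AA_\soft$: by \eqref{utoest5} they are the whole-space soft operator restricted (by extension by zero) to the $2^d$ orthants $\mathbf{R}^d_{v_i}\cap Y_\soft^{\#}$, i.e.\ genuine corner operators. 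Your claims that ``$\Sp(\AA_\soft)=\Sp(\AA_\soft^{v_1})$ for zero quasimomentum'' and that odd reflection across the faces converts the Dirichlet box problem into quasi-periodic problems with quasimomenta in $\{0,\pi\}^d$ both fail here: for a nonlocal convolution operator the Dirichlet condition is a volume constraint ($z=0$ on the complement), not a boundary trace, so odd reflection does not implement it (and the microstructure need not be reflection-symmetric anyway); moreover $\Sp(\AA_\soft^{v_i})\supset\Sp(\AA_\soft)=\cup_{\theta\in Y^*}\Sp(\AA_\soft^{\theta})$, which is generically strictly larger than the union of the finitely many fibers $\theta\in\{0,\pi\}^d$, so the corner spectra cannot be identified with that finite union. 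The way the corner operators actually ``absorb'' the bulk, face and edge configurations in the paper is purely geometric, not spectral: after truncating the kernel to $a_L$ and localising a Weyl function to a cube of size $O(L)$, for $lN>6L$ that cube misses at least one face from each opposite pair, so the truncated box operator coincides on its support with some orthant operator $\AA_{\soft,L}^{v_i}$ (relation \eqref{117a}); no reflection or Gelfand decomposition enters.

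The second genuine gap is the lower inclusion $\cup_i\Sp(\AA_\soft^{v_i})\subset\lim_{N}\Sp(\AA_{1/N})$. This does not follow ``from Theorem \ref{th2.5}'', which only yields $\Sp(\AA_\soft)$; the corner spectra may be strictly larger, and their Weyl sequences are not compactly supported, so they cannot simply be ``transplanted'' into the box. The paper's Theorem \ref{krnj1} does the real work: truncate the kernel, use Proposition \ref{cetprop1} to select a cube on which the quasimode keeps mass while the residual stays relatively small, cut off with commutator control uniform in $L$ (Proposition \ref{p5.4}), rescale by $\e=1/N(L)$ with $lN(L)>3L+|\zeta_{k,L}|$, and then --- the step your plan omits entirely --- upgrade the resulting quasimode of $\AA_{\e,\soft}$ to a quasimode of the full high-contrast operator $\AA_{1/N}$ by testing against arbitrary $v\in L^2(\Pi)$ decomposed via Corollary \ref{corcan3} and invoking Lemma \ref{l5.9}, estimating the coupling of the soft-supported quasimode to the stiff part of the test function (this is where the H\"older--Sobolev bound with the factor $|\e\square^{2L}_{\zeta_{k,L}}|^{1/d}$ is needed). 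Without replacing the reflection argument by the orthant-coincidence argument, and without this transfer step from the soft operator to $\AA_{1/N}$, the proposal does not close.
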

Here $\AA_\soft^{v_i}$ denotes the operator on the part of soft component associated with the $i$-th vertex of $\R^d$, see Section \ref{s:6} and \eqref{utoest5} below for the precise definition. Notice that $\Sp(\AA_\soft) \subset \Sp(\AA_\soft^{v_i})$ for all $i$.

\subsection{ Norm resolvent and spectral convergence bounds for the case  $S=\R^d$}\label{s2.3}

Periodic problems in the whole space is a standard premise for the Floquet-Bloch theory. Applying the scaled version of the Gelfand transform $\G_\e: L^2(\R^d) \to L^2(Y^*\times  Y ), Y^*:= [-\pi,\pi]^d $,
\begin{equation*}
	(\G_\e f)(\theta, y) := \left(\frac{\e^2}{2\pi}\right)^{d/2} \sum_{n\in \Z^d} f(\e(y+n)) \rme^{-\rmi  \e \theta\cdot(y+n)},
\end{equation*}
we obtain the decomposition
\begin{equation}\label{27}
	\G_\e \A_\e  (\G_\e)^{-1} = \int_{Y^*}^\oplus \A_\e^\theta,
\end{equation}
where the self-adjoint operators $\A_\e^\theta$, $\theta \in Y^*$, are associated with the sesquilinear form
\begin{equation*} 
	\begin{split}
		\int_{Y}\int_{\R^d} a(\xi - y) \left( \e^{-2}\Lambda(y,\xi) +  p(y,\xi)\right) (\rme^{\rmi \theta\cdot(\xi - y)} u(\xi)-u(y))  \overline{(\rme^{\rmi \theta\cdot(\xi - y)}v(\xi)-v(y))}d\xi dy,
		\\
		\quad \forall u, v\in   L^2_\#(Y).
	\end{split}
\end{equation*}
The relation \eqref{27} implies that
\begin{equation*}
	\Sp (\AA_\e ) = \cup_{\theta\in Y^*} \Sp (\AA_\e^\theta).
\end{equation*}
Thus, in order to understand the limit behaviour of $\AA_\e$ and its spectrum on can analyse the family of operators $\AA_\e^\theta$ instead.

Following a new approach, recently developed in \cite{CKS2023}, we show that $\A_\e^\theta$ can be approximated in the norm resolvent sense uniformly in $\theta\in Y^*$ by a homogenised operator $\AA_\e^{h,\theta}$, associated with the sesquilinear form
\begin{multline} \label{28}
	\e^{-2}	A^\hom \theta \cdot \theta \, z \overline{\widetilde z}
	\\
	+	\int_{Y}\int_{\R^d} a(\xi - y)  p(y,\xi) (\rme^{\rmi \theta\cdot(\xi - y)} (z+v(\xi))-(z+v(y)))
	\overline{(\rme^{\rmi \theta\cdot(\xi - y)}(\widetilde z+\tilde v(\xi))-(\widetilde z+\tilde v(y)))}d\xi dy,
	\\
	\quad \forall z+v, \widetilde z+\tilde v \in   \C+ L^2_\#(Y_\soft).
\end{multline}

Our results are as follows.

\begin{theorem}  \label{th2.8}
	There exists a positive function $\overline{h}$ satisfying $\overline{h}(t) \to 0$ as $t\to 0$, $\lim_{t\to 0} \overline{h}(t)/t >0$, such that
	\begin{equation}\label{205a}
		\|  (\A_\e^\theta +1)^{-1} - (\AA_\e^{h,\theta}+1)^{-1}\|_{L^2(Y)\to L^2(Y)}\leq C  \overline{h}(\e)
	\end{equation}
	uniformly in $\theta\in Y^*$.
	Moreover,
	\begin{equation*}
		\lim_{\e\to 0} \Sp (\AA_\e) = \mathcal G : = \{\beta(\l) \geq 0\}\cup \Sp(\AA_\soft),
	\end{equation*}
	and, for any $\Lambda >0$, one has
	\begin{equation*}
		d_{H, [0, \Lambda]}\Big( \Sp (\AA_\e), \mathcal G \Big) \leq C(\Lambda) \max\{ \overline{h}(\e), \,\e^{2/3}\},
	\end{equation*}
	where
	\begin{equation*}
		d_{H, [0, \Lambda]}(A_1, A_2):= \max \big(   \dist(A_1  \cap [0, \Lambda], A_2), \,   \dist( A_1, A_2  \cap [0, \Lambda]) \big).
	\end{equation*}
\end{theorem}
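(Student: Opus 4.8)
The plan is to prove Theorem \ref{th2.8} by combining a periodic-fibre norm-resolvent estimate with the abstract machinery linking the fibre operators $\AA_\e^\theta$ to the limit objects. First I would establish the uniform-in-$\theta$ estimate \eqref{205a}. Following the approach of \cite{CKS2023}, one splits the fibre form into a ``stiff'' part, scaled by $\e^{-2}$, and a ``soft'' part of order one. On the stiff component the factor $\e^{-2}\Lambda_0(y,\xi)$ forces, in the resolvent, the stiff part of any bounded-energy function to be close (in the appropriate norm) to a constant plus the corrector field $z\,(\xi_i+\chi^i)$; this is where the corrector problem \eqref{ex102} and the homogenised matrix $A^\hom$ from \eqref{defA} enter, producing the term $\e^{-2}A^\hom\theta\cdot\theta$ in \eqref{28}. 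The soft part passes to the limit without rescaling. The rate function $\overline h(\e)$ will come out of quantifying ``closeness to a corrector'': one writes the difference of resolvents as $(\AA_\e^\theta+1)^{-1} - (\AA_\e^{h,\theta}+1)^{-1} = (\AA_\e^\theta+1)^{-1}\big[(\AA_\e^{h,\theta}+1)-(\AA_\e^\theta+1)\big](\AA_\e^{h,\theta}+1)^{-1}$, tests against arbitrary data, inserts the corrector as a quasi-mode, and estimates the residual using the moment bound $a(x)(1+|x|^2)\in L^1$ and the ellipticity condition \eqref{a-2} (the latter via Assumption \ref{assumptionmisja1} to control the stiff energy from below, uniformly in $\theta$). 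The uniformity in $\theta\in Y^*$ is automatic because all constants depend only on $a$, $\Lambda_0$, $p$ and the geometry, never on $\theta$; the phase factors $\rme^{\rmi\theta\cdot(\xi-y)}$ are bounded by $1$ and their oscillation over the support of $a$ is controlled by $|\theta||\xi-y|$, again absorbed by the second moment of $a$.

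Next I would pass from the fibrewise resolvent estimate to the spectral statements. Since $\Sp(\AA_\e)=\cup_{\theta}\Sp(\AA_\e^\theta)$ and, by the analogous decomposition for the homogenised fibre operators, $\cup_\theta\Sp(\AA_\e^{h,\theta})$ is the spectrum of the direct integral $\int^\oplus \AA_\e^{h,\theta}\,d\theta$, the uniform bound \eqref{205a} gives $d_{H}\big(\Sp(\AA_\e),\,\cup_\theta\Sp(\AA_\e^{h,\theta})\big)\le C\overline h(\e)$ on any compact window (norm-resolvent convergence of each fibre, uniform in the parameter, implies Hausdorff convergence of the union of spectra at rate $\overline h$). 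So it remains to identify $\lim_{\e\to0}\cup_\theta\Sp(\AA_\e^{h,\theta})$ and to quantify the rate of this convergence. The homogenised fibre form \eqref{28} decouples into the scalar ``macroscopic'' variable $z\in\C$ coupled through $\lambda$ to the soft field $v\in L^2_\#(Y_\soft)$: eliminating $v$ via the soft resolvent exactly reproduces Zhikov's $\beta$-function, so the ``$z$-branch'' of $\Sp(\AA_\e^{h,\theta})$ is $\{\lambda\ge0 : \beta(\lambda)=\e^{-2}A^\hom\theta\cdot\theta\}$; as $\e\to0$ and $\theta$ ranges over $Y^*$, the quantity $\e^{-2}A^\hom\theta\cdot\theta$ sweeps all of $[0,\infty)$, whence this branch converges to $\{\beta(\lambda)\ge0\}$ — noting that $\{\beta(\lambda)\in\Sp(\AA_\hom)\}=\{\beta(\lambda)\ge0\}$ because $\Sp(\AA_\hom)=[0,\infty)$ (the homogenised operator on $\R^d$ has purely absolutely continuous spectrum filling the half-line, $A^\hom$ being positive definite by Lemma \ref{lmdefA}). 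The ``$v$-branch'' is $\theta$-independent and equals $\Sp(\AA_\soft^\#)=\Sp(\AA_\soft^0)$; taking the union over $\theta$ of the quasi-periodic soft operators $\AA_\soft^\theta$ gives $\Sp(\AA_\soft)$ by the Gelfand decomposition already recorded in the excerpt. Hence $\mathcal G=\{\beta(\lambda)\ge0\}\cup\Sp(\AA_\soft)$, as claimed.

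For the explicit rate $C(\Lambda)\max\{\overline h(\e),\e^{2/3}\}$ the contribution $\overline h(\e)$ is the norm-resolvent error from \eqref{205a}; the extra $\e^{2/3}$ accounts for the discrepancy between $\cup_\theta\Sp(\AA_\e^{h,\theta})$ and its limit $\mathcal G$ on $[0,\Lambda]$. This comes from two sources: the discreteness in $\theta$ is not an issue (the union is over the continuum $Y^*$), but near $\lambda=0$ and near thresholds where $\beta$ has critical points the level set $\{\beta(\lambda)=s\}$ moves with finite, possibly vanishing, speed in $s$, so matching a target $\lambda_0$ with $\beta(\lambda_0)\ge0$ to an actual value $\e^{-2}A^\hom\theta\cdot\theta$ costs $O(\e^{2/3})$ in the worst case (quadratic behaviour of $\beta$ near a critical point forces $|\lambda-\lambda_0|\lesssim (\text{gap in }s)^{1/2}$, and the coarsest achievable gap when one also requires $\e^{-2}A^\hom\theta\cdot\theta\le\Lambda$, i.e. $|\theta|\lesssim\e$, is $O(\e^{2})$ in $s$ translating to $O(\e^{2/3})$ after accounting for the cube-root-type behaviour of $\beta^{-1}$ that appears near $\lambda=0$); a careful but elementary analysis of $\beta$ — monotonicity between poles, asymptotics at the poles, and the value $\beta(0)=0$ with $\beta'(0)>0$ — done in Section \ref{s5} pins down this exponent. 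I expect the main obstacle to be the first step: making the quasi-mode/corrector construction in the fibre \emph{quantitatively} uniform in $\theta$, in particular controlling the phase-twisted corrector $\rme^{\rmi\theta\cdot y}(\text{const}+\chi)$ as an approximate minimiser of the $\theta$-dependent stiff form, and showing that the ellipticity furnished by Assumption \ref{assumptionmisja1} survives the twist with a $\theta$-independent constant — this is the technical heart, and it is precisely the point where the nonlocal (convolution) structure replaces the gradient estimates available in the PDE case and where the piecewise-constant extension from the stiff to the soft component (Appendix \ref{a1}) must be invoked to close the energy estimates.
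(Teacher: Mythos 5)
Your overall scaffolding — Gelfand transform, fibre-wise norm-resolvent estimate via the corrector machinery of \cite{CKS2023}, then union over $\theta$ to recover $\Sp(\AA_\e)$ — matches the paper's route, and the identification $\mathcal G=\{\beta(\lambda)\geq 0\}\cup\Sp(\AA_\soft)$ is obtained by essentially the same dichotomy (scalar branch governed by $\beta$, soft branch governed by the $\AA_\soft^\theta$). However, your explanation of where the exponent $2/3$ comes from is wrong, and it is precisely the nontrivial part of the quantitative statement.

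The function $\beta$ has \emph{no} critical points away from its poles: Proposition \ref{p:5.7} gives $\beta'(\lambda)\geq 1-|Y_\soft|>0$ wherever $\beta$ is defined. In particular $\beta(0)=0$ and $\beta'(0)=1$, so $\beta^{-1}$ is Lipschitz near $0$ — there is no quadratic-vanishing or cube-root behaviour of $\beta$ or $\beta^{-1}$. The actual source of $\e^{2/3}$ is a regime splitting in $\theta$, not a degeneracy of $\beta$. In the proof of Proposition \ref{th7.11} the scalar component $z$ of a solution to \eqref{238} is bounded by $|z|\leq C\e^2/|\theta|^2$ (from the coercivity $a_\theta^h[z]\gtrsim\e^{-2}|\theta|^2|z|^2$ and \eqref{135}), while replacing the $\theta$-twisted soft form $b_\theta$ by its untwisted counterpart $b_0$ incurs a Lipschitz error of order $|\theta|$ (estimate \eqref{146}). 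The two errors balance at $|\theta|\sim\e^{2/3}$, giving the rate $\e^{2/3}$ in both regimes; Steps 2 and 3 of the proof then stitch the two regimes together by showing the corresponding pieces of spectrum are $O(\e^{2/3})$-close. Nothing about thresholds or critical values of $\beta$ is involved. Relatedly, your claim that the ``$v$-branch'' of $\Sp(\AA_\e^{h,\theta})$ is $\theta$-independent and equals $\Sp(\AA_\soft^\#)$ is incorrect: the soft part of the homogenised form \eqref{28} carries the phase factor $\rme^{\rmi\theta\cdot(\xi-y)}$, so that branch is $\Sp(\AA_\soft^\theta)$, which varies with $\theta$ and only after taking the union sweeps out $\Sp(\AA_\soft)$. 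Finally, the uniformity in $\theta$ of the resolvent estimate \eqref{205a} is not ``automatic'': the coercivity constant $\nu_\theta$ of $a_\theta$ on $W_\theta$ degenerates like $|\theta|^2$ as $\theta\to0$, and the proof of Proposition \ref{th7.5} has to treat $|\theta|\leq\theta_1$ and $|\theta|>\theta_1$ by genuinely different arguments (corrector expansion versus a direct decoupling onto $V_\theta$), together with the truncation function $\hat h(t)=t\sup_{s\geq t}h(s)/s$ to absorb the competition between $h(|\theta|)$ and $\e^{-2}|\theta|^2$.
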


The function $\overline{h}$ depends  essentially on the   decay properties of the convolution kernel $a$ at infinity, cf. \eqref{183}, \eqref{189}, \eqref{219} and \eqref{225} below. In case $a$ has a finite third moment, i.e. $a(\xi)|\xi|^3 \in L^1(\R^d)$, then we can set $\overline{h}(t) = t$, see Remark \ref{r6.6} (cf. also  {\cite[Theorem 5.6]{CKS2023}}).

%
\begin{remark}
	It is not difficult to generalize the results of the paper to the case of $d$-dimensional periodicity lattice (see \cite{mielketimofte})
	$$ \Xi= \{ \ell=\sum_{j=1}^d k_j b_j: (k_1,\dots,k_d)\in \mathbf{Z}^d  \}, $$where $\{b_1,\dots,b_d\}$ is an arbitrary basis for $\mathbf{R}^d$. The associated unit cell is
	$$ Y=\{y=\sum_{j=1}^d \gamma_j b_j: \gamma_j \in [0,1), j=1,\dots,d\}\}, $$
	such that $\RR^d$ is the disjoint union of the translated cells $\ell+Y$, if $\ell$ ranges over $\Xi$. $Y^{\#}$ can be then defined as $Y^{\#}=\RR^d/\Xi$ with the quotient topology.  For the case $\Xi=\mathbf{Z}^d$, $Y=[0,1)^d$ we obtain the case discussed here.
\end{remark}

\section{The limit two-scale operator via two-scale convergence}
\label{two-scalecon}
In this section we analyse the corrector problem \eqref{ex102} and the homogenised matrix of the stiff component and prove Theorem \ref{thmandrei1}.

\begin{lemma} \label{lmdefA} $\left.\right.$
	
	\begin{enumerate}
		\item The corrector problem \eqref{ex102} has a unique up to an additive  constant solution.
		\item For $\chi:=(\chi^1,\dots,\chi^d)$ and $\eta \in \R^d$ the function  $ \chi^\eta:=\chi\cdot\eta\in L^2_\#(Y_\stiff)$ is the unique up to an additive constant solution to the problem
			\begin{multline} \label{ex102nak1}
				\int_{Y} \int_{\R^d} a(\xi) \Lambda_0 (y,y+\xi)\left(\xi \cdot \eta+\chi^\eta(y+\xi)-\chi^\eta(y) \right)\left(b(y+\xi)-b(y) \right) \,d\xi\, dy=0
				\\
				\forall b \in L^2_\#(Y).
			\end{multline}
		\item The homogenised matrix of the stiff component $A^\hom $, cf. \eqref{defA}, is  symmetric and positive definite:
\begin{equation*} 
			\tilde{\alpha}_1 |\eta|^2  \leq   A^{\rm hom} \eta \cdot \eta,
\end{equation*}
		for some $\tilde{\alpha}_1 >0$.
		
	\end{enumerate}
\end{lemma}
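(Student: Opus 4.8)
The plan is to handle all three parts through the single symmetric nonnegative bilinear form
\[
B(u,v):=\int_Y\int_{\R^d}a(\xi)\,\Lambda_0(y,y+\xi)\,\big(u(y+\xi)-u(y)\big)\big(v(y+\xi)-v(y)\big)\,d\xi\,dy ,
\]
defined on $L^2_\#(Y)$. Since $\Lambda_0$ is supported on $Y_\stiff\times Y_\stiff$, the value $B(u,v)$ depends only on the restrictions $u|_{Y^\#_\stiff},v|_{Y^\#_\stiff}$ and is unchanged if a multiple of $\mathbf 1_{Y^\#_\stiff}$ is added to either argument; so the natural space is $V:=\{b\in L^2_\#(Y_\stiff):\langle b\rangle=0\}$. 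The corrector problem \eqref{ex102nak1} is exactly the weak equation $B(\chi^\eta,b)=\ell_\eta(b)$ for all $b$, with the bounded linear functional $\ell_\eta(b):=-\int_Y\int_{\R^d}a(\xi)\Lambda_0(y,y+\xi)(\xi\cdot\eta)(b(y+\xi)-b(y))\,d\xi\,dy$, and the variational characterisation of \eqref{ex102nak1} gives $A^{\rm hom}\eta\cdot\eta=\min_{b\in L^2_\#(Y)}\int_Y\int_{\R^d}a(\xi)\Lambda_0(y,y+\xi)(\xi\cdot\eta+b(y+\xi)-b(y))^2\,d\xi\,dy$. Two elementary bounds are recorded first: using translation invariance of the $L^2(Y)$ norm and $a\in L^1$ one gets $B(b,b)\le 4\alpha_2\|a\|_{L^1}\|b\|_{L^2(Y)}^2$; and Cauchy--Schwarz in the finite measure $a(\xi)\Lambda_0(y,y+\xi)\,d\xi\,dy$ gives $|\ell_\eta(b)|\le\big(\alpha_2|Y|\,|\eta|^2\int_{\R^d}a(\xi)|\xi|^2d\xi\big)^{1/2}B(b,b)^{1/2}$, finite by the second moment assumption on $a$.

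The crux is the coercivity (Poincaré) inequality $\|b\|_{L^2(Y)}^2\le C\,B(b,b)$ for $b\in V$, which is the only place Assumption \ref{assumptionmisja1} is used. I would prove it by chaining local averages. For $x\in Y_\stiff$ put $\beta_x:=\dint_{B_{r_0}(x)\cap Y^\#_\stiff}b$. Whenever $y\in B_{r_0}(x)\cap Y^\#_\stiff$ and $y'\in B_{r_0}(x')\cap Y^\#_\stiff$ with $|x-x'|\le r_1$ (in particular when $x'=x$), one has $|y-y'|\le 2r_0+r_1\le r_a$ and $y,y'\in Y^\#_\stiff$, hence $a(y'-y)\Lambda_0(y,y')\ge c_a\alpha_1$; integrating $|b(y)-b(y')|^2\le(c_a\alpha_1)^{-1}a(y'-y)\Lambda_0(y,y')|b(y)-b(y')|^2$ over these sets, using $|B_{r_0}(x)\cap Y^\#_\stiff|\ge\kappa_0|B_{r_0}|$ and the $\Z^d$-periodicity of the integrand (a ball $B_{r_0}(x)$ meets a bounded number of unit cells), yields $\int_{B_{r_0}(x)\cap Y^\#_\stiff}|b-\beta_x|^2\le C\,B(b,b)$ and $|\beta_x-\beta_{x'}|^2\le C\,B(b,b)$ for $|x-x'|\le r_1$, with constants uniform in $x,x',b$. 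Feeding the discrete paths \eqref{8a} (length $\le\overline N$) into the second estimate shows $|\beta_x-\beta_{x'}|^2\le C\,B(b,b)$ for all $x,x'\in Y_\stiff$, hence by periodicity for all $x,x'\in Y^\#_\stiff$. Covering $Y_\stiff$ by finitely many balls $B_{r_0}(x_k)$, using $b=0$ on $Y_\soft$ and $\langle b\rangle=0$ to eliminate the common value $\beta_{x_0}$, gives $\|b\|_{L^2(Y)}^2\le C\,B(b,b)$.

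Parts (1) and (2) then follow from the Lax--Milgram lemma on $(V,B)$ (equivalently, minimisation of the coercive, weakly lower semicontinuous, strictly convex functional $b\mapsto\tfrac12 B(b,b)-\ell_\eta(b)$): there is a unique $\chi^\eta\in V$ with $B(\chi^\eta,b)=\ell_\eta(b)$ for all $b\in V$. Since both sides are unchanged when $b$ is replaced by $b\,\mathbf 1_{Y^\#_\stiff}$ minus its mean, the identity extends to all $b\in L^2_\#(Y)$; adding $c\,\mathbf 1_{Y^\#_\stiff}$ produces all solutions in $L^2_\#(Y_\stiff)$, giving uniqueness up to an additive constant. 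Taking $\eta=e_i$ yields $\chi^i$, and by linearity $\chi\cdot\eta=\sum_i\eta_i\chi^i$ solves the $\eta$-problem, hence equals $\chi^\eta$ up to a constant, which is (2). For (3), testing \eqref{ex102nak1} for index $j$ against $b=\chi^i$ and for index $i$ against $b=\chi^j$ turns \eqref{defA} into the manifestly symmetric expression $A^{\rm hom}_{ij}=\int_Y\int_{\R^d}a(\xi)\Lambda_0(y,y+\xi)\big(\xi_i+\chi^i(y+\xi)-\chi^i(y)\big)\big(\xi_j+\chi^j(y+\xi)-\chi^j(y)\big)\,d\xi\,dy$, so $A^{\rm hom}\eta\cdot\eta=\int_Y\int_{\R^d}a(\xi)\Lambda_0(y,y+\xi)\big(\xi\cdot\eta+\chi^\eta(y+\xi)-\chi^\eta(y)\big)^2\,d\xi\,dy\ge0$ for $\chi^\eta=\chi\cdot\eta$. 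If this vanishes, then $\xi\cdot\eta+\chi^\eta(y+\xi)-\chi^\eta(y)=0$ a.e. on $\{a(\xi)\Lambda_0(y,y+\xi)>0\}$, in particular for a.e. $y\in Y^\#_\stiff$, $|\xi|<r_a$, $y+\xi\in Y^\#_\stiff$; hence $g(y):=\chi^\eta(y)+\eta\cdot y$ satisfies $g(y+\xi)=g(y)$ on that set, and the same ball-average chaining as in the Poincaré proof (applied to $g$) forces $g$ to be a.e. constant on $Y^\#_\stiff$. Since $\chi^\eta$ is $\Z^d$-periodic and $Y^\#_\stiff$ is open and periodic, there is a positive-measure set of $y$ with $y,y+n\in Y^\#_\stiff$ for a fixed $n\in\Z^d\setminus\{0\}$, forcing $\eta\cdot n=0$ for all such $n$, i.e. $\eta=0$. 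Thus the fixed symmetric matrix $A^{\rm hom}$ is positive on every nonzero vector, so its least eigenvalue $\tilde\alpha_1$ is positive and $\tilde\alpha_1|\eta|^2\le A^{\rm hom}\eta\cdot\eta$.

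The main obstacle is exactly the coercivity estimate of the second paragraph: conceptually the statement ``the stiff set is connected through $a$, so a small-energy function is almost constant on it'' is clear, but the quantitative form requires care with balls straddling cell boundaries, with the number of translated cells involved, and with keeping all constants uniform. Everything else — the two boundedness bounds, the Lax--Milgram step, the symmetrisation of $A^{\rm hom}$, and the rigidity argument for positivity — is routine once this inequality is in place. (Alternatively, the same inequality, together with the compactness it usually comes with, can be extracted from the `bounded energy' machinery of Appendix \ref{a2}.)
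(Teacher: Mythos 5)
Your proposal is correct, and its backbone is the same as the paper's: coercivity of the stiff bilinear form on zero-mean functions obtained from the kernel-connectivity assumption, Lax--Milgram for (a), linearity for (b), the symmetric quadratic representation of $A^{\rm hom}$ plus incompatibility of a periodic $\chi^\eta$ with linear growth for (c). The differences are local. For the coercivity you re-derive the chaining estimate inline via ball averages along the discrete paths, whereas the paper simply invokes Lemma \ref{lemma:crucialest} (whose proof is the same Jensen-plus-path-chaining mechanism); your version is self-contained but essentially reproduces the appendix work, and your closing parenthetical should point to Appendix \ref{a1} rather than Appendix \ref{a2}. For positive definiteness the paper chooses $y_0$ with $\int_{y_0+Y_\stiff}y\cdot\eta\,dy=0$ and applies the coercivity lemma on growing cubes $\square^{m/2}$ to contradict periodicity quantitatively, while you argue qualitatively that zero energy forces $g=\chi^\eta+\eta\cdot y$ to be a.e.\ constant on $Y_\stiff^\#$, whence $\eta\cdot n=0$ for every $n\in\Z^d$ (by periodicity of $Y_\stiff^\#$ and $\chi^\eta$) and so $\eta=0$; this is a clean variant, though it does require the constancy to hold across cells, which you should justify by chaining through overlapping cubes exactly as in the Poincar\'e step. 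One phrasing slip worth fixing: to reduce a general test function $b\in L^2_\#(Y)$ to an element of $V$ you should use $(b-c)\mathbf 1_{Y_\stiff^\#}$ with $c$ the mean of $b$ over $Y_\stiff$, not ``$b\,\mathbf 1_{Y_\stiff^\#}$ minus its mean'' (the latter does not vanish on $Y_\soft$); since both $B(\chi^\eta,\cdot)$ and $\ell_\eta(\cdot)$ only see differences of $b$ at pairs of stiff points, the correction is immediate and does not affect the argument.
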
 	
\begin{proof}
	Claim a. follows from the Lax-Milgram theorem upon establishing  the coercivity of the form
	\begin{multline*}
		a_{\rm stiff}(\psi_1,\psi_2):= 	\int_{Y} \int_{\R^d} a(\xi) \Lambda_0 (y,y+\xi)\left(\psi_1(y+\xi)-\psi_1(y) \right)\left(\psi_2(y+\xi)-\psi_2(y) \right) \,d\xi\, dy
		\\
		\psi_1, \psi_2 \in L^2_\#(Y_\stiff)
	\end{multline*}
	for functions with zero mean  on $Y_\stiff$. Applying  Lemma \ref{lemma:crucialest} with $\M= Y_\stiff^\#$, and using the periodicity of $\psi$, we have
	\begin{multline}\label{43}
	a_{\rm stiff} (\psi,\psi) \geq	\frac{\alpha_1 c_a}{(2k)^d} \int_{(2kY_\stiff)^2 \cap D_{r_a}  } (\psi(y)-\psi(\xi))^2 \, dy\, d \xi\\ \geq
	C   \int_{(Y_\stiff)^2} (\psi(y)-\psi(\xi))^2 \,dy d\xi=C \left(\|\psi\|^2_{L^2(Y_\stiff)}-\left(\int_{Y_\stiff}\psi \right)^2\right),
\end{multline}
	which proves the first claim.
	
	Claim b. is a straightforward consequence of the linearity of problem \eqref{ex102}.
	
	Now we address part c. From \eqref{defA} and \eqref{ex102} one has
	\begin{equation*}
		A^{\rm hom}_{ij}=\int_{Y} \int_{\R^d} a(\xi) \Lambda_0 (y,y+\xi)\left(\xi_i+\chi^i(y+\xi)-\chi^i(y) \right)\left(\xi_j+{\chi}^j(y+\xi)-{\chi}^j(y)\right)\,d\xi\,dy,
	\end{equation*}
	which yields the symmetry of $A^{\rm hom}$.
	
	Similarly, from \eqref{ex102nak1} we have
	\begin{equation*}
		  A^{\hom} \eta \cdot \eta =\int_{Y} \int_{\R^d} a(\xi) \Lambda_0 (y,y+\xi)\left( \xi\cdot\eta+\chi^\eta(y+\xi)-\chi^\eta(y) \right)^2 d\xi\,dy
	\end{equation*} 	
	for every $\eta \in \R^d$. Suppose that $  A^{\hom} \eta \cdot \eta   =0$ for some $\eta \neq 0$. Choose $y_0\in \R^d$ such that
	\begin{equation*}
		\int_{y_0+Y_\stiff}   y \cdot\eta  dy = 0.
	\end{equation*}
	Note that one then has $	\int_{y_0+(Y_\stiff^\# \cap \square^{m/2})}   y \cdot\eta  dy = 0$ for any odd $m\in \N$.
	Let $\chi^\eta$  be  zero-mean on $Y_\stiff$ and set $\psi(y) =   y \cdot\eta   + \chi^\eta$. Then arguing as in \eqref{43} via Lemma \ref{lemma:crucialest}, for  $m\in \N$, we obtain
	\begin{equation*} 
		0=  A^{\hom} \eta \cdot \eta   \geq C \|\psi\|^2_{L^2(y_0+(Y_\stiff^\# \cap \square^{m/2}))} \mbox{ for some } C=C(m)>0.
	\end{equation*} 	
	We arrive at a contradiction due to the periodicity of $\chi^\eta$, which completes the proof.
\end{proof}

\noindent {\bf Proof of Theorem \ref{thmandrei1}.}
We give the proof for the case $S=\mathbf{R}^d$. The case of bounded $S$  can be dealt with in an analogous way, see also Remark \ref{marin1}.

Assume first that $f_\e \wtto f \in L^2(S\times Y)$ and consider the corresponding sequence of solutions $u_\e$  to \eqref{starting}. Applying Corollary \ref{corcan3} we
consider the decomposition
\begin{equation} \label{decomp1}
 u_\e  = \bar{u}_\e +\e \hat{u}_\e +z_\e ,
 \end{equation}
with 	$\bar{u}_\e  \in H^1(\RR^d)\cap C^\infty(\R^d)$, $\hat{u}_\e  \in L^2(\RR^d)$ and $z_\e \in L^2(\e Y_\soft^\#)$ satisfying
\begin{equation} \|\bar{u}_\e \|_{H^1(\RR^d)} \leq C, \quad\|\hat{u}_\e \|_{L^2(\RR^d)} \leq C, \quad \|z_\e \|_{L^2(\e Y_\soft^\#)} \leq C.  
\end{equation}
By the basic properties of two-scale convergence, we have, up to a subsequence,
\begin{equation}\label{52}
	\bar{u}_\e  \xrightharpoonup{2} u_0(x),\quad \nabla  \bar{u}_\e  \xrightharpoonup{2} \nabla u_0(x)+\nabla_y \bar{u}_1(x,y),\quad  \hat{u}_\e \xrightharpoonup{2} \hat{u}_1(x,y), \quad z_\e  \xrightharpoonup{2} z(x,y),
\end{equation}
for some $u_0\in H^1(\R^d)$, $\bar{u}_1 \in L^2(\R^d;H^1_\#(Y))$, $\hat{u}_1, z, \in L^2(\R^d;L^2(Y_\#))$.  In particular, one has
$$u_\e \xrightharpoonup{2} u_0(x) +z(x,y).  $$
Note that
\begin{eqnarray*}
		\bar u_\e &\rightharpoonup u_0& \mbox{ weakly  in } H^1(\R^d).
\end{eqnarray*}

 Using a change of variables we rewrite the equation \eqref{starting} in the form
\begin{multline} \label{ex0}
	\int_{\R^d} \int_{\R^d} a(\xi)\Lambda_\e (x, x+\e\xi)\left(\frac{u_\e (x+\e \xi)-u_\e (x)}\e  \right) \left(\frac{v(x+\e \xi)-v(x)}\e  \right)\,d\xi\,dx
	\\
	- \lambda \int_{\R^d} u_\e  (x)v(x) dx=\int_{\R^d} f_\e  (x) v(x) dx.
\end{multline}
In order to recover the structure of the two-scale limits $\bar{u}_1$ and $\hat u_0$, we pass to the limit as $\e \to 0$ in \eqref{ex0}  with the test functions of the form $\e \varphi(x) b(x/\e)$,  $\varphi \in C_0^\infty(\mathbf{R}^d)$, $b \in C_\#(Y)$.
Note that by the fundamental theorem of calculus we have (recall that  $\bar{u}_\e $ is smooth)
\begin{equation} \label{formula11}
	\frac{\bar{u}_\e (x+\e \xi)-\bar{u}_\e (x)}\e =\int_0^1 \xi \cdot\nabla \bar{u}_\e (x+\e    t \xi) \,dt.
\end{equation}
Then taking into account the decomposition \eqref{decomp1}, it is easy to see that
\begin{multline}\label{56}
\lim_{\e \to 0} \int_{\R^d} \int_{\R^d} 	a(\xi)\Lambda_0({x}/\e , {x/\e+\xi}) \Big[ \int_0^1  \nabla \bar{u}_\e (x+\e    t \xi) \cdot\xi  \,dt+\hat{u}_\e (x+\e \xi)-\hat{u}_\e (x)\Big]
\\
 \Big(\varphi(x+\e\xi)   b(x/\e+\xi )-\varphi(x)b(x/\e)\Big) \, d\xi\,dx 	= 0
\end{multline} 	
Passing to the limit  in \eqref{56} via Lemma \ref{twoscalelimit1} and Corollary \ref{corandrei1}, and taking into account \eqref{52}, yields
\begin{multline}\label{57a}
	 \int_{\R^d} \int_{\R^d}\int_{Y} \Lambda_0 (y,y+\xi) a(\xi) \left(  \nabla u_0(x) \cdot \xi +(\bar{u}_1+\hat{u}_1)(x,y+\xi)-(\bar{u}_1+\hat{u}_1)(x,y) \right)
	 \\
	 \varphi(x)  \left(b(y+\xi)-b(y) \right) \,dy\, d\xi \, dx=0 \quad \forall \varphi \in C_0^\infty(\mathbf{R}^d), \,b \in C_\#(Y).
\end{multline} 	
By the density argument, it follows that for a.e. $x$ the function $(\bar{u}_1+\hat{u}_1)(x,\cdot)$ solves the corrector equation \eqref{ex102nak1} with $\eta = \nabla u_0(x)$. In particular,
\begin{equation*} 
u_1:=	\bar{u}_1+\hat{u}_1=  \nabla u_0 \cdot \mathbf{\chi}  .
\end{equation*}
Furthermore, by direct inspection, cf. \eqref{defA}, for any $v \in H^1 (\R^d)$, one has
\begin{multline} \label{estimate72}
 	A^{\rm hom} \nabla u_0(x) \cdot \nabla v(x)
\\
=	\int_{Y} \int_{\R^d} a(\xi) \Lambda_0 (y,y+\xi)\left(  \nabla u_0(x) \cdot\xi +u_1(x,y+\xi)-u_1(x,y) \right)   \nabla v(x) \cdot\xi   d\xi dy
\end{multline}
for a.e. $x \in \R^d$.

We next show the validity of  \eqref{ruc1}. To this end  we pass to the limit in  \eqref{ex0} via two-scale convergence with a test function  $v \in C_0^\infty(\R^d) $. Note that the term containing $ p(x,x+\e\xi)$ on the left-hand  side of \eqref{ex0} vanishes in the limit, since $v(x+\e \xi)-v(x) = \e \int_0^1  \nabla v (x+\e    t \xi) \cdot\xi  \,dt$ (compare with \eqref{formula11}). Then resorting to Lemma \ref{twoscalelimit1} and Corollary \ref{corandrei1}, we infer, similarly to \eqref{ex0}--\eqref{57a}, that the first integral on the left-hand  side of \eqref{ex0} converges to the right-hand  side of \eqref{estimate72}.  The convergence of the remaining two integrals is straightforward. Thus, we obtain \eqref{ruc1} with the test functions from $C_0^\infty(\R^d)$, and by density argument it holds for all $v\in H^1(\R^d)$.

In the last step of the proof we derive \eqref{ex200}. Taking  in \eqref{ex0}  test functions of the form $\varphi(x)b(x/\e)$,  $\varphi \in L^2(\mathbf{R}^d)$, $b \in C_\#(Y_\soft)$, yields
\begin{multline}\label{rok1}
 \int_{\R^d} \int_{\R^d} a(\xi)p({x}/\e , {x/\e+\xi} )\left(u_\e (x+\e \xi)-u_\e (x) \right) \left(\varphi(x+\e \xi)b(x/\e+\xi )-\varphi(x)b({x}/\e ) \right)\,d\xi\,dx
 \\
 - \lambda \int_{\R^d}  u_\e  (x)\varphi(x)b({x}/\e ) dx=\int_{\R^d}  f_\e  (x) \varphi(x) b({x}/\e ) dx.
\end{multline}
Passing once again to the limit via Lemma \ref{twoscalelimit1} and Corollary \ref{corandrei1}, while taking into account \eqref{decomp1}--\eqref{52} and \eqref{formula11}, we arrive at
\begin{multline}\label{recal1}
\int_{\R^d}\varphi(x) \int_{\R^n}\int_{Y}  a(\xi)p(y, y+\xi)(z(x,y+\xi)-z(x,y))(b(y+\xi)-b(y)) \,dy\,d\xi\, dx
\\
-\lambda \int_{\R^d} \varphi(x) \int_{Y} (u_0(x)+z(x,y)) b(y)=\int_{\R^d} \varphi(x) \int_{Y} f(x,y) b(y). 	
\end{multline} 	
Then \eqref{ex200} follows by the density argument.

To complete the proof we invoke the following classical result whose proof can be found in e.g. \cite{Past}. (Recall the definition of the projector $\mathcal P$ in Remark \ref{remandrei3}.)

\begin{proposition}
	\label{prop510}
	Weak two-scale resolvent convergence is equivalent to the strong two-scale resolvent convergence, i.e. the following properties are equivalent:
	\begin{enumerate}
		\item If $f_\e \xrightharpoonup{2} f$, then for every $\lambda>0$, $(\AA_\e+ \lambda I)^{-1} f_\e \xrightharpoonup{2} (\AA+\lambda I)^{-1} \mathcal{P}f$;
		\item If $f_\e \xrightarrow{2} f$, and $f \in L^2(S)+L^2(S\times Y_\soft)$, then $(\AA_\e+ \lambda I)^{-1} f_\e \xrightarrow{2} (\AA+\lambda I)^{-1} f$.
	\end{enumerate}
\end{proposition}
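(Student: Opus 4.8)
The plan is to prove the two implications separately by the classical energy/self-adjointness argument, using throughout that $\AA_\e$ and $\AA$ are self-adjoint and nonnegative (so $(\AA_\e+\lambda I)^{-1}$ and $(\AA+\lambda I)^{-1}$ are bounded by $1/\lambda$ for $\lambda>0$), and that for a bounded sequence strong two-scale convergence is equivalent to weak two-scale convergence together with convergence of the two-scale $L^2$-norms. I will write $(\cdot,\cdot)_\#$ for the inner product $\int_S\int_Y(\cdot)(\cdot)\,dy\,dx$ on $L^2(S\times Y)$ and use that, by Remark \ref{remandrei3}, $\mathcal P$ is the orthogonal projection of $L^2(S\times Y)$ onto $L^2(S)+L^2(S\times Y_\soft)$, so $(f,h)_\#=(\mathcal P f,h)_\#$ whenever $h$ lies in that subspace; in particular $(\AA+\lambda I)^{-1}g=(\AA+\lambda I)^{-1}\mathcal P g$ for $g$ in that subspace and its range lies in $H$.

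For $(1)\Rightarrow(2)$, assume $f_\e\xrightarrow{2} f$ with $\mathcal P f=f$; set $u_\e:=(\AA_\e+\lambda I)^{-1}f_\e$ and $u:=(\AA+\lambda I)^{-1}f$, so that, since strong two-scale convergence implies weak, $(1)$ gives $u_\e\xrightharpoonup{2} u\in H$. Testing the weak form of $(\AA_\e+\lambda I)u_\e=f_\e$ with $u_\e$ yields the energy identity $a_\e(u_\e,u_\e)+\lambda\|u_\e\|_{L^2(S)}^2=(f_\e,u_\e)_{L^2(S)}$, and the weak form of $(\AA+\lambda I)u=f$ tested with $u$ gives $a(u,u)+\lambda\|u\|_\#^2=(f,u)_\#$. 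Since a strongly two-scale convergent sequence paired with a weakly two-scale convergent one passes to the two-scale product, $(f_\e,u_\e)_{L^2(S)}\to(f,u)_\#$, hence $a_\e(u_\e,u_\e)+\lambda\|u_\e\|_{L^2(S)}^2\to a(u,u)+\lambda\|u\|_\#^2$. I then invoke weak two-scale lower semicontinuity of the $L^2$-norm, $\liminf\|u_\e\|_{L^2(S)}^2\ge\|u\|_\#^2$, together with two-scale lower semicontinuity of the convolution energies, $\liminf a_\e(u_\e,u_\e)\ge a(u,u)$ for a bounded-energy family $u_\e\xrightharpoonup{2} u\in H$ (the substantive input, supplied by the decomposition in Corollary \ref{corcan3} and the two-scale analysis of convolution energies in Appendix \ref{a3}). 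Adding the two $\liminf$ inequalities and comparing with the limit of the sum forces both to be equalities; writing $\lambda\|u_\e\|^2=\big(a_\e(u_\e,u_\e)+\lambda\|u_\e\|^2\big)-a_\e(u_\e,u_\e)$ and passing to the $\limsup$ then gives $\limsup\|u_\e\|_{L^2(S)}^2\le\|u\|_\#^2$. Thus $\|u_\e\|_{L^2(S)}^2\to\|u\|_\#^2$, which upgrades $u_\e\xrightharpoonup{2} u$ to $u_\e\xrightarrow{2} u$.

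For $(2)\Rightarrow(1)$, let $f_\e\xrightharpoonup{2} f$; then $u_\e:=(\AA_\e+\lambda I)^{-1}f_\e$ is bounded in $L^2(S)$ and satisfies $a_\e(u_\e,u_\e)\le(f_\e,u_\e)_{L^2(S)}\le C$, so by Corollary \ref{corcan3} every subsequence has a further subsequence along which $u_\e\xrightharpoonup{2}\hat u$ with $\hat u\in L^2(S)+L^2(S\times Y_\soft)$ (the ``stiff'' part of the limit carries no $y$-dependence). Fix $g$ in a dense subset of $L^2(S)+L^2(S\times Y_\soft)$ equipped with a strongly two-scale convergent realization $g_\e\xrightarrow{2} g$ (available, e.g., for $g=g(x)$ or $g=g(x,x/\e)$). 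By self-adjointness of $(\AA_\e+\lambda I)^{-1}$, $(u_\e,g_\e)_{L^2(S)}=\big(f_\e,(\AA_\e+\lambda I)^{-1}g_\e\big)_{L^2(S)}$; by $(2)$ the right factor converges strongly two-scale to $(\AA+\lambda I)^{-1}g$, so the right-hand side tends to $\big(f,(\AA+\lambda I)^{-1}g\big)_\#=\big((\AA+\lambda I)^{-1}\mathcal P f,g\big)_\#$, while the left-hand side tends to $(\hat u,g)_\#$. Hence $(\hat u,g)_\#=\big((\AA+\lambda I)^{-1}\mathcal P f,g\big)_\#$ for all such $g$, therefore for all $g\in L^2(S)+L^2(S\times Y_\soft)$; since $\hat u$ lies in this subspace, $\hat u=(\AA+\lambda I)^{-1}\mathcal P f$. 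As the limit is independent of the subsequence, the whole family two-scale converges, which is $(1)$.

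The hard part is not the functional-analytic bookkeeping above but the two structural facts it rests on: the lower semicontinuity $\liminf a_\e(u_\e,u_\e)\ge a(u,u)$ of the high-contrast convolution energies, and the fact that the two-scale limit of any bounded-energy family decomposes as $u_0(x)+z(x,y)$ with $z$ supported on $Y_\soft$, i.e.\ lies in the range of $\mathcal P$. Both follow from the piecewise-constant extension and regularisation machinery behind Corollary \ref{corcan3} and Appendix \ref{a3}; once these are in hand, the equivalence is the classical one and reduces to the two arguments above.
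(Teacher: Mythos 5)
The paper does not actually prove Proposition~\ref{prop510}: it invokes it as a ``classical result whose proof can be found in e.g.~\cite{Past}'' and moves on. So there is no in-paper argument to compare against. What you have written is the standard self-contained proof, and it is correct; it is exactly the kind of duality-plus-energy argument that the reference establishes in an abstract variable-Hilbert-space framework. Both directions are handled correctly: for $(2)\Rightarrow(1)$, the self-adjointness argument with strongly two-scale convergent realizations of a dense set of right-hand sides, combined with the a~priori compactness from Corollary~\ref{corcan3} that forces the limit into $L^2(S)+L^2(S\times Y_\soft)$, is clean and right; for $(1)\Rightarrow(2)$, the energy-identity comparison with the $\liminf$/$\limsup$ bookkeeping to upgrade convergence of norms is the standard mechanism.

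The one place where you are implicitly leaning on more than the paper explicitly supplies is the inequality $\liminf_\e a_\e(u_\e,u_\e)\ge a(u,u)$ for a bounded-energy family with $u_\e\xrightharpoonup{2}u\in H$. You flag it as the ``substantive input'' and attribute it to Corollary~\ref{corcan3} and Appendix~\ref{a3}, which is fair, but neither actually states this two-scale $\Gamma$-liminf inequality; deriving it still requires (i) splitting $a_\e$ into its $\Lambda_0$-stiff and $p$-soft parts, (ii) passing to the weak two-scale limit of the symmetric difference quotients via the decomposition $u_\e=\bar u_\e+\e\hat u_\e+z_\e$, (iii) weak lower semicontinuity of the oscillatory quadratic forms (most cleanly via unfolding or Fatou in $\xi$ after truncating $a$), and (iv) the corrector minimality that turns the cell-problem quadratic form into $A^\hom\nabla u_0\cdot\nabla u_0$. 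All of these steps are in reach with the machinery already in the paper, but since you offer this proof in place of a citation, it would be worth either spelling out that lower-semicontinuity derivation or explicitly noting, as the authors do, that this equivalence is being imported from \cite{Past}.
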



\section{Spectrum of the limit operator}\label{s5}
In this section we characterise the spectrum of the limit two-scale operator.
We begin with the analysis of the spectrum of $\AA_\soft^\#$ and its relation to the spectrum of $\AA$.
\begin{proposition} \label{propsri1}
	$$\Sp  (\AA_\soft^\# ) \subset  	\Sp  (\AA).$$
\end{proposition}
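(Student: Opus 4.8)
The plan is to invoke Weyl's criterion: since $\AA$ is self-adjoint, it is enough, given $\lambda\in\Sp(\AA_\soft^\#)$, to produce a sequence $w_n\in\dom(\AA)$ with $\|w_n\|$ bounded away from $0$ and $\|(\AA-\lambda)w_n\|\to 0$ in $L^2(S)+L^2_\#(S\times Y_\soft)$. Note first that $\AA_\soft^\#$ is non-negative (its form $a_\soft^\#$ is), so $\lambda\ge 0$; applying Weyl's criterion to $\AA_\soft^\#$ we fix a sequence $\psi_n\in\dom(\AA_\soft^\#)\subset L^2_\#(Y_\soft)$ with $\|\psi_n\|_{L^2(Y_\soft)}=1$ and $\rho_n:=(\AA_\soft^\#-\lambda)\psi_n\to 0$ in $L^2(Y_\soft)$.

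The candidates will have tensor-product form, essentially supported on the soft component: $w_n=u_n+z_n$ with $z_n(x,y)=\phi_n(x)\psi_n(y)$ for a unit vector $\phi_n\in L^2(S)$ to be chosen, and $u_n\in\dom(\AA_\hom)$ chosen to absorb the stiff residual. Using the operator form of the limit system (Remark \ref{remopeq}), for such $w_n$ one computes that $(\AA-\lambda)w_n=\mathcal{P}f_n$ for any $f_n$ with $\langle f_n\rangle=h_1:=(\AA_\hom-\lambda)u_n-\lambda\langle\psi_n\rangle\phi_n$ and $\mathbf{1}_{Y_\soft}f_n=h_2:=\phi_n\rho_n-\lambda u_n\mathbf{1}_{Y_\soft}$; since $\mathcal{P}$ is bounded this gives $\|(\AA-\lambda)w_n\|\le C(\|h_1\|_{L^2(S)}+\|h_2\|_{L^2(S\times Y_\soft)})$. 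As $\|\phi_n\rho_n\|_{L^2(S\times Y_\soft)}=\|\rho_n\|_{L^2(Y_\soft)}\to 0$, the soft residual $h_2$ is small once $\|u_n\|_{L^2(S)}\to 0$; the stiff residual $h_1$ is small once $(\AA_\hom-\lambda)u_n=\lambda\langle\psi_n\rangle\phi_n+o(1)$ in $L^2(S)$. So the task reduces to solving $(\AA_\hom-\lambda)u_n=\lambda\langle\psi_n\rangle\phi_n$ with $u_n\to 0$ in $L^2(S)$.

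The key is to reconcile these two demands, and here I would exploit that $\Sp(\AA_\hom)$ is unbounded (it equals $[0,\infty)$ when $S=\R^d$ and is discrete tending to $+\infty$ when $S$ is a bounded Lipschitz domain). Choose $\mu_n\in\Sp(\AA_\hom)$ with $\mu_n\to\infty$, let $\phi_n$ be a unit vector in the range of the spectral projection $E^{\hom}\bigl((\mu_n-1,\mu_n+1]\bigr)$ of $\AA_\hom$ (nonempty, and contained in $\dom(\AA_\hom)$), and set $u_n:=\lambda\langle\psi_n\rangle\,(\AA_\hom-\lambda)^{-1}\phi_n$, where the resolvent is taken on this spectral subspace; for $n$ large $\mu_n-1>\lambda$, so this is well defined and $(\AA_\hom-\lambda)u_n=\lambda\langle\psi_n\rangle\phi_n$ exactly, i.e.\ $h_1=0$, while $\|u_n\|_{L^2(S)}\le|\lambda|\,|\langle\psi_n\rangle|\,(\mu_n-1-\lambda)^{-1}\le C(\mu_n-1-\lambda)^{-1}\to 0$ (using $|\langle\psi_n\rangle|\le|Y_\soft|^{1/2}$). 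Hence $\|(\AA-\lambda)w_n\|\le C\|h_2\|\to 0$. One checks that $w_n\in\dom(\AA)$ (because $u_n\in\dom(\AA_\hom)$ and $z_n(x,\cdot)=\phi_n(x)\psi_n\in\dom(\AA_\soft^\#)$ with $\AA_\soft^\# z_n(x,\cdot)=\phi_n(x)\AA_\soft^\#\psi_n\in L^2(S\times Y_\soft)$), and that $\|w_n\|^2=\|u_n\|^2+2\langle\psi_n\rangle\int_S u_n\phi_n+\|\phi_n\|^2\|\psi_n\|^2_{L^2(Y_\soft)}\to 1$, so $\|w_n\|$ stays bounded below. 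By Weyl's criterion, $\lambda\in\Sp(\AA)$.

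The step I expect to be the main obstacle is precisely this reconciliation: the naive tensor ansatz with $u_n\equiv 0$ forces $\lambda\langle\psi_n\rangle\phi_n$ into the stiff residual and fails unless $\langle\psi_n\rangle\to 0$, which cannot be assumed; the device of choosing $\phi_n$ at ever higher energy of $\AA_\hom$ is what lets one solve $(\AA_\hom-\lambda)u_n=\lambda\langle\psi_n\rangle\phi_n$ with $u_n$ negligible, and it is the only place where a nontrivial property of $\AA_\hom$ (unboundedness of its spectrum) enters. A secondary point to handle carefully is the precise bookkeeping that identifies $(\AA-\lambda)w_n$ with $\mathcal{P}f_n$ and yields the bound $\|(\AA-\lambda)w_n\|\le C(\|h_1\|+\|h_2\|)$, together with the verification that $w_n$ genuinely lies in $\dom(\AA)$; both follow from the explicit description of $\AA$ and $\mathcal{P}$ recorded in Remarks \ref{remandrei3} and \ref{remopeq}.
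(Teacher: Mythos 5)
Your proof is correct and takes a genuinely different route from the paper's. The paper argues by contrapositive: assuming $\lambda$ lies in the resolvent set of $\AA$, it feeds first a nontrivial $f\in L^2(S)\setminus H^1(S)$ constant in $y$ and then $f=g\psi$ of product form into the coupled limit system \eqref{ruc1}--\eqref{ex200}, averaging the resulting soft-component solutions against $L^2(S)$ weights to produce a solution of $\AA_\soft^\#\breve z-\lambda\breve z=\psi$ for arbitrary $\psi\in L^2(Y_\soft)$; surjectivity together with self-adjointness then gives bounded invertibility of $\AA_\soft^\#-\lambda I$. You instead build a Weyl sequence for $\AA$ directly from one for $\AA_\soft^\#$, of tensor form $w_n=u_n+\phi_n\otimes\psi_n$, with the key device of placing $\phi_n$ in a spectral band of $\AA_\hom$ near $\mu_n\to\infty$ so that $(\AA_\hom-\lambda)u_n=\lambda\langle\psi_n\rangle\phi_n$ admits a solution with $\|u_n\|_{L^2(S)}\to 0$. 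The paper's route is shorter and avoids any explicit quasimode construction; yours is more constructive, exhibiting approximate eigenfunctions of $\AA$ concentrated on the soft component with the macroscopic coupling suppressed by pushing $\phi_n$ to high energy, which is closer in spirit to the Weyl-sequence arguments used in Theorems \ref{th5.5} and \ref{krnj1}. It also isolates the structural input you actually need — unboundedness of $\Sp(\AA_\hom)$ — in place of the paper's implicit use of the strict inclusion $H^1(S)\subsetneq L^2(S)$; both are manifestations of $\AA_\hom$ being an unbounded operator. The bookkeeping you flag as delicate — identifying $(\AA-\lambda)w_n$ with $\mathcal{P}f_n$ via the operator form in Remark \ref{remopeq} and verifying $w_n\in\dom(\AA)$ — does indeed go through exactly as you describe.
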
 	
\begin{proof}
	We adapt the argument from \cite[Proposition 4.1]{CCV23}.
	Suppose that $\l\in \R$ is in the resolvent set of $\AA$, so that \eqref{ruc1} and \eqref{ex200}   has a solution $u_0+z$  for any $f\in L^2(S \times Y)$.  First we take a non-trivial $f \in L^2(S)\setminus H^1(S)$ (i.e. we assume that $f$ is constant in variable $y$). Note that in this case $ \lambda u_0 +f$ does not vanish. Then the equation on the soft component reads (see Remark \ref{remopeq})
	$$ \AA_\soft^\#  z-\lambda z=(\lambda u_0+ f)\mathbf{1}_{Y_\soft }.$$
	Note  that for two arbitrary functions $w\in L^2(S\times Y_\soft )$ and $h\in L^2(S)$ one has $\int_S w h \in L^2(Y_\soft )$. Therefore, multiplying the above identity by ${(\l u_0 +f)}\|\l u_0 +f\|^{-2}_{L^2(S)}$  and integrating over $S,$ we conclude that the function
	\begin{equation*}
		\phi:= \frac{\int_S z {(\l u_0 +f)}\,dx}{\|\l u_0 +f\|^2_{L^2(S)}} \in L^2_\#(Y_\soft  )
	\end{equation*}
	solves the equation
	$$ \AA_\soft^\#  \phi-\lambda \phi={\mathbf 1}_{Y_\soft }.$$
	Next, we take $f = g \psi$ with arbitrary non-trivial $g\in L^2(S)$ and $\psi\in L^2(Y_\soft )$. Then for the corresponding solution of \eqref{ruc1}-\eqref{ex200}, which we denote by $\widetilde u_0 + \widetilde z$, the problem on the soft component reads
	$$ \AA_\soft^\#  \widetilde z-\lambda \widetilde z=\lambda \widetilde u_0 {\mathbf 1}_{Y_\soft }+g\psi. $$
	The difference between $\widetilde z$ and $\hat{z}:=\lambda\widetilde u_0\phi$
	satisfies
	$$ \AA_\soft^\#  (\widetilde z-\hat{z})-\lambda (\widetilde z-\hat{z})=g\psi.$$
	Multiplying the last equation by ${g}\|g\|^{-2}_{L^2(S)}$ and integrating the resulting identity over $S,$ we see that	the function
	\begin{equation*}
		\breve z: =  \frac{\int_S(\widetilde z-\hat{z}\,) {g}dx}{\|g\|^2_{L^2(S)}}
	\end{equation*}
	is a solution of
	\begin{equation*}
		\AA_\soft^\#   \breve z -\lambda \breve z=\psi.
	\end{equation*}
	Since $\psi\in L^2(Y_\soft )$ is arbitrary, the operator $\AA_\soft^\#  - \l I$ acts onto, therefore, by the bounded inverse theorem one concludes that $(\AA_\soft^\# -\l I)^{-1}$ is bounded. Indeed, since   $\AA_\soft^\#  - \l I$ is onto and self-adjoint, the kernel of  $\AA_\soft^\#  - \l I$ is trivial, hence the operator is injective.
\end{proof} 	

It order to characterise the structure of the spectrum of $\AA_\soft^\#$, it is natural to consider the following decomposition:
\begin{equation}\label{operator2}
    \AA_\soft^\# =  \AA_\soft^{\#,1}- \AA_\soft^{\#,2},
\end{equation}
where the operators $\AA_\soft^{\#,1}$ and $\AA_\soft^{\#,2}$ on $L^2_\#(Y_\soft)$ are defined by (cf. \eqref{14})
 $$ \AA_\soft^{\#,1} z(x)=m(x) z(x), \quad \AA_\soft^{\#,2} z(x)=\int_{Y} K(x,y)z(y) \,dy, \quad z \in L^2_\#(Y_\soft), $$
  with
 \begin{equation} \label{andrey1}
 m(x):=2\int_{Y} \tilde{a}(y-x) p(x,y)\,d y, \quad K(x,y):=2 \tilde{a}(y-x)p(x,y)\mathbf{1}_{Y_\soft^{\#}}(x)\mathbf{1}_{Y_\soft^{\#}}(y),
 \end{equation}
 and
 \begin{equation} \label{andrei100}
 \tilde{a}(x):=\sum_{j \in \mathbf{Z}^d} a(x+j).
 \end{equation}
Since $a$ is even,  $\tilde{a}$ is also an even function. Moreover, it is $Y$-periodic by construction.

 The spectrum of $\AA_\soft^{\#,1}$ is purely essential and coincides with the essential range of the function $m \in L^\infty_{\#}(Y)$. We next argue that $\AA_\soft^{\#,2}$ is compact. Recall the  Schur test, see e.g. \cite[Theorem 6.18]{Foll99}:  
 \begin{theorem} \label{thmfolland}
 	Let $(X,\mathcal{M},\mu)$ and $(Y,\mathcal{N},\nu)$ be $\sigma$-finite measurable space and let $K$ be $\mathcal{M} \otimes \mathcal{N}$ measurable function on $X \times Y$. Suppose that there exists $C>0$ such that $\int_X|K(x,y)|\,d\mu(x) \leq C$ for almost every $y \in Y$ and $\int_Y|K(x,y)|\,d\nu(y) \leq C$ for almost every $x \in X$, and that $1\leq p \leq \infty$. If $f \in L^p(Y)$, the integral
 	$$ \mathcal{T}f(x)=\int_Y K(x,y) f(y) \,d \nu(y)   $$
 	is finite for almost every $x \in X$, the function $\mathcal{T}f$ belongs to $L^p(X)$, and $\|\mathcal{T}f\|_{L^p(X)} \leq C\|f\|_{L^p(Y)}$.
 \end{theorem}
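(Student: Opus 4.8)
Since the final statement is the classical Schur test, here quoted from \cite{Foll99}, the plan is simply to reproduce its standard proof, recorded for completeness. The entire content is the a priori bound $\|\mathcal{T}f\|_{L^p(X)}\le C\|f\|_{L^p(Y)}$: once this is established with $|f|$ in place of $f$, the function $x\mapsto\int_Y|K(x,y)||f(y)|\,d\nu(y)$ lies in $L^p(X)$, hence is finite for $\mu$-a.e.\ $x$, so the integral defining $\mathcal{T}f(x)$ converges absolutely a.e., and $\mathcal{T}f$ is measurable. I would split the argument into the endpoint cases $p=1$, $p=\infty$ and the interior range $1<p<\infty$.

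For $p=1$, Tonelli's theorem (this is where $\sigma$-finiteness enters) gives
\[
\int_X|\mathcal{T}f(x)|\,d\mu(x)\le\int_X\!\!\int_Y|K(x,y)||f(y)|\,d\nu(y)\,d\mu(x)=\int_Y|f(y)|\Big(\int_X|K(x,y)|\,d\mu(x)\Big)d\nu(y)\le C\|f\|_{L^1(Y)}.
\]
For $p=\infty$, directly $|\mathcal{T}f(x)|\le\|f\|_{L^\infty(Y)}\int_Y|K(x,y)|\,d\nu(y)\le C\|f\|_{L^\infty(Y)}$ for a.e.\ $x$. For $1<p<\infty$ the mechanism is the H\"older splitting trick: with $p'$ the conjugate exponent, I factor $|K(x,y)||f(y)|=|K(x,y)|^{1/p'}\big(|K(x,y)|^{1/p}|f(y)|\big)$ and apply H\"older's inequality in the $y$-variable to get
\[
|\mathcal{T}f(x)|^p\le\Big(\int_Y|K(x,y)|\,d\nu(y)\Big)^{p/p'}\int_Y|K(x,y)||f(y)|^p\,d\nu(y)\le C^{p/p'}\int_Y|K(x,y)||f(y)|^p\,d\nu(y).
\]
Integrating in $x$, interchanging the order of integration by Tonelli once more, and using $\int_X|K(x,y)|\,d\mu(x)\le C$ yields $\|\mathcal{T}f\|_{L^p(X)}^p\le C^{p/p'}\cdot C\cdot\|f\|_{L^p(Y)}^p=C^p\|f\|_{L^p(Y)}^p$ (recall $p/p'+1=p$), which is the assertion; the cases $p=1$ and $p=\infty$ are just the degenerate limits of this computation.

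There is no genuine obstacle here: the only points needing a word of care are the joint measurability of $(x,y)\mapsto K(x,y)f(y)$ and the measurability of $\mathcal{T}f$, together with the legitimacy of the iterated-integral swaps, all of which are standard consequences of the Fubini--Tonelli theorem under the stated $\sigma$-finiteness hypotheses --- which is exactly why $\sigma$-finiteness appears in the statement.
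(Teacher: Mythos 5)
Your proof is correct and is the standard Hölder-splitting argument for the Schur test. Note that the paper does not actually prove this statement --- it is quoted verbatim from Folland's \emph{Real Analysis} (Theorem 6.18) as a known auxiliary result, and your argument reproduces precisely the proof given there, including the endpoint cases and the use of Tonelli under $\sigma$-finiteness.
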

 The following result is also well known, see e.g. \cite{Halmos}.
 \begin{theorem} \label{propHS}
 	Let $(X,\mathcal{M},\mu)$  be $\sigma$-finite measurable space and let
 	$\mathcal{M}$  be countably generated. Let
 	$K$ be $\mathcal{M} \otimes \mathcal{M}$ measurable function on $X \times X$ such that $\int_X \int_X |K(x,y)|^2 \,d\mu(x)\,d\mu(y)< \infty$. Then the operator $\mathcal{T}:L^2(X)\to L^2(X)$ defined by
 	$$ \mathcal{T}f(x)=\int_Y K(x,y) f(y) \,d \nu(y)   $$
 	is of Hilbert-Schmidt class and thus compact.
 \end{theorem}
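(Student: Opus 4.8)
The plan is to prove this in three steps: (i) $\mathcal T$ is a bounded operator on $L^2(X)$; (ii) $\mathcal T$ belongs to the Hilbert--Schmidt class; (iii) every Hilbert--Schmidt operator is compact.

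First I would check boundedness. Since $\mathcal M$ is countably generated and $\mu$ is $\sigma$-finite, $L^2(X)$ is separable; fix an orthonormal basis $\{e_n\}_{n\in\N}$. By Fubini's theorem the hypothesis $\int_X\int_X|K(x,y)|^2\,d\mu(x)\,d\mu(y)<\infty$ implies that $y\mapsto K(x,y)$ lies in $L^2(X)$ for $\mu$-a.e.\ $x$, so by the Cauchy--Schwarz inequality $\mathcal T f(x)$ is finite for a.e.\ $x$ and
$$ |\mathcal T f(x)|^2\le\Big(\int_X|K(x,y)|^2\,d\mu(y)\Big)\|f\|_{L^2(X)}^2 . $$
Integrating in $x$ gives $\|\mathcal T f\|_{L^2(X)}\le\|K\|_{L^2(X\times X)}\|f\|_{L^2(X)}$, so $\mathcal T$ is bounded.

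Next I would show $\mathcal T$ is Hilbert--Schmidt. The family $\{e_n\otimes\overline{e_m}\}_{n,m}$, where $(e_n\otimes\overline{e_m})(x,y):=e_n(x)\overline{e_m(y)}$, is an orthonormal basis of $L^2(X\times X)$, hence $K=\sum_{n,m}c_{nm}\,e_n\otimes\overline{e_m}$ with $\sum_{n,m}|c_{nm}|^2=\|K\|_{L^2(X\times X)}^2<\infty$. A direct computation with the kernel then gives $\mathcal T e_m=\sum_n c_{nm}e_n$, so $\|\mathcal T e_m\|_{L^2(X)}^2=\sum_n|c_{nm}|^2$, and therefore $\sum_m\|\mathcal T e_m\|_{L^2(X)}^2=\sum_{n,m}|c_{nm}|^2<\infty$, which is exactly the Hilbert--Schmidt condition. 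Finally, letting $P_N$ be the orthogonal projection onto $\Span\{e_1,\dots,e_N\}$, the operator $\mathcal T P_N$ has finite rank and $\|\mathcal T-\mathcal T P_N\|^2\le\sum_{m>N}\|\mathcal T e_m\|_{L^2(X)}^2\to0$, so $\mathcal T$ is a norm limit of finite-rank operators and hence compact. The only points needing attention are the separability of $L^2(X)$ (guaranteed by the countably generated hypothesis) and the identification of the product orthonormal basis of $L^2(X\times X)$; all remaining estimates are routine applications of Fubini's theorem and Cauchy--Schwarz, so no serious obstacle arises.
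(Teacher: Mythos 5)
Your proof is correct. Note, however, that the paper does not prove this statement at all: it is quoted as a well-known fact with a reference to Halmos--Sunder, so there is no internal argument to compare against. What you supply is the standard textbook proof — boundedness via Fubini and Cauchy--Schwarz, the Hilbert--Schmidt property via the expansion of $K$ in the product orthonormal basis $\{e_n\otimes\overline{e_m}\}$ of $L^2(X\times X)$ (whose completeness is exactly where the $\sigma$-finiteness and the countably generated hypothesis, i.e.\ separability of $L^2(X)$, enter, as you correctly flag), and compactness by norm-approximation with the finite-rank operators $\mathcal T P_N$ — and all steps check out; in particular the identity $\|\mathcal T e_m\|^2=\sum_n|c_{nm}|^2$ follows rigorously from $\langle\mathcal T e_m,e_n\rangle=\langle K, e_n\otimes\overline{e_m}\rangle$ and Parseval. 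A marginally shorter variant of your middle step is to apply Parseval to $y\mapsto K(x,y)$ for a.e.\ $x$, giving $\sum_m\|\mathcal T e_m\|^2=\int_X\|K(x,\cdot)\|_{L^2}^2\,d\mu(x)=\|K\|_{L^2(X\times X)}^2$ without expanding $K$ in the product basis. You also implicitly (and correctly) read the integral in the statement as $\int_X K(x,y)f(y)\,d\mu(y)$; the $\int_Y\cdots d\nu$ appearing there is a leftover typo from the preceding Schur-test theorem.
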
 	
 \begin{proposition}
The operator $\AA_\soft^{\#,2}$ is compact. 	
 \end{proposition}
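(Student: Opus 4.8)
The plan is to show that the kernel $K(x,y) = 2\tilde a(y-x)p(x,y)\mathbf 1_{Y_\soft^\#}(x)\mathbf 1_{Y_\soft^\#}(y)$ lies in $L^2(Y\times Y)$ and then invoke Theorem \ref{propHS} to conclude that $\AA_\soft^{\#,2}$ is Hilbert--Schmidt, hence compact. Since $p$ is bounded ($0<w<\alpha_2$), it suffices to bound $\int_Y\int_Y \tilde a(y-x)^2\,dx\,dy$, and by $Y$-periodicity of $\tilde a$ and translation invariance this equals $|Y|\int_Y \tilde a(z)^2\,dz = \int_Y \tilde a(z)^2\,dz$.

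The key step is therefore to verify $\tilde a \in L^2(Y)$, where $\tilde a(z) = \sum_{j\in\Z^d} a(z+j)$. First I would note that $\tilde a \in L^1(Y)$ automatically, since $\int_Y \tilde a = \int_{\R^d} a < \infty$ by the assumption $a(1+|\cdot|^2)\in L^1(\R^d)$; but $L^1$ is not enough. To get $L^2$, I would use that $a\ge 0$ together with the moment bound: write, for $z\in Y=[0,1)^d$,
\begin{equation*}
\tilde a(z)^2 = \Big(\sum_{j} a(z+j)\Big)^2 \le \Big(\sum_j \frac{1}{(1+|z+j|^2)}\Big)\Big(\sum_j a(z+j)^2(1+|z+j|^2)\Big),
\end{equation*}
by Cauchy--Schwarz, where $\sum_j (1+|z+j|^2)^{-1}$ diverges for $d\ge 2$, so this naive splitting fails. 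Instead I would argue more directly: since $a(1+|\cdot|^2)\in L^1$, in particular $a\in L^1(\R^d)$, and one can also assume (or it follows from the stated hypotheses together with $a$ being a fixed kernel — if not, this is the point to be careful about) that $a\in L^2(\R^d)$; then $\|\tilde a\|_{L^2(Y)}\le \|a\|_{L^2(\R^d)}$ is a standard periodisation inequality (for $f\ge 0$, $\|\sum_j f(\cdot+j)\|_{L^2(Y)}^2 = \sum_{j,k}\int_Y f(z+j)f(z+k) \le$ — actually one uses $\|\sum_j f(\cdot+j)\|_{L^2(Y)} \le \sum_j \|f(\cdot+j)\|_{L^2(Y)}$ only if the sum of $L^2(Y)$-norms converges, which again needs decay). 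The clean route: $\tilde a\in L^2(Y)$ iff the Fourier coefficients $\hat a(2\pi k)$, $k\in\Z^d$, are square-summable, which holds because $\hat a$ is bounded (as $a\in L^1$) and decays — this is getting circular.

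The honest main obstacle is exactly pinning down why $\tilde a\in L^2(Y)$ from the stated hypotheses. The cleanest sufficient argument I would actually write: by the assumption $x\mapsto a(x)(1+|x|^2)\in L^1(\R^d)$, the truncated pieces decay, and one has $\int_Y \tilde a(z)^2\,dz = \int_Y \tilde a(z)\,\tilde a(z)\,dz \le \|\tilde a\|_{L^\infty}\|\tilde a\|_{L^1(Y)}$ provided $\tilde a\in L^\infty(Y)$; and $\tilde a\in L^\infty$ would follow if $a$ is, say, bounded (which is consistent with the ellipticity lower bound near the origin but not forced globally). If $a$ is merely in the stated weighted-$L^1$ class without boundedness, then $\AA_\soft^{\#,2}$ need not be Hilbert--Schmidt and one should instead apply the Schur test, Theorem \ref{thmfolland}, with $p=2$: $\int_Y |K(x,y)|\,dx \le 2\alpha_2\int_Y \tilde a(y-x)\,dx = 2\alpha_2\|\tilde a\|_{L^1(Y)} = 2\alpha_2\|a\|_{L^1(\R^d)} =: C$, uniformly in $y$, and symmetrically in $x$; this gives boundedness of $\AA_\soft^{\#,2}$ on $L^2$ but not compactness. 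For compactness without extra integrability one argues by approximation: truncate $a$ to $a_R := a\mathbf 1_{B_R}$, note $a_R\in L^\infty$ (no — still not), or rather approximate $a$ in the weighted-$L^1$ norm by compactly supported bounded kernels $a_n$; the corresponding operators $\AA_{\soft,n}^{\#,2}$ are Hilbert--Schmidt hence compact (their kernels are bounded and supported in a bounded periodised set), and $\|\AA_\soft^{\#,2} - \AA_{\soft,n}^{\#,2}\|\le 2\alpha_2\|\widetilde{a - a_n}\|_{L^1(Y)}\to 0$ by the Schur bound applied to the difference; since compact operators form a closed ideal, the limit $\AA_\soft^{\#,2}$ is compact. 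This approximation argument is the route I would commit to, as it uses only the stated hypothesis $a(1+|\cdot|^2)\in L^1(\R^d)$ (indeed only $a\in L^1$), with the Schur test providing the operator-norm control and the compactness of the approximants being elementary.
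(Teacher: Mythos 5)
Your final, committed argument is correct and is essentially the paper's proof: the paper likewise abandons a direct Hilbert--Schmidt claim, uses the Schur test (Theorem \ref{thmfolland}) for the operator-norm bound, truncates the kernel (there by height, $\tilde a_n = \tilde a\,\mathbf 1_{\{\tilde a\le n\}}$, rather than your bounded compactly supported approximants of $a$ --- an immaterial difference) to get Hilbert--Schmidt compact approximants via Theorem \ref{propHS}, and concludes by operator-norm convergence $\|\AA_\soft^{\#,2,n}-\AA_\soft^{\#,2}\|\le C\|\tilde a_n-\tilde a\|_{L^1(Y)}\to 0$ and closedness of the compact operators. The preliminary detours in your write-up (the failed direct $L^2$ bound on $\tilde a$) are correctly identified as dead ends and do not affect the validity of the argument you commit to.
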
 	
 \begin{proof}
 	It is easily seen that
 	$$\esssup_y\int_Y |K(x,y)|\,dx = \esssup_x\int_Y |K(x,y)|\,dy\leq 2\|p\|_{L^\infty}\|\tilde{a}\|_{L^1(Y)}=2\|p\|_{L^\infty}\|a\|_{L^1(\mathbf{R}^d)}. $$
 	Hence, by Theorem \ref{thmfolland} the operator $\AA_\soft^{\#,2}$ is bounded:
 	$$\|\AA_\soft^{\#,2}\|_{L^2(Y_\soft) \to L^2(Y_\soft)} \leq  2\|p\|_{L^\infty}\|a\|_{L^1(\mathbf{R}^d)}. $$
 	For each $n\in \N$ we define
 	\begin{eqnarray*}
 	& & \tilde{a}_n:=\tilde{a}\mathbf{1}_{\{\tilde{a}\leq n\}},\quad K_n(x,y):=\tilde{a}_n(y-x) p(x,y)\mathbf{1}_{Y_\soft^\#}(x)\mathbf{1}_{Y_\soft^\#}(y),\\ & &\hspace{+3ex} \AA_\soft^{\#,2,n}z(x):=\int_Y K_n(x,y)z(y)\,dy,\quad \forall z \in L^2_\#(Y_\soft).
 	 \end{eqnarray*}
 	 By  Theorem \ref{propHS} the operators $\AA_\soft^{\#,2,n}$ are compact.  Moreover,
 	\begin{equation} \label{vulk2}
 	\|\tilde{a}_n-\tilde{a}\|_{L^1 (Y)} \xrightarrow{n \to \infty}0, \quad \int_Y|K_n(x,y) -K (x,y)|\,dx\leq C\|\tilde{a}_n-\tilde{a}\|_{L^1(Y)},\quad \forall y \in Y.
 	\end{equation}
 	Then Theorem \ref{thmfolland} and \eqref{vulk2} imply
 	$$\|\AA_\soft^{\#,2,n}-\AA_\soft^{\#,2}\|_{L^2\to L^2}\to 0.$$
 	Hence $\AA_\soft^{\#,2}$ is   compact.
 \end{proof} 	

Thus, the operator $\AA_\soft^\#$ is a compact perturbation of the multiplication operator $\AA_\soft^{\#,1}$, which implies the following characterisation of the spectrum.
\begin{equation*}
\Sp (\AA_\soft^\# )=\textrm{EssRan } m \cup\{\mu_1, \mu_2\dots \},	
\end{equation*} 	
where  $\{\mu_1,\mu_2,\dots\}$ is the discrete set (possibly empty or finite) that may  have accumulation points only in $\partial (\textrm{EssRan } m)$.
If $\{\mu_1',\mu_2',\dots\} \subset \{\mu_1,\mu_2,\dots\}$ is the set of all eigenvalues such that
$\sup\{\mu_1',\mu_2',\dots\} \leq \inf \textrm{EssRan } m, $ then we can enumerate the set $\{\mu_1',\mu_2',\dots\}$ in a non-decreasing order (accounting for multiplicity). In this case $\mu_k'$ can be characterised by the Rayleigh quotients:
\begin{equation} \label{representation}
 \mu_k'=\min_{V\subset L^2(Y_\soft ), \textrm{ dim} V=k}\max_{z \in V}\frac{a_\soft^\#(z,z)}{\|z\|_{L^2(Y_\soft )}^2}.
 \end{equation}
Similarly, for the set  $\{\mu_1'',\mu_2'',\dots\} \subset \{\mu_1,\mu_2,\dots\}$ of all eigenvalues such that
$\inf\{\mu_1'',\mu_2'',\dots\} \geq \sup \textrm{EssRan } m, $ enumerated  in the non-increasing order (accounting for multiplicity) one has
\begin{equation} \label{representation22}
	\mu_k''=\max_{V\subset L^2(Y_\soft ), \textrm{ dim} V=k}\min_{z \in V}\frac{a_\soft^\#(z,z)}{\|z\|_{L^2(Y_\soft )}^2}.
\end{equation}

We next study the properties of the function $\beta$ (see \eqref{ruc22}), which is an essential element of the characterisation of the spectrum of $\AA$. Denote by $L \subset \Sp (\AA_\soft^\# )$ the support of the measure $\mu:=(E_s\mathbf{1}_{Y_\soft^\# },\mathbf{1}_{Y_\soft })$,  where $E_s$ is the resolution of the identity associated with the operator $\AA_\soft^\# $ (cf. Remark \ref{remopeq}).
\begin{proposition}\label{p:5.7}
The function $\beta$ can be naturally extended to $L^c$. It is differentiable on its domain and its derivative is always positive: more precisely,  $\beta'(\lambda) \geq 1-|Y_\soft |$.  Moreover, if $\l_0 \in  L$ is such that $\mu(\{\l_0\})>0$ and there exists $\delta>0$ with  $(\l_0 -\delta,\l_0) \cap L = \varnothing$ (respectively, $(\l_0,\l_0 +\delta) \cap L = \varnothing$),  then  $\lim_{\l \to \l_0^-}\beta(\lambda)=+\infty$ (respectively, $\lim_{\l \to \l_0^+} \beta(\l)=-\infty$).
\end{proposition}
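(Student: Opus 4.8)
\textbf{Proof strategy for Proposition \ref{p:5.7}.}

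The plan is to work entirely with the spectral representation of the self-adjoint operator $\AA_\soft^\#$ via the scalar spectral measure $\mu$. Writing $\langle b_\l\rangle = \int_{\Sp(\AA_\soft^\#)} (\mu-\l)^{-1}\,d\mu(\mu)$ (abusing notation for the integration variable), we have on $L^c$ the formula
\begin{equation*}
\beta(\l) = \l + \l^2\int (\mu - \l)^{-1}\,d\mu(\mu).
\end{equation*}
The first point is that although $\beta$ was initially defined on $\R\setminus\Sp(\AA_\soft^\#)$, the integrand $(\mu-\l)^{-1}$ is smooth in $\l$ on the complement of $L = \supp\mu$, and since $\mu$ is a finite measure (its total mass is $\langle\mathbf{1}_{Y_\soft}\rangle = |Y_\soft|$) one may differentiate under the integral sign for $\l\in L^c$. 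This gives the natural extension and the differentiability claim simultaneously.

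Next I would compute $\beta'$. Differentiating, $\frac{d}{d\l}\big[\l^2(\mu-\l)^{-1}\big] = 2\l(\mu-\l)^{-1} + \l^2(\mu-\l)^{-2}$, and a convenient algebraic identity is $2\l(\mu-\l)^{-1} + \l^2(\mu-\l)^{-2} = \mu^2(\mu-\l)^{-2} - 1$ (expand $(\mu-\l)^2 = \mu^2 - 2\mu\l + \l^2$ and divide). Hence
\begin{equation*}
\beta'(\l) = 1 + \int\big(\mu^2(\mu-\l)^{-2} - 1\big)\,d\mu(\mu) = 1 - |Y_\soft| + \int \frac{\mu^2}{(\mu-\l)^2}\,d\mu(\mu) \geq 1 - |Y_\soft|,
\end{equation*}
using $\mu(\R) = \langle\mathbf{1}_{Y_\soft}\rangle = |Y_\soft|$ (note $\mu$ is supported on $[0,\infty)$ since $\AA_\soft^\#$ is nonnegative, so $\mu^2/(\mu-\l)^2\ge 0$ is immediate; one should double-check whether the integral could be infinite at points of $L^c$, but on $L^c$ the distance from $\l$ to $\supp\mu$ is positive, so the integrand is bounded and the integral is finite). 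This establishes the positivity of the derivative.

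Finally, for the blow-up behaviour: suppose $\l_0\in L$ with $\mu(\{\l_0\})>0$ and $(\l_0-\delta,\l_0)\cap L = \varnothing$. Split the integral in $\beta(\l)$, for $\l\in(\l_0-\delta,\l_0)$, into the atom at $\l_0$ plus the rest: $\langle b_\l\rangle = \mu(\{\l_0\})(\l_0-\l)^{-1} + \int_{\Sp(\AA_\soft^\#)\setminus\{\l_0\}}(\mu-\l)^{-1}\,d\mu(\mu)$. As $\l\uparrow\l_0$ the atomic term tends to $+\infty$; the remaining integral stays bounded because $\mu$ restricted to $\Sp(\AA_\soft^\#)\setminus\{\l_0\}$ has support bounded away from $(\l_0-\delta/2,\l_0)$ on the left (by the gap assumption) and the part to the right of $\l_0$ contributes a quantity bounded uniformly for $\l$ near $\l_0$ from the left (the denominator $|\mu-\l|$ is bounded below there), so $\liminf$ of it is finite; hence $\l^2\langle b_\l\rangle \to +\infty$ and $\beta(\l)\to+\infty$ since $\l\to\l_0 > 0$. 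The case of a right gap is symmetric, with $(\l_0-\l)^{-1}\to -\infty$. The main obstacle is the careful bookkeeping showing the non-atomic remainder of the spectral integral stays bounded as $\l$ approaches $\l_0$ from the gap side — this needs the gap hypothesis on one side and the nonnegativity/finiteness of $\mu$ on the other, and one must be slightly careful that $\l_0>0$ so that the factor $\l^2$ does not vanish.
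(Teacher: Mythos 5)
Your proposal follows essentially the same route as the paper's proof: the spectral-measure representation $\beta(\l)=\l+\l^2\int_L (s-\l)^{-1}\,d\mu(s)$, which gives the extension to $L^c$ and differentiability at once; the same algebraic identity reducing $\beta'(\l)$ to $1-|Y_\soft|+\int_L s^2(s-\l)^{-2}\,d\mu(s)\ge 1-|Y_\soft|$; and the same atom/left/right splitting of the integral for the blow-up. One inaccuracy to fix: in the left-gap case you assert that the part of the integral over $(\l_0,+\infty)$ stays bounded as $\l\uparrow\l_0$ because the denominator is bounded below. That is not true in general — the gap is assumed only on one side, so $L$ may accumulate at $\l_0$ from the right, and $s-\l\ge s-\l_0$ has no uniform positive lower bound over that portion of the support; this term can in fact tend to $+\infty$. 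The argument survives exactly as in the paper: for $\l<\l_0$ that term is non-negative, so it can only reinforce the divergence to $+\infty$ (and is non-positive in the mirrored right-gap case). Your closing appeal to nonnegativity is the correct justification; the uniform-boundedness claim should simply be dropped. Your side remark that $\l_0>0$ is needed so the factor $\l^2$ does not kill the atomic singularity is a fair observation of a degenerate case the paper's proof does not dwell on.
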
 	
\begin{proof}
Using the spectral decomposition we obtain
\begin{equation}\label{beta_defi}
\beta(\l)=\l +\l^2 \int_{\Sp (\AA_\soft^\# )} \frac{1}{s-\l}(dE_s\mathbf{1}_{Y_\soft^\# }, \mathbf{1}_{Y_\soft })=\l +\l^2 \int_{L} \frac{1}{s-\l}d\mu(s). 	
\end{equation} 		
This formula naturally extends $\beta$ onto $L^c$.
 Differentiating with respect to $\l$ the expression on the right-hand side of \eqref{beta_defi} yields
\begin{eqnarray*}
\beta'(\lambda) &=& 1+2\l \int_{L} \frac{1}{s-\l}\,d\mu(s)+\l^2 \int
_{L} \frac{1}{(s-\l)^2}\,d\mu(s)\\ &=& 1-\int_{L} \,d\mu(s)+\int_{L}\frac{s^2}{(s-\l)^2}\, d\mu(s)\\ &\geq& 1-|Y_\soft |.
\end{eqnarray*} 	
In order to prove the second claim, assume that $(\l_0-\delta,\l_0) \cap L = \emptyset$ for some $\delta >0$ (the other case is treated analogously) and consider the  decomposition
\begin{eqnarray*}
\beta(\l)=\l +\l^2 \int_{\{\l_0\}} \frac{1}{s-\l}d\mu(s)+\l^2 \int_{(-\infty,\l_0)} \frac{1}{s-\l}d\mu(s)+\l^2 \int_{(\l_0,+\infty)} \frac{1}{s-\l}d\mu(s).
\end{eqnarray*}
Clearly,  as $\l \to \l_0^-$, the integral
$
 \int_{(-\infty,\l_0)} (s-\l)^{-1}d\mu(s)
$
 remains bounded,
$
	\int_{(\l_0,+\infty)} (s-\l)^{-1}d\mu(s)
$
 is non-negative (possibly tends to $+\infty$), and
$
	\int_{\{\l_0\}} (s-\l)^{-1}d\mu(s)
$
tends to $+\infty$.
\end{proof} 	

We are ready to prove Theorem \ref{thm2.3}, which characterises the spectrum of the limit two-scale operator.
\subsection{Proof of Theorem \ref{thm2.3}}
Assume first that $\l \in (Sp(\AA))^c$,  and let $u+z \in H^1_0(S)+ L^2_\#(S\times Y_\soft)$ be the solution to the coupled problem \eqref{ruc1}-\eqref{ex200} for some $f\in L^2(S)$. By Proposition \ref{propsri1} we have that   $\l \in (Sp(\AA_\soft^\# ))^c$, and by Remark \ref{remopeq} we have
$$ z(x,\cdot)=(\lambda u+f)(\AA_\soft^\# -\l I)^{-1}  \mathbf{1}_{Y_\soft} = (\lambda u+f) b_\l. $$
Plugging this into \eqref{ruc1} yields
\begin{equation}\label{71}
	(-\nabla \cdot A^{\rm hom} \nabla -\beta(\l)I) u= (1+ \l  \langle b_\l \rangle) f.
\end{equation}
Note that  $1+\l \langle b_\l \rangle \neq 0$. Indeed, otherwise \eqref{ruc1}-\eqref{ex200} would have infinitely many solutions.
Then, since problem \eqref{71} is uniquely solvable for any $f \in L^2(S)$, we conclude that $\beta(\l) \notin \Sp (\A_\hom)$.\footnote{Here we again appeal to the bounded inverse theorem, applying it to the operator $\A_\hom: \dom(\A_\hom) \to L^2(S)$, where $\dom(\A_\hom)$ is equipped with the graph norm.}

Now assume that $\l$ is in the complement to the set $\Sp  (\AA_\soft^\# ) \cup \{\beta(\l) \in \Sp (\AA_\hom)\}.$
For an arbitrary $f  \in L^2_\#(S \times Y)$, define $u\in H^1_0(S)$ and $z\in L^2_\#(S\times Y_\soft)$ to be the (unique) solutions to the problems
\begin{equation*} 
	-\nabla  \cdot A^{\rm hom} \nabla u-\b(\l)u=\int_Y \Big( f(\cdot,y) +\l 	(\AA_\soft^\# -\l I)^{-1} f(\cdot,y)\mathbf{1}_{Y_\soft}(y) \Big)\,dy,
\end{equation*} 	
and
\begin{equation*} 
\AA_\soft^\#  z(x,\cdot) -\l z(x,\cdot)= \lambda u(x)  \mathbf{1}_{Y_\soft}	 + f(x,\cdot)\mathbf{1}_{Y_\soft},
\end{equation*} 	
respectively. By direct inspection we see that $u+z$ is the unique solution to the coupled problem \eqref{ruc1}-\eqref{ex200}, hence $\l \in (\Sp (\A))^c$.

\begin{remark}
Note that in the case  $S=\mathbf{R}^d$ we have $\Sp (\A_\hom)=[0,+\infty)$, while in the case of a bounded Lipschitz domain $S$  the spectrum of  $\A_\hom$ is discrete with the only accumulation point at  $+\infty$.
\end{remark}

\subsection{Examples}

The spectrum of the limit operator $\AA$ crucially depends on the spectrum of $\AA_\soft^\#$ via the function $\b$. In particular, $\Sp(\AA)$ is guaranteed to have gaps in the case $S=\R^d$ if $\Sp(\AA_\soft^\#)$ has non-empty discrete spectrum, cf. Proposition \ref{p:5.7}. Therefore, it is important to know what $\Sp(\AA_\soft^\#)$ may look like. In the remainder of the section we provide several examples illustrating various possibilities for the structure  of the spectrum of the microscopic operator   $\AA_\soft^\# $.

\subsubsection{Example of $\AA_\soft^\# $ with purely essential spectrum}
In this  section we construct a $1$-dimensional example of an operator $\AA_\soft^\# $ that has  an empty discrete spectrum.
To simplify the notation, we will work with a shifted periodicity cell $Y=[-\frac{1}{2},\frac{1}{2})$ in this example and take $Y_\soft =(-\frac{1}{4},\frac{1}{4})$.
Define
$$
a(y)=\left\{
\begin{array}{ll}
\frac12\left(\frac y 2+\frac34\right), &\hbox{if }y\in[-\frac32,-\frac12],\\[2mm]
	\frac14,&\hbox{if }y\in[-\frac12,\frac12],\\[2mm]
	\frac12\left(-\frac y 2+\frac34\right), &\hbox{if }y\in[\frac12,\frac32],\\[2mm]
	0,&\hbox{otherwise}.
\end{array}
\right.
$$
Clearly, $\tilde a(y)=\frac12$ for all $y$ (cf. \eqref{andrei100}).
We also set \begin{equation}\label{60}
p(y,\xi)=w(y)w(\xi)\big(1-\mathbf{1}_{Y_\stiff^{\#}}(y)\mathbf{1}_{Y_\stiff^{\#}}(\xi)\big),
\end{equation} where $w$ is a continuous,   positive, $1$-periodic   function   such that
\begin{equation}\label{73a}
	\int_{Y} w(y)dy=1.
\end{equation}
Then, see \eqref{operator2},\eqref{andrey1},
$$
\AA_\soft^\# z(y)=w(y)z(y)-w(y)\int_{Y_\soft } w(\xi) z(\xi)d\xi,\qquad y\in {Y_\soft },
$$
 and the quadratic form associated with  $\AA_\soft^\# $ reads
$$
a_\soft^\#( z,z )= \int\limits_{Y_\soft } w(y) z^2(y)dy-\bigg(\int\limits_{Y_\soft } w(y)z(y)dy\bigg)^2.
$$
Denote
\begin{equation*}
	\delta: = \min_{Y_\soft} w, \qquad w^+: = \max_{Y_\soft} w.
\end{equation*}
Then the essential spectrum of $\AA_\soft^\# $ is given by ${\rm EssRan} \, w = [\delta,w^+]$.

Clearly,
\begin{equation}\label{73}
	a_\soft^\#( z,z )\leqslant w^+ \|z\|^2_{L^2(Y_\soft)}, \quad \forall z\in L^2_\#(Y_\soft).
\end{equation}
Next we will make a specific choice of the function $w$ in such a way that
\begin{equation}\label{74}
	\delta\|z\|^2_{L^2(Y_\soft)}\leqslant a_\soft^\#( z,z ), \quad \forall z\in L^2_\#(Y_\soft).
\end{equation}
Together with \eqref{73} this will imply that the discrete spectrum of  $\AA_\soft^\# $ is empty for the choice of $w$.

Let $\gamma < \frac{1}{3}$ be a small positive number. We introduce a sequence
$$
b_0=\frac14,\quad b_1=\frac12\frac{\gamma}{1-\gamma}=\frac12\sum_{k=1}^{\infty}\gamma^k, \quad
b_j=\gamma^{j-1} b_1 = \frac12\sum_{k=j}^{\infty}\gamma^k\ \hbox{for }j\geqslant 2,
$$
and consider the intervals $J_j=[b_{j+1},b_j] $, for $j \in \mathbf{N}_0$. Observe that  $b_j\to 0$ as $j\to\infty$ and $|J_j|=\frac12\gamma^j$ for $j\geqslant 1$.
We define $w$ as follows. On the interval $[b_1,\frac12]$ we set $w \equiv w_0$, where $w_0$ is a constant to be defined later, at each $b_j, j\geq 2,$ we set $w(b_j ) = w_j : = \frac{1}{j }+\delta$ and require $w$ to be affine on each interval $J_j, j\geq 1$. Moreover, we define $w_1=w_0$ for consistency of the notation. Finally,
we set $w(\frac12)=\delta$, extend $w$ by symmetry to $[-\frac12,0]$, i.e. $w(-y)=w(y)$, and choose the constant $w_0$ so that \eqref{73a} holds.

One can easily check that  $1<w_0<1+2\gamma$, for $\gamma<\frac14$, $\delta<\frac12$.
By construction, the function $w$ is continuous, $\min w=\delta$, $\max w=w_0$.

Clearly, \eqref{74}
is equivalent to the inequality
\begin{equation}\label{ineq1}
	\bigg(\int\limits_{Y_\soft } w(y)z(y)dx\bigg)^2\leqslant \int\limits_{Y_\soft } (w(y)-\delta) z^2(y)dy.
\end{equation}
  Since
$$
\bigg(\int\limits_{Y_\soft } w(y)z(y)dy\bigg)^2\leqslant \bigg(\int\limits_{Y_\soft } w(y)|z(y)|dy\bigg)^2,
$$
it is sufficient to prove \eqref{ineq1} for non-negative $z\in L^2_\#(Y_\soft)$ only.
Denote
$$
 z_j: =\frac 1{2|J_j|}\int\limits_{-J_j\cup J_j}z(y)dy.
$$
Taking into account the inequality $w_{j+1}\leqslant w(y)\leqslant w_j $ for $y\in -J_j\cup J_j$, we have
$$
\int\limits_{Y_\soft } w(y)z(y)dx\leqslant \sum_{j=0}^{\infty}2|J_j|w_j z_j \quad\hbox{and}\quad\int\limits_{Y_\soft } (w(y)-\delta) z^2(y)dy
\geqslant  \sum_{j=0}^{\infty}2 |J_j|(w_{j+1}-\delta)
 z_j^2,
$$
where we have used the positivity of $z$ and $w_j-\delta$, $j\in \N$.   Therefore, it is sufficient to prove that
\begin{equation}\label{ineq2}
	\Big(\sum_{j=0}^{\infty}2|J_j|w_j z_j\Big)^2\leqslant \sum_{j=0}^{\infty}2 |J_j|(w_{j+1}-\delta)  z_j^2.
\end{equation}

For sufficiently small $\gamma$
and $\delta$ condition \eqref{73a} implies the bounds
$$
2|J_0|w_0<0.55,\qquad 2|J_0|(w_1-\delta)=2|J_0|(w_0-\delta)>0.45.
$$
Then, using the inequality $(\alpha+\beta)^2\leqslant \frac{10}9\alpha^2+10\beta^2$ we obtain
\begin{equation*}
	\begin{gathered}
		\Big(\sum_{j=0}^{\infty}2|J_j|w_j z_j\Big)^2 \leqslant \frac{10}{9}\big(2|J_0|w_0z_0\big)^2+10\Big(\sum_{j=1}^{\infty}2|J_j|w_j z_j\Big)^2
		\\
		\leqslant
		\frac{10}9 0.55^2z_0^2+10\Big(\sum_{j=1}^{\infty}2|J_j|w_j z_j\Big)^2
		\leqslant
		0.45z_0^2+10\Big(\sum_{j=1}^{\infty}2|J_j|w_j z_j\Big)^2
		\\
		\leqslant
		2|J_0|(w_0-\delta)z_0^2+10\Big(\sum_{j=1}^{\infty}2|J_j|w_j z_j\Big)^2.
	\end{gathered}
\end{equation*}

It remains to show that for sufficiently small $\gamma$  the inequality
\begin{equation}\label{ineq3}
	10\Big(\sum_{j=1}^{\infty}2|J_j|w_j z_j\Big)^2\leqslant  \sum_{j=1}^{\infty}2 |J_j|(w_{j+1}-\delta)  z_j^2 = \sum_{j=1}^{\infty}\frac{\gamma^j}{j+1}z_j^2
\end{equation}
holds (cf. \eqref{ineq2}). This can be done as follows:
$$
\Big(\sum_{j=1}^{\infty}2|J_j|w_j z_j\Big)^2=\Big(\sum_{j=1}^{\infty}\gamma^j\frac{w_j}{(w_{j+1}-\delta)^\frac12}
(w_{j+1}-\delta)^\frac12 z_j \Big)^2
$$
$$
\leqslant \Big(\sum_{j=1}^{\infty}\gamma^{j/2}\frac{\frac1j+\delta}{\frac1{(j+1)^\frac12}}
\,\gamma^{j/2}\frac1{(j+1)^\frac12} z_j \Big)^2\leqslant
\Big(\sum_{j=1}^{\infty}\gamma^{j}(j+1)\big(\frac1j+\delta\big)^2\Big)
\Big(\sum_{j=1}^{\infty}\gamma^{j}\frac1{j+1}z_j^2\Big)
$$
For small enough $\gamma$  we have
$$
\Big(\sum_{j=1}^{\infty}\gamma^{j}\big[(j+1)\big(\frac1j+\delta\big)\big]^2\Big)<\frac1{10},
$$
and inequality \eqref{ineq3} follows.

\subsubsection{Example of $\AA_\soft^\# $ with   infinitely many points of discrete spectrum}
\begin{wrapfigure}{R}{0.4\textwidth}
	\vspace{-5mm}
		\includegraphics[width=0.4\textwidth, scale=0.2, bb=0 0 1000 1000]{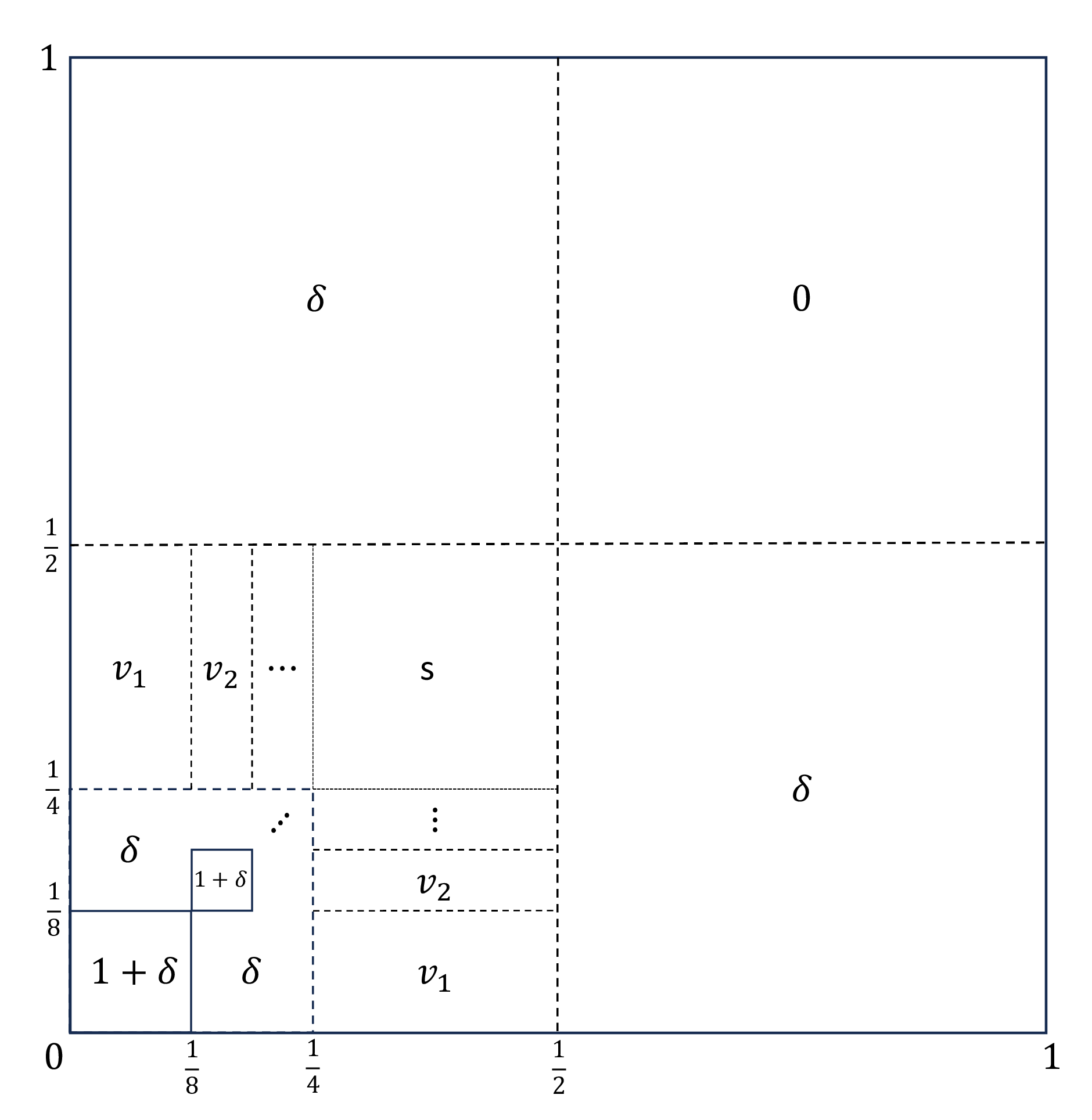}
	\caption{Function $p$ defined on $(0,1)^2$}
\label{figure1}
\end{wrapfigure}
In this example the periodicity cell is $Y=[0,1)$ and the soft component is $Y_\soft =(0,\frac{1}{2})$. Choosing $a(\cdot)$ as  in the previous example we have $\tilde{a}=\frac{1}{2}$. Next we define the function $p(y,\xi)$, see Figure \ref{figure1} for a graphical representation.
Consider a sequence of points $b_0:=0$, $b_j:=\frac{1}{4}\sum_{i=1}^j (1/2)^i$, $j\in \mathbf{N}$, and associated intervals $J_j=(b_{j-1},b_j)$, $j\in \mathbf{N}$. Note that the union of these intervals fills $(0,\frac{1}{4})$ and  $|J_j|=\frac{1}{2^{j+2}}$.
For positive $\delta $ and $s$, which will be specified later,
we define a function $p$ in the following way (see Figure \ref{figure1}):
\begin{itemize}
\item $p(y,\xi)=1+\delta$ on the set $S:=\cup_{j=1}^{\infty} J_j \times J_j$;
\item $p(y,\xi)=\delta$ on the set $(0,\frac{1}{2})\times(\frac{1}{2},1) \cup (\frac{1}{2},1) \times  (0,\frac{1}{2}) \cup \left((0,\frac{1}{4})^2 \backslash S\right)$; 	
\item $p(y,\xi)=s$ on   $(\frac14,\frac12)^2$;
\item $p(y,\xi)=v_j$ on $J_j \times (\frac{1}{4},\frac{1}{2})$, $j \in \mathbf{N}$, where $v_j$ is a constant chosen in such a way that $\int_0^{1} p(y,\xi) \,d\xi=1$ for each $y \in J_j$. It is easy to see that $v_j$ should satisfy the relation
$$ (1+\delta)|J_j|+\frac{\delta}{2}+\delta(\frac{1}{4}-|J_j|)+\frac{v_j}{4}=1. $$
We assume $\delta$ to be sufficiently small so that $v_j>0$  for all $j \in \N$.
\item Finally, we extend $p$ by symmetry, $p(y,\xi)=p(\xi,y)$, onto   $(\frac{1}{4},\frac{1}{2})
\times (0, \frac14)$.
\end{itemize} 	
By construction  $\int_0^1p(y,\xi)\, d\xi=1$ for all $y \in (0,\frac{1}{4})$, and $\int_0^1 p(y,\xi)\, d\xi=\sum_{j\in \N} v_j |J_j|+\frac{s}{4}  + \frac{\delta}{2} = \frac{7}{8} - \frac{\delta}{4}+ \frac{s}{4} =:\nu$   for all $y \in (\frac{1}{4},\frac{1}{2})$.   We choose   $\delta$ and  $s$   so that $\nu>1$.
Appealing to the  representation \eqref{operator2},\eqref{andrey1}  for $\AA_\soft^\# $, we have  that $m(y)=1$ for $y \in (0,\frac{1}{4})$,  $m(y)=\nu$ for $y \in (\frac{1}{4},\frac{1}{2})$, and $K(y,\xi)=p(y,\xi)\mathbf{1}_{Y_\soft^{\#}}(y)\mathbf{1}_{Y_\soft^{\#}}(l)$. Then $\Sp_{\rm ess}(\AA_\soft^\# )=\{1,\nu\}$.

 We next  show that the operator $\AA_\soft^\# $ has infinitely many points of discrete spectrum.
 To this end it is sufficient to find a sequence  $u_j \in L^2(Y_\soft )$, $j \in \N$,  such that
\begin{enumerate}
\item the elements of the sequence are mutually orthogonal;
\item  for any $n \in\mathbf{N}$ and $(\alpha_j)_{j=1}^n\subset \mathbf{R}^n\backslash \{0\}$ we have $a_\soft^\#(\sum_{j=1}^n \alpha_j u_j, \sum_{j=1}^n \alpha_ju_j)<\|\sum_{j=1}^n \alpha_j u_j \|^2_{L^2(\RR^d)}$.
\end{enumerate}

For each $j \in \mathbf{N}$ we set $u_j=\mathbf{1}_{J_j}(y)$. Obviously, the sequence $(u_j)_{j\in \mathbf{N}}$ satisfies a. To check b. we use \eqref{operator2} and the representation:
$$K(y,\xi)|_{(0,\frac{1}{4})\times (0,\frac{1}{4})  }= \sum_{j=1}^{\infty}\mathbf{1}_{J_j\times J_j} (y,\xi)+\delta , $$
 from which we deduce that, for any  $(\alpha_j)_{j=1}^n\subset \mathbf{R}^n\backslash \{0\}$,
\begin{eqnarray*}
 a_\soft^\#(\sum_{j=1}^n \alpha_ju_j, \sum_{j=1}^n \alpha_j u_j)&=&  \|\sum_{j=1}^n \alpha_j u_j \|^2_{L^2(\RR^d)}-\sum_{j=1}^n \alpha_j^2 |J_j|^2-\delta (\sum_{j=1}^n \alpha_j|J_j|)^2<  \|\sum_{j=1}^n \alpha_j u_j \|^2_{L^2(\RR^d)}.
 \end{eqnarray*}
This implies that there exists an infinite increasing sequence $(\mu_j')_{j \in \N}$ such that $\mu_j'<1$ for all $j\in\N $, $\lim\limits_{j\to\infty}\mu_j'=1$,
and
$$
\{\mu_1',\mu_2',\dots\}\cup\{1,\nu\} \subseteq \Sp  (\AA_\soft^\# ).
$$

\begin{remark}
In the above example one can modify $p$ on the square $(\frac{1}{4},\frac{1}{2})^2$ so that    $\Sp_{\rm ess}(\AA_\soft^\# )$  will contain  an interval.
\end{remark} 	

	\subsubsection{Example of $\AA_\soft^\# $  that has   discrete spectrum above and below its essential spectrum}

\bigskip
For sufficiently small  $\delta>0$ and $\varkappa>0$ we define:
\begin{itemize}
	\item $Y_\soft =(\delta,1-\delta)$;
	\item \begin{equation}\label{68a}
			a(\xi)=\,\left\{\begin{array}{ll}
			\frac{1}{16}\varkappa^{-1}, & \hbox{if } \xi\in  -\frac34 + I_\varkappa
			\cup -\frac14 + I_\varkappa
			\cup \frac14 + I_\varkappa
			\cup \frac34 + I_\varkappa,  \\
			0, & \hbox{otherwise},
		\end{array}  \right.
	\end{equation}
	where $I_\varkappa: = (-\varkappa, \varkappa)$;
	\item $p(y,\xi)$ via \eqref{60} with $w(y,\xi)=1$.
\end{itemize} 	
Observe that
 $$
\tilde{a}(y)=\,\left\{\begin{array}{ll}
	\frac18\varkappa^{-1}, & \hbox{if } y\in \frac14 + I_\varkappa
	\cup \frac34 + I_\varkappa,  \\
	0, & Y\setminus (\frac14 + I_\varkappa
	\cup \frac34 + I_\varkappa).
\end{array}  \right.
$$
Recall that $\tilde{a}$ is even and periodic.
Consequently $m(y)=1$ (cf. \eqref{andrey1}). Thus we have
$$
\AA_\soft^\#  z= z-\int_{Y_\soft }\tilde{a}(\xi-\cdot) z(\xi)\,d\xi \quad  z \in L^2_\#(Y_\soft).
$$
The essential spectrum of this operator consists of just one point: $\Sp _{\rm ess}(\AA_\soft^\# )=\{1\}$.

Substituting $z_1(y)=(1- 2\delta)^{-\frac12} \mathbf{1}_{Y^\#_\soft}(y)$ into the quadratic form associated with  $\AA_\soft^\#$ we have
$$
a_\soft^\#(z_1,z_1) =1-\int\limits_{Y_\soft }\int\limits_{Y_\soft } \tilde{a}(\xi-y) z_1(y)z_1(\xi)\, dyd\xi<1.
$$
Since $\|z_1\|_{L^2(Y^\#_\soft)}=1$, this implies that there exists a discrete eigenvalue $\lambda'$ such that $\lambda'<1$ (recall \eqref{representation}).

Now we choose the following test function:
$$z_2(y)= \left\{\begin{array}{lll}
	\varkappa^{-\frac12}, & \hbox{if }|y-\frac18|<\frac14\varkappa,  \\
	-\varkappa^{-\frac12}, & \hbox{if }|y-\frac38|<\frac14\varkappa,  \\
	0, & \hbox{otherwise}
\end{array}
\right.
\quad \hbox{for }y\in Y,
$$
 extended by periodicity. By construction,  $\|z_2\|_{L^2(Y_\soft)} = 1$.
Denoting $I_\varkappa^-= \big(\frac18-\frac14 \varkappa,\frac18+\frac14 \varkappa\big)$ and
$I_\varkappa^+=\big(\frac38-\frac 14 \varkappa,\frac38+\frac14 \varkappa\big)$,
we notice that
$$
z_2(y) z_2(\xi)=\left\{
\begin{array}{ll}
	\varkappa^{-1}&\hbox{if }(y,\xi)\in (I_\varkappa^+\times  I_\varkappa^+)\cup ( I_\varkappa^-\times  I_\varkappa^-),\\
	-\varkappa^{-1}&\hbox{if }(y,\xi)\in (I_\varkappa^-\times  I_\varkappa^+)\cup ( I_\varkappa^+\times  I_\varkappa^-),\\
	0& \hbox{otherwise}.
\end{array}
\right.
$$
Since $\tilde{a}(\xi-y)=0$ for $(y,\xi)\in  (I_\varkappa^+\times  I_\varkappa^+)\cup ( I_\varkappa^-\times  I_\varkappa^-)$ for $\chi$ small enough,
and $\tilde{a}(\xi-y)=\frac18\varkappa^{-1}$ for $(y,\xi)\in ( I_\varkappa^+\times  I_\varkappa^-)\cup ( I_\varkappa^-\times  I_\varkappa^+)$,
we obtain
\begin{eqnarray*}
(\AA_\soft^\#  z_2,z_2)_{L^2 (Y^\#_\soft)}&=&1-\int\limits_{Y_\soft }\int\limits_{Y_\soft } \tilde{a}(\xi-y) z_2(y)z_2(\xi)\, dyd\xi \\
&=&
1+\frac{1}{16}>1.
\end{eqnarray*}
Therefore, there exists a discrete eigenvalue $\lambda''$ such that $\lambda''>1$ (recall \eqref{representation22}).

\bigskip
The convolution kernel defined in \eqref{68a} does not satisfy \eqref{a-2}, therefore we must
 modify $a(\xi)$ to make it comply with this assumption.
To this end we choose a non-negative even $C_0^\infty(\mathbf R)$ function $\varphi$ such that
$\varphi(0)>0$, and define $a_\gamma(\xi)=a(\xi)+\gamma\varphi(\xi)$.   Clearly, for any
$\gamma>0$ the kernel  $a_\gamma$ satisfies condition \eqref{a-2}.
By simple perturbation theory arguments, for sufficiently small $\gamma>0$,
the operator $\AA_\soft^\#$ with the convolution kernel $a_\gamma$ still has a non-trivial discrete spectrum above and below
the essential spectrum.

\section{Analysis of the limiting spectrum } \label{s:6}

 In this section we analyse the structure of the limiting spectrum.  In particular, it is shown that the limit spectrum is in general strictly larger then the spectrum of the limit operator.  In the case of a bounded domain $S$, the limit spectrum depends on the geometry of the soft component in the vicinity of the domain boundary. In general, the boundary spectrum behaves ``erratically'' as $\e\to 0$. Yet, it may be possible to describe it in special cases and for specific subsequences of $\e$.  We consider an example when $S$ is a rectangular box with integer dimensions. Namely, in Theorem \ref{th2.7} we assume that
\begin{equation}\label{69}
	S=\Pi:=(0,l_1)\times \dots \times (0,l_d),\,\,l_1,\dots,l_d \in \N.
\end{equation}
and $\e = 1/N$, $N\in \N$. For future reference we denote $l:=\min\{l_1,\dots,l_d\}.$ We work with this specific choice of $S$ in Subsections \ref{s5.2} and \ref{s5.3}. 

We begin with the proof of Theorem \ref{th2.6}, which is formulated for a generic set $S$ --- i.e. either a bounded open Lipschitz set or the whole space.

\subsection{Proof of Theorem \ref{th2.6}}

Recall the definition of the operator $\AA_{\e,\rm soft}$, see \eqref{21}. We will need the following simple result.
\begin{proposition} \label{pr6.1}
	\begin{equation}\label{83}
		\Sp(\AA_\soft^\#) \subset     \lim_{\e\to 0}\Sp (\AA_{\e,\rm soft}).
	\end{equation}
\end{proposition}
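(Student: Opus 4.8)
\emph{Proof plan.} The case $S=\R^d$ is essentially free: as recorded before the statement, $\AA_{\e,\rm soft}$ is unitarily equivalent to $\AA_\soft$ via rescaling, so $\Sp(\AA_{\e,\rm soft})=\Sp(\AA_\soft)$ for every $\e$, and since $\Sp(\AA_\soft^\#)\subset\Sp(\AA_\soft)$ (also recorded above), \eqref{83} is immediate. So I would concentrate on a bounded Lipschitz domain $S$, where the guiding picture is that, after rescaling by $\e^{-1}$, $\AA_{\e,\rm soft}$ is $\AA_\soft$ ``seen through the window'' $S/\e$, which exhausts $\R^d$ as $\e\to0$; a \emph{compactly supported} quasimode of $\AA_\soft$ therefore fits inside that window once $\e$ is small, and can be pulled back to a quasimode of $\AA_{\e,\rm soft}$.

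Fix $\l\in\Sp(\AA_\soft^\#)$. By the Weyl criterion I would pick $z_n\in L^2_\#(Y_\soft)$ with $\|z_n\|_{L^2(Y_\soft)}=1$ and $\rho_n:=\|(\AA_\soft^\#-\l I)z_n\|_{L^2(Y_\soft)}\to0$. Extending $z_n$ periodically to $\R^d$ and fixing cut-offs $\phi_R\in C_0^\infty(B_{2R})$ with $0\le\phi_R\le1$, $\phi_R\equiv1$ on $B_R$, $|\nabla\phi_R|\le C/R$, I set $u_{n,R}:=\phi_R z_n$, a compactly supported element of $L^2(Y^\#_\soft)$. The first step is the identity $(\AA_\soft-\l I)(\phi_R z_n)=\phi_R(\AA_\soft^\#-\l I)z_n+\mathcal C_R z_n$, valid because $\AA_\soft$ and $\AA_\soft^\#$ share the same multiplication part and coincide on periodic functions, where $\mathcal C_R$ is the commutator of $\AA_\soft$ with multiplication by $\phi_R$, whose integral kernel carries the factor $\phi_R(t)-\phi_R(\xi)$. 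Splitting $|\phi_R(t)-\phi_R(\xi)|\le\min(1,C|t-\xi|/R)$ according to $|t-\xi|\le R$ or $>R$, and using the moment bounds $\int a(\xi)|\xi|^j\,d\xi<\infty$ for $j=0,1,2$ together with the elementary uniform estimate $\|z_n\|^2_{L^2(B_r(y))}\le Cr^d$ (from $\Z^d$-periodicity and $\langle z_n^2\rangle=1$), I expect $\|\mathcal C_R z_n\|_{L^2(Y^\#_\soft)}\le CR^{d/2-1}$ and $\|\phi_R(\AA_\soft^\#-\l I)z_n\|_{L^2(Y^\#_\soft)}\le CR^{d/2}\rho_n$, with $C$ independent of $n$ and $R$; since also $cR^d\le\|u_{n,R}\|^2_{L^2(Y^\#_\soft)}\le CR^d$ for $R\ge R_0$, this gives
\[
\frac{\|(\AA_\soft-\l I)u_{n,R}\|_{L^2(Y^\#_\soft)}}{\|u_{n,R}\|_{L^2(Y^\#_\soft)}}\le C\rho_n+\frac CR .
\]
This is precisely the cut-off construction already alluded to for the inclusion $\Sp(\AA_\soft^\#)\subset\Sp(\AA_\soft)$; the only extra care needed is the uniformity of the constants in $n$.

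The last step is to transfer $u_{n,R}$ into $L^2(\e Y^\#_\soft\cap S)$. Since $S$ is open and nonempty and $\supp u_{n,R}\subset B_{2R}$, for all small $\e$ there is $k=k_{\e,R}\in\Z^d$ with $\e(k+B_{2R})\subset S$; I put $v:=u_{n,R}(\cdot/\e-k)$. Using the $\Z^d$-periodicity of $Y^\#_\soft$, $Y^\#_\stiff$ and of $p$ in each argument, one checks that $v\in L^2(\e Y^\#_\soft\cap S)$ and that for any $w\in L^2(\e Y^\#_\soft\cap S)$, writing $W(s):=w(\e(s+k))\in L^2(Y^\#_\soft)$ with $\|W\|^2=\e^{-d}\|w\|^2$,
\[
a_{\e,\rm soft}(v,w)=\e^d a_\soft(u_{n,R},W),\qquad \langle v,w\rangle=\e^d\langle u_{n,R},W\rangle,\qquad \|v\|^2=\e^d\|u_{n,R}\|^2 .
\]
Hence $\|(\AA_{\e,\rm soft}-\l I)v\|\le\e^{d/2}\|(\AA_\soft-\l I)u_{n,R}\|$ (the supremum defining the left-hand norm runs only over the subspace of such $W$), so $\dist(\l,\Sp(\AA_{\e,\rm soft}))\le\|(\AA_\soft-\l I)u_{n,R}\|/\|u_{n,R}\|\le C\rho_n+C/R$ for every $\e<\e(n,R)$. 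A diagonal choice — $R=m$, then $n=n(m)$ with $\rho_{n(m)}\le1/m$, then $\e_m:=\e(n(m),m)\downarrow0$ — yields $\dist(\l,\Sp(\AA_{\e,\rm soft}))\to0$ as $\e\to0$, hence $\l\in\lim_{\e\to0}\Sp(\AA_{\e,\rm soft})$, which is \eqref{83}.

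The one genuinely non-cosmetic point, and where I would spend most care, is the localization bound for the commutator term $\mathcal C_R$ when $a$ is not compactly supported: it must be shown to be of strictly lower order than the $L^2$-mass $\sim R^{d/2}$ of $u_{n,R}$, and uniformly over the Weyl index $n$. This is exactly where the second-moment hypothesis $a(\xi)|\xi|^2\in L^1$ and the uniform cell-wise $L^2$ bound on the approximate eigenfunctions $z_n$ enter; everything else is bookkeeping of the rescaling.
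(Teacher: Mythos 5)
Your proposal is correct, but it takes a genuinely different route from the paper. The paper's proof of this proposition is a short two-scale convergence argument: starting from the resolvent problem for $\AA_{\e,\soft}$ with right-hand side $f_\e\wtto f(y)\mathbf 1_S(x)$, $f\in L^2(Y_\soft)$, the paper passes to the two-scale limit exactly as in \eqref{rok1}--\eqref{recal1}, concludes that the limit $z$ is independent of $x$ and solves the $\AA_\soft^\#$ resolvent problem, and then invokes the weak-implies-strong resolvent convergence (Proposition \ref{prop510}) together with the abstract spectral inclusion mechanism from Remark \ref{remandrei3}. It is essentially free because the two-scale machinery has already been set up in Section \ref{two-scalecon}. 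Your proof instead builds explicit quasimodes: take a Weyl sequence for $\AA_\soft^\#$, cut it off at scale $R$, estimate the commutator $\mathcal C_R$ of $\AA_\soft$ with the cut-off using the split $|\phi_R(t)-\phi_R(\xi)|\le\min(1,C|t-\xi|/R)$ and the second-moment bound (yielding $O(R^{d/2-1})$ against an $L^2$-mass $\sim R^{d/2}$), and then rescale and translate into $\e Y_\soft^\#\cap S$. I have checked the key estimate: the ``local'' part $|\xi-y|\le R$ gives $O(R^{d-2})$ from $\int a|\xi|^2<\infty$, and the ``far'' part $|\xi-y|>R$ also gives $O(R^{d-2})$ since $\int_{|\eta|>R}a\le R^{-2}\int a|\eta|^2$, so the claimed $\|\mathcal C_R z_n\|\le CR^{d/2-1}$ uniformly in $n$ does hold, and the rescaling identities $a_{\e,\soft}(v,w)=\e^d a_\soft(u_{n,R},W)$ etc.\ are exactly right by the periodicity of $p$ and $Y_\soft^\#$. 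Your construction is more self-contained and more elementary, and in fact it is essentially the cut-off argument the paper only sketches for the inclusion $\Sp(\AA_\soft^\#)\subset\Sp(\AA_\soft)$, transported to the $\e$-scale domain $\e Y_\soft^\#\cap S$; the cost is the careful localization bookkeeping you correctly identify as the crux. The paper's route is shorter precisely because it reuses Section \ref{two-scalecon} wholesale and avoids any quantitative commutator estimate.
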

\begin{proof}
	For $z_\e, f_\e  \in L^2(\e Y^\#_\soft \cap S)$, $\l<0$, the equation
	\begin{equation*}
		a_{\e,\soft}(z_\e, v) - \l \int_{R^d} z_\e v = \l \int_{\R^d} f_\e v, \quad \forall v\in L^2(\e Y^\#_\soft \cap S),
	\end{equation*}
	is a particular case of  equation \eqref{starting}. Choosing a sequence $f_\e \wtto (\stto) f(y) \mathbf 1_{S}(x)$ for some $f\in L^2(Y_\soft)$, we have that $z_\e\wtto z\in L^2_\#(S\times Y_\soft)$ up to a subsequence. Passing to the limit as in \eqref{rok1}, \eqref{recal1} with the   test functions of the form $\varphi(x)b(x/\e)$,  $\varphi \in L^2(\mathbf{R}^d)$, $b \in C_\#(Y_\soft)$, and then setting $\varphi  = \mathbf 1_{S} $ we conclude that $u_0 =0$ and $z$ is independent of $x$. In this case  \eqref{recal1} reads
	\begin{multline*}
		\int_{\R^n}\int_{Y}  a(\xi)p(y, y+\xi)(z(y+\xi)-z(y))(b(y+\xi)-b(y)) \,dy\,d\xi\,
		\\
		-\lambda \int_{Y} z(y) b(y) dy= \int_{Y} f(y) b(y)dy.
	\end{multline*}
	This establishes a weak (and, hence, strong, cf. Proposition \ref{prop510}) two-scale resolvent convergence of $\AA_{\e,\soft}$ to  $\AA_{\soft}^\#$. In turn, this implies spectral inclusion \eqref{83} as per Remark \ref{remandrei3}.
\end{proof}
	
 Below we only provide an argument for the case of a bounded domain  $S$. Although the argument can be adapted also for the case $S= \R^d$, we do not present it here, since  Theorem \ref{th2.8} provides a stronger result.

Let $\lambda_\e \in \Sp (\AA_\e )$ be a converging sequence, $\l_\e\to \l$, with $\l \notin \lim_{\e\to 0} \Sp(\AA_{\e,\soft})$. Then for sufficiently small $\e$ we have that $\dist(\l_\e, \Sp(\AA_{\e,\soft})) \geq C>0$. (This will be important later on, when we will resort to Proposition \ref{pr6.1}.) There exist $u_\e , f_\e  \in L^2(S)$, such that $\|u_\e \|_{L^2(\RR^d)}=1$, $\|f_\e\|_{L^2(\RR^d)}:=\delta_\e\to 0$ as $\e\to 0$, and
	\begin{eqnarray} \nonumber
		& &\int_{\R^d} \int_{\R^d} a(\xi)\Lambda_\e (x, x+\e\xi)\left(\frac{u_\e (x+\e \xi)-u_\e (x)}\e  \right) \left(\frac{v(x+\e \xi)-v(x)}\e  \right)\,d\xi\,dx=\\ \label{estimate60} & & 
  \lambda_\e  \int_{\R^d} u_\e  (x)v(x) \,dx+\int_{\R^d} f_\e  (x) v(x) \,dx , \quad \forall v \in
  L^2(S).
	\end{eqnarray}

	By Corollary \ref{corcan3}, $u_\e$ admits the decomposition
	$$ u_\e  = \widetilde{u}_\e +z_\e = \bar{u}_\e +\e \hat{u}+z_\e,$$
	 with
	$$\|\bar{u}_\e \|_{H^1(\RR^d)} \leq C, \quad \|\hat{u} \|_{L^2(\RR^d)} \leq C, \quad \|z_\e \|_{L^2(\RR^d)} \leq C, \quad z_\e =0 \textrm{ on }  Y_\stiff^{\#} \cup S^c.   $$
	 Notice that $\supp \bar{u}_\e \subset S^{\e k}$ (recall that $S^{\e k}$ denoted $\e k$ neighbourhood of the set $S$).
	 Then up to  a subsequence we have that
$$
\bar{u}_\e  \xrightharpoonup{H^1} {u}_0, \quad \nabla  \bar{u}_\e  \xrightharpoonup{2} \nabla {u}_0(x)+\nabla_y \bar{u}_1(x,y),\quad  \hat{u}_\e \xrightharpoonup{2} \hat{u}_0(x,y), \quad z_\e  \xrightharpoonup{2} z(x,y).
$$
	Here $u_0\in H^1_0(S)$, $\bar{u}_1 \in L^2(\R^d;H^1_\#(Y))$, $\hat{u}_1, z, \in L^2_\#(\R^d\times Y)$. Moreover, $\bar{u}_1(x,\cdot)=\hat{u}_0(x,\cdot)=z(x,\cdot)=0$, for $x \notin S$, and  $z(x,y)=0$ for $y \in  Y_\stiff^{\#}$.

	First we argue that $\|u_0\|_{L^2(S)} > 0$. Assume the opposite. Then one necessarily has that  $\widetilde{u}_\e  \to 0$ strongly in $L^2(\R^d)$.
	Substituting in \eqref{estimate60}  test functions of the form
	$ v_\e (x)=v_1(x) v_2(\frac{x}\e )$, where $v_1\in L^2(S)$, $v_2\in L^2_\#(Y_\soft)$ yields
	\begin{eqnarray} \nonumber
		& &\int_{\R^d} \int_{\R^d} a(\xi)p(\frac{x}\e , \frac{x+\e\xi}\e )\left(z_\e (x+\e \xi)-z_\e (x) \right) \left(v_\e (x+\e \xi)-v_\e (x) \right)\,d\xi\,dx \\ \nonumber & &+ \int_{\R^d} \int_{\R^d} a(\xi)p(\frac{x}\e , \frac{x+\e\xi}\e )\left(\widetilde{u}_\e (x+\e \xi)-\widetilde{u}_\e (x) \right) \left(v_\e (x+\e \xi)-v_\e (x) \right)\,d\xi\,dx-\lambda_\e \int_{\R^n} z_\e (x)v_\e (x) \,dx \\ \label{estimate67}& &=\lambda_\e  \int_{\R^d} \widetilde{u}_\e  (x)v_\e (x) dx+\int_{\R^d} f_\e (x) v_\e (x)\,dx.
	\end{eqnarray}
	Since the integrals containing $\widetilde{u}_\e$ define a bounded linear functional acting on the space of test functions $ L^2(S)$,  the above identity can be rewritten in the form
	\begin{equation*}
	a_{\e,\soft}(z_\e, v_\e) - \lambda_\e \int_{\R^n} z_\e (x)v_\e (x) \,dx = \int_{\R^d} (f_\e (x) + \tilde f_\e(x)) v_\e (x)\,dx,
	\end{equation*}
	for some $\tilde f_\e$ satisfying
	\begin{equation*}
	\|\tilde f_\e\|_{L^2(S)} \leq C \|\widetilde u_\e\|_{L^2(R^d)} \to 0 \mbox{ as } \e \to 0.
	\end{equation*}
Now, since by assumption $\lambda_\e$ are bounded away from the spectrum of $\AA_{\e,\soft}$ for sufficiently small $\e$, we immediately conclude that
	$z_\e  \to 0 \textrm{ strongly in } L^2(S)$, and hence
 $u_\e =(\widetilde{u}_\e +z_\e )\to 0$ in $L^2$. This relation contradicts the fact that $\|u_\e \|_{L^2(\RR^d)}=1$, hence  $u_0 \neq 0$.
	
	Passing to the limit in \eqref{estimate67} via   Corollary \ref{corandrei1}, in the same way as in Section \ref{two-scalecon} (cf. \eqref{recal1}), we arrive at
	\begin{eqnarray*} 
		& &\int_{\R^d}v_1(x) \int_{Y} \int_{\R^d} a(\xi)p(y, y+\xi)\left(z(x,y+ \xi)-z(x,y) \right) \left(v_2(y+\xi)-v_2(y)\right)\,d\xi\,dy \,dx \\ & & -\lambda \int_{\R^d} v_1(x)\int_{Y} z(x,y)v_2(y) \,dy\,dx =   \lambda\int_{\R^d}u_0(x) v_1(x)\int_{Y} v_2(y)\, dy \, dx.
	\end{eqnarray*}
	Note that $v_1$ is arbitrary, $x$ plays role of a parameter in the above equation. Recall also that $z$ is periodic in $y$.
	 Since  by Proposition \ref{pr6.1} the resolvent of $\AA_\soft^\#$ is well defined, it follows that
	\begin{equation} \label{estimate80}
		z(x,\cdot)=u_0(x)\lambda(\AA_\soft^\# -\lambda I)^{-1} \mathbf{1}_{Y_\soft }, \textrm{ for a.e. } x \in S.
	\end{equation}
	
	Passing to the limit in \eqref{estimate60} exactly as in the proof of \eqref{ruc1}, cf. \eqref{56} and the subsequent argument in Section \ref{two-scalecon}, we arrive at
	\begin{equation*}
		\int_{S}  A^{\rm hom} \nabla u_0(x) \cdot \nabla v(x)  dx - \lambda\int_{S} \left(u_0(x)+\int_{Y} z(x,y)\, dy\right)v(x)\, dx=0.
	\end{equation*}
	Then combining \eqref{ruc22} and \eqref{estimate80}  we obtain
	$$ \int_{S} A^{\rm hom} \nabla u_0(x) \cdot \nabla v(x)\,dx -\beta(\lambda) \int_{S} u_0v=0, $$
	and since $u_0 \neq 0$, we conclude that
 $$
 \beta(\lambda) \in \Sp (\AA_\hom ).
 $$
\hfill $\square$

\begin{remark}
	The claim of  Theorem \ref{th2.6} remains valid if we replace $\AA_\e$ with $\AA_{1/N}$ and $\AA_{\e,\soft}$ with $\AA_{{1/N},\soft}$.
\end{remark}

\subsection{Characterization of $\text{H-}\lim_{N \to \infty} \Sp (\AA_{1/N,\rm soft})$ for the case $S=\Pi$} \label{s5.2}

As we already noted at the beginning of this section, the boundary spectrum, which  comes essentially from the geometry of the soft component in the boundary layer, is difficult, if not impossible, to characterise in general. Moreover,  one cannot get rid of the boundary spectrum by replacing the soft component near the boundary with the stiff one (as one could do in PDEs setting with dispersed/disconnected soft inclusions), even if the convolution kernel has finite support. In fact, this strategy will work only if the diameter of the support is smaller that the minimal distance between soft inclusions. 

Yet, it should be possible to characterise the limit boundary spectrum in case for specific geometries of $S$.  
In what follows we show that in the case when $S =\Pi$, cf. \eqref{69}, the Hausdorff limit $\text{H-}\lim_{N\to \infty} \Sp (\mathcal{A}_{1/N})$ exists and  provide a detailed description of this limit. To this end we first characterize the limit of $\Sp (\AA_{1/N,\rm soft})$ and then show that $\text{H-}\lim_{N \to \infty} \Sp (\AA_{1/N,\rm soft}) \subset \text{H-}\lim_{N\to \infty} \Sp (\mathcal{A}_{1/N})$. A particular feature of this setting is that the geometry of the microstructure in the boundary layer remains congruent to itself (under appropriate scaling) for each $N\in \N$, thus rendering the boundary layer spectrum to be stable as $N\to \infty$.

Our technique can be easily adapted to a more general case when $S$ is a polytope, provided that there exists a subsequence of $\e$ such that the geometry of the soft component in  the boundary layer in the vicinity of each vertex of the polytope is self-similar for all elements of the subsequence of $\e$.   In this case, similarly to \eqref{57} below, the limiting spectrum of the soft component can be characterised as the union of spectra of the soft component in each vertex of the polytope.

 The main result of this subsection is formulated in  Theorem \ref{th5.5} below. We begin by introducing some notation and preparing auxiliary statements.

We denote by $\Pi_N: = N\Pi$ the scaled box and denote its vertices by $v_i$, $i=1,2,\ldots,2^d$, starting with $v_1 = (0,\dots,0)$. The order of enumeration is not important. For brevity we do not reflect the dependence of coordinates of $v_i$ on $N$.  To each vertex $v_i$ we assign an orthant (or hyperoctant) of the space
\begin{equation}\label{utoest5}
	\mathbf{R}^d_{v_i}=\{ (x_1,\dots,x_d): \forall j=1,\dots,d, \ \pm x_j \geq 0 \}
\end{equation} 	
so that after an appropriate translation, i.e. $\mathbf{R}^d_{v_i} + v_i$, it coincides with $\Pi_N$ in the vicinity of $v_i$.
We consider the operators $\mathcal{A}^N_\soft: L^2(\Pi_N\cap Y^\#_\soft )\to L^2(\Pi_N\cap Y^\#_\soft )$ and  $\AA_\soft^{v_i}: L^2(\mathbf{R}^d_{v_i}\cap Y_\soft^{\#}) \to L^2(\mathbf{R}^d_{v_i}\cap Y_\soft^{\#})$, $i=1,2,\ldots,2^d$:
\begin{equation*}
\begin{gathered}
	\AA_\soft^N z (y) :=m(y)z(y)-\mathbf{1}_{\Pi_N\cap Y^\#_\soft}(y)\int_{\R^d} a(\xi) p(y,y+\xi)z(y+\xi) \, d\xi,
	\\
	\AA_\soft ^{v_i} z (y) =m(y)z(y)-\mathbf{1}_{\mathbf{R}^d_{v_i}\cap Y^\#_\soft}(y)\int_{\R^d} a(\xi) p(y,y+\xi)z(y+\xi) \, d\xi,
	\\
	m(y):=2\int_{\R^d} a(\xi) p(y,y+\xi)\,d\xi,
\end{gathered}
\end{equation*}
alongside with their ``truncated'' counterparts
\begin{equation*} 
	\begin{gathered}
		\AA_{\soft,L}^N z (y) :=m_L(y)z(y)-\mathbf{1}_{\Pi_N\cap Y^\#_\soft}(y)\int_{\R^d} a_L(\xi) p(y,y+\xi)z(y+\xi) \, d\xi,
\\
	\AA_{\soft,L} ^{v_i} z (y) =m_L(y)z(y)-\mathbf{1}_{\mathbf{R}^d_{v_i}\cap Y^\#_\soft}(y)\int_{\R^d} a_L(\xi) p(y,y+\xi)z(y+\xi) \, d\xi,
	\end{gathered}
	\end{equation*}
where
\begin{equation}\label{85}
	\begin{gathered}
	a_L(\xi):=\mathbf{1}_{\square^{L}}(\xi)\,a(\xi),
\\
m_L(y):=2\int_{\R^d} a_L(\xi) p(y,y+\xi)\,d\xi.
\end{gathered}
\end{equation}

By Theorem \ref{thmfolland} we have
\begin{equation} \label{defeta}
	\begin{gathered}
		\|m_L-m\|_{L^\infty} \to 0,\qquad \rho(L):=\max_i \|\AA_\soft ^{v_i}-\mathcal{A}_{\soft,L}^{v_i}\|_{L^2 \to L^2} \to 0,
	\\
		\tilde{\rho}(L):=\sup_{N \in \mathbf{N}} \|\mathcal{A}^N_\soft-\mathcal{A}_{\rm soft,L}^N\|_{L^2 \to L^2} \to 0
		\ \ \hbox{as }L \to \infty.
	\end{gathered}
\end{equation}


Note that the operator $\AA_\soft^N$ is unitary equivalent to   $\AA_{1/N,\soft}$ via rescaling. In particular, we have
$$ \Sp (\mathcal{A}^N_\soft)=\Sp  (\mathcal{A}_{1/N,\rm soft}).$$
Thus, in order to characterise $\text{H-}\lim_{N \to \infty} \Sp (\AA_{1/N,\rm soft})$
it is sufficient to analyse $\Sp (\mathcal{A}^N_\soft)$.

First we need a couple of technical results.
\begin{proposition} \label{cetprop1}
Let $u,f \in L^2(\mathbf{R}^d)$,  be different from zero. 	Then for each $L>0$ there exists $\zeta \in L \mathbf{Z}^d$ such that
	\begin{equation}
		\begin{aligned}
			&{\|u\|_{L^2(\square^{4L}_\zeta)}} \leq  \sqrt{4^d+  2^d}   {\| u\|_{L^2(\square^{L}_\zeta)}},
			\\
		&	\frac{\|  f\|_{L^2(\square^{2L}_\zeta)}}{\|f\|_{L^2(\RR^d)}}\leq \sqrt{4^d+  2^d}  \frac{\| u \|_{L^2(\square^L_\zeta)}}{\|u\|_{L^2(\RR^d)}},  \label{prd1} 	
		\end{aligned}
		 \end{equation}
		 \begin{equation*}
		 	\|  u \|_{L^2(\square^L_\zeta)} \neq 0.
		 \end{equation*}

\end{proposition}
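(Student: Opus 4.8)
The plan is to exploit a standard pigeonhole / averaging argument over a lattice of cubes. First I would tile $\mathbf{R}^d$ by the cubes $\{\square^L_\zeta : \zeta \in 2L\mathbf{Z}^d\}$ (so that the cubes $\square^L_\zeta$, which have side $2L$, are pairwise disjoint up to null sets and cover $\mathbf{R}^d$). For each such $\zeta$ set $a_\zeta := \|u\|^2_{L^2(\square^L_\zeta)}$ and $b_\zeta := \|f\|^2_{L^2(\square^L_\zeta)}$; then $\sum_\zeta a_\zeta = \|u\|^2_{L^2(\mathbf{R}^d)}$ and $\sum_\zeta b_\zeta = \|f\|^2_{L^2(\mathbf{R}^d)}$, both finite and positive. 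The key observation is that a dilated cube $\square^{4L}_\zeta$ is covered by at most $4^d$ of the unit-pattern translates $\square^L_{\zeta'}$ with $\zeta' \in 2L\mathbf{Z}^d$, and likewise $\square^{2L}_\zeta$ is covered by at most $2^d$ of them (one should double-check the exact covering constants, which is where the $4^d + 2^d$ comes from once both bounds are combined — see the obstacle paragraph). Hence $\|u\|^2_{L^2(\square^{4L}_\zeta)} \le \sum_{\zeta' \in N_4(\zeta)} a_{\zeta'}$ and $\|f\|^2_{L^2(\square^{2L}_\zeta)} \le \sum_{\zeta' \in N_2(\zeta)} b_{\zeta'}$, where $N_4(\zeta), N_2(\zeta)$ are the relevant finite neighbourhoods of $\zeta$ in the lattice.

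Next I would run the pigeonhole argument. Consider the quantity $q_\zeta := a_\zeta / \|u\|^2_{L^2(\mathbf{R}^d)} + b_\zeta / \|f\|^2_{L^2(\mathbf{R}^d)}$; summing over $\zeta$ gives $\sum_\zeta q_\zeta = 2$. Since $\sum_\zeta q_\zeta < \infty$, there exists $\zeta_0$ at which $q_{\zeta_0} > 0$ and, more importantly, the "cluster sums" do not blow up relative to $q_{\zeta_0}$: precisely, one can choose $\zeta_0$ so that $q_{\zeta_0} > 0$ and $\sum_{\zeta' \in N_4(\zeta_0) \cup N_2(\zeta_0)} q_{\zeta'} \le C_d\, q_{\zeta_0}$ for a purely dimensional constant $C_d$ (this is the standard fact that a summable nonnegative sequence has a term which dominates, up to a fixed factor, the sum over any fixed-size neighbourhood of it; quantitatively $C_d$ is the cardinality of the largest neighbourhood, but a sharper choice gives the constant under the square root). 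Taking $\zeta_0$ with the additional property $a_{\zeta_0} > 0$ (possible since if $a_{\zeta_0} = 0$ one simply re-selects among indices with $a_{\zeta'} > 0$, which is again done by the same pigeonhole applied to $a$ alone, using $u \ne 0$) ensures the last displayed inequality $\|u\|_{L^2(\square^L_{\zeta_0})} \ne 0$.

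Finally I would assemble the two estimates. From the covering bounds and the choice of $\zeta_0$,
\[
\|u\|^2_{L^2(\square^{4L}_{\zeta_0})} \le \sum_{\zeta' \in N_4(\zeta_0)} a_{\zeta'} \le (4^d + 2^d)\, a_{\zeta_0} = (4^d+2^d)\,\|u\|^2_{L^2(\square^L_{\zeta_0})},
\]
which is the first inequality after taking square roots. Similarly, dividing the covering bound for $f$ by $\|f\|^2_{L^2(\mathbf{R}^d)}$ and using that the selected $\zeta_0$ balances the $u$- and $f$-contributions,
\[
\frac{\|f\|^2_{L^2(\square^{2L}_{\zeta_0})}}{\|f\|^2_{L^2(\mathbf{R}^d)}} \le \sum_{\zeta' \in N_2(\zeta_0)} \frac{b_{\zeta'}}{\|f\|^2_{L^2(\mathbf{R}^d)}} \le (4^d+2^d)\,\frac{a_{\zeta_0}}{\|u\|^2_{L^2(\mathbf{R}^d)}} = (4^d+2^d)\,\frac{\|u\|^2_{L^2(\square^L_{\zeta_0})}}{\|u\|^2_{L^2(\mathbf{R}^d)}},
\]
giving the second inequality after taking square roots, with $\zeta := \zeta_0 \in 2L\mathbf{Z}^d \subset L\mathbf{Z}^d$. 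The main obstacle — really the only subtle point — is getting the combinatorial bookkeeping to produce exactly the constant $\sqrt{4^d+2^d}$ rather than something larger: one must choose the single index $\zeta_0$ so that it simultaneously controls the dilated-cube sum for $u$ (factor $4^d$ worth of neighbours) and the dilated-cube sum for $f$ normalised against $u$'s own contribution (factor $2^d$), and the additive split $4^d + 2^d$ under the root suggests the intended argument is to pick $\zeta_0$ maximising a suitably weighted combination of $a_\zeta$ and $b_\zeta$ — I would make that weighting explicit (e.g. maximise $a_{\zeta'} \big/ (\text{sum over the }4^d\text{-neighbourhood})$ against the analogous ratio for $f$) and verify the constant carefully, but this is routine finite combinatorics rather than a genuine difficulty.
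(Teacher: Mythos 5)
There is a genuine gap, and it sits exactly where you park it in your ``obstacle'' paragraph: the selection of $\zeta_0$. You choose $\zeta_0$ so that the neighbourhood sum of $q_\zeta := a_\zeta/\|u\|^2_{L^2(\mathbf{R}^d)} + b_\zeta/\|f\|^2_{L^2(\mathbf{R}^d)}$ is at most $C_d\,q_{\zeta_0}$. From that you can only deduce bounds whose right-hand side is a multiple of $q_{\zeta_0}=a_{\zeta_0}/\|u\|^2+b_{\zeta_0}/\|f\|^2$, whereas \emph{both} inequalities of \eqref{prd1} need a right-hand side proportional to $a_{\zeta_0}=\|u\|^2_{L^2(\square^L_{\zeta_0})}$ alone; nothing in your argument lets you drop the $b_{\zeta_0}$ term, and your middle inequality in the last display (``using that $\zeta_0$ balances the $u$- and $f$-contributions'') is precisely the unproven step. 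Your proposed fixes do not repair this: re-selecting among indices with $a_{\zeta'}>0$ destroys the neighbourhood control you just invoked, and ``maximise the two ratios'' fails both because a single index cannot in general quasi-maximise two different functionals and because suprema over infinite lattices need not be attained. The way out — and this is the paper's proof — is to notice that after squaring and normalising, the two inequalities of \eqref{prd1} have the \emph{identical} right-hand side $(4^d+2^d)\,\|u\|^2_{L^2(\square^L_\zeta)}/\|u\|^2_{L^2(\mathbf{R}^d)}$. One then argues by contradiction: if for every $\zeta\in L\mathbf{Z}^d$ outside $\mathcal N:=\{\zeta:\,u=0\text{ on }\square^L_\zeta\}$ at least one inequality failed, adding the two normalised quantities and summing over all such $\zeta$, and comparing with the exact lattice identities for $\sum_\zeta\|u\|^2_{L^2(\square^{4L}_\zeta)}$, $\sum_\zeta\|f\|^2_{L^2(\square^{2L}_\zeta)}$ and $\sum_\zeta\|u\|^2_{L^2(\square^{L}_\zeta)}$, gives the strict inequality $4^d+2^d>4^d+2^d$. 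Restricting to $\zeta\notin\mathcal N$ also yields $\|u\|_{L^2(\square^L_\zeta)}\neq0$ for free, without any re-selection.

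A second, quantitative problem is the lattice you tile with. With the paper's convention $\square^m_x=[x-m,x+m]^d$ (side $2m$, centred at $x$), a cube $\square^{4L}_\zeta$ with $\zeta\in 2L\mathbf{Z}^d$ meets $5^d$ of your disjoint tiles per your tiling, and $\square^{2L}_\zeta$ meets $3^d$ of them (the faces of the dilated cubes fall in the middle of tiles), not $4^d$ and $2^d$. So even a corrected pigeonhole over the coarse lattice $2L\mathbf{Z}^d$ delivers at best a constant like $\sqrt{5^d+3^d}$, strictly worse than the one in the statement. The stated constant comes from summing over the full lattice $L\mathbf{Z}^d$ with \emph{overlapping} cubes: relative to the overlap multiplicity of the cubes $\square^{L}_\zeta$, the families $\{\square^{4L}_\zeta\}$ and $\{\square^{2L}_\zeta\}$ over-count by factors $4^d$ and $2^d$ respectively, and it is this ratio, combined with the shared right-hand side described above, that produces exactly $\sqrt{4^d+2^d}$.
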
 	
\begin{proof}
	Clearly, one has
	\begin{equation*} 
	\sum_{\zeta\in L\mathbf{Z}^d} \|u\|_{L^2(\square^{4L}_\zeta)}^2=4^d\|u\|^2_{L^2(\RR^d)}, \quad		
		\sum_{\zeta \in L\mathbf{Z}^d} \| f\|_{L^2(\square^{2L}_\zeta)}^2=2^d\|f\|^2_{L^2(\RR^d)}.
	\end{equation*} 	
Denote	$\mathcal{N}=\{\zeta\in L\mathbf{Z}^d:   u=0 \mbox{ in } \square^{L}_\zeta\}$.
	Assuming  that for all $\zeta\in L\mathbf{Z}^d \setminus \mathcal{N}$ at least one of the inequalities in \eqref{prd1} does not hold, we arrive at a contradiction:
	\begin{eqnarray*}
		& & 4^d+  2^d =	\frac{1}{\|u\|^2_{L^2(\RR^d)}}\sum_{\zeta\in L\mathbf{Z}^d \setminus \mathcal{N} } \| u\|_{L^2(\square^{4L}_\zeta)}^2+  \frac{1}{\|f\|^2_{L^2(\RR^d)}}	\sum_{\zeta\in L\mathbf{Z}^d \setminus \mathcal{N}} \|  f\|^2_{L^2(\square^{2L}_\zeta)}>\\ & &  \hspace{+10ex}  (4^d+  2^d )	\frac{1}{\|u\|^2_{L^2(\RR^d)}}	\sum_{\zeta\in L\mathbf{Z}^d \setminus \mathcal{N} } \|  u\|^2_{L^2(\square^{L}_\zeta)} =4^d+  2^d.
	\end{eqnarray*} 	
\end{proof}

\begin{proposition}\label{p5.4}
 Let $a_L$ be given by \eqref{85},  $|K(y,\xi)|\leq C$, $(y,\xi)\in \R^{2d}$,  and $\eta_L$ be a cut-off function such that $\eta_L \in C_0^{\infty}(\square^{2L}_{\zeta})$, $\eta_L|_{\square^L_{\zeta}}=1$, $\|\eta_L\|_{L^{\infty}(\RR^d)}\leq 1$, $\|\nabla \eta_L\|_{L^{\infty}(\RR^d)} \leq C/L$.
Then for the operator $\mathcal{K}: L^2(\R^d) \to L^2(\R^d)$
$$
\mathcal{K} u(y) = \int_{\R^d} a_L (\xi-y) K(y,\xi)u(\xi) d\xi
$$
we have
\begin{equation*}
\|\mathcal{K} (\eta_L u)  - \eta_L    \mathcal{K} u \|_{L^2(\R^d)} \leq \frac{C}{L}\| u \|_{L^2(\square^{4L}_{\zeta})},	
\end{equation*}
where the constant $C$ is independent of $L$.
\end{proposition}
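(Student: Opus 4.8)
The plan is to estimate the commutator $\mathcal{K}(\eta_L u) - \eta_L \mathcal{K} u$ pointwise by exploiting the cancellation coming from the difference $\eta_L(\xi) - \eta_L(y)$. Writing out both operators, for $y \in \R^d$ we have
\[
\mathcal{K}(\eta_L u)(y) - \eta_L(y)\,\mathcal{K} u(y) = \int_{\R^d} a_L(\xi - y)\, K(y,\xi)\, \big(\eta_L(\xi) - \eta_L(y)\big)\, u(\xi)\, d\xi.
\]
Since $a_L$ is supported in the cube $\square^L$ of half-side $L$, the integrand vanishes unless $|\xi - y|_\infty \le L$. First I would record the mean value bound $|\eta_L(\xi) - \eta_L(y)| \le \|\nabla \eta_L\|_{L^\infty} |\xi - y| \le (C/L)\sqrt{d}\, L = C'$, which only gives a bounded (not small) factor; the gain of the $1/L$ must instead come together with the fact that the whole expression is nonzero only when $y$ is within distance $\sim L$ of $\partial \square^L_\zeta$ — i.e. on an annular region whose contribution is controlled — OR, more directly, one keeps the factor $|\xi-y|\le CL$ against $\|\nabla\eta_L\|_{L^\infty}\le C/L$, which yields an honest factor $C$; the extra $1/L$ that appears in the statement comes from a different place. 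Let me reconsider: in fact the bound $\|\nabla\eta_L\|_{L^\infty}\le C/L$ combined with $|\xi - y| \le \sqrt d\, L$ on the support of $a_L(\xi - y)$ gives $|\eta_L(\xi)-\eta_L(y)| \le C$; to recover the $1/L$ factor in the statement one must exploit that $\eta_L(\xi) - \eta_L(y) = 0$ unless at least one of $\xi, y$ lies in $\square^{2L}_\zeta \setminus \square^{L}_\zeta$, together with an $L^1$-type bound on $a_L$. I would therefore split according to whether $y \in \square^{2L}_\zeta$ or not.

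Concretely, the plan is: (i) if $y \notin \square^{2L}_\zeta$, then since $|\xi - y|_\infty \le L$ on the support of $a_L(\xi-y)$ we have $\xi \notin \square^{L}_\zeta$, so $\eta_L(\xi) = \eta_L(y) = 0$ and the integrand vanishes; thus the commutator is supported in $\square^{2L}_\zeta$. (ii) On $\square^{2L}_\zeta$, estimate using Minkowski's integral inequality (or the Schur test, Theorem~\ref{thmfolland}): writing the commutator as $\int_{\R^d} G(y,\xi) u(\xi)\, d\xi$ with $G(y,\xi) = a_L(\xi-y) K(y,\xi)(\eta_L(\xi)-\eta_L(y))$, one has $|G(y,\xi)| \le C\, a_L(\xi - y)\, \frac{C}{L}\,|\xi - y| \le \frac{C}{L}\, a(\xi-y)\,|\xi-y|\,\mathbf 1_{\square^L}(\xi-y)$. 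Since $x \mapsto a(x)|x| \in L^1(\R^d)$ by the third assumption on $a$ (indeed $a(x)(1+|x|^2)\in L^1$), we get $\sup_y \int |G(y,\xi)|\,d\xi \le C/L$ and, because $a$ is even, also $\sup_\xi \int |G(y,\xi)|\,dy \le C/L$. (iii) Moreover $G(y,\xi)$ is nonzero only when $y \in \square^{2L}_\zeta$ and $\xi \in \square^{2L}_\zeta + \square^L \subset \square^{4L}_\zeta$ (using $2L + L \le 4L$), so effectively $u$ may be replaced by $u\,\mathbf 1_{\square^{4L}_\zeta}$. Combining the Schur-type bound with this localization of the $\xi$-integration yields
\[
\|\mathcal K(\eta_L u) - \eta_L \mathcal K u\|_{L^2(\R^d)} \le \frac{C}{L}\,\|u\,\mathbf 1_{\square^{4L}_\zeta}\|_{L^2(\R^d)} = \frac{C}{L}\,\|u\|_{L^2(\square^{4L}_\zeta)},
\]
with $C$ depending only on $d$, $\|K\|_{L^\infty}$ and $\|a(\cdot)|\cdot|\|_{L^1}$, hence independent of $L$ (and of $\zeta$).

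The only mildly delicate point is bookkeeping the supports: one must be careful that with half-side conventions $\square^m_x = [x-m,x+m]^d$, the support condition $|\xi - y|_\infty \le L$ together with $y \notin \square^{2L}_\zeta$ forces $\xi \notin \square^L_\zeta$, and together with $y \in \square^{2L}_\zeta$ forces $\xi \in \square^{3L}_\zeta \subset \square^{4L}_\zeta$; both inclusions are immediate from the triangle inequality in the sup-norm. I do not expect any real obstacle here — the estimate is a routine commutator bound, the two ingredients being the Lipschitz bound on $\eta_L$ (giving the $1/L$) and the first moment integrability of $a$ (giving a finite constant after integrating the $|\xi - y|$ weight against $a_L(\xi - y)$). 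If one prefers to avoid the first-moment argument entirely, the cruder estimate $|\eta_L(\xi) - \eta_L(y)| \le C$ on the support of $a_L$ together with $\|a\|_{L^1} < \infty$ gives the inequality with $C$ in place of $C/L$, which already suffices for the application in \eqref{defeta}; but I would present the sharper $C/L$ version exactly as stated, since it costs nothing extra.
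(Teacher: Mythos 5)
Your overall strategy is sound and close to the paper's: both start from the same commutator identity
$\mathcal{K}(\eta_L u)(y)-\eta_L(y)\mathcal{K}u(y)=\int a_L(\xi-y)K(y,\xi)(\eta_L(\xi)-\eta_L(y))u(\xi)\,d\xi$,
extract the factor $1/L$ from $|\eta_L(\xi)-\eta_L(y)|\le \frac{C}{L}|\xi-y|$, and localise $u$ to $\square^{4L}_\zeta$. Where you differ is the integration tool: you run a Schur test using the first moment $\int a(\xi)|\xi|\,d\xi$, whereas the paper applies Cauchy--Schwarz in $\xi$ (splitting $\sqrt{a_L}\,|\xi|\cdot\sqrt{a_L}\,|u(y+\xi)|$) and so uses the second moment $\int a(\xi)|\xi|^2\,d\xi$ together with the $y$-support of the commutator. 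Both routes are legitimate under the standing assumption $a(x)(1+|x|^2)\in L^1$, and yours is marginally more economical in the moment it consumes.

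However, there is a concrete error in your support bookkeeping, step (i): from $y\notin\square^{2L}_\zeta$ and $|\xi-y|_\infty\le L$ you deduce $\xi\notin\square^{L}_\zeta$ and then conclude $\eta_L(\xi)=0$. That last implication is false: $\eta_L$ equals $1$ on $\square^{L}_\zeta$ but is merely supported in $\square^{2L}_\zeta$, so it can be nonzero on $\square^{2L}_\zeta\setminus\square^{L}_\zeta$. Consequently the commutator is \emph{not} supported (in $y$) in $\square^{2L}_\zeta$; the correct cube is $\square^{3L}_\zeta$, exactly as the paper states: if $y\notin\square^{3L}_\zeta$ then $\eta_L(y)=0$ and, on $\supp a_L(\xi-y)$, also $\xi\notin\square^{2L}_\zeta$, so $\eta_L(\xi)=0$. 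The same correction fixes your step (iii): the kernel $G(y,\xi)$ is nonzero only if at least one of $y,\xi$ lies in $\square^{2L}_\zeta$ (so that $\eta_L(\xi)\neq\eta_L(y)$ is possible) and $|\xi-y|_\infty\le L$, which gives $\xi\in\square^{3L}_\zeta\subset\square^{4L}_\zeta$; with this the replacement of $u$ by $u\,\mathbf 1_{\square^{4L}_\zeta}$ and the final bound $\frac{C}{L}\|u\|_{L^2(\square^{4L}_\zeta)}$ go through unchanged. One further caveat: your closing remark that the cruder bound with $C$ in place of $C/L$ "already suffices for the application" is not right --- the proposition is used in the proof of Theorem \ref{th5.5} (cf. \eqref{utoest24}, \eqref{utoest53}), where the $1/L$ decay is essential for the error to vanish as $L\to\infty$; \eqref{defeta} concerns the kernel truncation, not this commutator estimate.
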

\begin{proof}
	\begin{equation*}
	\mathcal{K} (\eta_L u)  - \eta_L    \mathcal{K} u
 =\int_{\R^d} a_L(\xi) K(y,y+\xi)(\eta_L(y+\xi)-\eta_L(y))u(y+\xi) \, d\xi.
\end{equation*}
Since $\supp \eta_L\subset \square^{2L}_{\zeta }$ and $\supp a_L \subset \square^{L}$
the expression
$a_L (\xi)(\eta_L (y+\xi)-\eta_L (y))$ vanishes for  $y \in \R^d \setminus\square^{3L}_{\zeta} $  and every $\xi \in \mathbf{R}^d$. Therefore,
\begin{equation*}
	a_L(\xi)|K(y,y+\xi)||\eta_L (y+\xi)-\eta_L(y)| \leq a_L(\xi)\frac{C|\xi|}{L}\mathbf{1}_{\square^{3L}_{\zeta}}(y).
\end{equation*} 	
Then using the Cauchy–Schwarz inequality we have
\begin{multline*}
	\int_{\R^d}	\left|\int_{\R^d} a_L(\xi) K(y,y+\xi)(\eta_L(y)-\eta_L(y+\xi))u(y+\xi) \, d\xi\right|^2 dy
	\\
	\le \frac{C}{L^2} \int_{\R^d} a_L(\xi) |\xi|^2 d\xi \iint_{\R^{2d}}  a_L(\xi) \,\mathbf{1}_{\square^{3L}_{\zeta}}(y) |u(y+\xi)|^2 d\xi dy
	\le \frac{C}{L^2} \int_{\R^d} a(\xi) |\xi|^2 d\xi  \, \|u\|^2_{L^2(\square^{4L}_{\zeta})}.
\end{multline*}
\end{proof}

Now we are in position to prove the following result.
\begin{theorem} \label{th5.5}
	\begin{equation} \label{tonci1}  \text{H-}\lim_{N \to \infty} \Sp (\AA_{1/N,\soft})=\bigcup_{j=1,\dots 2^d} \mathrm{Sp} (\AA_\soft^{v_j}). \end{equation}
\end{theorem}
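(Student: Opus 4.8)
The proof splits into a truncation reduction and then a localisation argument that exploits the special feature of \eqref{69} with $\e=1/N$: every face of the scaled box $\Pi_N=N\Pi$ sits at integer coordinates, so any bounded portion of the medium near a point of $\Pi_N$ may be translated by a vector of $\Z^d$ without changing the periodic data $m$, $p$, $Y^\#_\soft$. First, by \eqref{defeta} we have $\|\AA_\soft^N-\AA_{\soft,L}^N\|_{L^2\to L^2}\le\tilde\rho(L)$ \emph{uniformly in} $N$ and $\|\AA_\soft^{v_j}-\AA_{\soft,L}^{v_j}\|_{L^2\to L^2}\le\rho(L)$, with $\tilde\rho(L),\rho(L)\to0$ as $L\to\infty$; since $d_H(\Sp A,\Sp B)\le\|A-B\|$ for self-adjoint $A,B$, and since all operators here are nonnegative and uniformly bounded (so their spectra lie in a fixed compact interval, which makes the Hausdorff limit meaningful), it suffices to prove, for each fixed $L$, that $\text{H-}\lim_{N\to\infty}\Sp(\AA_{\soft,L}^N)=\bigcup_{j=1}^{2^d}\Sp(\AA_{\soft,L}^{v_j})$, and then let $L\to\infty$ by a diagonal argument using the uniform bounds and the closedness of $\bigcup_j\Sp(\AA_\soft^{v_j})$.

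Fix $L$. For the inclusion $\supseteq$, take $j$ and $\lambda\in\Sp(\AA_{\soft,L}^{v_j})$ together with a normalised quasimode $u$, $\|(\AA_{\soft,L}^{v_j}-\lambda)u\|\le\eps$. Applying Proposition \ref{cetprop1} with scale parameter $R\in\N$, $R\gg L$, to $u$ and $f:=(\AA_{\soft,L}^{v_j}-\lambda)u$ yields a cube $\square^R_\zeta$ on which $u$ is suitably concentrated; cutting off with a function $\eta_R$ as in Proposition \ref{p5.4}, and noting that the multiplication part $m_L$ commutes with $\eta_R$ while the commutator of $\eta_R$ with the range-$L$ integral part is $O(1/R)$ exactly as in Proposition \ref{p5.4}, we see that $\eta_R u/\|\eta_R u\|$ is a normalised quasimode of $\AA_{\soft,L}^{v_j}$, supported in $\square^{2R}_\zeta\cap\R^d_{v_j}$, with error $\le C\eps+C/R$. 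As $\AA_{\soft,L}^{v_j}$ has range $L$ and this support is a fixed bounded set, for all large $N$ there is a vector of $\Z^d$ carrying $\square^{2R+L}_\zeta\cap\R^d_{v_j}$ isometrically into $\Pi_N$ onto a region where $\AA_{\soft,L}^N$ equals the corresponding translate of $\AA_{\soft,L}^{v_j}$; transplanting the quasimode gives $\dist(\lambda,\Sp(\AA_{\soft,L}^N))\le C\eps+C/R$ for all large $N$, and letting $R\to\infty$, $\eps\to0$ shows $\dist(\lambda,\Sp(\AA_{\soft,L}^N))\to0$, i.e. $\lambda$ is the limit of a sequence of points of $\Sp(\AA_{\soft,L}^N)$.

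The inclusion $\subseteq$ is the main obstacle. Let $\lambda_N\in\Sp(\AA_{\soft,L}^N)$, $\lambda_N\to\lambda$, with normalised quasimodes $u_N$, $\|(\AA_{\soft,L}^N-\lambda_N)u_N\|\le1/N$. Fix $R\in\N$, $R\gg L$; as above (Propositions \ref{cetprop1} and \ref{p5.4}) produce a normalised quasimode $v_N$ of $\AA_{\soft,L}^N$ supported in $\square^{2R}_{\zeta_N}\cap\Pi_N$, with $\zeta_N\in R\Z^d$ and error $\le C/N+C/R$. Let $F_N$ be the set of faces of $\Pi_N$ within distance $2R+L$ of $\square^{2R}_{\zeta_N}$. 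Since parallel faces of $\Pi_N$ are at distance $\ge lN\to\infty$, for large $N$ the faces in $F_N$ are pairwise non-parallel, hence all incident to a single vertex $v_{j(N)}$. Pass to a subsequence so that $F_N$ is a fixed set $F$ and $j(N)\equiv j$; each coordinate of $\zeta_N$ transverse to a face of $F$ is a bounded integer, so it is eventually constant along a further subsequence, while an integer translation (harmless because the medium is $\Z^d$-periodic) zeros out the coordinates transverse to no face of $F$. Thus $\square^{2R+L}_{\zeta_N}\cap\Pi_N$ is, after that integer translation, eventually \emph{equal} to a fixed bounded region $\Omega_R$ of a ``corner'' domain (a half-space if $|F|=1$, a quarter-space if $|F|=2$, and so on), on which $\AA_{\soft,L}^N$ coincides with the corresponding corner operator; transplanting $v_N$ shows that $\lambda$ lies within $C/R$ of the spectrum of that corner operator. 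Since a bounded piece of the corner domain embeds isometrically (by an integer translation) into the orthant $\R^d_{v_j}$ — $v_j$ being the vertex incident to all faces of $F$ — away from the faces not in $F$, the transplantation argument gives that the spectrum of the corner operator is contained in $\Sp(\AA_{\soft,L}^{v_j})$, so $\dist(\lambda,\Sp(\AA_{\soft,L}^{v_j}))\le C/R$. Letting $R\to\infty$ along $R_k\in\N$, extracting further subsequences and using that only the $2^d$ indices occur, some $v_{j_*}$ recurs infinitely often, whence $\dist(\lambda,\Sp(\AA_{\soft,L}^{v_{j_*}}))=0$ and $\lambda\in\bigcup_j\Sp(\AA_{\soft,L}^{v_j})$.

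Combining the two inclusions with the truncation reduction of the first paragraph and letting $L\to\infty$ yields \eqref{tonci1}. The embedding argument used above also shows, in passing, that $\Sp(\AA_\soft^{v_j})$ contains $\Sp(\AA_\soft)$ and the spectrum of every lower-codimensional corner operator attached to $v_j$, which explains why only the vertices contribute to the limit. The delicate point throughout is the case analysis in the $\subseteq$ direction: ensuring that the localised quasimode always ``lives'' on a medium piece that, after an \emph{integer} translation, stabilises to a fixed corner domain embeddable in one of the orthants $\R^d_{v_j}$ — this is precisely where the integrality of $\Pi$ and the choice $\e=1/N$ are used.
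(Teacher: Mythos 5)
Your proposal is correct and follows the same core strategy as the paper: truncate the kernel using \eqref{defeta}, localise the Weyl sequence on a favourable cube via Proposition \ref{cetprop1}, cut off and control the commutator via Proposition \ref{p5.4}, and transplant by integer translations --- exploiting that $\e=1/N$ places $\partial\Pi_N$ on the integer lattice so the periodic medium is translation-invariant there --- to match the quasimode against an orthant operator $\AA_\soft^{v_j}$. The differences from the paper are organisational rather than substantive: you factor the truncation out cleanly (proving the result for fixed $L$ and then letting $L\to\infty$ via the uniform-in-$N$ norm bound), whereas the paper couples $L$ to $N$ (or to $k$) and takes iterated limits, and in the $\subseteq$ direction your ``corner-domain'' case analysis makes explicit the vertex-selection step that the paper treats more tersely around \eqref{117a}.
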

\begin{proof}

	First we prove the inclusion $\supseteq$ in \eqref{tonci1}.
	We only show that $\Sp (\AA_\soft ^{v_1})\subset \text{H-}\lim_{N \to \infty} \Sp (\AA_{1/N,\soft})$,  for other $i=2,3,\dots, 2^d,$ the proof is completely analogous.
	For any $\lambda \in \Sp  (\AA_\soft ^{v_1})$ there exists
	a sequence $(u_k)_k \subset L^2(\mathbf{R}^d_{v_1} \cap Y_\soft^{\#})$ such that
	\begin{equation} \label{cetest99}
		\|u_k\|_{L^2(\RR^d)}=1, \quad \AA_\soft ^{v_1} u_k-\lambda u_k=f_k \in L^2(\RR^d_{v_1} \cap Y_\soft^{\#}), \  \|f_k\|_{L^2(\RR^d)}=:\delta_k \to 0. 	
	\end{equation}
	From \eqref{defeta} and  \eqref{cetest99} we also have that
	\begin{equation*} 
		\mathcal{A}_{\rm soft,L}^{v_1} u_k-\lambda u_k=f_{k,L} \in L^2(\RR^d_{v_1} \cap Y_\soft^{\#}), \quad \|f_{k,L}\|_{L^2(\RR^d)} \leq \delta_k +\rho(L)=:\delta_{k,L}.
	\end{equation*}
	By using Proposition \ref{cetprop1}   we find that   for each $k,L$  there exists $\zeta_{k,L} \in L\mathbf{Z}^d$ such that
	\begin{eqnarray}\label{prd6}
		& &\|  u_k\|_{L^2(\square^{4L}_{\zeta_{k,L}})} \leq \sqrt{4^d+  2^d}\, \|  u_k\|_{L^2(\square^{L}_{\zeta_{k,L}})},  \\ \label{utoest23}  & &   \| f_{k,L}\|_{L^2(\square^{2L}_{\zeta_{k,L}})} \leq \sqrt{4^d+  2^d}\, \delta_{k,L} \| u_k\|_{L^2(\square^{L}_{\zeta_{k,L}})}, \\ & & \label{prd8}  \| u_k\|_{L^2(\square^{L}_{\zeta_{k,L}})} \neq 0.
	\end{eqnarray}
		We consider a cut-off function
		\begin{equation}\label{104}
			\eta_L \in C_0^{\infty}(\square^{2L}_{\zeta_{k,L}}) \mbox{ such that } \eta_L|_{\square^L_{\zeta_{k,L}}}=1,\, |\eta_L|\leq 1,\, |\nabla \eta_L|\leq C/L,
		\end{equation}
		for some $C>0$,   and define
	\begin{equation} \label{utoest20}
		u_{k,L} =\frac{\eta_L  u_k}{\|u_k\|_{L^2(\square^L_{\zeta_{k,L} })}}.
	\end{equation}	
	From \eqref{prd6} we have
	\begin{equation} \label{utoest21}
		1 \leq \|u_{k,L}\|_{L^2(\mathbf{R}^d)} \leq \sqrt{4^d+  2^d}.
	\end{equation}
	By direct computation
	\begin{equation*}
		\mathcal{A}_{\rm soft,L}^{v_1} u_{k,L}-\lambda u_{k,L}
		 = \frac{\eta_L(y)f_{k,L} + \mathcal{A}_{\rm soft,L}^{v_1} (\eta_L u_k)  - \eta_L    \mathcal{A}_{\rm soft,L}^{v_1} u_k }{\|u_k\|_{L^2(\square^L_{\zeta_{k,L}})}}  .
	\end{equation*}
Applying Proposition \ref{p5.4} to the terms on the right-hand side involving 	$\mathcal{A}_{\rm soft,L}^{v_1}$, and taking into account \eqref{prd6} and \eqref{utoest23}
 we conclude that
	\begin{equation} \label{utoest24}
		\| \mathcal{A}_{\rm soft,L}^{v_1} u_{k,L}-\lambda u_{k,L}\|_{L^2(\RR^d)} \leq C \delta_{k,L} + \frac{C}{L}.
	\end{equation}

For each $L$ we choose sufficiently large $N = N(L)$, namely such that
\begin{equation}\label{104a}
	lN(L) >3L+ |\zeta_{k,L}|
\end{equation}
   (recall that $l$ denotes the length of the shortest edge of $\Pi$). Then, since the expression
$a_L (\xi) u_{k,L}(y+\xi)$ vanishes for  $y \in \R^d \setminus\square^{3L}_{\zeta_{k,L}} $  and every $\xi \in \mathbf{R}^d$,  we have that
\begin{equation*}
		(\AA_{\soft,L} ^{v_1}u_{k,L} -\AA_{\soft,L}^{N(L)}  u_{k,L})(y) = \mathbf{1}_{\mathbf{R}^d_{v_1}\backslash \Pi_{N(L)}} (y) \int_{\R^d}  a_L(\xi) p(y,y+\xi) u_{k,L}(y+\xi) \, d\xi \equiv 0 	.
\end{equation*}
Then from   \eqref{utoest24}, \eqref{defeta} and the latter we have
	\begin{equation} \label{ana1}
		 \lim_{k \to \infty} \limsup_{L\to \infty}   \|\AA_\soft^{N(L)} u_{k,L} -\lambda u_{k,L}\|_{L^2(\RR^d)}=0.   	
	\end{equation} 	
	Since
	\begin{equation*}
		\textrm{dist} (\lambda,\Sp (\AA_\soft^N) ) \leq \frac{\|\AA_\soft^N u_{k,L} -\lambda u_{k,L}\|_{L^2(\RR^d)}}{\|u_{k,L}\|_{L^2(\RR^d)}}, 	
	\end{equation*} 	
	taking into account \eqref{utoest21}, we conclude that
	\begin{equation*}
		\lim_{N \to \infty} \textrm{dist}  (\lambda,\Sp (\AA_\soft^N) )=0.
	\end{equation*}

	Next  we prove the inclusion $\subseteq$ in \eqref{tonci1}. The argument is analogous to the first part of the proof.
	Consider a sequence $\lambda_N \in \Sp (\AA_\soft^N)$, and assume that $\lambda_N \to \lambda$ as $N \to \infty$. Then there exists a  sequence
	$(u_N)_{N \in \mathbf{N}} \subset L^2(\Pi_N \cap Y_\soft^{\#})$ such that
	\begin{equation*}
		\|u_N\|_{L^2(\RR^d)}=1, \quad \AA_\soft^{N} u_N-\lambda u_N=f_N \in L^2(\Pi_N \cap Y_\soft^{\#}),\quad  \|f_N\|_{L^2(\RR^d)}=:\delta_N \to 0. 	
	\end{equation*}
Then from \eqref{defeta} for the operator with the truncated kernel (recall the notation in \eqref{defeta}) we have that
		\begin{equation*}
		\AA_{\rm soft,L}^{N} u_N-\lambda u_N=f_{N,L}, \quad \|f_{N,L}\|_{L^2(\RR^d)} \leq \delta_N +\tilde{\rho}(L)
	\end{equation*}

	By Proposition \ref{cetprop1} for every pair $L,N \in \mathbf{N}$    we choose $\zeta_{N,L} \in L\mathbf{Z}^d$ such that
	\begin{eqnarray}\label{prd10}
		& &\|  u_N\|_{L^2(\square^{4L}_{\zeta_{N,L}})} \leq \sqrt{4^d+  2^d}\, \|  u_N\|_{L^2(\square^{ L}_{\zeta_{N,L}})},
		 \\
		 & &   \| f_{N,L}\|_{L^2(\square^{2L}_{\zeta_{N,L}})} \leq \sqrt{4^d+  2^d}\, (\delta_N +\tilde{\rho}(L))\| u_N\|_{L^2(\square^{L}_{\zeta_{N,L}})}, \label{118b}
		 \\ & &
           \|  u_N\|_{L^2(\square^{L}_{\zeta_{N,L}})} \neq 0.\nonumber
	\end{eqnarray}
	
	We define
	\begin{equation*} 
		u_{N,L} := \frac{\eta_L u_N}{\|u_N\|_{L^2(\square^L_{\zeta_{N,L} })}}.
	\end{equation*}	
	 where the cut-off function  $\eta_L $ is as in \eqref{104} with $\zeta_{k,L}$ replaced by $\zeta_{N,L}$. Applying Proposition \ref{p5.4} while taking into account \eqref{prd10} and \eqref{118b} we have
		\begin{equation} \label{utoest53}
		\| \AA_{\soft,L}^{N} u_{N,L}-\lambda u_{N,L}\|_{L^2(\RR^d)} \leq C (\delta_N +\tilde{\rho}(L) + \frac{1}{L}).
	\end{equation}

	As before, we observe that the expression
	$a_L (\xi) u_{N,L}(y+\xi)$ vanishes for  $y \in \R^d \setminus\square^{3L}_{\zeta_{N,L}} $  and every $\xi \in \mathbf{R}^d$. It follows that for every $N$ such that $lN > 6L$ there exists (al least one) $v_i$ such that
	\begin{equation}\label{117a}
		\mathbf{1}_{(\mathbf{R}^d_{v_i}+v_i)\backslash \Pi_{N}} (y) \int_{\R^d}  a_L(\xi) p(y,y+\xi) u_{N,L}(y+\xi) \, d\xi \equiv 0.
	\end{equation}
	Indeed, if the interior of the cube $\square^{3L}_{\zeta_{N,L}}$ does not have common points with the boundary of $\Pi_N$ then \eqref{117a} holds for every $v_i$. If, however, the interior of  $\square^{3L}_{\zeta_{N,L}}$ intersects a face of $\Pi_N$, then it does not have common points with the opposite face. In particular, if the interior of  $\square^{3L}_{\zeta_{N,L}}$ contains $v_i$ for some $i=1,\dots,2^d$, \eqref{117a} holds only for this specific $v_i$. A similar observation can be made for faces of larger co-dimensions, edges and vertices.
	
	 Combining  \eqref{117a}, \eqref{utoest53} and \eqref{defeta} we conclude that there exists $v_i$ such that for appropriately translated function $u_{N,L}$ (not relabelled) and $lN > 6L$ it holds
	\begin{equation*} 
		\| \AA_{\soft}^{v_i} u_{N,L}-\lambda u_{N,L}\|_{L^2(\RR^d)} \leq C(\delta_{N} + \rho(L)  + \tilde\rho(L) + \frac{1}{L}).
	\end{equation*} 	
Finally, choosing a sequence $L(N)\to \infty$ satisfying $lN > 6L(N)$ and passing to the limit as $N\to \infty$ we conclude that
\begin{equation*} 
	 \lambda \in \bigcup_{j=1,\dots 2^d} \mathrm{Sp} (\AA_\soft^{v_j}).
\end{equation*}
\end{proof}

\begin{remark}
	If we cover $S$ with cubes of size order $\sqrt{\e}$ consistent with the periodic microstructure (i.e. so that their faces do not cut the periodicity cell in the middle), and replace soft component with stiff in the cubes located in the $\sqrt{\e}$-neighbourhood of $\partial S$, then the boundary limit spectrum can be expressed as the union of the spectra of the soft component operators analogous to the operators $\AA_\soft^{v_i}$ defined on the domains of appropriate unions of the sets $\{ (x_1,\dots,x_d): \forall i=1,\dots,d, \ \pm x_i \geq 0 \}$ which include all possible exterior and interior corners.
\end{remark}

\subsection{Proof of Theorem \ref{th2.7}} \label{s5.3}

In view of Theorems \ref{th2.6} and \ref{th5.5}, and the fact that $\{ \beta(\l) \in \Sp (\AA_\hom)\} \subset \lim_{\e\to 0} \Sp(\AA_{\e})$ (in fact, this is valid for any subsequence $\e\to 0$), which follows from \eqref{14a} and Theorem \ref{thm2.3}, in order to complete the proof of Theorem \ref{th2.7} it is enough to establish the following assertion.

\begin{theorem}\label{krnj1}
	The following inclusion is valid for any subsequence  $(N_i)_{i\in\N}\subset \N$.
	$$	\bigcup_{j=1,\dots,2^d} \mathrm{Sp}(\AA_\soft ^{v_j}) \subseteq \lim_{i \to \infty} \Sp(\AA_{1/{N_i}}).$$
\end{theorem}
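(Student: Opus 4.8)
The plan is to exhibit, for each $\lambda$ in the finite union $\bigcup_{j}\Sp(\AA_\soft^{v_j})$, an $L^2(\Pi)$--normalised quasimode for $\AA_{1/N}$ whose residual tends to $0$ along a subsequence of $(N_i)$; since the union is finite and the argument is identical at each vertex, I fix $v_1$ and $\lambda\in\Sp(\AA_\soft^{v_1})$.

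\textbf{Step 1: soft quasimode near the vertex.} I would recycle the cut--off Weyl sequence $u_{k,L}$ constructed in the first half of the proof of Theorem \ref{th5.5}: it is supported in a cube $\square^{2L}_{\zeta_{k,L}}\subset\R^d_{v_1}$ and satisfies $\|\AA^{N}_{\soft,L}u_{k,L}-\lambda u_{k,L}\|\le\delta_k+\rho(L)+C/L+\tilde\rho(L)$. Rescaling by $\e=1/N$ (with $N$ from the given subsequence, chosen below) turns it into $z_\e\in L^2(\e Y_\soft^{\#}\cap\Pi)$, supported in a cube $Q_\e$ of side $\sim L\e$ lying in the corner region of $\Pi$ at $v_1$, with $\|z_\e\|_{L^2}\asymp1$. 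Using that the form $a_\e$ restricted to soft--supported functions equals $a_{\e,\soft}$ (the $\Lambda_0$--part drops, being carried by $Y_\stiff\times Y_\stiff$), and that replacing $a$ by $a_L$ costs only $O(\|a-a_L\|_{L^1})$ on the soft component uniformly in $\e$, I get $\|\AA_{\e,\soft}z_\e-\lambda z_\e\|\le\sigma(k,L)+o_\e(1)$, where $\sigma(k,L)\to0$ after $k\to\infty$, $L\to\infty$.

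\textbf{Step 2: the leakage, and the stiff corrector.} Extending $z_\e$ by zero I consider $g_\e:=P_\stiff\AA_\e z_\e$; on the stiff component $\Lambda_\e=\e^2p$ on the relevant region, so $g_\e(x)=-\tfrac2{\e^d}\int a(\tfrac{x-y}\e)p(\tfrac x\e,\tfrac y\e)z_\e(y)\,dy$, and a Schur/Young estimate after rescaling gives $\|g_\e\|_{L^2}\le C$ uniformly in $\e$, with the truncated version $g_\e^{(L)}$ supported in $Q_\e^{L\e}$ and $\|g_\e-g_\e^{(L)}\|_{L^2}\le C\|a-a_L\|_{L^1}$. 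Thus $(\AA_\e-\lambda)z_\e$ has a small soft part but an $O(1)$ stiff part $-g_\e$, which must be corrected. I set $R_\e:=Q_\e^{ML\e}$ for a large fixed $M$ (with $N$ chosen so large that $R_\e\subset\Pi$ and $R_\e$ only meets the faces through $v_1$, as in \eqref{104a}), and let $\AA_{\e,\stiff}^{R_\e}$ be the compression of $\AA_\e$ to $L^2(\e Y_\stiff^{\#}\cap R_\e)$. Combining the coercivity $a_\e(v,v)\gtrsim c\|\nabla v\|_{L^2}^2$ for stiff--supported $v$ --- which follows from Lemma \ref{lemma:crucialest} exactly as in \eqref{43}, after rescaling --- with a Poincaré inequality for functions vanishing outside a set of diameter $\sim ML\e$, one obtains $\inf\Sp(\AA_{\e,\stiff}^{R_\e})\ge c(ML\e)^{-2}$, hence $\|(\AA_{\e,\stiff}^{R_\e}-\lambda)^{-1}\|\le C(ML\e)^2$ for $\e$ small. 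Define $w_\e:=(\AA_{\e,\stiff}^{R_\e}-\lambda)^{-1}g_\e^{(L)}$, so that $\|w_\e\|_{L^2}\le C(ML\e)^2\to0$ and $(\AA_{\e,\stiff}^{R_\e}-\lambda)w_\e=g_\e^{(L)}$.

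\textbf{Step 3: gluing and the main obstacle.} Put $u_\e:=z_\e+w_\e$ (extended by zero to $\Pi$); then $\|u_\e\|\asymp1$, and I test $(\AA_\e-\lambda)u_\e$ against $v=v_1\oplus v_2\oplus v_3$ with $v_1$ on $\e Y_\stiff^{\#}\cap R_\e$, $v_2$ on the soft component, $v_3$ on $\e Y_\stiff^{\#}\setminus R_\e$. On $v_1$ the term $-g_\e$ coming from $z_\e$ is cancelled by $g_\e^{(L)}$ from $w_\e$ up to $O(\|a-a_L\|_{L^1})$ plus the soft residual; on $v_2$ the contribution of $z_\e$ is the soft residual and $|a_\e(w_\e,v_2)|\le C\|w_\e\|\,\|v_2\|$ since $w_\e$ is stiff--supported and $p$ vanishes on $Y_\stiff\times Y_\stiff$; on $v_3$ the contribution of $z_\e$ vanishes for the truncated kernel (a tail $O(\|a-a_L\|_{L^1})$ for the full $a$), leaving the term $a_\e(w_\e,v_3)$. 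This last term is the crux: because $\AA_\e$ on the stiff component carries a factor $\e^{-2}$, the naive bound is only $|a_\e(w_\e,v_3)|\lesssim\e^{-2}\|w_\e\|_{L^2(\text{collar of }\partial R_\e)}\|v_3\|$, which is not visibly small. The way out is to exploit that $g_\e^{(L)}$ is supported in the small central cube $Q_\e^{L\e}$, at distance $\sim ML\e$ from $\partial R_\e$, and that $\AA_{\e,\stiff}^{R_\e}$ has finite interaction range $L\e$ and a spectral gap $\sim(ML\e)^{-2}$ at $\lambda$; a Combes--Thomas / energy--decay estimate for the resolvent then gives $\|w_\e\|_{L^2(\text{collar of }\partial R_\e)}\le C(ML\e)^2 e^{-cM}\|g_\e^{(L)}\|$, whence $|a_\e(w_\e,v_3)|\lesssim L^2e^{-cM}\|v_3\|$. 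Making this localisation precise (and checking it is stable under the rescalings involved) is the main technical burden. Granting it, one sums all errors, then lets $\e\to0$ (picking $N$ from the subsequence large enough at each stage), $M\to\infty$, $L\to\infty$, $k\to\infty$ along a diagonal, concluding $\dist(\lambda,\Sp(\AA_{1/N}))\to0$ along a subsequence of $(N_i)$, i.e. $\lambda\in\lim_i\Sp(\AA_{1/N_i})$.
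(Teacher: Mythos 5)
Your approach is genuinely different from the paper's, and unfortunately it has a gap at the exact point you flag as "the main obstacle."

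The paper never builds a stiff corrector $w_\e$. It takes only the rescaled soft quasimode $w_{k,\e}$ (which is supported entirely on $\e Y_\soft^\#$), notes that $a_\e(w_{k,\e},w_{k,\e})=a_{\e,\soft}(w_{k,\e},w_{k,\e})\le C$, and then verifies the \emph{weak} quasimode inequality of Lemma \ref{l5.9}: $\,|a_\e(w_{k,\e},v)-\lambda(w_{k,\e},v)|\le\Upsilon_{k,\e}\sqrt{a_\e(v,v)+\|v\|^2}$ for every test function $v\in L^2(\Pi)$. The key is that the test function is decomposed by Corollary \ref{corcan3} as $v=\bar v_\e+\e\hat v_\e+z_\e$ with $\bar v_\e$ smooth and $H^1$-bounded, $\hat v_\e$ $L^2$-bounded, and $z_\e$ soft-supported. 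The pairing with $z_\e$ is handled by the soft equation; the pairing $a_\e(w_{k,\e},\bar v_\e)$ is $O(\e)$ because $\bar v_\e(x+\e\xi)-\bar v_\e(x)=O(\e)$ in $L^2$ (this is where the $\e^{-2}$ singularity of the stiff part never enters, since $w_{k,\e}$ is soft-supported and the form on the soft pairing carries no $\e^{-2}$); the $L^2$ pairings $(w_{k,\e},\bar v_\e)$ and $(w_{k,\e},\e\hat v_\e)$ are small via the small support of $w_{k,\e}$ and a Sobolev/H\"older bound. The whole ``leakage onto the stiff component'' that you try to kill with a corrector is thus automatically controlled at the level of the bilinear form, because a general test function splits into a smooth piece (against which the soft quasimode couples weakly) plus small and soft pieces.

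The gap in your Step 3 is the claimed bound $\|w_\e\|_{L^2(\text{collar of }\partial R_\e)}\le C(ML\e)^2 e^{-cM}\|g_\e^{(L)}\|$. This would require the resolvent kernel of $\AA_{\e,\stiff}^{R_\e}$ at the spectral parameter $\lambda$ to decay with a rate of order $(L\e)^{-1}$ (i.e.\ one unit of decay per interaction range), but that is far from what Combes--Thomas gives here. After rescaling to unit periodicity, $\AA_{\e,\stiff}^{R_\e}$ is unitarily equivalent to $\e^{-2}\BB^{ML}$ with $\BB^{ML}$ a bounded unit-scale operator on a box of size $\sim ML$, interaction range $\sim L$, and $\inf\Sp(\BB^{ML})\sim(ML)^{-2}$; the relevant spectral gap to $\lambda\e^2\to 0$ is $\sim(ML)^{-2}$, while $\|\BB^{ML}\|\sim1$. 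The Combes--Thomas twist $e^{\eta\cdot x}\BB^{ML}e^{-\eta\cdot x}$ perturbs the operator by $\sim|\eta|L$, so the admissible twist is $|\eta|\lesssim(ML)^{-2}/L=M^{-2}L^{-3}$, giving a decay factor of only $\exp(-c\,ML\cdot M^{-2}L^{-3})=\exp(-c/(ML^2))$ over the distance $\sim ML$ to $\partial(R_\e/\e)$. This tends to $1$, not $0$, as $M\to\infty$, so the correction term $a_\e(w_\e,v_3)\sim\e^{-2}\|w_\e\|_{L^2(\text{collar})}\|v_3\|$ does not become small. The root cause is structural: you are inverting the Dirichlet stiff operator at the spectral parameter $\lambda$, which (after rescaling) sits essentially at the bottom of its spectrum; near zero energy the Green's function of a Laplacian-like convolution operator decays only polynomially, not exponentially. (The situation would be salvageable in high dimension using $|x-y|^{2-d}$ Green's-function bounds, but not uniformly in $d$, and in any case the sharp Agmon rate $\sqrt{\Delta}$ also vanishes here because the effective energy $\lambda\e^2\to0$.) So even granting the truncation estimates, Step 3 as written fails; the correct escape route is the test-function decomposition of the paper, which sidesteps the stiff corrector altogether.
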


	\begin{proof}
	To simplify the notation we will not relabel the subsequence $N\to \infty$. We  provide the argument for $\Sp(\AA_\soft ^{v_1})\subset \lim_{N} \Sp(\AA_{1/{N}})$, for other $v_i$  it is completely analogous.
	As in the first part of the proof of Theorem \ref{th5.5} we assume
 $\lambda \in \Sp  (\AA_\soft ^{v_1})$ and  consider a corresponding Weyl sequence
	  $(u_k)_{k \in \mathbf{N}} \subset L^2(\mathbf{R}^d_{v_1} \cap Y_\soft^{\#})$ as in \eqref{cetest99}. Further, let  $u_{k,L}$ be defines by \eqref{utoest20}, and choose a subsequence $N(L)$ satisfying $L/N(L) \to 0$ as $L\to \infty$, and \eqref{104a}.  In particular we have that  \eqref{ana1} holds.
	
	   For the subsequence $\e = \e(L):= 1/N(L)$ we define a rescaled family of functions
	\begin{equation*}
		w_{k,\e}(x): = \e^{-d/2}  u_{k,L} (x/\e).
	\end{equation*}
	We drop the dependence of the functions $w_{k,\e}$ on $L$ from the notation as it is now encoded  through $\e$.
	Note that the support of $w_{k,\e}$ vanishes as $\e\to 0$:
	\begin{equation*}
		|\supp w_{k,\e}| = |\e\square_{\zeta_{k,L}}^{2L}| \to 0,
	\end{equation*}
	while its norm is bounded on both sides:
	 \begin{equation} \label{sriest10}
	  	1 \leq \|w_{k,\e}\|_{L^2(\mathbf{R}^d)} \leq \sqrt{4^d+2^d}.
	  \end{equation}
It follows directly from \eqref{ana1} that the rescaled family satisfies
	\begin{equation*} 
		\mathcal{A}_{\e,\soft} w_{k,\e} -\lambda w_{k,\e} := f_{k,\e} \in L^2(\e Y_\soft^\# \cap \Pi)
\end{equation*} 	
with
\begin{equation*}
	\lim_{k \to \infty}  \limsup_{\e \to 0}\|f_{k,\e}\|_{L^2(\Pi)}=0.
\end{equation*} 	
For further reference we observe that
\begin{equation}\label{124}
	a_\e(w_{k,\e},w_{k,\e}) = a_{\e,\soft}(w_{k,\e},w_{k,\e}) \leq C.
\end{equation}
	
The remaining argument relies on the following basic result \cite[Lemma E.1]{CCV2019}.
\begin{lemma}\label{l5.9}
	Let $\AA$ be a   non-negative self-adjoint (not necessarily bounded) operator in a Hilbert space $H$ and $\mathfrak{a}$ be the associated bilinear form. Assume that for some $u\in {\rm Dom}(\mathfrak{a}) = {\rm Dom}(\AA^{1/2})$, $\|u\|=1$,  $\l\in \R$, and $0<\epsilon<1$ we have
	\[
	|\mathfrak{a}(u,v) - \l  (u,v)_H| \leq \epsilon \sqrt{\mathfrak{a}(v,v) + (u,v)_H} \quad \forall v\in {\rm Dom}(\mathfrak{a}).
	\]
	Then
	\begin{equation*}
		\dist(\l,\Sp(\AA)) \leq |\l +1| (1-\epsilon)^{-1} \epsilon.
	\end{equation*}
\end{lemma}	
In view of the above lemma, in order to complete the proof, it is sufficient to establish that 	
\begin{equation} \label{sriest20}
	\left|a_\e(w_{k,\e},v)-\lambda ( w_{k,\e},v)_{L^2(\RR^d)}\right| \leq \Upsilon_{k,\e}\sqrt{a_\e (v,v)+\|v\|^2_{L^2(\RR^d)}} \qquad \forall v \in L^2(\Pi),
\end{equation}	
where
	\begin{equation*}
	\lim_{k \to \infty} \limsup_{\e \to 0} \Upsilon_{k,\e}=0. 	
\end{equation*} 	
Applying Corollary \ref{corcan3} to $ v \in L^2(\Pi)$ we have
$$
v=\bar{v}_\e  +\varepsilon \hat{v}_\e +z_\e ,
$$
where
$\bar{v}_\e  \in H^1(\RR^d)\cap C^\infty(\R^d)$, $\hat{v}_\e  \in L^2(\RR^d)$ and $z_\e \in L^2(\e Y_\soft^\# \cap \Pi)$ and
\begin{equation}\label{126a}
\|\bar{v}_\e \|^2_{H^1(\RR^d)} + \|\hat{v}_\e \|^2_{L^2(\RR^d)} + \|z_\e \|^2_{L^2(\RR^d)} \leq  C(a_\e (v,v)+\|v\|^2_{L^2(\RR^d)}).
\end{equation}

Note that since $z_\e$ vanishes outside $\e Y_\soft^\# \cap \Pi$ we have that
$a_\e(w_{k,\e},z_\e)=a_{\e,\soft}(w_{k,\e},z_\e)$, which infers  	
\begin{equation*}
	a_\e(w_{k,\e},z_\e) - \lambda ( w_{k,\e},z_\e)_{L^2(\RR^d)} =  (f_{k,\e},z_\e)_{L^2(\RR^d)}.
\end{equation*} 	
Thus we have
\begin{equation} \label{127}
a_\e(w_{k,\e},v)-\lambda ( w_{k,\e},v)_{L^2(\RR^d)} = (f_{k,\e},z_\e)_{L^2(\RR^d)} + a_\e(w_{k,\e},\bar{v}_\e  +\varepsilon \hat{v}_\e) - \lambda ( w_{k,\e},\bar{v}_\e  +\varepsilon \hat{v}_\e)_{L^2(\RR^d)}.
\end{equation}		
It remains to estimate the last two terms on the right hand side of \eqref{127}.

In subsequent estimates we will be utilising the bounds \eqref{sriest10}, \eqref{124} and \eqref{126a} without mentioning. Using the formula $ \bar{v}_\e (x+\e \xi)-\bar{v}_\e (x) =\e\int_0^1 \xi \cdot\nabla \bar{v}_\e (x+\e    t \xi) \,dt$ and the  Cauchy–Schwarz inequality   we obtain
	\begin{multline}
	\big| a_\e (w_{k,\e},\bar{v}_\e)\big| = \Big|	\int_{\R^d} \int_{\R^d} a(\xi)p(x/\e, x+\xi/\e)\left(w_{k,\e}(x+\e\xi )-w_{k,\e}(x) \right) \left(\bar{v}(x+\e\xi)-\bar{v}(x) \right)\,d\xi\,dx \Big|
	 \\
	 \leq C \e \Big(a_\e(w_{k,\e},w_{k,\e}) \int_{\R^d} a(\xi) |\xi|^2 d\xi\Big)^{1/2} \|\nabla\bar{v}_\e\|_{L^2(\RR^d)}\leq C \e \sqrt{a_\e (v,v)+\|v\|^2_{L^2(\RR^d)}} .
\end{multline}	
The next bound	  is  trivial:
\begin{equation}
	\big|a_\e(w_{k,\e}, \varepsilon \hat{v}_\e)\big| \leq C \e \sqrt{a_\e (v,v)+\|v\|^2_{L^2(\RR^d)}}.
\end{equation}
Using the H\"older and Sobolev inequalities we obtain
\begin{multline} 
\big| ( w_{k,\e},\bar{v}_\e   )_{L^2(\RR^d)}\big|=\big| ( w_{k,\e},\bar{v}_\e   )_{L^2(\e\square_{\zeta_{k,L}}^{2L})}\big|\leq \|w_{k,\e}\|_{L^2(\e\square_{\zeta_{k,L}}^{2L})}\|\bar{v}_\e\|_{L^2(\e\square_{\zeta_{k,L}}^{2L})}
\\
\leq C \|\bar{v}_\e\|_{L^{2*}(\e\square_{\zeta_{k,L}}^{2L})}\big|\e\square_{\zeta_{k,L}}^{2L}\big|^{1/d}\leq C \|\bar{v}_\e\|_{H^1(\e\square_{\zeta_{k,L}}^{2L})}\big|\e\square_{\zeta_{k,L}}^{2L}\big|^{1/d}
\\
\leq C  \big|\e\square_{\zeta_{k,L}}^{2L}\big|^{1/d}\sqrt{a_\e (v,v)+\|v\|^2_{L^2(\RR^d)}}.
\end{multline}
Finally, it is easy to see that
\begin{equation}\label{133}
 \big|( w_{k,\e}, \varepsilon \hat{v}_\e)_{L^2(\RR^d)}\big| \leq C \e \sqrt{a_\e (v,v)+\|v\|^2_{L^2(\RR^d)}}.
\end{equation}	
Combining \eqref{127}--\eqref{133} we arrive at \eqref{sriest20} with $\Upsilon_{k,\e} = C \Big(\e +\|f_{k,\e}\|_{L^2(\Pi)} +  \big|\e\square_{\zeta_{k,L}}^{2L}\big|^{1/d}\Big)$, which completes the proof.	
	\end{proof}

	\subsection{Proof of Theorem \ref{th2.5}}
	It is sufficient to establish the inclusion
	\begin{equation}\label{140}
		\mathrm{Sp}(\mathcal{A}_\soft) \subset \lim_{\e \to 0} \mathrm{Sp}(\AA_\e ).
	\end{equation} 	
	Regardless of whether $S \subset \RR^d$ is a bounded Lipschitz set  or  $S=\mathbf{R}^d$, the argument for \eqref{140} follows almost verbatim the proof of Theorem \ref{krnj1}, upon replacing $\AA_\soft^{v_1}$ with $\AA_\soft$. In fact, it is a bit easier, since one does not need to take care of the boundary of  $\mathbf{R}^d_{v_1}$.
		
	\hfill $\square$

\begin{remark} \label{remvele}
	In a similar way as in the first part of the proof of Theorem \ref{th5.5}  one can show that $\mathrm{Sp}(\AA_\soft) \subset  \lim_{\e \to 0} \Sp (\AA_{\e,\soft})$ (here $\AA_{\e,\soft}$ is defined on a generic set $S$).

\end{remark}

\section{Norm resolvent convergence for the whole space setting }\label{s7}

Recently a new abstract scheme for the norm resolvent estimates that applies to a wide range of problems with small parameter was developed in \cite{CKS2023}.
Following \cite{CKS2023} we use the scaled version of Gelfand transform $\G_\e: L^2(\R^d) \to L^2(Y^*\times  Y ), Y^*:= [-\pi,\pi]^d $,
\begin{equation*}
	(\G_\e f)(\theta, y) := \left(\frac{\e^2}{2\pi}\right)^{d/2} \sum_{n\in \Z^d} f(\e(y+n)) \rme^{-\rmi  \e \theta\cdot(y+n)}.
\end{equation*}
Applying the transform to problem \eqref{starting} and setting $\l = 1$ we obtain an equivalent family of problems for $u^\e_\theta(\cdot): = 	(\G_\e u^\e)(\theta, \cdot)$: for a.e. $\theta \in Y^*$ the function $u^\e_\theta\in L^2_\#(Y)$ solves
\begin{equation*} 
	\begin{split}
		 \int_{Y}\int_{\R^d} a(\xi - y) \left( \e^{-2}\Lambda(y,\xi) +  p(y,\xi)\right) &(\rme^{\rmi \theta\cdot(\xi - y)} u^\e_\theta(\xi)-u^\e_\theta(y))  \overline{(\rme^{\rmi \theta\cdot(\xi - y)}v(\xi)-v(y))}d\xi dy
		\\
		& +  \int_{Y} u^\e_\theta(y)\overline{v(y)} dy=\int_{Y} f^\e_\theta(y)\overline{v(y)} dy, \quad \forall v\in   L^2_\#(Y),
	\end{split}
\end{equation*}
where $f^\e_\theta(\cdot):=	(\G_\e f)(\theta, \cdot)$.  Adopting the notation from \cite{CKS2023} we can write the above problem in the form
\begin{equation} \label{62}
	\e^{-2} a_\theta(u^\e_\theta,v) + b_\theta(u^\e_\theta,v) = (f,v), \quad \forall v\in   L^2(Y),
\end{equation}
where
\begin{equation*} 
	a_\theta(u,v) : = \int_{Y}\int_{\R^d} a(\xi - y) \Lambda_0(y,\xi) (\rme^{\rmi \theta\cdot(\xi - y)} u(\xi)-u(y))\overline{(\rme^{\rmi \theta\cdot(\xi - y)}v(\xi)-v(y))}d\xi dy,
\end{equation*}
and
\begin{multline}\label{64}
	b_\theta(u,v) : =  \int_{Y}\int_{\R^d} a(\xi - y) p(y,\xi) (\rme^{\rmi \theta\cdot(\xi - y)} u(\xi)-u(y))\overline{(\rme^{\rmi \theta\cdot(\xi - y)} v(\xi)-v(y))}d\xi dy \\ +  \int_{Y} u(y)\overline{v(y)} dy.
\end{multline}
(We drop for convenience the dependence of  $f$ on $\e$ and $\theta$ in \eqref{62}.)

For every $\theta\in Y^*$ denote by $\A_\e^\theta$ the bounded symmetric operator on $L^2(Y)$ associated with problem \eqref{62}:
\begin{equation*} 
	\A_\e^\theta u(y) = 2	\int_{\R^d} a(\xi - y) \left( \e^{-2}\Lambda(y,\xi) +  p(y,\xi)\right) (\rme^{\rmi \theta\cdot(\xi - y)} u(\xi)-u(y)) d\xi.
\end{equation*}

The approach presented in \cite{CKS2023} works for a  general class of problems of the form \eqref{62} and, upon verifying a number of abstract hypotheses,  allows to approximate the original family of the operators by a more simple (homogenised) family in the norm-resolvent sense. When applying to a specific problem, like the one at hand, a lot of the argument can often be significantly simplified. In what follows we adapt some of the proofs from  \cite{CKS2023} to our setting while providing reference points to the original for reader's convenience. We will verify hypotheses (H1)--(H4) from \cite{CKS2023}, although in our setting the hypothesis (H4) is weaker, which manifests in the loss in the rate of the norm-resolvent convergence.

\subsection{Basic  properties of $a_\theta$ and $b_\theta$}
We begin our analysis by verifying some basic properties of the sesquilinear forms in \eqref{62}. Many bounds  obtain in  \cite{CKS2023} are in terms of the family of  the norms $\|\cdot \|_\theta$ associated with  the scalar product
\begin{equation*}
	(u, v)_\theta : = a_\theta(u,v)  + b_\theta(u,v).
\end{equation*}
From the continuity of the integral operators involved we immediately conclude that these norms are equivalent to the standard $L^2$-norm   uniformly in $\theta$:
\begin{equation*}
	\|\cdot \|_{L^2(Y)} \leq \|\cdot \|_{\theta} \leq K  	\|\cdot \|_{L^2(Y)}
\end{equation*}
for some $K>0$. Henceforth we will only use the $L^2$-norm in our bounds, rather than the whole family of norms $\|\cdot \|_{\theta} $, as in \cite{CKS2023}.

The following  bounds are trivial:
\begin{equation}\label{143}
	\begin{gathered}
		a_\theta(u,v) +	b_\theta(u,v)\leq C\|u \|_{L^2(Y)} \|v \|_{L^2(Y)},\quad
		b_\theta[u] \geq \|u \|_{L^2(Y)}^2, \quad \forall \theta  \in  Y^*.
	\end{gathered}
\end{equation}

Further, the forms are Lipschitz continuous in $\theta$, i.e.
\begin{equation}\label{144}
	|a_{\theta_1}(u,v) -  a_{\theta_2}(u,v)| \leq C |\theta_1 - \theta_2| \|u\|_{L^2(Y)} \|v\|_{L^2(Y)}\quad \forall u,v\in {L^2(Y)}, \quad \forall \theta_1, \theta_2 \in  Y^*,
\end{equation}
\begin{equation}\label{146}
	|b_{\theta_1}(u,v) -  b_{\theta_2}(u,v)| \leq C|\theta_1 - \theta_2| \|u\|_{L^2(Y)} \|v\|_{L^2(Y)}\quad \forall u,v\in {L^2(Y)}, \quad \forall \theta_1, \theta_2 \in  Y^*.
\end{equation}
Indeed, we have
\begin{equation*}
	\begin{aligned}
		a_{\theta_1}(u,v) -  a_{\theta_2}(u,v)= \int_{Y}\int_{\R^d} a(\xi - y) \Lambda_0(y,\xi) \Big[(\rme^{\rmi \theta_2\cdot(\xi - y)}  - \rme^{\rmi \theta_1\cdot(\xi - y)}) &u(\xi)\overline{v(y)}
		\\  +
		(\rme^{-\rmi \theta_2\cdot(\xi - y)}  - \rme^{-\rmi \theta_1\cdot(\xi - y)})& u(y)\overline{v(\xi)}\Big]  d\xi dy.
	\end{aligned}
\end{equation*}
Then \eqref{144} follows by the application of the identity
\[
\rme^{\pm\rmi \theta_2\cdot(\xi - y)}  - \rme^{\pm\rmi \theta_1\cdot(\xi - y)} = \pm \rmi (\theta_2 - \theta_1)\cdot(\xi - y) \int_0^1 \rme^{\pm\rmi (t\theta_2 + (1-t) \theta_1)\cdot(\xi - y)}dt
\]
  together with the first moment assumption on $a(\xi)$.  The argument for \eqref{146} is analogous.

\medskip

While $b_\theta$ is coercive on $L^2_\#(Y)$, the form $a_\theta$ is not and its kernel
\begin{equation*}
	V_\theta :=\{v\in {L^2_\#(Y)} \, \vert \, a_\theta[v] = 0 \},
\end{equation*}
plays an important role in  this approach to homogenisation  of high-contrast media. Notice that by the Cauchy-Schwartz inequality we have that
\begin{equation}\label{148}
    a_\theta(u,v) = a_\theta(v,u) = 0, \quad \forall v\in V_\theta, \forall u \in L^2(Y).
\end{equation}

We also define the orthogonal complement of the kernel $V_\theta$ with  respect to the inner product $(\cdot,\cdot)_\theta$:
\begin{equation*}
	W_\theta :=\{w\in {L^2_\#(Y)} \, \vert (w,v)_\theta =0, \, \forall v\in V_\theta \}.
\end{equation*}

\subsection{Verifying hypotheses}

\noindent Next we verify  hypotheses (H1)--(H4) from \cite{CKS2023}.

\begin{proposition}[{\cite[Hypothesis (H2)]{CKS2023}}]
	\begin{equation}\label{156}
		V_\theta = \left\{
		\begin{array}{ll}
			\{ v\in {L^2_\#(Y)} \, \vert \, v=0 \mbox{ in } Y_\stiff^\# \}, & \theta \neq 0,
			\\
			\vspace{-8pt}
			\\
			\mathbf C + \{ v\in {L^2_\#(Y)} \, \vert \, v=0 \mbox{ in } Y_\stiff^\# \}, & \theta =0.
		\end{array}
		\right.
	\end{equation}
\end{proposition}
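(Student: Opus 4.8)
The plan is to compute directly when the form $a_\theta[v]$ vanishes, using the ellipticity assumptions on the kernel $a$ restricted to the stiff component together with the connectedness-through-the-kernel Assumption \ref{assumptionmisja1}. Recall that
\[
a_\theta[v] = \int_{Y}\int_{\R^d} a(\xi - y) \Lambda_0(y,\xi) \bigl|\rme^{\rmi \theta\cdot(\xi - y)} v(\xi)-v(y)\bigr|^2 \,d\xi\, dy,
\]
and that $\Lambda_0$ is bounded below by $\alpha_1>0$ on $Y_\stiff^\# \times Y_\stiff^\#$ and vanishes otherwise. Hence $a_\theta[v]=0$ if and only if $\rme^{\rmi \theta\cdot(\xi - y)} v(\xi) = v(y)$ for ($a$-almost) all pairs $(y,\xi)\in (Y_\stiff^\#)^2$ with $y-\xi$ in the support of $a$; in particular the condition only constrains the values of $v$ on $Y_\stiff^\#$ and places no restriction on $Y_\soft^\#$, which already gives the $\supseteq$ inclusion in both cases. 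For the reverse inclusion, first I would note that the constraint, applied with $y=\xi$ (legitimate since $a(0)\ge c_a>0$ by \eqref{a-2}), forces $\rme^{0} v(y)=v(y)$, which is vacuous; the real information comes from genuinely distinct points.

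The core step is a propagation argument along discrete paths. Fix $v$ with $a_\theta[v]=0$. For a.e.\ pair of points $y,\xi\in Y_\stiff^\#$ with $|y-\xi|<r_a$ we have, by \eqref{a-2}, $a(y-\xi)\ge c_a>0$, hence the almost-everywhere relation $v(y) = \rme^{\rmi \theta\cdot(\xi-y)} v(\xi)$ holds for a.e.\ such pair. I would upgrade this to a statement about the local average: for $r_0$ as in Assumption \ref{assumptionmisja1}, define $\bar v(y) := |Y_\stiff^\# \cap B_{r_0}(y)|^{-1}\int_{Y_\stiff^\# \cap B_{r_0}(y)} v(\xi')\,d\xi'$ wherever the denominator is positive (which, by the density bound with constant $\kappa_0$, holds for all $y\in Y_\stiff$, hence a.e.\ on $Y_\stiff^\#$ by periodicity). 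Using $2r_0+r_1\le r_a$, for two points $y',y''\in Y_\stiff^\#$ with $|y'-y''|\le r_1$, the balls $B_{r_0}(y')$ and $B_{r_0}(y'')$ both lie within distance $r_a$ of each other, so the pointwise relation lets one compare $\bar v(y')$ with $\rme^{\rmi\theta\cdot(y''-y')}\bar v(y'')$ up to a null set; a Fubini argument makes $\bar v$ a.e.\ equal to a function satisfying $\bar v(y') = \rme^{\rmi\theta\cdot(y''-y')}\bar v(y'')$ whenever $|y'-y''|\le r_1$. Then along a discrete path $\{\eta_0,\dots,\eta_{N+1}\}$ as in \eqref{8a} connecting two arbitrary points of $Y_\stiff^\#\cap\square$, telescoping the phase factors gives $\bar v(\eta') = \rme^{\rmi\theta\cdot(\eta''-\eta')}\bar v(\eta'')$. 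Finally, invoking periodicity, $v$ (hence $\bar v$) satisfies $v(y+e_j)=v(y)$; combined with the path relation applied between $y$ and $y+e_j$, we get $\rme^{\rmi\theta\cdot e_j}\bar v(y) = \bar v(y)$ for each standard basis vector $e_j$. So if $\theta\neq 0$ then $\rme^{\rmi\theta\cdot e_j}\neq 1$ for some $j$, forcing $\bar v\equiv 0$ on $Y_\stiff^\#$, and then the pointwise relation propagates $v=0$ a.e.\ on $Y_\stiff^\#$; if $\theta=0$ the relation merely says $\bar v$ is constant on (the connected-through-$a$ set) $Y_\stiff^\#$, and again the pointwise relation forces $v$ itself to be a.e.\ equal to that same constant on $Y_\stiff^\#$. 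This yields the two cases of \eqref{156}.

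The step I expect to be the main obstacle is the careful almost-everywhere bookkeeping in the propagation: the relation $v(y)=\rme^{\rmi\theta\cdot(\xi-y)}v(\xi)$ holds only for a.e.\ pair, so one cannot naively chain it along a fixed path of measure-zero. The clean way around this is precisely the local-averaging device above: averaging over small balls $B_{r_0}$ turns an "a.e.\ pairwise" identity into a genuine (a.e.-defined, but then continuous-representative) identity for $\bar v$, at which point the finite path \eqref{8a} with uniformly bounded length $\bar N$ and step $\le r_1$ can be traversed legitimately, and the budget $r_a\ge 2r_0+r_1$ in Assumption \ref{assumptionmisja1} is exactly what guarantees that each averaging ball pair is "visible" to the kernel $a$. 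One also has to check that $\bar v$ and $v$ agree on $Y_\stiff^\#$ in the relevant sense — this follows again from the pointwise relation, which shows $v(\xi)$ is, for a.e.\ $\xi$, determined by any average of nearby values up to a phase that is locally constant. The torus/periodicity input at the end is routine once $\bar v$ is known to be locally rigid.
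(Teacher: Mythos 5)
Your plan is correct in substance, but it takes a genuinely different route from the paper. You characterise the null space of $a_\theta$ directly: $a_\theta[v]=0$ forces the gauged identity $\rme^{\rmi\theta\cdot\xi}v(\xi)=\rme^{\rmi\theta\cdot y}v(y)$ for a.e.\ stiff pair with $|\xi-y|<r_a$, and you propagate it by local averaging over balls $B_{r_0}$ along the discrete paths of \eqref{8a}, using $r_a\ge 2r_0+r_1$, finishing with periodicity to kill the phase (for $\theta\in Y^*\setminus\{0\}$ one has $\rme^{\rmi\theta_j}\neq1$ for some $j$). The paper instead proves quantitative coercivity: setting $\phi(y)=\rme^{\rmi\theta\cdot y}u(y)$, it bounds $a_\theta[u]$ from below via Lemma \ref{lemma:crucialest} (with $\M=Y_\stiff^\#$) and the extension Lemma \ref{rA.7}, then expands in Fourier series to get $a_\theta[u]\ge C|\theta|^2\int_{Y_\stiff}|u|^2$ for $\theta\neq0$ and $a_0[u]\ge C\int_{Y_\stiff}\big|u-\fint_{Y_\stiff}u\big|^2$, i.e.\ \eqref{76}--\eqref{77}, from which the description \eqref{156} of $V_\theta$ is immediate. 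Your propagation step is essentially a qualitative re-proof of Lemma \ref{lemma:crucialest}, so your route is more elementary and avoids the extension lemma and the Fourier computation; what it does not buy are the quantitative bounds \eqref{76}--\eqref{77}, which the paper reuses when verifying hypotheses (H1) and (H3) (coercivity of $a_\theta$ on $W_\theta$), so in the paper's economy the kernel characterisation comes for free from estimates needed anyway. One small repair to your sketch: the exact relation you assert for the averages, $\bar v(y')=\rme^{\rmi\theta\cdot(y''-y')}\bar v(y'')$, does not hold for ball averages of $v$ itself; average the gauged function $\phi(y)=\rme^{\rmi\theta\cdot y}v(y)$ instead, for which the a.e.\ identity $\phi(\xi')=\phi(\xi'')$ on pairs within distance $2r_0+r_1\le r_a$ yields $\bar\phi(y')=\bar\phi(y'')$ together with $\phi=\bar\phi$ a.e.\ on each ball; after this gauge change your telescoping along the paths and the final periodicity argument go through verbatim.
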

\begin{proof}
For $u\in L^2_\#(Y)$ the expression 
\begin{equation*}
	\int_{\R^d} a(\xi - y) \Lambda_0(y,\xi) |\rme^{\rmi \theta\cdot\xi } u(\xi)-\rme^{\rmi \theta\cdot y}u(y)|^2 d\xi 
\end{equation*} 
is periodic with respect to $y$. Then, using assumption \eqref{a-2}  one can easily see that 
\begin{equation*}
	a_\theta[u] \geq C \int_{(Y_\stiff^\#\cap \square^{2k})^2 \cap D_r} |\rme^{\rmi \theta\cdot\xi } u(\xi)-\rme^{\rmi \theta\cdot y}u(y)|^2d\xi dy,
\end{equation*}
where $k$ is as in \eqref{8a}.
Denote  $\phi(y) : = \rme^{\rmi \theta\cdot y}u(y)$. Applying Lemma \ref{lemma:crucialest} with $\M = Y_\stiff^\#$  and then the extension Lemma \ref{rA.7}, while taking into account the quasi-periodicity of $\phi$, we obtain 
\begin{equation*}
	a_\theta[u] \geq C  \int_{Y_\stiff\times Y_\stiff}  |\phi(\xi)- \phi(y)|^2d\xi dy  \geq C \int_{Y\times Y}|\tilde\phi(y+\xi)- \tilde\phi(y)|^2 dy d\xi,
\end{equation*}
where $\tilde \phi$ is the quasi-periodic extension of $\phi$ into the soft component.
Rewriting this for the corresponding (periodic) extension $\widetilde u$ of $u$ (i.e. $\widetilde u : = \rme^{- \rmi \theta\cdot y} \tilde\phi(y)$) we have
\begin{equation*}
	a_\theta[u] \geq C \int_{Y\times Y}|\rme^{\rmi \theta\cdot\xi } \widetilde u(y+\xi)-\widetilde u(y)|^2 dy d\xi = C \int_{Y}  \sum_{k\in\Z^d} |u_k|^2|\rme^{\rmi (2\pi k + \theta)\cdot \xi } -1|^2 d\xi,
\end{equation*}
where $u_k$  are the coefficients of the Fourier series for $\widetilde u$ on $Y$. Then a direct calculation shows that for $\theta \neq 0$
\begin{equation}\label{76}
	a_\theta[u] \geq C |\theta|^2  \int_{Y}|\widetilde u|^2 \geq C |\theta|^2  \int_{Y_\stiff}|u|^2.
\end{equation}
 When $\theta = 0$ we get that
\begin{equation}\label{77}
	a_0[u] \geq  C   \int_{Y_\stiff}\big|u - \fint_{Y_\stiff} u \big|^2.
\end{equation}
This implies that the elements of $V_\theta$ vanish on $Y_\stiff$ for $\theta \neq 0$, and equal to a constant  on $Y_\stiff$  when $ \theta = 0$.
\end{proof}

\begin{remark}
Hypothesis (H2) in \cite{CCV23} requires that the family of spaces $V_\theta$ has a removable singularity at $\theta = 0$, in the sense that upon replacing the space $V_0$ with another (suitable) space one gets a Lipschitz continuous in $\theta$ (in appropriate sense) family of spaces. This is clearly the case in our setting, cf. \eqref{156}.
\end{remark}

Henceforth, we denote by $L^2(Y_\soft)$ the subspace of $L^2(Y)$ whose elements vanish outside of $Y_\soft$ (and are extended periodically to the whole of $\R^d$ if necessary). Moreover, we identify $\C$ with the subspace of constant functions. Thus we  may write $V_\theta = L^2(Y_\soft)$, $\theta\neq 0$, $V_0 = \C+ L^2(Y_\soft)$.

\begin{proposition}[{\cite[Hypotheses (H1) and (H3)]{CKS2023}}]
		There exists a positive constant $C$ such that	$\forall \theta\in  Y^*$  and  $\forall w\in W_\theta$ one has
		\begin{equation}\label{79}
			a_\theta[w]\geq \nu_\theta \|w\|_{L^2(Y)}^2,
		\end{equation}
	where
	\begin{equation*}
		\nu_\theta = \left\{
		\begin{array}{ll}
			C|\theta|^2, & \theta \neq 0,
			\\
			\vspace{-8pt}
			\\
			C, & \theta =0.
		\end{array}
		\right.
	\end{equation*}

\end{proposition}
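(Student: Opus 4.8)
The plan is to establish the coercivity estimate \eqref{79} on the complement space $W_\theta$ by exploiting the characterisation of $V_\theta$ in \eqref{156} together with the estimates \eqref{76} and \eqref{77} already obtained in the previous proposition. For $\theta \neq 0$, I would first argue that the form $a_\theta$ is already coercive \emph{modulo the soft component}: from \eqref{76} we have $a_\theta[u] \geq C|\theta|^2 \int_{Y_\stiff} |u|^2$ for all $u \in L^2_\#(Y)$, so the only obstruction to full coercivity is control of the $L^2(Y_\soft)$-part of $u$. But $V_\theta = L^2(Y_\soft)$ for $\theta \neq 0$, hence any $w \in W_\theta$ is $(\cdot,\cdot)_\theta$-orthogonal to all functions supported on $Y_\soft$; the point is to convert this orthogonality into an estimate $\|w\|_{L^2(Y_\soft)}^2 \leq C \|w\|_{L^2(Y_\stiff)}^2$, possibly with a $\theta$-dependent constant, which combined with \eqref{76} yields \eqref{79}.

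The mechanism for this conversion: given $w \in W_\theta$, take the test function $v = w\,\mathbf{1}_{Y_\soft} \in V_\theta$. Then $(w,v)_\theta = 0$, i.e. $a_\theta(w,v) + b_\theta(w,v) = 0$. By \eqref{148}, $a_\theta(w,v) = 0$ since $v \in V_\theta$, so $b_\theta(w,v) = 0$. Writing out $b_\theta$ from \eqref{64}, this reads
\[
\int_{Y}\int_{\R^d} a(\xi-y) p(y,\xi)\,(\rme^{\rmi\theta\cdot(\xi-y)}w(\xi)-w(y))\overline{(\rme^{\rmi\theta\cdot(\xi-y)}v(\xi)-v(y))}\,d\xi\,dy + \int_Y w\,\overline{v} = 0.
\]
Expanding $v = w\mathbf{1}_{Y_\soft}$ and using the lower bound $b_\theta[v] \geq \|v\|_{L^2(Y)}^2 = \|w\|_{L^2(Y_\soft)}^2$ together with the boundedness of the off-diagonal (stiff-coupling) terms by $C\|w\|_{L^2(Y_\stiff)}\|w\|_{L^2(Y_\soft)}$ — which follows from $a(\xi)(1+|\xi|^2)\in L^1$ and the $L^\infty$ bound on $p$ — we get a quadratic inequality of the form $\|w\|_{L^2(Y_\soft)}^2 \leq C\|w\|_{L^2(Y_\stiff)}\|w\|_{L^2(Y_\soft)}$, hence $\|w\|_{L^2(Y_\soft)} \leq C\|w\|_{L^2(Y_\stiff)}$ with $C$ \emph{independent} of $\theta$. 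Therefore $\|w\|_{L^2(Y)}^2 \leq (1+C^2)\|w\|_{L^2(Y_\stiff)}^2$, and \eqref{76} gives $a_\theta[w] \geq C|\theta|^2\|w\|_{L^2(Y_\stiff)}^2 \geq C'|\theta|^2\|w\|_{L^2(Y)}^2$, which is \eqref{79}. For $\theta = 0$, the argument is identical except that $V_0 = \C + L^2(Y_\soft)$, so one must also subtract off the mean: take $v = \fint_{Y_\stiff} w + (w - \fint_{Y_\stiff} w)\mathbf{1}_{Y_\soft}$ — wait, more carefully, $v$ should be the $(\cdot,\cdot)_0$-projection of $w$ onto $V_0$, or simply $v = c + w\mathbf{1}_{Y_\soft}$ for a well-chosen constant $c$; orthogonality of $w$ to $V_0$ then controls both $\|w - \text{const}\|_{L^2(Y_\soft)}$ and the constant itself in terms of $\|w - \fint_{Y_\stiff} w\|_{L^2(Y_\stiff)}$, and \eqref{77} closes the estimate.

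The main obstacle I anticipate is making the $\theta = 0$ case clean: one needs to verify that a $w \in W_0$ cannot have a large constant component, which requires checking that the constant functions are \emph{not} themselves in $W_0$ (indeed $\mathbf{1} \in V_0$, so $W_0 \perp_{(\cdot,\cdot)_0} \mathbf{1}$), and then carefully tracking how the mean-zero condition on $Y_\stiff$ interacts with the orthogonality condition so that the coupling terms in $b_0(w, v)$ remain bounded by $C\|w - \fint_{Y_\stiff}w\|_{L^2(Y_\stiff)}\|w\|_{L^2(Y_\soft)}$ rather than by something involving the uncontrolled full $L^2(Y)$-norm. A secondary point requiring care is the uniformity of all constants in $\theta$ — the estimate $\|w\|_{L^2(Y_\soft)} \leq C\|w\|_{L^2(Y_\stiff)}$ must have $\theta$-independent $C$, which it does because it comes purely from $b_\theta$ whose relevant bounds \eqref{143} are uniform in $\theta$; only the $|\theta|^2$ factor, which is built into $\nu_\theta$, is $\theta$-dependent. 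Hypothesis (H1) (that $W_\theta$ is the form domain complement and the forms are closed/bounded) is essentially immediate from \eqref{143} and the definitions, so I would dispatch it with a sentence.
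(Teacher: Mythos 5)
Your approach is essentially the same as the paper's: decompose $w = w_0 + w_1$ with $w_0$ the $L^2$-orthogonal projection of $w$ onto $V_\theta$, use the orthogonality $(w,w_0)_\theta = 0$, which by \eqref{148} reduces to $b_\theta(w,w_0) = 0$, and extract $\|w_0\|_{L^2(Y)} \leq C\|w_1\|_{L^2(Y)}$ from the coercivity and $\theta$-uniform boundedness of $b_\theta$, then feed this into \eqref{76}/\eqref{77}. Your first instinct for the $\theta=0$ test function, $v = \fint_{Y_\stiff} w + (w-\fint_{Y_\stiff}w)\mathbf{1}_{Y_\soft}$, is precisely the $w_0$ the paper uses (the $L^2$-projection of $w$ onto $V_0 = \C + L^2(Y_\soft)$), so the subsequent second-guessing was unnecessary.
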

\begin{proof}
	
Assume first that $\theta \neq 0$. Denoting
\begin{equation*}
	w = w_0 +w_1 : =  \mathbf{1}_{Y_\soft} w + \mathbf{1}_{Y_\stiff} w,
\end{equation*}
we rewrite \eqref{76} as
\begin{equation*}
	a_\theta[w] \geq   C |\theta|^2  \|w_1 \|_{L^2(Y)}^2.
\end{equation*}
Therefore, it is sufficient to show that
\begin{equation}\label{82}
	\| w_0\|_{L^2(Y)} \leq C\|w_1 \|_{L^2(Y)}.
\end{equation}
According to the orthogonality condition, for every $v\in V_\theta$ we have (cf. \eqref{148})
\begin{equation}\label{166}
	0=	(w,v)_\theta = a_\theta(w,v) + b_\theta(w,v) = b_\theta(w,v).
\end{equation}
Setting $v = w_0$ we obtain (cf. \eqref{64})
\begin{multline*}
	\int_{Y}\int_{\R^d} a(\xi - y) p(y,\xi) \left|\rme^{\rmi \theta\cdot(\xi - y)} w_0(\xi)-w_0(y)\right|^2 d\xi dy
	+  \int_{Y} |w_0(y)|^2 dy
	\\
	= - \int_{Y}\int_{\R^d} a(\xi - y) p(y,\xi) (\rme^{\rmi \theta\cdot(\xi - y)} w_1(\xi)-w_1(y))\overline{(\rme^{\rmi \theta\cdot(\xi - y)} w_0(\xi)-w_0(y))}d\xi dy,
\end{multline*}
which immediately implies  \eqref{82}.

Now we treat the case $\theta = 0$. Decomposing $w = w_0 +w_1$, where
\begin{equation*}
	w_0	 : =  \mathbf{1}_{Y_\soft} w + \mathbf{1}_{Y_\stiff} \fint_{Y_\stiff} w , \quad w_1	 : =  \mathbf{1}_{Y_\stiff}\left( w -  \fint_{Y_\stiff} w\right),
\end{equation*}
we observe that \eqref{77} implies
\begin{equation*}
	a_0[w_1] \geq  C   \int_{Y_\stiff}|w_1|^2,
\end{equation*}
and, by direct calculation, one has
\begin{equation*}
	\int_{Y} w \, \overline{w_0} = 	\int_{Y} |w_0|^2.
\end{equation*}
Then, retracing the argument for the previous  case  we arrive at
\begin{equation*}
	a_0[w] \geq  C  \|w \|_{L^2(Y)}^2.
\end{equation*}
\end{proof}

Next assertion is a straightforward consequence of the coercivity of the form $a_0$ and the Lipschitz property \eqref{144}, see also \cite[Proposition 4.1]{CKS2023}.
\begin{corollary}\label{c7.4}
There exist positive constants $C$ and $\theta_a$ such that
\begin{equation*}
C \|w_0\|_{L^2(Y)}  \leq a_\theta[w_0], \quad \forall w_0\in W_0, \,\, \forall \theta \in Y^*,\,\,|\theta|\leq \theta_a.
\end{equation*}
\end{corollary}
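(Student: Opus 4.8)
The plan is to deduce the claim from the coercivity estimate for $a_0$ on $W_0$, which is exactly estimate \eqref{79} at $\theta = 0$, together with the Lipschitz continuity \eqref{144} of $\theta \mapsto a_\theta$. First I would fix $w_0 \in W_0$ with $\|w_0\|_{L^2(Y)} = 1$ (the general case follows by homogeneity) and write, for an arbitrary $\theta \in Y^*$,
\begin{equation*}
a_\theta[w_0] = a_0[w_0] + \bigl(a_\theta[w_0] - a_0[w_0]\bigr).
\end{equation*}
By \eqref{79} with $\theta = 0$ we have $a_0[w_0] \geq C_0$ for some fixed $C_0 > 0$, and by \eqref{144} (applied with $u = v = w_0$, $\theta_1 = \theta$, $\theta_2 = 0$) we have $|a_\theta[w_0] - a_0[w_0]| \leq C_1 |\theta|$.

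Combining these, $a_\theta[w_0] \geq C_0 - C_1|\theta|$, so it suffices to choose $\theta_a := C_0/(2C_1)$; then for all $|\theta| \leq \theta_a$ one gets $a_\theta[w_0] \geq C_0/2 =: C > 0$. Undoing the normalisation gives $a_\theta[w_0] \geq C\|w_0\|_{L^2(Y)}^2$ for all $w_0 \in W_0$ and all $\theta \in Y^*$ with $|\theta| \leq \theta_a$, which is the assertion (note that the target inequality as stated has $\|w_0\|_{L^2(Y)}$ on the left, so one either reads it with the square, consistent with $a_\theta$ being a quadratic form, or restricts attention to unit vectors — either way the argument is the same).

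\textbf{Main obstacle.} There is essentially no obstacle here; the only point requiring a little care is that $W_0$ is the orthogonal complement of $V_0$ with respect to the $\theta = 0$ inner product $(\cdot,\cdot)_0$, whereas for $\theta \neq 0$ one would naturally work with $W_\theta$. The corollary is stated precisely so as to sidestep this: it controls $a_\theta$ from below on the \emph{fixed} space $W_0$, not on the moving spaces $W_\theta$, so no comparison of the subspaces is needed and the bare combination of \eqref{79} (at $\theta=0$) and \eqref{144} does the job. This is why the constant $\theta_a$ must be taken small — it quantifies how far one can perturb $\theta$ away from $0$ before the lower bound $C_0$ furnished by coercivity at the origin can be eaten up by the Lipschitz error term.
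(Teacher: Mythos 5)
Your argument is correct and is exactly the route the paper takes: the text proves the corollary by combining the coercivity bound \eqref{79} at $\theta=0$ on $W_0$ with the Lipschitz estimate \eqref{144}, choosing $\theta_a$ small enough that the perturbation term cannot absorb the coercivity constant. Your remark about the missing square on $\|w_0\|_{L^2(Y)}$ in the statement is also the right reading (it is a typo for $\|w_0\|_{L^2(Y)}^2$, consistent with $a_\theta$ being a quadratic form).
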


\begin{proposition}[{\cite[Hypothesis (H4)]{CKS2023}}]
Let the	sesquilinear maps $a_0': V_0 \times L^2(Y) \to \mathbf C^n$ and $a_0'': V_0 \times V_0 \to \mathbf C^{n\times n}$ be given by
\begin{equation*}
	\begin{aligned}
			a_0'(v,u) :=  &\int_{Y}\int_{\R^d} a(\xi) \Lambda_0(x,x+\xi)\, {\rmi \xi} \,v \,(\overline{u(x+\xi)-u(x))}\,d\xi dx,
		\\
		a_0'' (v, \hat v):= & \int_{Y}\int_{\R^d} a(\xi) \Lambda_0(x,x+\xi) \,\xi \otimes \xi \,v\, \overline{\hat v}\,d\xi dx.
	\end{aligned}
\end{equation*}
The following bounds hold
	\begin{equation}\label{96}
		|	a_\theta( v, u ) - a_0'(v,u)\cdot \theta |\leq C |\theta|^2 \|v\|_{L^2(Y)} \|u\|_{L^2(Y)}, \quad \forall v\in V_0, \forall u\in {L^2(Y)}, \forall \theta\in  Y^*,
	\end{equation}
	\begin{equation}\label{99}
		|a_\theta(v, \hat v) - a_0'' (v, \hat v) \theta\cdot \theta| \leq|\theta|^2 h(|\theta|) \|v\|_{L^2(Y)} \|\hat v\|_{L^2(Y)}, \qquad\quad \,\, \forall v, \hat v \in V_0, \forall \theta\in  Y^*,
	\end{equation}
	where  $h(t)$ is a non-negative increasing function such that $h(t)\to 0$ as $t\to 0$.
\end{proposition}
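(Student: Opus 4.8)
The plan is to prove both bounds \eqref{96} and \eqref{99} by Taylor-expanding the phase factor $\rme^{\rmi\theta\cdot(\xi-y)}$ in the sesquilinear form $a_\theta$, exploiting the crucial structural fact that for $v\in V_0$ the function $v$ is constant on $Y_\stiff^\#$. Recall from \eqref{156} that $V_0=\mathbf C+L^2(Y_\soft)$, so for $v\in V_0$ we may write $v=c+v_\soft$ with $c\in\mathbf C$ and $v_\soft$ supported in $Y_\soft^\#$; since $\Lambda_0$ vanishes outside $Y_\stiff^\#\times Y_\stiff^\#$, in the integral defining $a_\theta(v,u)$ only the constant part $c$ of $v$ contributes (both $v(\xi)$ and $v(y)$ are replaced by $c$ where $\Lambda_0(y,\xi)\ne 0$). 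Thus $a_\theta(v,u)=c\int_Y\int_{\R^d}a(\xi-y)\Lambda_0(y,\xi)(\rme^{\rmi\theta\cdot(\xi-y)}-1)\overline{(\rme^{\rmi\theta\cdot(\xi-y)}u(\xi)-u(y))}\,d\xi\,dy$, after the change of variables $\xi\mapsto y+\xi$ this matches the structure of $a_0'$ up to the replacement of $\rme^{\rmi\theta\cdot\xi}-1$ by its linearization $\rmi\theta\cdot\xi$.

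The first step is to establish \eqref{96}. Using the exact integral identity
\[
\rme^{\rmi\theta\cdot\xi}-1 = \rmi\theta\cdot\xi + (\rmi\theta\cdot\xi)^2\int_0^1(1-t)\rme^{\rmi t\theta\cdot\xi}\,dt,
\]
the remainder $a_\theta(v,u)-a_0'(v,u)\cdot\theta$ is bounded by $|c|\,|\theta|^2\int_Y\int_{\R^d}a(\xi)|\xi|^2\Lambda_0(x,x+\xi)\,|\overline{u(x+\xi)-u(x)}|\,d\xi\,dx$. Since $a(\xi)|\xi|^2\in L^1(\R^d)$ by the second moment assumption and $\Lambda_0$ is bounded, the Cauchy--Schwarz inequality (in $d\xi\,dx$, with weight $a(\xi)|\xi|^2$) gives a bound $C|\theta|^2\|v\|_{L^2(Y)}\|u\|_{L^2(Y)}$; note $|c|\le C\|v\|_{L^2(Y)}$ since $c=\fint_{Y_\stiff}v$. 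This is exactly \eqref{96}.

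For \eqref{99} I restrict both arguments to $V_0$, so both $v=c$ and $\hat v=\hat c$ are (effectively) constants on $Y_\stiff^\#$, and the form becomes
\[
a_\theta(v,\hat v) = c\,\overline{\hat c}\int_Y\int_{\R^d}a(\xi)\Lambda_0(x,x+\xi)\,|\rme^{\rmi\theta\cdot\xi}-1|^2\,d\xi\,dx.
\]
Comparing $|\rme^{\rmi\theta\cdot\xi}-1|^2$ with its quadratic part $(\theta\cdot\xi)^2 = (\xi\otimes\xi)\theta\cdot\theta$ and using $|\,|\rme^{\rmi\theta\cdot\xi}-1|^2-(\theta\cdot\xi)^2\,|\le g(|\theta||\xi|)\,|\theta|^2|\xi|^2$ for some bounded function $g$ with $g(s)\to0$ as $s\to0$, one gets $|a_\theta(v,\hat v)-a_0''(v,\hat v)\theta\cdot\theta|\le |c||\hat c||\theta|^2\int a(\xi)|\xi|^2 g(|\theta||\xi|)\|\Lambda_0\|_\infty\,d\xi\,dx$. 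The main obstacle here is to show that $h(|\theta|):=C\int_Y\int_{\R^d}a(\xi)|\xi|^2 g(|\theta||\xi|)\Lambda_0(x,x+\xi)\,d\xi\,dx\to0$ as $|\theta|\to0$: this follows from the dominated convergence theorem, with dominating function $C\,a(\xi)|\xi|^2\in L^1$ (since $g$ is bounded), and $h$ is clearly increasing in $|\theta|$ because $g$ is increasing. This establishes \eqref{99}, and together with the first step completes the verification of Hypothesis (H4).
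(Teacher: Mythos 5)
Your overall strategy — reduce $a_\theta(v,u)$ to the constant value $c$ of $v$ on the stiff component (where $\Lambda_0$ is supported), Taylor-expand the phase, and use the second-moment integrability of $a$ together with weighted Cauchy--Schwarz — is the same as the paper's for \eqref{96}, but as written your remainder bound for \eqref{96} is incomplete. After the reduction,
\begin{equation*}
a_\theta(v,u)=c\int_Y\int_{\R^d}a(\xi)\,\Lambda_0(x,x+\xi)\,(\rme^{\rmi\theta\cdot\xi}-1)\,\overline{\bigl(\rme^{\rmi\theta\cdot\xi}u(x+\xi)-u(x)\bigr)}\,d\xi\,dx,
\end{equation*}
whereas in $a_0'(v,u)\cdot\theta$ the conjugated factor is $\overline{(u(x+\xi)-u(x))}$ \emph{without} the phase. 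Hence replacing $\rme^{\rmi\theta\cdot\xi}-1$ by $\rmi\,\theta\cdot\xi$ leaves, in addition to the Taylor remainder you estimate, the cross term
\begin{equation*}
c\int_Y\int_{\R^d}a(\xi)\,\Lambda_0(x,x+\xi)\,\rmi(\theta\cdot\xi)\,\overline{(\rme^{\rmi\theta\cdot\xi}-1)\,u(x+\xi)}\,d\xi\,dx,
\end{equation*}
which your displayed bound (involving only $|u(x+\xi)-u(x)|$) does not cover. The term is harmless: $|\rme^{\rmi s}-1|\le|s|$ and the same Cauchy--Schwarz with weight $a(\xi)|\xi|^2$ give $C|\theta|^2\|v\|_{L^2(Y)}\|u\|_{L^2(Y)}$, and indeed the paper's expansion keeps precisely this contribution (there it appears as the term with $|\rme^{\rmi\theta\cdot\xi}-1|^2\,v\,\overline{u(x+\xi)}$, due to a slightly different grouping). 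So this is a fixable slip rather than a wrong approach, but the term must be written down and estimated.

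For \eqref{99} your argument is correct and takes a genuinely different, in fact cleaner, route than the paper. The paper splits the $\xi$-integral into $|\xi|<r$ and $|\xi|>r$, controls the far region by the tail $g(r)=\int_{|\xi|>r}a(\xi)|\xi|^2d\xi$ and the near region by the fourth-order Taylor remainder, and then balances the two via an implicitly defined $r(t)$, ending with $h(t)=C\sqrt{g(r(t))}$. You instead use the uniform pointwise inequality $\bigl|\,|\rme^{\rmi s}-1|^2-s^2\bigr|\le g(|s|)\,s^2$ — you should exhibit such a $g$ explicitly, e.g. $g(s)=\min\{s^2/12,1\}$, which follows from $0\le s^2-2(1-\cos s)\le s^4/12$, with monotonicity of $g$ used to pass from $g(|\theta\cdot\xi|)$ to $g(|\theta||\xi|)$ — so that $h(t)=C\int_{\R^d}a(\xi)|\xi|^2\min\{t^2|\xi|^2,1\}\,d\xi$ is non-decreasing and, by dominated convergence with dominant $a(\xi)|\xi|^2\in L^1(\R^d)$, tends to $0$ as $t\to0$. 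This avoids the near/far splitting and the implicit function $r(t)$, and still recovers $h(t)\le Ct$ when $a(\xi)|\xi|^3\in L^1(\R^d)$ (since $\min\{t^2|\xi|^2,1\}\le t|\xi|$), consistent with Remark \ref{r6.6}; like the paper's $h$, it is non-explicit in general, being governed by the tail decay of $a(\xi)|\xi|^2$, so nothing is lost in the subsequent construction of $\overline{h}$. Finally, keep the observation you already make that $|c|\le|Y_\stiff|^{-1/2}\|v\|_{L^2(Y)}$ (and likewise for $\hat c$), which is what converts $|c||\hat c|$ into $\|v\|_{L^2(Y)}\|\hat v\|_{L^2(Y)}$ in both bounds.
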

\begin{proof}

Recalling that elements of $ V_0$ are constant on $Y_\stiff$ and taking into account the expansion
\begin{equation*}
	\rme^{\rmi \theta\cdot\xi} = 1 + \rmi \theta\cdot\xi + r(\theta\cdot \xi), \mbox{ where } |r(\theta\cdot \xi)| \leq C |\theta|^2 |\xi|^2,
\end{equation*}
and the bound
\begin{equation*}
	|\rme^{\rmi \theta\cdot\xi} - 1|^2 \leq C |\theta|^2 |\xi|^2,
\end{equation*}
we can expand $a_\theta(v,u)$, $u\in {L^2(Y)}$, in terms of $\theta$ as follows,
\begin{equation*}
	\begin{aligned}
		a_\theta(v,u) = \theta \cdot \int_{Y}\int_{\R^d} a(\xi) \Lambda_0(x,x+\xi) ({\rmi \xi} \,v) (\overline{u(x+\xi)-u(x))}d\xi dx +
		\\
		\int_{Y}\int_{\R^d} a(\xi) \Lambda_0(x,x+\xi) r(\theta\cdot \xi) \,v\, \overline{(u(x+\xi)-u(x))}d\xi dx +
		\\
		\int_{Y}\int_{\R^d} a(\xi) \Lambda_0(x,x+\xi) |\rme^{\rmi \theta\cdot\xi} - 1|^2 \,v\,\overline{u(x+\xi)}.
	\end{aligned}
\end{equation*}
This readily implies  \eqref{96}.

Next we show the validity of \eqref{99}. For $v, \hat v \in V_0$ we have
\begin{equation*}
	a_\theta(v, \hat v)  = \int_{Y}\int_{\R^d} a(\xi) \Lambda_0(x,x+\xi) |\rme^{\rmi \theta\cdot\xi} -1|^2 \,v\, \overline{\hat v}\,d\xi dx.
\end{equation*}	
In case when the support of $a$ is finite the assertion is trivial due to the Taylor expansion
\begin{equation}\label{101}
	|\rme^{\rmi \theta\cdot\xi} -1|^2 = |\theta\cdot \xi|^2(1 + \mathcal R(\theta\cdot \xi)),
\end{equation}
where the $|\mathcal R(t)| \leq |t|^2$. Moreover, \eqref{99} holds with  $h(|\theta|) = O(|\theta|^4)$.

Assume now that the support of $a$ is unbounded. In this case the function
\begin{equation}\label{183}
	g(r) : = 	 \int_{|\xi|>r} a(\xi) |\xi|^2 d\xi.
\end{equation}
is strictly positive, continuous, decreasing for all $r>0$, and $g(r)\to 0$ as $r \to \infty$ since $a(\xi)|\xi|^2$ is summable.

Consider the decomposition
\begin{multline}\label{98}
	a_\theta(v, \hat v)  =
	\int_{Y}\int_{|\xi|< r} a(\xi) \Lambda_0(x,x+\xi) |\rme^{\rmi \theta\cdot\xi} -1|^2 \,v\, \overline{\hat v}\,d\xi dx +
	\\
	\int_{Y}\int_{|\xi|>r} a(\xi) \Lambda_0(x,x+\xi) |\rme^{\rmi \theta\cdot\xi} -1|^2 \,v\, \overline{\hat v}\,d\xi dx.
\end{multline}
We can estimate the second term on the right-hand  side as follows,
\begin{multline}\label{182}
	\left|	\int_{Y}\int_{|\xi|>r} a(\xi) \Lambda_0(x,x+\xi) |\rme^{\rmi \theta\cdot\xi} -1|^2 \,v\, \overline{\hat v}d\xi dx \right| =
	\\
	\left|	\int_{Y}\int_{|\xi|>r} a(\xi)|\xi|^2 \Lambda_0(x,x+\xi) \frac{|\rme^{\rmi \theta\cdot\xi} -1|^2}{|\xi|^2} \,v\, \overline{\hat v}d\xi dx \right|\leq C g(r) r^{-2}  \|v\|_{L^2(Y)} \|\hat v\|_{L^2(Y)}.
\end{multline}
Next we deal with the first term on the right-hand  side of \eqref{98} utilising \eqref{101}:
\begin{multline}\label{183a}
	\left|\int_{Y}\int_{|\xi|< r} a(\xi) \Lambda_0(x,x+\xi) |\rme^{\rmi \theta\cdot\xi} -1|^2 \,v\, \overline{\hat v}\,d\xi dx  - a_0'' (v, \hat v)\theta\cdot\theta\right| \leq
	\\
	|\theta|^2	\left|	\int_{Y}\int_{|\xi|> r} a(\xi) \Lambda_0(x,x+\xi) |\xi|^2 \,v\, \overline{\hat v}\,d\xi dx \right| +
	\\
	C |\theta|^2 \left| \int_{Y}\int_{|\xi|< r} a(\xi) \Lambda_0(x,x+\xi)  |\xi|^2 \, \mathcal R(\theta\cdot \xi) \,v\, \overline{\hat v}\,d\xi dx\right| \leq
	\\
	g(r) |\theta|^2 \|v\|_{L^2(Y)} \|\hat v\|_{L^2(Y)} + C |\theta|^2 \mathcal R(|\theta| r) \|v\|_{L^2(Y)} \|\hat v\|_{L^2(Y)}.
\end{multline}
For $t>0$ we define a function $r = r(t)$ via the relation
\begin{equation*}
	\frac{(g(r))^{1/4}}{r} = t.
\end{equation*}
Due to the properties of  $g(r)$,   the function $r(t)$ is well defined and tends to infinity as $t \to 0$. Thus we have that
\begin{equation*}
	g(r(|\theta|)) |\theta|^2 + |\theta|^2\mathcal R(|\theta|r(|\theta|)) + g(r(|\theta|)) (r(|\theta|))^{-2} \leq C \sqrt{g(r(|\theta|))} \, |\theta|^2.
\end{equation*}
Setting
\begin{equation}\label{189}
	h(t): =  C \sqrt{g(r(t))}
\end{equation}
we get \eqref{99}  (the monotonicity of $h$ can be easily inferred from the monotonicity of $g$).
\end{proof}
\begin{remark} \label{r6.6}
	In the case when the kernel $a$ has finite third moment, i.e.
	$\int_{\R^d} a(\xi)|\xi|^3 d\xi< +\infty$, then it is is easy to see that $h(t)=t$. Indeed, by setting $r=1/|\theta|$ in \eqref{98} we immediately get that the right hand sides in  \eqref{182} and \eqref{183a} are of order $|\theta|^3$.
\end{remark}

\subsection{Correctors and the homogenised form}

The hypotheses (H1)--(H4) entail a two-step approximation of the family of problems \eqref{62} by simplified ones. The first approximation  is given in Proposition \ref{th9.1} below (see \cite{CKS2023} for the proof), the second  is formulated in Proposition \ref{th7.5}. Its proof is an adaptation of the general argument from \cite{CKS2023} to the situation of the weaker hypothesis (H4). In  turn, Proposition \ref{th7.5} leads to the bounds on the rate of convergence of spectra of the operator $\AA_\e$, formulated in Proposition \ref{th7.11}.

For $v_0 \in V_0$ consider a ``corrector'' problem: find $\mathcal N_\theta v_0 \in W_0$ satisfying
\begin{equation*}
	\begin{aligned}
	a_\theta(\mathcal N_\theta v_0 , w_0  ) =  - a_\theta(v_0 , w_0  ),\quad \forall w_0 \in W_0.
	\end{aligned}
\end{equation*}
This problem is well posed for $|\theta|\leq \theta_a$ due to the coercivity of the form $a_\theta$ on the space $W_0$, see Corollary \ref{c7.4}. This defines $\mathcal N_\theta$ as a bounded linear operator from $V_0$ to $W_0$.
Note that the values of $v_0$ in $Y_\soft$ are of no importance since  $a_\theta(v_0 , w_0 ) = 0$ for every $v_0 \in L^2(Y_\soft)$.

\begin{proposition}[{\cite[Theorem 5.3]{CKS2023}}]\label{th9.1}
 There exists $\theta_1>0$ ($\theta_1\leq \theta_a$) such that for every $\theta\in Y^*$, $|\theta|\leq \theta_1$, there exists a unique solution $z+ v\in V_0 =  \C +L^2(Y_\soft)$ to
	\begin{equation}\label{116}
		B_\e(z+ v, \widetilde z+ \tilde v) :=\e^{-2} a_\theta(z+ \mathcal N_\theta z, \widetilde z + \mathcal N_\theta \widetilde z) + b_\theta(z+v, \widetilde z+ \tilde v) = ( f, \widetilde z+ \tilde v)_{L^2(Y)}, \quad \forall \widetilde z+ \tilde v \in V_0.
	\end{equation}
	(We do not reflect the dependence of $z+v$ on $\e$ in the notation.)
	Furthermore, the following error bounds hold for	
	the solution $u^\e_\theta$ to \eqref{62}:
	\begin{equation}\label{197}
		\e^{-2}  a_\theta[u^\e_\theta - (z+ \mathcal N_\theta z + v )] + b_\theta[u^\e_\theta - (z+ \mathcal N_\theta z + v )] \leq C \e^2 \|f\|^2_{L^2(Y)},
	\end{equation}
	\begin{equation}\label{118a}
		\e^{-2}  a_\theta[u^\e_\theta - (z+ \mathcal N_\theta z + v )] + b_\theta[u^\e_\theta - (z+v)] \leq C \e^2 \|f\|^2_{L^2(Y)}.
	\end{equation}
\end{proposition}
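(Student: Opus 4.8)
The statement to be proved is Proposition~\ref{th9.1}, which the excerpt explicitly attributes to \cite[Theorem 5.3]{CKS2023}; accordingly, the plan is to deduce it from the abstract scheme of \cite{CKS2023} once hypotheses (H1)--(H4) have been verified, which was done in the preceding propositions. Concretely, I would first record that the forms $a_\theta$, $b_\theta$ satisfy: the uniform two-sided equivalence of $\|\cdot\|_\theta$ with $\|\cdot\|_{L^2(Y)}$; the boundedness and coercivity bounds \eqref{143}; the Lipschitz dependence on $\theta$ in \eqref{144}, \eqref{146}; the identification of the kernel $V_\theta$ in \eqref{156} and its ``removable singularity'' at $\theta=0$; the coercivity of $a_\theta$ on $W_\theta$ with gap $\nu_\theta$ (estimate \eqref{79}) together with Corollary~\ref{c7.4}; and the expansion bounds \eqref{96}, \eqref{99} of (H4). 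With these in hand, the existence of $\theta_1\le\theta_a$ and the unique solvability of \eqref{116} follow from Lax--Milgram applied to the form $B_\e$ on $V_0$: the key point is that $B_\e$ is coercive on $V_0$ uniformly in $\e$ and in $|\theta|\le\theta_1$, because on the $\C$-part the term $\e^{-2}a_\theta(z+\mathcal N_\theta z,\cdot)$ is comparable to $\e^{-2}|\theta|^2|z|^2$ (by \eqref{99} and the coercivity of $a_\theta$ on $W_0$, which controls $\mathcal N_\theta$), while on the $L^2(Y_\soft)$-part the term $b_\theta$ alone is coercive by \eqref{143}; continuity of $B_\e$ is immediate from \eqref{143}.

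For the error bounds \eqref{197}, \eqref{118a} I would follow the standard two-scale-type argument of \cite{CKS2023}. Decompose the exact solution $u^\e_\theta\in L^2_\#(Y)=V_\theta\oplus W_\theta$ (at $\theta=0$, $V_0\oplus W_0$; for $\theta\ne0$ the splitting is $V_\theta\oplus W_\theta$, but one works with the $V_0$-based ansatz and pays a price controlled by the Lipschitz estimates \eqref{144}, \eqref{146}). Write $u^\e_\theta = (z+\mathcal N_\theta z + v) + \rho_\e$, where $z+v$ solves \eqref{116}, and test the original equation \eqref{62} against $\rho_\e$ and against elements of $W_0$; subtracting the defining relations for $z$, $v$, $\mathcal N_\theta$ one obtains an identity of the form $\e^{-2}a_\theta[\rho_\e] + b_\theta[\rho_\e] = \langle \text{consistency error}, \rho_\e\rangle$, where the consistency error collects the $O(|\theta|^2 h(|\theta|))$ and $O(|\theta|^3)$ discrepancies from (H4) together with the cross terms produced by the corrector $\mathcal N_\theta$. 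The crucial quantitative input is that each such discrepancy, after being multiplied by $\e^{-2}$, is bounded by $C\e^2\|f\|_{L^2(Y)}\,(\e^{-2}a_\theta[\rho_\e]+b_\theta[\rho_\e])^{1/2}$; this is where the scaling $\e^{-2}|\theta|^2 \sim \e^{-2}\cdot O(\e^2)$ on the relevant spectral window is used, exactly as in \cite{CKS2023}. Absorbing the square-root factor yields \eqref{197}; \eqref{118a} then follows because $\mathcal N_\theta z$ lies in $W_0$ and the triangle inequality together with $a_\theta[\mathcal N_\theta z]\le C a_\theta[z] = 0$-type control (more precisely $\|\mathcal N_\theta z\|_{L^2(Y)}\le C\|z\|_{L^2(Y)}$ and the definition of $\mathcal N_\theta$) converts the bound on $b_\theta[u^\e_\theta-(z+\mathcal N_\theta z+v)]$ into one for $b_\theta[u^\e_\theta-(z+v)]$ at the cost of an extra $a_\theta[\cdot]$ term, already estimated.

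The main obstacle, and the only place genuine care is needed beyond bookkeeping, is the uniformity in $\theta$ of all constants, together with the handling of the singularity of the kernel $V_\theta$ at $\theta=0$: one must check that $\theta_1$ can be chosen independent of $\e$, that the coercivity constant for $B_\e$ on the $\C$-direction degenerates precisely like $\e^{-2}|\theta|^2$ and not worse, and that the replacement of $V_\theta$ by $V_0$ for small $\theta\ne0$ introduces only errors compatible with the stated $\e^2\|f\|^2$ bound. This is precisely the content that \cite{CKS2023} packages abstractly via hypotheses (H1)--(H4), and since all four have been verified above (with the caveat that (H4) holds here in the weaker form \eqref{99} with a general $h(t)\to0$ rather than $h(t)=O(t^2)$, which is harmless for \eqref{197}--\eqref{118a} as these are stated only with the $\e^2$-rate), the proof reduces to invoking \cite[Theorem 5.3]{CKS2023}. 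I would therefore present the proof as: (i) a one-paragraph verification that the hypotheses of the cited abstract theorem are met with uniform constants, citing the propositions of this subsection; (ii) the Lax--Milgram argument for well-posedness of \eqref{116}; (iii) the energy/consistency computation giving \eqref{197}; (iv) the short deduction of \eqref{118a} from \eqref{197}.
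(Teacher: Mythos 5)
Your proposal matches the paper's treatment: the paper gives no independent proof of this proposition but simply invokes \cite[Theorem 5.3]{CKS2023} once hypotheses (H1)--(H4) have been verified in the preceding propositions, adding only the remark that the plain $L^2(Y)$ norm can be used on the right-hand sides of \eqref{197} and \eqref{118a} because it is uniformly equivalent to the $\theta$-dependent norms in this setting. Your extra sketch of the internal Lax--Milgram/consistency-error mechanism goes beyond what the paper records, and your observation that the weakened form of (H4) is harmless for the $\e^2$ rate here is consistent with the paper, which attributes the loss of rate only to the second approximation step (Proposition \ref{th7.5}).
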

Note that we use the usual $L^2(Y)$ norm on the right-hand  sides of \eq{197}, \eqref{118a} rather than  a $\theta$-dependent dual norm used in \cite{CKS2023} due to their equivalence in the present setting.

We define a linear operator $N_\theta : V_0 \to W_0$ so that $N_\theta v$ is the unique solution to
\begin{equation}\label{114}
	a_0(N_\theta v, w) = - a_0'(v,w)\cdot \theta, \quad \forall w\in W_0.
\end{equation}
Clearly, this problem is well posed due to \eqref{79}.

Next we define the homogenised form $a_\theta^h: V_0\times V_0 \to \C$:
\begin{equation*}
	a_\theta^h(v,\tilde v): = a_0''(v,\tilde v)\theta \cdot\theta - a_0(N_\theta v, N_\theta \tilde v).
\end{equation*}

\begin{remark}\label{r7.3}
	The values of the arguments of the  form $a_\theta^h(v,\tilde v)$  in the soft component $Y_\soft$ is of no consequence  since the integral kernel is zero in  $Y_\soft$. Moreover, the space of test functions $W_0$ in \eqref{114} can be replaces with ${L^2(Y)}$ as both sides of the equation vanish if we replace $w$ by an element of $V_0$. Therefore, comparing the corrector problems \eqref{ex102} and \eqref{114}, we see that for $z\in \C$  one has $N_\theta z = \rmi \tilde \chi \cdot \theta z$, where $\tilde \chi$ is the vector whose components are the solutions to \eqref{ex102}. Furthermore,
	\begin{equation}\label{200}
		a_\theta^h(z,\widetilde z) = A^\hom \theta \cdot \theta \, z \overline{\widetilde z}, \quad z, \widetilde z \in \C,
	\end{equation}
	cf. \eqref{defA}.
\end{remark}

\begin{proposition}[{\cite[Proposition 5.4]{CKS2023}}]
Let $|\theta|\leq \theta_a$, then
	\begin{equation}\label{120a}
		\|\mathcal N_\theta v -  N_\theta v\|_{L^2(Y)} \leq C|\theta|^2 \| v\|_{L^2(Y)},\quad \forall v\in V_0.
	\end{equation}
\end{proposition}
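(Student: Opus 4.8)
The plan is to compare the two corrector operators $\mathcal N_\theta$ (defined via the full form $a_\theta$) and $N_\theta$ (defined via the leading-order form $a_0$ together with the linearised term $a_0'$), and to exploit the coercivity of $a_0$ on $W_0$ provided by \eqref{79} together with the quadratic remainder estimates \eqref{96} and \eqref{99}. First I would record the defining identities: for $v\in V_0$ and any $w_0\in W_0$,
\begin{equation*}
a_\theta(\mathcal N_\theta v, w_0) = -a_\theta(v,w_0), \qquad a_0(N_\theta v, w_0) = -a_0'(v,w_0)\cdot\theta,
\end{equation*}
and note that $N_\theta v \in W_0$, so $\mathcal N_\theta v - N_\theta v \in W_0$ and may be used as a test function. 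Subtracting, and writing $e := \mathcal N_\theta v - N_\theta v$, I would obtain
\begin{equation*}
a_0(e, e) = a_0(\mathcal N_\theta v, e) - a_0(N_\theta v, e) = \big(a_0(\mathcal N_\theta v,e) - a_\theta(\mathcal N_\theta v,e)\big) - a_\theta(v,e) + a_0'(v,e)\cdot\theta,
\end{equation*}
where I substituted $a_\theta(\mathcal N_\theta v, e) = -a_\theta(v,e)$ in the middle.

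Next I would estimate the two bracketed/residual terms. The term $-a_\theta(v,e) + a_0'(v,e)\cdot\theta$ is exactly of the form controlled by hypothesis (H4): by \eqref{96} it is bounded by $C|\theta|^2\|v\|_{L^2(Y)}\|e\|_{L^2(Y)}$. For the term $a_0(\mathcal N_\theta v,e) - a_\theta(\mathcal N_\theta v,e)$, I would write $a_\theta - a_0$ acting on the pair $(\mathcal N_\theta v, e)$; since $\mathcal N_\theta v, e \in W_0$, this difference of forms applied to elements of $W_0$ is $O(|\theta|)$ by the Lipschitz bound \eqref{144}, giving a bound $C|\theta|\,\|\mathcal N_\theta v\|_{L^2(Y)}\|e\|_{L^2(Y)}$. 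Combining with the coercivity $a_0(e,e)\ge C\|e\|_{L^2(Y)}^2$ from \eqref{79} (valid on all of $W_0$, $\theta=0$), I would get
\begin{equation*}
C\|e\|_{L^2(Y)}^2 \leq C|\theta|^2\|v\|_{L^2(Y)}\|e\|_{L^2(Y)} + C|\theta|\,\|\mathcal N_\theta v\|_{L^2(Y)}\|e\|_{L^2(Y)},
\end{equation*}
hence $\|e\|_{L^2(Y)} \leq C|\theta|^2\|v\|_{L^2(Y)} + C|\theta|\,\|\mathcal N_\theta v\|_{L^2(Y)}$.

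To close the estimate I need $\|\mathcal N_\theta v\|_{L^2(Y)} \leq C|\theta|\,\|v\|_{L^2(Y)}$ for $|\theta|\le\theta_a$. This I would obtain from the definition of $\mathcal N_\theta$ by testing with $w_0 = \mathcal N_\theta v$: Corollary \ref{c7.4} gives $C\|\mathcal N_\theta v\|_{L^2(Y)}^2 \le a_\theta(\mathcal N_\theta v,\mathcal N_\theta v) = -a_\theta(v,\mathcal N_\theta v)$, and since $v\in V_0$ one has $a_0(v,\cdot)=0$ so $a_\theta(v,\mathcal N_\theta v) = (a_\theta - a_0)(v,\mathcal N_\theta v)$, which by \eqref{144} (here really the bound on $a_\theta$ applied to $V_0\times L^2(Y)$, i.e. \eqref{96} at worst, or directly Lipschitz-in-$\theta$ from $a_0(v,\cdot)=0$) is $O(|\theta|)\|v\|_{L^2(Y)}\|\mathcal N_\theta v\|_{L^2(Y)}$; dividing yields $\|\mathcal N_\theta v\|_{L^2(Y)}\le C|\theta|\|v\|_{L^2(Y)}$. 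Plugging this back gives $\|e\|_{L^2(Y)}\leq C|\theta|^2\|v\|_{L^2(Y)}$, which is \eqref{120a}. The only genuinely delicate point is bookkeeping the regimes where each coercivity/Lipschitz estimate is valid (ensuring $\theta_a$ is chosen small enough that all invocations of Corollary \ref{c7.4} and \eqref{79} apply simultaneously), and making sure that $a_0(v,\cdot)\equiv 0$ for $v\in V_0$ is used to upgrade the naive $O(1)$ bounds on cross terms to $O(|\theta|)$ — this is the mechanism that turns the generic $O(|\theta|)$ corrector bound into the sharp $O(|\theta|^2)$ difference.
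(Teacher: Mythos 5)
Your proof is correct and proceeds by the standard comparison argument that the paper is pointing to in \cite{CKS2023}: subtract the defining identities of $\mathcal N_\theta$ and $N_\theta$ tested against $e=\mathcal N_\theta v - N_\theta v \in W_0$, use the coercivity of $a_0$ on $W_0$ from \eqref{79} to get $\|e\|^2 \lesssim |\theta|^2\|v\|\,\|e\| + |\theta|\|\mathcal N_\theta v\|\,\|e\|$, and then feed in the a priori bound $\|\mathcal N_\theta v\|\lesssim |\theta|\|v\|$, obtained by testing the $\mathcal N_\theta$-equation with $\mathcal N_\theta v$, using Corollary \ref{c7.4} and the observation that $a_\theta(v,\cdot)=(a_\theta-a_0)(v,\cdot)$ vanishes to first order in $\theta$. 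All the individual estimates are valid for $|\theta|\leq\theta_a$, and the factorisation of the error into an $O(|\theta|^2)$ consistency term (controlled by \eqref{96}) and an $O(|\theta|)\cdot O(|\theta|)$ cross term (controlled by \eqref{144} and the corrector size) is exactly the mechanism that gives the quadratic rate.

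One minor remark: a marginally shorter variant tests with the $a_\theta$-coercivity (Corollary \ref{c7.4}) instead of $a_0$, writing $a_\theta[e]=-a_\theta(v,e)+a_0'(v,e)\cdot\theta-(a_\theta-a_0)(N_\theta v,e)$; this replaces your $\|\mathcal N_\theta v\|\lesssim|\theta|\|v\|$ by the analogous bound for $N_\theta v$, which follows directly from \eqref{79} with $\theta=0$ and boundedness of $a_0'$, without invoking \eqref{96} a second time. Both routes are equivalent in content, and yours is complete.
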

\noindent The proof follows verbatim the argument in \cite{CKS2023} since it utilises  only the bound \eqref{96} of hypothesis (H4).
In order to simplify the notation without loss of generality we assume henceforth that
\begin{equation*}
	\liminf_{t \to 0} \frac{h(t)}{t} > 0.
\end{equation*}
(In case $\liminf_{t \to 0} \frac{h(t)}{t} = 0$ we simply replace $h(t)$ with $\max\{h(t) , t\}$.)

\begin{proposition}[{\cite[Proposition 5.5]{CKS2023}}]
	For any $|\theta|\leq \theta_a$ we have
	\begin{equation}\label{117}
		|a_\theta(v+\mathcal N_\theta v, \tilde v+\mathcal N_\theta \tilde v) - a_\theta^h(v,\tilde v)| \leq C |\theta|^2 h(|\theta|)\| v\|_{L^2(Y)} \|\tilde v\|_{L^2(Y)}, \quad \forall v,\tilde v\in V_0.
	\end{equation}
	\begin{equation}\label{118}
		a_\theta^h[z]\geq C|\theta|^2 \| z\|_{L^2(Y)}^2, \quad \forall z\in \C.
	\end{equation}
	\begin{proof}
		The only difference in the argument for \eqref{117} compared to \cite{CKS2023} is the use of the weaker bound \eqref{99},  which manifests in the weaker result. The bound \eqref{118} is   a direct consequence of \eqref{200}.
	\end{proof}
\end{proposition}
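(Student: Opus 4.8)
The plan is to prove \eqref{117} by a two–step reduction: first collapse the left–hand bilinear expression into the difference of just two terms by exploiting the Galerkin orthogonality built into the definitions of $\mathcal N_\theta$ and of the complement $W_0$, and then match each of these two terms against the corresponding piece of $a_\theta^h$ using the already–verified hypotheses. Since $\mathcal N_\theta v$ is defined by $a_\theta(\mathcal N_\theta v, w_0) = -a_\theta(v,w_0)$ for all $w_0\in W_0$, the element $u:=v+\mathcal N_\theta v$ satisfies $a_\theta(u,w_0)=0$ for every $w_0\in W_0$; as $\mathcal N_\theta\tilde v\in W_0$ and $a_\theta$ is Hermitian, this gives
\[
a_\theta(v+\mathcal N_\theta v,\,\tilde v+\mathcal N_\theta\tilde v)=a_\theta(v+\mathcal N_\theta v,\,\tilde v)=a_\theta(v,\tilde v)+a_\theta(\mathcal N_\theta v,\tilde v).
\]
Applying the same orthogonality to $\mathcal N_\theta\tilde v$ with test function $\mathcal N_\theta v\in W_0$, and using Hermitian symmetry once more, yields $a_\theta(\mathcal N_\theta v,\tilde v)=-a_\theta(\mathcal N_\theta v,\mathcal N_\theta\tilde v)$, hence
\[
a_\theta(v+\mathcal N_\theta v,\,\tilde v+\mathcal N_\theta\tilde v)-a_\theta^h(v,\tilde v)=\bigl[a_\theta(v,\tilde v)-a_0''(v,\tilde v)\theta\cdot\theta\bigr]-\bigl[a_\theta(\mathcal N_\theta v,\mathcal N_\theta\tilde v)-a_0(N_\theta v,N_\theta\tilde v)\bigr].
\]

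The first bracket is $O(|\theta|^2 h(|\theta|)\|v\|_{L^2(Y)}\|\tilde v\|_{L^2(Y)})$ directly by \eqref{99}; this is the single place where the \emph{weaker} form of hypothesis (H4) enters and where the loss of rate relative to \cite{CKS2023} originates. For the second bracket I would first record the a priori bounds $\|\mathcal N_\theta v\|_{L^2(Y)}\le C|\theta|\|v\|_{L^2(Y)}$ and $\|N_\theta v\|_{L^2(Y)}\le C|\theta|\|v\|_{L^2(Y)}$, obtained by testing the respective corrector equations against the correctors themselves and invoking the coercivity on $W_0$ (\eqref{79}, resp.\ Corollary \ref{c7.4}) together with the first–order bound \eqref{96} (resp.\ the continuity of $a_0'$). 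Then split
\[
a_\theta(\mathcal N_\theta v,\mathcal N_\theta\tilde v)-a_0(N_\theta v,N_\theta\tilde v)=\bigl[a_\theta-a_0\bigr](\mathcal N_\theta v,\mathcal N_\theta\tilde v)+a_0\bigl(\mathcal N_\theta v-N_\theta v,\,\mathcal N_\theta\tilde v\bigr)+a_0\bigl(N_\theta v,\,\mathcal N_\theta\tilde v-N_\theta\tilde v\bigr),
\]
and bound the three summands respectively by $C|\theta|\,\|\mathcal N_\theta v\|\,\|\mathcal N_\theta\tilde v\|$ (the Lipschitz–in–$\theta$ estimate \eqref{144}), $C\|\mathcal N_\theta v-N_\theta v\|\,\|\mathcal N_\theta\tilde v\|$ and $C\|N_\theta v\|\,\|\mathcal N_\theta\tilde v-N_\theta\tilde v\|$ (continuity of $a_0$). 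Inserting the a priori bounds and the corrector comparison \eqref{120a} ($\|\mathcal N_\theta v-N_\theta v\|\le C|\theta|^2\|v\|$) makes each of the three $O(|\theta|^3\|v\|\|\tilde v\|)$, which is $\le C|\theta|^2 h(|\theta|)\|v\|\|\tilde v\|$ thanks to the standing normalisation $\liminf_{t\to0}h(t)/t>0$. Combining the two brackets yields \eqref{117}.

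The bound \eqref{118} is then immediate: by Remark \ref{r7.3} (the identity \eqref{200}) one has $a_\theta^h(z,\widetilde z)=A^{\hom}\theta\cdot\theta\,z\overline{\widetilde z}$ for $z,\widetilde z\in\mathbf C$, and since constants on $Y=[0,1)^d$ satisfy $\|z\|_{L^2(Y)}^2=|z|^2$, the positive definiteness of $A^{\hom}$ from Lemma \ref{lmdefA}(c) gives $a_\theta^h[z]=(A^{\hom}\theta\cdot\theta)\,|z|^2\ge\tilde{\alpha}_1|\theta|^2\|z\|_{L^2(Y)}^2$.

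The main obstacle I anticipate is purely one of bookkeeping of orders in \eqref{117}: one must be confident that \emph{every} error term other than the one produced by \eqref{99} is genuinely of the smaller order $|\theta|^3$, so that it is absorbed into $|\theta|^2 h(|\theta|)$. This hinges on the quadratic gain $|\theta|^2$ in \eqref{120a} and on the smallness $\|\mathcal N_\theta v\|,\|N_\theta v\|=O(|\theta|)$ of the correctors; if either of these were only linear, the argument would not close, so the cleanest presentation first isolates those two facts before doing the splitting above.
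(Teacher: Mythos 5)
Your proposal is correct and reconstructs, in detail, the argument from \cite[Proposition~5.5]{CKS2023} that the paper simply cites, down to the same key ingredients: the Galerkin orthogonality giving $a_\theta(v+\mathcal N_\theta v,\tilde v+\mathcal N_\theta\tilde v)=a_\theta(v,\tilde v)-a_\theta(\mathcal N_\theta v,\mathcal N_\theta\tilde v)$, the $O(|\theta|)$ a priori bounds on the correctors from coercivity and \eqref{96}, the corrector comparison \eqref{120a}, the Lipschitz estimate \eqref{144}, the weaker bound \eqref{99} producing the $|\theta|^2 h(|\theta|)$ rate, the normalisation $\liminf_{t\to0}h(t)/t>0$ to absorb the $|\theta|^3$ remainders, and the identity \eqref{200} together with Lemma~\ref{lmdefA}(c) for \eqref{118}. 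This matches what the authors intend when they say the only change from \cite{CKS2023} is the substitution of \eqref{99}.
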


\subsection{Proof of Theorem \ref{th2.8}}

Recall the operator $\AA_\e^{h,\theta}$ associated with the form \eqref{28}. Utilising notation of this section, the operator $\AA_\e^{h,\theta}+1$ is associated with the form
\begin{equation*}
	B^h_\e(v+z, \tilde v+ \widetilde z):=	\e^{-2} a_\theta^h(z, \widetilde z) + b_\theta(v + z, \tilde v+ \widetilde z), \quad \forall v+z, \tilde v+\widetilde z \in V_0 =\C+ L^2_\#(Y_\soft).
\end{equation*}
The following assertion establishes, in particular, the bound \eqref{205a}. Indeed, it follows directly from \eqref{205} below
and the second inequality in \eqref{143}.

\begin{proposition}[{\cite[Theorem 5.6]{CKS2023}}]  \label{th7.5}
	For every $\theta\in Y^*$ there exists a unique solution $v^h + z^h \in   \C+ L^2_\#(Y_\soft)$ to
\begin{equation}\label{120}
	B^h_\e(v^h + z^h, \tilde v+ \widetilde z) =( f, \tilde v+\widetilde z )_{L^2(Y)}, \quad \forall \tilde v+\widetilde z \in \C+ L^2_\#(Y_\soft).
\end{equation}
	Furthermore, $v^h + (I+N_\theta) z^h$ and $v^h + z^h$ approximate the solution $u^\e_\theta$ to \eqref{62} in the sense that
	\begin{equation}\label{126}
		\e^{-2}  a_\theta[u^\e_\theta - (v^h + (I+N_\theta) z^h)] + b_\theta[u^\e_\theta - (v^h + (I+N_\theta) z^h)] \leq C (\overline{h}(\e))^2 \|f\|^2_{L^2(Y)} ,
	\end{equation}
	\begin{equation}\label{205}
		b_\theta[u^\e_\theta - (v^h + z^h)] \leq C  (\overline{h}(\e))^2 \|f\|^2_{L^2(Y)} ,
	\end{equation}
for some positive function $\overline{h}$ such that $\overline{h}(t) \to 0$ as $t\to 0$, $\lim_{t\to 0} \overline{h}(t)/t >0$.
\end{proposition}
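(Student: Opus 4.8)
The plan is to follow the abstract scheme of \cite{CKS2023}, tracking where the weaker hypothesis (H4) (namely \eqref{99} with a general $h$ in place of an $O(|\theta|^4)$ remainder) affects the estimates. First I would split the analysis into the two regimes $|\theta|\le \theta_1$ and $|\theta|>\theta_1$. In the regime $|\theta|>\theta_1$ the form $a_\theta$ is uniformly coercive on $W_\theta$ by \eqref{79} (with $\nu_\theta \ge C\theta_1^2$), so problem \eqref{62} is uniformly well-posed and, moreover, $\e^{-2}a_\theta[\,\cdot\,]$ dominates; one checks directly that the ``soft'' space $V_0$ degenerates (for $\theta\ne 0$, $V_\theta = L^2(Y_\soft)$) and that both $u^\e_\theta$ and the approximants $v^h+z^h$, $v^h+(I+N_\theta)z^h$ are $O(\e^2)\|f\|$-close to the solution of the limiting soft problem, giving \eqref{126}, \eqref{205} trivially in this regime with $\overline h(\e)=\e$. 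The real work is the regime $|\theta|\le\theta_1$.

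For $|\theta|\le\theta_1$ I would chain the two approximation steps. Step one is Proposition \ref{th9.1}: the solution $u^\e_\theta$ is approximated, in the $\e^{-2}a_\theta[\cdot]+b_\theta[\cdot]$ norm, by $z+\mathcal N_\theta z + v$ where $z+v$ solves \eqref{116}, with error $O(\e^2)\|f\|$, cf. \eqref{197}--\eqref{118a}. Step two replaces the form $B_\e$ in \eqref{116} by the homogenised form $B^h_\e$ and $\mathcal N_\theta$ by $N_\theta$. Here I use \eqref{117}, which says $|a_\theta(v+\mathcal N_\theta v,\tilde v+\mathcal N_\theta\tilde v)-a^h_\theta(v,\tilde v)|\le C|\theta|^2 h(|\theta|)\|v\|\|\tilde v\|$, together with \eqref{120a}, $\|\mathcal N_\theta v - N_\theta v\|\le C|\theta|^2\|v\|$. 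The standard comparison-of-solutions lemma (subtract the two variational problems, test with the difference, use coercivity \eqref{118} of $a^h_\theta$ on $\C$ scaled by $\e^{-2}$ and coercivity $b_\theta[\cdot]\ge\|\cdot\|^2_{L^2(Y)}$) then yields existence/uniqueness of $v^h+z^h$ solving \eqref{120} and the bound
\[
\e^{-2} a_\theta^h[z^h - z] + b_\theta[(v^h+z^h)-(z+v)] \le C \Big(\e^{-2}|\theta|^2 h(|\theta|)\Big)^2 \|f\|^2,
\]
after dividing through; the factor $\e^{-2}|\theta|^2 h(|\theta|)$ is what one must control. Since $|\theta|\le\theta_1$ is bounded, and since in the worst case $|\theta|$ is of order $1$ so $|\theta|^2 h(|\theta|)\sim h(1)$ is just a constant while for $|\theta|\to 0$ one has $\e^{-2}|\theta|^2 h(|\theta|)$ small only when $|\theta|\lesssim\e$, the correct bookkeeping (exactly as in \cite[Theorem 5.6]{CKS2023}) produces the quantity
\[
\overline h(\e) := \sup_{|\theta|\le\theta_1}\, \min\{\, 1,\ \e^{-1}|\theta|\, h(|\theta|)^{1/2}/|\theta| \cdot |\theta|\, \}
\]
— more transparently, $\overline h$ is obtained by optimising the split of $a_\theta(v+\mathcal N_\theta v,\cdot)$ at a cut-off radius $r=r(\e)$ as in the proof of \eqref{99}, giving $\overline h(\e)\to 0$, $\lim_{\e\to0}\overline h(\e)/\e>0$, and $\overline h(\e)=\e$ when $h(t)=t$ (finite third moment, Remark \ref{r6.6}). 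Combining the two steps by the triangle inequality in the $\e^{-2}a_\theta[\cdot]+b_\theta[\cdot]$ norm gives \eqref{126}; dropping the (nonnegative) $a_\theta$-term and the corrector $N_\theta z^h$ — which is legitimate because $a_\theta[N_\theta z^h]$ is itself $O((\overline h(\e))^2)$-controlled and enters $b_\theta$ only through its $L^2$ norm, bounded by \eqref{120a} — gives \eqref{205}.

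The main obstacle is the second step: one must verify that replacing the exact corrector $\mathcal N_\theta$ by the leading-order corrector $N_\theta$ and $a_\theta$ by $a^h_\theta$ is compatible with the degenerate scaling $\e^{-2}$ on the stiff part, and that the resulting error, which naively is $\e^{-2}\times$(error in the form), is in fact uniformly small in $\theta$. This is precisely where (H4) is used, and where our weaker \eqref{99} forces $\overline h$ in place of $\e$; the argument is otherwise a verbatim transcription of \cite{CKS2023}, the only change being to carry the function $h$ through the estimate instead of a power of $|\theta|$. I would also note, for later use in Theorem \ref{th2.8}, that the uniformity in $\theta\in Y^*$ of all constants follows from the uniformity of \eqref{79}, \eqref{143}, \eqref{144}, \eqref{146}, \eqref{96}, \eqref{99} established in the previous subsections.
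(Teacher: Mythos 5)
Your high-level structure (split at $|\theta|=\theta_1$; for small $\theta$ chain Proposition~\ref{th9.1} with a comparison of the variational problems \eqref{116} and \eqref{120}; for large $\theta$ exploit coercivity of $a_\theta$ on $W_\theta$ and reduce to the soft problem) matches the paper. However, the central quantitative step is misstated, and this is precisely where the paper's argument differs from a ``standard comparison-of-solutions'' lemma.

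The bound you write,
\[
\e^{-2} a_\theta^h[z^h - z] + b_\theta[(v^h+z^h)-(z+v)] \le C \Big(\e^{-2}|\theta|^2 h(|\theta|)\Big)^2 \|f\|^2,
\]
is not useful: for fixed $\theta$ it blows up as $\e\to0$, and the heuristic ``$\e^{-2}|\theta|^2 h(|\theta|)$ is small only when $|\theta|\lesssim\e$'' leaves the regime $\e\ll|\theta|\le\theta_1$ uncontrolled. What saves the argument in the paper is an improved a~priori bound on the constant part of $v^h+z^h$: testing \eqref{120} with $P_{W_\theta}z^h$ gives $\e^{-2}|\theta|^2\|z^h\|_{L^2(Y)}\le C\|f\|_{L^2(Y)}$, i.e.\ $\|z^h\|\le C\e^2|\theta|^{-2}\|f\|$. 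This kills the $\e^{-2}|\theta|^2$ prefactor coming from \eqref{117}, so that $B_\e[(v-v^h)+(z-z^h)]\le C\,h(|\theta|)\,\|f\|\,\|z-z^h\|$. Combined with the lower bound $B_\e[\cdot]\ge C(\e^{-2}|\theta|^2+1)\|z-z^h\|^2$ this yields $B_\e^{1/2}[\cdots]\le C\|f\|\,h(|\theta|)/\sqrt{\e^{-2}|\theta|^2+1}$, which is uniformly small in $\theta$. Without the improved bound on $z^h$, the error cannot be made $\e$-small uniformly in $\theta$, and this is a genuine gap in your argument.

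Your expression for $\overline h$ is also not correct. The paper bounds $h(|\theta|)/\sqrt{\e^{-2}|\theta|^2+1}$ by $\max\{h(\e),\hat h(\e)\}$, where $\hat h(t):=t\sup_{s\ge t}h(s)/s$, by splitting the cases $|\theta|\ge\e$ (use $h(|\theta|)\le\hat h(\e)|\theta|/\e$ and divide by $\e^{-1}|\theta|$) and $|\theta|<\e$ (use monotonicity of $h$). This gives $\overline h=\max\{h,\hat h\}$, not the $\sup\min$ expression you propose; moreover the cut-off optimisation over $r$ you allude to is already performed once and for all inside the verification of \eqref{99}, it is not redone here. Your treatment of the $|\theta|>\theta_1$ regime and the derivation of \eqref{205} from \eqref{126} by discarding $N_\theta z^h$ are in the right spirit, but the large-$\theta$ case in the paper actually proves an identity $v^\e_\theta=v^h+P_{V_\theta}z^h$ rather than two separate $O(\e^2)$ estimates.
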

\begin{proof}
	The first part of the argument up to formula \eqref{142} follows closely \cite{CKS2023}. We present it here for the sake of completeness.
	
	Problem \eqref{120} is clearly well posed.
	Testing \eqref{120} with $v^h+z^h$    yields
	\begin{equation*}
		B^h_\e[v^h+z^h] \leq \|f\|_{L^2(Y)} \|v^h+z^h\|_{L^2(Y)}.
	\end{equation*}
	Since
	\begin{equation*}
		\|v^h+z^h\|_{L^2(Y)}^2 \leq b_\theta[v^h+z^h],
	\end{equation*}
	we immediately conclude that
	\begin{equation*}
		B^h_\e[v^h+z^h] \leq  \|f\|_{L^2(Y)}^2.
	\end{equation*}
	Furthermore,
	\begin{equation*}
		\e^{-2} a_\theta^h[z^h ] \leq \|f\|_{L^2(Y)} \sqrt{ b_\theta[v^h+z^h]} -  b_\theta[v^h+z^h]\leq \frac{1}{4} \|f\|_{L^2(Y)}^2.
	\end{equation*}
	
	Next we obtain a stronger bound on $z^h$.  We denote by $P_{V_\theta}$ and $P_{W_\theta}$ the projectors onto the subspaces $V_\theta$ and $W_\theta$ respectively with respect to the scalar product $(\cdot ,\cdot )_\theta$. Recalling \eqref{166} we see that $b_\theta(v^h+z^h, P_{W_\theta} z^h ) = b_\theta[P_{W_\theta} z^h ]$. 	Then, testing \eqref{120} with $\tilde v +\widetilde z = -P_{V_\theta} z^h + z^h = P_{W_\theta} z^h$, we obtain
	\begin{equation*}
		\e^{-2} a_\theta^h[z^h ]  + b_\theta[P_{W_\theta} z^h ]\leq \|f\|_{L^2(Y)}  \| P_{W_\theta} z^h\|_{L^2(Y)} \leq C \|f\|_{L^2(Y)}  \| z^h\|_{L^2(Y)} .
	\end{equation*}
	Together with \eqref{200} this yields
	\begin{equation}\label{135}
		\e^{-2} |\theta|^2 \| z^h\|_{L^2(Y)} \leq C \|f\|_{L^2(Y)}.
	\end{equation}
	
	Next we restrict our attention to the case $|\theta|\leq \theta_1$ and utilise the solution $v+z$ of \eqref{116}  from Proposition \ref{th9.1}
to write a two-way decomposition
	\begin{equation*}
		\begin{aligned}
			u^\e_\theta - (v^h + (I+N_\theta) z^h) = &[u^\e_\theta - (v+ z+\mathcal N_\theta z)] + [(v-v^h) + (z-z^h)+ \mathcal N_\theta(z-z^h)] + [\mathcal N_\theta z^h - N_\theta z^h] =
			\\
			& [u^\e_\theta - (v+z)] +  [(v-v^h) + (z-z^h)] - N_\theta z^h,
		\end{aligned}
	\end{equation*}
	Substituting the first decomposition into $a_\theta$ and the second into $b_\theta$ we have that the left-hand side of \eqref{126} is bounded by
	\begin{multline}\label{138}
		3\left(  	\e^{-2}  a_\theta[u^\e_\theta - (v + z+\mathcal N_\theta  z)] + b_\theta[u^\e_\theta - (v +   z)] \right) +
		3 B_\e[(v-v^h) +(z-z^h)] +
		\\
		3	\e^{-2}  a_\theta[\mathcal N_\theta z^h - N_\theta z^h] +3 b_\theta[N_\theta z^h]
	\end{multline}
	(recall the definition of the form $B_\e(\cdot,\cdot)$ in \eqref{116}).
	By \eqref{118a} the first term is bounded by $C \e^2 \|f\|_{L^2(Y)}^2$.  The bounds \eqref{120a} and \eqref{135} yield
	\begin{equation}\label{209}
		\e^{-2}  a_\theta[\mathcal N_\theta z^h - N_\theta z^h] \leq C \e^{-2}  	\|\mathcal N_\theta  z^h  -  N_\theta  z^h \|_{L^2(Y)}^2 \leq C 	\e^{-2}  |\theta|^4 \|z^h\|_{L^2(Y)}^2 \leq C \e^2  \|f\|^2_{L^2(Y)} .
	\end{equation}
	From Remark \ref{r7.3} and \eqref{135} we get
	\begin{equation}\label{210}
		b_\theta[N_\theta z^h] \leq C \|N_\theta z^h\|_{L^2(Y)}^2 \leq C \e^2  \|f\|^2_{L^2(Y)} .
	\end{equation}
	It remains to analyse the second term in \eqref{138}. By testing both \eqref{116} and \eqref{120} with  $\tilde v +\widetilde z = (v-v^h) +(z-z^h)$ and comparing the resulting identities one concludes that
	\begin{equation}\label{142}
		B_\e[(v-v^h) +(z-z^h)] = 	\e^{-2}  a_\theta^h(z^h, z-z^h) - \e^{-2}   a_\theta(z^h+ \mathcal N_\theta z^h,(z -z^h) + \mathcal N_\theta(z -z^h) ).
	\end{equation}
	First  we need to obtain a bound on $B_\e[(v-v^h) +(z-z^h)]$ from below. Estimates \eqref{117} and \eqref{118} imply that there exists $r_3>0$ such that for all $|\theta|\leq r_3$ (so that $h(|\theta|)$ is sufficiently small)
	\begin{equation*}
		a_\theta[(z -z^h) +\mathcal N_\theta(z-z^h)]\geq	C|\theta|^2  \|z-z^h\|_{L^2(Y)}^2.
	\end{equation*}
	The bound
	\begin{equation*}
		b_\theta[(v-v^h) +(z-z^h)]  \geq	C\|z-z^h\|_{L^2(Y)}^2
	\end{equation*}
	is straightforward. Thus we have
	\begin{equation*}
			B_\e[(v-v^h) +(z-z^h)] \geq C \left( \e^{-2} |\theta|^2  + 1\right)	\|z-z^h\|_{L^2(Y)}^2 .
	\end{equation*}
	Applying  to the right-hand  side of \eqref{142} successively  \eqref{117}, \eqref{135} and the last bound  we arrive at
	\begin{equation*}
		\begin{aligned}
			B_\e[(v-v^h) +(z-z^h)] \leq  &C\e^{-2}  |\theta|^2 h(|\theta|)\| z^h\|_{L^2(Y)} \|z-z^h\|_{L^2(Y)}
			\\
			 \leq& C \, h(|\theta|) \|f\|_{L^2(Y)}\|z-z^h\|_{L^2(Y)}
			\\
			\leq &C \|f\|_{L^2(Y)} \, \frac{h(|\theta|)}{\sqrt{\e^{-2} |\theta|^2  + 1}}B_\e^{1/2}[(v-v^h) +(z-z^h)].
		\end{aligned}
	\end{equation*}
	
	In order to show that the term containing $\e$ and $\theta$ vanishes as $\e \to 0$ we introduce the function
	\begin{equation}\label{219}
		\hat h(t):= t \, \sup_{s\geq t} \frac{h(s)}{s}.
	\end{equation}
	Clearly,
	\begin{equation*}
		\hat h(t) \to 0 \mbox{ as } t\to 0.
	\end{equation*}
	Moreover,
	\begin{equation*}
		\frac{h(|\theta|)}{\hat h(\e)} \leq \frac{|\theta|}\e
	\end{equation*}
	whenever $\e\leq |\theta|$.
	In this case we have
	\begin{equation*}
		\frac{h(|\theta|)}{\sqrt{\e^{-2} |\theta|^2   + 1}}  \leq   \hat h(\e) \frac{\e^{-1} |\theta|}{\sqrt{\e^{-2} |\theta|^2   + 1}} \leq \hat h(\e).
	\end{equation*}
	On the other hand, when  $\e > |\theta|$, recalling the monotonicity of $h$, we trivially have
	\begin{equation*}
		\frac{h(|\theta|)}{\sqrt{\e^{-2} |\theta|^2   + 1}} \leq   h(\e).
	\end{equation*}
Thus we conclude that
	\begin{equation*}
	\begin{aligned}
		B_\e[(v-v^h) +(z-z^h)] \leq  C  (\overline{h}(\e))^2 \|f\|^2_{L^2(Y)},
	\end{aligned}
\end{equation*}
	with
	\begin{equation}\label{225}
		\overline{h}(t) := \max \{h(t) , \hat h(t)\}.
	\end{equation}
	
	Combining \eqref{138}, \eqref{118a}, \eqref{209}, and \eqref{210} yields \eqref{126}. Then \eqref{205} follows by resorting to \eqref{210} once again.
	
	It remains to consider the case $|\theta| > \theta_1$. For such $\theta$ the basic argument is as follows: the part of the solution of each of problems \eqref{62} and \eqref{120} that is supported on the stiff component $Y_\stiff$ is of order $\e^2$, while the remaining part solve a family of problems on the soft component $Y_\soft$, see \eqref{228} below.
	
	We proceed with the argument by considering the decomposition $u_\theta^\e = v_\theta^\e + w_\theta^\e \in V_\theta + W_\theta$. Using  $w_\theta^\e$ as a test function in \eqref{62} and recalling \eqref{79} we have
	\begin{equation*}
		C \e^{-2}  \theta_1^2 \|w_\theta^\e\|_{L^2(Y)}^2 \leq \e^{-2} a_\theta[w_\theta^\e] + b_\theta[w_\theta^\e] \leq \|f \|_{L^2(Y)} \|w_\theta^\e  \|_{L^2(Y)}.
	\end{equation*}
	So we conclude that
	\begin{equation*}
		\|w_\theta^\e \|_{L^2(Y)} \leq C \e^2 \| f\|_{L^2(Y)}.
	\end{equation*}
	At the same time, by taking in \eqref{62} test functions from $V_\theta$ we see that  $v_\theta^\e$ is the solution to the following well posed problem
	\begin{equation}\label{228}
		b_\theta(v_\theta^\e, v) = ( f, v)_{L^2(Y)}, \quad \forall v\in V_\theta.
	\end{equation}
	
	Next we address problem \eqref{120}. We already know that $\| z^h\|_{L^2(Y)} \leq C \e^{2} \|f \|_{L^2(Y)}$, cf. \eqref{135}.  Setting $\widetilde z = 0$ we have that
	\begin{equation}\label{229}
		b_\theta(v^h +  z^h, \tilde v) =b_\theta(v^h + P_{V_\theta} z^h, \tilde v) =   ( f, \tilde v)_{L^2(Y)}, \quad \forall  \tilde  v   \in V_\theta.
	\end{equation}
	Comparing \eqref{228} and \eqref{229} we conclude that $v_\theta^\e = v^h + P_{V_\theta} z^h$. Therefore,
	\begin{equation*}
		\| u_\theta^\e - (v^h + z^h)\|_{L^2(Y)} = \| w_\theta^\e - P_{W_\theta}  z^h\|_{L^2(Y)} \leq C \e^{2} \|f \|_{L^2(Y)},
	\end{equation*}
	and \eqref{126}, \eqref{205} follow easily (with the error bound of order $\e^2$).
	
\end{proof}

We define the self-adjoint operator $\AA_\soft^\theta$ so that     $\AA_\soft^\theta+1$ is generated by the  form       $b_\theta(\cdot,\cdot)$.

\begin{proposition}\label{th7.11}
	The limiting spectrum of the family of operators $\AA_\e$ is given by
	\begin{equation*}
		\lim_{\e\to 0} \Sp (\AA_\e) = \mathcal G : = \Sp(\AA) \cup ( \cup_{\theta\in Y^*} \Sp(\AA_\soft^\theta) ).
	\end{equation*}
	Moreover, for any $\Lambda >0$ one has
	\begin{equation*}
		d_{H, [0, \Lambda]}\Big( \Sp (\AA_\e), \mathcal G \Big) \leq C(\Lambda) \max\{ \overline{h}(\e), \,\e^{2/3}\}.
	\end{equation*}

\end{proposition}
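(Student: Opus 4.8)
The strategy is to exploit the fibre decomposition $\Sp(\AA_\e) = \bigcup_{\theta\in Y^*}\Sp(\AA_\e^\theta)$ together with the norm-resolvent approximation of each fibre operator $\AA_\e^\theta$ by the homogenised operator $\AA_\e^{h,\theta}$ from Proposition \ref{th7.5}. First I would record the consequence of \eqref{205a} (equivalently \eqref{205} and the second inequality in \eqref{143}): $\|(\AA_\e^\theta+1)^{-1} - (\AA_\e^{h,\theta}+1)^{-1}\| \le C\overline h(\e)$ uniformly in $\theta$, whence the Hausdorff distance between $\Sp(\AA_\e^\theta)$ and $\Sp(\AA_\e^{h,\theta})$ on a fixed bounded window is $\le C(\Lambda)\overline h(\e)$, again uniformly in $\theta$ (the passage from resolvent norm bound to spectral Hausdorff bound on $[0,\Lambda]$ is a standard quantitative argument via the resolvent identity, using that $-1$ is in the resolvent set of both operators). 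Taking the union over $\theta$ and using that Hausdorff distance is stable under unions gives
\begin{equation*}
	d_{H,[0,\Lambda]}\Big(\Sp(\AA_\e),\ \bigcup_{\theta\in Y^*}\Sp(\AA_\e^{h,\theta})\Big)\le C(\Lambda)\overline h(\e).
\end{equation*}

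Next I would analyse $\bigcup_\theta \Sp(\AA_\e^{h,\theta})$. The form of $\AA_\e^{h,\theta}+1$, namely $\e^{-2}a_\theta^h(z,\widetilde z) + b_\theta(v+z,\widetilde v+\widetilde z)$ on $\C + L^2_\#(Y_\soft)$, is a finite-rank ($1$-dimensional stiff) perturbation of the soft operator $\AA_\soft^\theta+1$ generated by $b_\theta$. By \eqref{118}/\eqref{200} one has $\e^{-2}a_\theta^h[z]\ge C\e^{-2}|\theta|^2\|z\|^2$, so for $|\theta|$ bounded away from $0$ the stiff channel is pushed to energies of order $\e^{-2}$ and on the window $[0,\Lambda]$ one has $\Sp(\AA_\e^{h,\theta})\cap[0,\Lambda] = \Sp(\AA_\soft^\theta)\cap[0,\Lambda]$ (with an error that is super-polynomially small, certainly $\ll\overline h(\e)$). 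For small $|\theta|$, the stiff eigenvalue behaves like the minimum over $z\in\C$ of the Rayleigh quotient of $\e^{-2}A^\hom\theta\cdot\theta\,|z|^2 + b_\theta(z+z_{\rm soft}^\theta,\cdot)$, which after optimising in the soft corrector component reproduces the relation $\beta(\l) = \e^{-2}A^\hom\theta\cdot\theta$; as $\e\to0$ this sweeps out precisely $\{\beta(\l)\ge 0\}$ on a bounded window. The quantitative control here is where the $\e^{2/3}$ enters: one discretises $Y^*$ on a mesh of size $\delta$, uses Lipschitz continuity \eqref{144}, \eqref{146} of the forms in $\theta$ to control $\Sp(\AA_\e^{h,\theta})$ within $C\delta$ of the mesh values, compares the discrete-$\theta$ spectrum with the continuum $\{\beta(\l)\ge0\}$ at scale $C\delta$, and separately notes that moving $\theta$ by $\delta$ moves the stiff eigenvalue by $\le C\e^{-2}\delta$; balancing $\delta$ against $\e^{-2}\delta\cdot$(spacing) — more precisely choosing $\delta\sim\e^{2/3}$ so that both $\delta$ and the contribution of the $O(\e^{-2}|\theta|^2)$ term near $|\theta|\sim\e^{2/3}$ are of the same order — yields the $\e^{2/3}$ term. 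Assembling: $\bigcup_\theta\Sp(\AA_\e^{h,\theta})$ is within $C(\Lambda)\max\{\e^{2/3}\}$ of $\Sp(\AA)\cup(\bigcup_\theta\Sp(\AA_\soft^\theta)) = \{\beta(\l)\ge0\}\cup\Sp(\AA_\soft)$ on $[0,\Lambda]$, using Theorem \ref{thm2.3} (which identifies $\Sp(\AA)$ with $\{\beta(\l)\in\Sp(\AA_\hom)\}\cup\Sp(\AA_\soft^\#)$, and $\Sp(\AA_\hom)=[0,\infty)$ for $S=\R^d$) and the Gelfand identity $\Sp(\AA_\soft) = \bigcup_\theta\Sp(\AA_\soft^\theta)$ with $\Sp(\AA_\soft^\#)=\Sp(\AA_\soft^0)\subset\Sp(\AA_\soft)$.

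Combining the two displayed bounds by the triangle inequality for $d_{H,[0,\Lambda]}$ gives
\begin{equation*}
	d_{H,[0,\Lambda]}\big(\Sp(\AA_\e),\,\mathcal G\big)\le C(\Lambda)\max\{\overline h(\e),\,\e^{2/3}\},
\end{equation*}
and letting $\Lambda\to\infty$ (noting $\mathcal G$ and each $\Sp(\AA_\e)$ are closed and the bound is uniform on every window) yields $\lim_{\e\to0}\Sp(\AA_\e)=\mathcal G$. I would close by remarking that the equality $\mathcal G = \{\beta(\l)\ge0\}\cup\Sp(\AA_\soft)$ already gives the ``easy'' inclusions independently: $\supset$ follows from the strong two-scale resolvent convergence \eqref{14a} combined with Theorem \ref{th2.5}, so the real content of the proposition is the reverse inclusion with a rate, which is exactly what the fibrewise norm-resolvent estimate delivers.

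\textbf{Main obstacle.} The delicate point is the quantitative spectral comparison for \emph{small} $|\theta|$, where the stiff channel has energy $\e^{-2}A^\hom\theta\cdot\theta$ comparable to the $O(1)$ soft spectrum: here the perturbation coupling stiff and soft parts through $b_\theta$ is not small, the relevant eigenvalue is governed implicitly by the function $\beta$, and one must show that as $\theta$ ranges over $\{|\theta|\lesssim\e^{2/3}\}$ this eigenvalue fills out $\{\beta(\l)\ge0\}\cap[0,\Lambda]$ up to error $\e^{2/3}$. Controlling this requires the monotonicity and blow-up properties of $\beta$ from Proposition \ref{p:5.7} (to guarantee that the sweep is genuinely onto and that near points where $\beta$ jumps the discretisation error does not create spurious gaps), plus a careful optimisation balancing the mesh size $\delta$, the Lipschitz-in-$\theta$ constants, and the $\e^{-2}$ scaling — this is precisely the mechanism that produces the $\e^{2/3}$ rate rather than the $\overline h(\e)$ rate one gets from the resolvent estimate alone.
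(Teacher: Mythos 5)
Your overall architecture coincides with the paper's: Gelfand fibres, the uniform norm-resolvent estimate \eqref{205a} giving $d_{H,[0,\Lambda]}(\Sp(\AA_\e^\theta),\Sp(\AA_\e^{h,\theta}))\le C(\Lambda)\overline h(\e)$ uniformly in $\theta$, and then a comparison of $\bigcup_\theta\Sp(\AA_\e^{h,\theta})$ with $\mathcal G$ split into a large-$\theta$ and a small-$\theta$ regime. However, the step you yourself flag as the main obstacle is where your argument breaks. Your proposed mechanism for the $\e^{2/3}$ rate --- discretising $Y^*$ on a mesh of size $\delta$ and using the Lipschitz bounds \eqref{144}, \eqref{146} to control $\Sp(\AA_\e^{h,\theta})$ within $C\delta$ of the mesh values --- does not work: the form of $\AA_\e^{h,\theta}$ contains $\e^{-2}a^h_\theta$, so changing $\theta$ by $\delta$ changes the form by order $\e^{-2}|\theta|\delta$, and with $\delta\sim\e^{2/3}$, $|\theta|\sim\e^{2/3}$ this is $\sim\e^{-2/3}\to\infty$; your own estimate ``moving $\theta$ by $\delta$ moves the stiff eigenvalue by $\le C\e^{-2}\delta$'' confirms that nothing is controlled at this scale, so the advertised balancing does not produce the rate. (Lipschitz continuity in $\theta$ is used in the paper only for the soft form $b_\theta$, in \eqref{255} and in freezing $b_\theta\to b_0$ for $|\theta|<\e^{2/3}$; no discretisation of $\theta$ is needed since the union over all $\theta\in Y^*$ is taken.) The ingredient you are missing is the a priori bound on the constant (stiff) component of any approximate eigenfunction of $\AA_\e^{h,\theta}$, namely $\|z\|\le C\e^{2}|\theta|^{-2}$ coming from \eqref{135}: for $|\theta|\ge\e^{2/3}$ this is $\le C\e^{2/3}$ and reduces the problem, via Lemma \ref{l5.9}, to $\Sp(\AA_\soft^\theta)$ up to $C\e^{2/3}$; for $|\theta|<\e^{2/3}$ one replaces $b_\theta$ by $b_0$ at cost $C|\theta|\le C\e^{2/3}$ and identifies the resulting operator with the Zhikov fibre operator $\AA_\xi$, $\xi=\e^{-1}\theta$, whose spectrum is pinned down by \eqref{250} through $\beta(\l)=A^\hom\xi\cdot\xi$; sweeping $|\xi|\le\e^{-1/3}$ covers $\{\beta(\l)\in[0,R_0\e^{-2/3}]\}$ and the leftover $\{\beta(\l)>R_0\e^{-2/3}\}\cap[0,\Lambda]$ is handled by the elementary estimate \eqref{255a}. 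The balance producing $\e^{2/3}$ is thus $\e^{2}/|\theta|^{2}$ against $|\theta|$, not a mesh size against an eigenvalue-motion rate.

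Two further gaps: first, your claim that for $|\theta|$ bounded away from zero one has equality $\Sp(\AA_\e^{h,\theta})\cap[0,\Lambda]=\Sp(\AA_\soft^\theta)\cap[0,\Lambda]$ up to a super-polynomially small error is wrong --- the stiff and soft channels are coupled through $b_\theta$ at order one, so the error is only $O(\e^2/|\theta|^2)$ (this happens not to damage the final rate, but the reasoning should be corrected). Second, the Hausdorff estimate is two-sided, and you do not address the reverse direction at rate $\e^{2/3}$: given $\l\in\Sp(\AA_\soft^\theta)$ with $|\theta|\ge\e^{2/3}$ (the case $|\theta|<\e^{2/3}$ being reduced via \eqref{255}), one must build a quasimode for $\AA_\e^{h,\theta}$ by augmenting the soft approximate eigenfunction with an explicitly chosen constant $z$ satisfying $|z|\le C\e^{2}|\theta|^{-2}\le C\e^{2/3}$, and given $\l$ with $\beta(\l)\in[0,R_0\e^{-2/3}]$ one must choose $\theta$ with $|\theta|\le\e^{2/3}$ so that $\beta(\l)=A^\hom(\e^{-1}\theta)\cdot(\e^{-1}\theta)$ and retrace the small-$\theta$ comparison in reverse; appealing to Proposition \ref{p:5.7} and "the sweep is onto" does not by itself yield the quantitative bound.
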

\begin{proof}
	By Proposition \ref{th7.5} the resolvents of the operators   $\AA_\e^\theta$ and $\AA_\e^{h,\theta}$ are $\overline{h}(\e)$ close uniformly in $\theta$. Thus, by classical argument, we have
	\begin{equation*}
		d_{H, [0, \Lambda]}\Big( \Sp (\AA_\e^\theta), \Sp (\AA_\e^{h,\theta}) \Big) \leq  (\Lambda+1)^2\,\,\overline{h}(\e) \mbox{ uniformly in } \theta\in  Y^*.
	\end{equation*}
		The remainder of the proof is devoted to the analysis of the spectra of  $\AA_\e^{h,\theta}$.
	\medskip

	\noindent {\bf Step 1.}
	Assume that $\l\in \Sp (\AA_\e^{h,\theta})\cap [0, \Lambda]$. Then for any $\delta >0$ (we will assume it to be sufficiently small) there exists $v + z \in L^2(Y_\soft) \dotplus \C$, $\|v + z \|_{L^2(Y)}=1$, such that
 	\begin{equation}\label{238}
		\e^{-2} a_\theta^h(z, \widetilde z) + b_\theta(v + z, \tilde v+ \widetilde z) =  (\l +1)(v + z,\tilde v + \widetilde z)_{L^2(Y)} + (f,\tilde v  + \widetilde z)_{L^2(Y)}, \quad \forall\, \tilde v+\widetilde z \in L^2(Y_\soft) \dotplus \C,
	\end{equation}
  with some $f$ satisfying  $\| f\|_{L^2(Y)}\leq \delta$.
	
	Consider first $\theta\in  Y^*$ such that $|\theta |\geq \e^{2/3}$. By \eqref{135} we have that
	\begin{equation*}
		\| z\|_{L^2(Y)} \leq C 	\e^{2} |\theta|^{-2} \|(\l+1)(v + z) + f\|_{L^2(Y)} \leq  C \e^{2/3}.
	\end{equation*}
	Taking $\widetilde z =0$ in \eqref{238} we have
	\begin{equation}\label{242}
		b_\theta(v , \tilde v)  =  (\l +1)(v ,\tilde v )_{L^2(Y_\soft)} - b_\theta( z, \tilde v) +  ((\l+1) z + f,\tilde v )_{L^2(Y_\soft)}, \quad \forall\, \tilde v \in L^2(Y_\soft) .
	\end{equation}
	For a given $z$ the form $b_\theta( z, \tilde v)$ is a bounded linear functional on $L^2(Y_\soft)$, cf. \eqref{143}, and, hence, can be represented as
	\begin{equation*}
		b_\theta( z, \tilde v) =  (f_z,\tilde v )_{L^2(Y_\soft)}
	\end{equation*}
	for some $f_z\in L^2(Y_\soft)$ with $\|f_z \|_{L^2(Y_\soft)}\leq C  \e^{2/3}$. Thus, we can rewrite \eqref{242} as follows,
	\begin{equation*}
		b_\theta(v , \tilde v)  =  (\l +1)(v ,\tilde v )_{L^2(Y_\soft)}  + ((\l+1) z + f_z+ f,\tilde v )_{L^2(Y_\soft)}, \quad \forall\, \tilde v \in L^2(Y_\soft),
	\end{equation*}
	where
	\begin{equation*}
		\|(\l+1) z + f_z+ f\|_{L^2(Y_\soft)} \leq C \e^{2/3} + \delta.
	\end{equation*}
	Since $\delta$ is arbitrary, we conclude by Lemma \ref{l5.9} that
	\begin{equation}\label{243}
		\dist \big( \l , \Sp(\AA_\soft^\theta)\big)  \leq C \e^{2/3}.
	\end{equation}
	
	Next we consider the case $|\theta |< \e^{2/3}$. To this end we rewrite \eqref{238} in the form
	\begin{multline}\label{244}
		\e^{-2} a_\theta^h(z, \widetilde z) + b_0(v + z, \tilde v+ \widetilde z) =
		(\l +1)(v + z,\tilde v + \widetilde z)_{L^2(Y)}
		\\
		+[b_0(v + z, \tilde v+ \widetilde z) - b_\theta(v + z, \tilde v+ \widetilde z)] + (f,\tilde v  + \widetilde z)_{L^2(Y)}.
	\end{multline}
	We use an argument similar to the above. By the Lipschitz continuity of the form $b_\theta$, see \eqref{146}, we see that the expression in the square brackets is a bounded linear functional on ${L^2(Y)}$, and, hence can be represented in the form
	\begin{equation}\label{248}
		b_0(v + z, \tilde v+ \widetilde z) - b_\theta(v + z, \tilde v+ \widetilde z) = ( f_{v+z},\tilde v  + \widetilde z)_{L^2(Y)},
	\end{equation}
	for some $f_{v+z} \in L^2(Y)$  with
 \begin{equation}\label{246}
      \| f_{v+z}\|_{L^2(Y)}\leq C |\theta| \leq  C  \e^{2/3}.
 \end{equation}
 Thus we have
	\begin{multline}\label{245}
		\e^{-2} a_\theta^h(z, \widetilde z) + b_0(v + z, \tilde v+ \widetilde z) =
		(\l +1)(v + z,\tilde v + \widetilde z)_{L^2(Y)}
		+ ( f_{v+z} + f,\tilde v  + \widetilde z)_{L^2(Y)}.
	\end{multline}

	Next we make a  simple observation about the operator in the above equation.  By \eqref{200} we have that
	\begin{equation*}
		\e^{-2} a_\theta^h(z,\widetilde z) = A^\hom \xi \cdot \xi \, z \widetilde z, \mbox{ where } \xi : = \e^{-1} \theta.
	\end{equation*}
	It is a simple exercise (cf. \cite{Zhikov2000}) to check that the  spectrum of the self-adjoint operator $\AA_\xi$ associated with the form
	\begin{equation*}
		A^\hom \xi \cdot \xi \, z \widetilde z + b_0(v + z, \tilde v+ \widetilde z) - (v+z,\tilde v  + \widetilde z)_{L^2(Y)}, \quad \xi \in \R^d,
	\end{equation*}
	satisfies the relation
 \begin{equation}\label{250}
	\{ \l : \, \beta(\l) = A^\hom \xi \cdot \xi \}\subset	\Sp(\AA_\xi) \subset   \Sp(\AA_\soft^0) \cup \{ \l : \, \beta(\l) = A^\hom \xi \cdot \xi \}.
	\end{equation}
	In particular,
	\begin{equation*}
	\{ \l : \, \beta(\l) \geq 0 \} \subset	\big(\cup_{\xi \in \R^d} \Sp(\AA_\xi)\big)\subset \Sp(\AA_\soft^0) \cup \{ \l : \, \beta(\l) \geq 0 \}  = \Sp(\AA),
	\end{equation*}
	cf. \eqref{57}.
	
	From \eqref{246} and \eqref{245}, since $\delta$ is arbitrary, we conclude that $\l$ is $C \e^{2/3} $ close to the spectrum of  $\AA_\xi$ for  $\xi = \e^{-1} \theta \in B_{\e^{-1/3}}$:
 \begin{equation*}
		\dist\Big(  \l , \Sp(\AA_\soft^0) \cup\{ \beta(\l) \in [0;R_0 \e^{-2/3}] \}  \Big) \leq C(\Lambda) \e^{2/3},
	\end{equation*}
	where $R_0$ is defined as $R_0:=\max_{|\xi|\leq 1}  A^\hom \xi \cdot \xi $.
 Combining this with \eqref{243} we arrive at 	
 \begin{equation*}
		\sup_{\l\in  \cup_{\theta\in Y^*} \Sp (\AA_\e^{h,\theta}) \cap [0;\Lambda]}\dist(  \l, \,\mathcal{G} ) \leq C(\Lambda) \e^{2/3}.
	\end{equation*}

 \medskip

\noindent {\bf Step 2.} The following bounds hold:
	\begin{equation}\label{255}
		\sup_{\l\in \Sp(\AA_\soft^\theta), |\theta | < \e^{2/3}}\dist \Big( \l,\,\bigcup_{\theta\in Y^*, |\theta |\geq \e^{2/3}}\Sp(\AA_\soft^\theta)   \Big) \leq C  \e^{2/3},
	\end{equation}
	\begin{equation}\label{255a}
		\sup_{\l\in [0;\Lambda], \beta(\l) > R_0 \e^{-2/3}  } \dist(\l, \Sp(\AA_\soft^0))\leq  C \e^{2/3}.
	\end{equation}
	The first one follows by appealing  to the Lipschitz continuity of the form $b_\theta$ with respect to $\theta$ and arguing similarly to \eqref{248}--\eqref{245}. The second can be easily inferred from the estimate
	\begin{equation*}
		R_0 \e^{-2/3} <\beta(\l) =  \lambda+\lambda^2 \int_{Y_\soft } (\AA_\soft ^0-\l I)^{-1} \mathbf{1}_{Y_\soft^{\#}}\, dy \leq \l+\l^2 |Y_\soft| \|(\AA_\soft ^0-\l I)^{-1}\|.
	\end{equation*}

\medskip

\noindent {\bf Step 3.} It remains to prove the inverse bound. First we assume  $\l\in\Sp(\AA_\soft^\theta)$, $\theta\in  Y^*$. Due to the bound \eqref{255} we can further assume that $|\theta |\geq \e^{2/3}$. Then for any $\delta >0$  there exists $v  \in L^2(Y_\soft) $, $\|v  \|_{L^2(Y)}=1$, such that
 	\begin{equation*}
		 b_\theta(v , \tilde v) =  (\l +1)(v ,\tilde v )_{L^2(Y_\soft)} + (f,\tilde v  )_{L^2(Y_\soft)}, \quad \forall\, \tilde v \in L^2(Y_\soft),
	\end{equation*}
  with some $f$ satisfying  $\| f\|_{L^2(Y_\soft)}\leq \delta$. Consider $z \in \mathbf C$ defined by the identity
  \begin{equation*}
     \e^{-2}  A^\hom \theta \cdot \theta \, z = (\l +1)(v ,1 )_{L^2(Y_\soft)} + (f,1 )_{L^2(Y_\soft)} - b_\theta(v , 1).
  \end{equation*}
  Clearly, we have
  \begin{equation}\label{258}
      |z| \leq C \frac{\e^2}{|\theta|^2} \leq C \e^{2/3}.
  \end{equation}
  By direct inspection we see that $v+z$ satisfies the identity
  	\begin{multline*}
  	    	\e^{-2} a_\theta^h(z, \widetilde z) + b_\theta(v + z, \tilde v+ \widetilde z) =  (\l +1)(v + z,\tilde v + \widetilde z)_{L^2(Y)}
        \\
        + (f - (\l +1) z,\tilde v  + \widetilde z)_{L^2(Y)} + b_\theta( z, \tilde v+ \widetilde z) , \quad \forall\, \tilde v+\widetilde z \in L^2(Y_\soft) \dotplus \C.
  	\end{multline*}
	Since $\delta$ is arbitrary and from \eqref{258} we conclude that
 \begin{equation}\label{260}
    \dist( \l, \Sp (\AA_\e^{h,\theta}) ) \leq C \e^{2/3}.
 \end{equation}

Finally, we consider the case $\l\in [0;\Lambda]\setminus \Sp(\AA_\soft^0)$, $\beta(\l) \geq 0$. Due to \eqref{255a} we can assume without loss of generality that $\beta(\l) \in [0;R_0 \e^{-2/3}] $. By \eqref{250} there exists $\theta$, $|\theta|\leq \e^{2/3}$, such that  $\beta(\l)$ is an eigenvalue of $\AA_\xi$ with $\xi = \e^{-1}\theta$. Then retracing the argument of Step 1 in the reverse direction, cf. \eqref{248}, \eqref{246}, and  \eqref{244}, we arrive at \eqref{260}, which concludes the proof.

\end{proof}

\begin{remark}
	Since $\Sp(\AA_\soft)=\cup_{\theta \in Y^*} \Sp(\AA_\soft^\theta)$, understanding the structure of $\Sp(\AA_\soft^\theta)$ helps us to  characterize the spectrum of $\AA_\soft$.  Similarly to the decomposition of  $\AA_\soft^\#$, see \eqref{operator2},  we have
	\begin{equation*}
		\AA_\soft =\AA_\soft^1-\AA_\soft^2,\quad \AA_\soft^{\theta} =\AA_\soft^{\theta,1} -\AA_\soft^{\theta,2},
	\end{equation*}
	where for $z \in L^2(Y_\soft^\#)$
	\begin{eqnarray*} & &  \AA_\soft^1 z(x):=m(x)z(x),\quad \AA_\soft^2 z:=\int_{\R^d} K(x,y) z(y) \,dy, \\ & &
		m(x):=2\int_{\R^d} a(\xi)p(x,x+\xi)\,d\xi,\quad  K(x,y):=2  a(x-y)p(x,y)\,dy,
	\end{eqnarray*}
	and for $z \in L^2_\#(Y_\soft)$
	\begin{eqnarray*} & &  \AA_\soft^{\theta,1} z(x):=m(x)z(x),\quad \AA_\soft^{\theta,2}z:=\int_{\R^d} K^{\theta}(x,y) z(y) \,dy, \\ & &
		K^{\theta}(x,y):=2   a(x-y )p(x,y)\rme^{\rmi \theta \cdot(x-y )}\,dy.
	\end{eqnarray*}
	Obviously $m \in L_{\#}^{\infty} (Y)$ and $\AA_\soft^{2,\theta}$ is compact for every $\theta \in Y^*$. The essential spectrum of $\AA_\soft^{\theta}$ is the essential image of $m$. The remaining spectrum of  $\AA_\soft^{\theta}$  is at most countable number of finite multiplicity eigenvalues. The spectrum of $\AA_\soft^{\theta}$ is continuous in $\theta$ with respect to the Hausdorff distance.
\end{remark} 	

\begin{appendices}


\section{Extension operator}\label{a1}

In what follows, for an integrable function $u$ and  a set $A$, the notation   $u_A$ stands for the mean value of   $u$ over $A$. A variant of the following lemma for the case $A=B$ can be found in \cite[Lemma 2.4]{BCE2021}.
\begin{lemma}\label{LSE}
	Let $A,B\subset \R^d$   have finite and positive Lebesgue measure. Then, for every $u\in L^p(A\cup B)$ with $1\leq p<\infty$,
	\begin{eqnarray*}
		\int_{B} |u_A-u(x)|^pdx &\leq& \frac{1}{|A|}\int_{A\times B} |u(x)-u(y)|^p\,dx\,dy, \\ \label{SE2}
		|u_A-u_B|^p &\leq& \frac{1}{|A||B|}\int_{A\times B} |u(x)-u(y)|^p\,dx\,dy.
	\end{eqnarray*}
\end{lemma}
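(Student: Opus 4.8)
The plan is to prove both inequalities by a single application of Jensen's inequality (convexity of $t\mapsto t^p$) combined with Fubini's theorem. First I would establish the first inequality. Fix $x\in B$. Since $|A|>0$ is finite, $u_A = \frac{1}{|A|}\int_A u(y)\,dy$ is well defined (note $u\in L^p(A\cup B)\subset L^1_{\loc}$ on sets of finite measure, so this is legitimate), and we may write
\begin{equation*}
	u_A - u(x) = \frac{1}{|A|}\int_A \bigl(u(y) - u(x)\bigr)\,dy.
\end{equation*}
Applying Jensen's inequality with respect to the normalised measure $\frac{1}{|A|}\mathbf{1}_A\,dy$ to the convex function $t\mapsto |t|^p$ gives
\begin{equation*}
	|u_A - u(x)|^p \leq \frac{1}{|A|}\int_A |u(y) - u(x)|^p\,dy.
\end{equation*}
Now integrate this over $x\in B$ and apply Fubini–Tonelli (everything is nonnegative and measurable) to obtain
\begin{equation*}
	\int_B |u_A - u(x)|^p\,dx \leq \frac{1}{|A|}\int_B\int_A |u(y)-u(x)|^p\,dy\,dx = \frac{1}{|A|}\int_{A\times B}|u(x)-u(y)|^p\,dx\,dy,
\end{equation*}
which is the first claim.

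For the second inequality I would argue analogously, now averaging in the $B$ variable as well. Write
\begin{equation*}
	u_A - u_B = \frac{1}{|B|}\int_B \bigl(u_A - u(x)\bigr)\,dx,
\end{equation*}
and apply Jensen's inequality with respect to $\frac{1}{|B|}\mathbf{1}_B\,dx$:
\begin{equation*}
	|u_A - u_B|^p \leq \frac{1}{|B|}\int_B |u_A - u(x)|^p\,dx.
\end{equation*}
Then insert the first inequality, already proved, into the right-hand side:
\begin{equation*}
	|u_A - u_B|^p \leq \frac{1}{|B|}\cdot\frac{1}{|A|}\int_{A\times B}|u(x)-u(y)|^p\,dx\,dy = \frac{1}{|A||B|}\int_{A\times B}|u(x)-u(y)|^p\,dx\,dy,
\end{equation*}
completing the proof.

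There is no serious obstacle here; this is a routine estimate. The only minor points to check are integrability/measurability technicalities: one should note that $u\in L^p(A\cup B)$ with $|A|,|B|<\infty$ implies $u\in L^1(A)$ and $u\in L^1(B)$ by Hölder's inequality, so the averages $u_A,u_B$ make sense, and that the double integrals are well defined (possibly $+\infty$, in which case the inequalities hold trivially) by Tonelli's theorem since the integrand $|u(x)-u(y)|^p$ is nonnegative and jointly measurable on $A\times B$. Everything else is just Jensen plus Fubini, carried out in the order above.
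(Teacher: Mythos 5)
Your proof is correct and follows essentially the same route as the paper's: represent the difference as an average, bound the $p$-th power of the average pointwise, then integrate over $B$ (and over $B$ once more for the second inequality). The only cosmetic difference is that you invoke Jensen's inequality where the paper uses H\"older with the conjugate exponent $p'$; your choice has the small advantage of covering $p=1$ uniformly, whereas the paper treats $p=1$ as a separate (trivial) case.
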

\begin{proof}
	Let $p>1$ and denote by $p'$ the conjugate exponent of $p$. By H\"older's inequality:
	\begin{equation}	
	 \begin{aligned}\label{in1}
			&\int_B |u_A-u(x)|^p \,dx=\frac{1}{|A|^p}\int_B\left|\int_A (u(y)-u(x))\, dy \right|^p\,dx
			\\
			&\leq  \frac{|A|^{p/p'}}{|A|^p} \int_B \int_A |u(y)-u(x)|^p \,dy \,dx
	=\frac{1}{|A|} \int_{A \times B} |u(y)-u(x)|^p \,dx \,dy.
	\end{aligned}
\end{equation} 		
	To prove \eqref{SE2} we proceed in a similar way  utilising \eqref{in1}:
	\begin{align*}
		|u_A-u_B|^p&	= \frac{1}{|B|^p}\left| \int_B (u_A-u(x))\, dx \right|^p
		\leq  \frac{|B|^{p/p'}}{|B|^p} \int_B  |u(y)-u(x)|^p \,dx
		\\
		&=\frac{1}{|B|} \int_{B} |u_A-u(x)|^p \,dx
		\leq  \frac{1}{|A||B|} \int_{A \times B} |u(y)-u(x)|^p \,dx \,dy.
	\end{align*} 	
	The case $p=1$ is straightforward.
\end{proof} 	
\begin{lemma}\label{lemma2.6ver} 	
	Let $A,B$ be subsets of $\R^d$ and let $|A \cap B|>0$.   Then, there exists  a linear  continuous extension operator $\Phi:L^p(A\cap B)\to L^p(B)$ such that,
	for all $u\in L^p(A)$,
	\begin{equation*}
		\Phi(u)=u \quad\mbox{in }\hspace{0.1cm}A\cap B,
	\end{equation*}
	\begin{equation*}
		\int_{ B} |\Phi(u)|^p\, dx\le c_1\int_{A\cap B}|u|^p\, dx,
	\end{equation*}
	\begin{equation}\label{3}
		\int_{ B^2}\left| \Phi(u)(x)-\Phi(u)(y)     \right|^p\, dx\, dy\le c_2\int_{(A\cap B)^2}\left|  u(x)- u(y)     \right|^p\, dx\, dy,
	\end{equation}
	where $c_1=1+\frac{| B \backslash A|}{|A \cap  B|}$ and $c_2=1+2\frac{| B \backslash A|}{|A \cap  B|}$.
\end{lemma}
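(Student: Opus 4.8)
\textbf{Proof plan for Lemma~\ref{lemma2.6ver}.}
The plan is to define $\Phi$ by the obvious ``fill in by the average'' recipe: set
\[
\Phi(u)(x) := \begin{cases} u(x), & x\in A\cap B,\\[1mm] u_{A\cap B}, & x\in B\setminus A, \end{cases}
\]
where $u_{A\cap B}$ denotes the mean of $u$ over $A\cap B$, which is well defined since $|A\cap B|>0$. Linearity and the agreement $\Phi(u)=u$ on $A\cap B$ are immediate, so the work is entirely in the two quantitative bounds. Throughout I will abbreviate $E:=A\cap B$ and $F:=B\setminus A$, so that $B=E\cup F$ is a disjoint union (up to a null set), $|F|=|B\setminus A|$, and $c_1=1+|F|/|E|$, $c_2=1+2|F|/|E|$.

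First I would prove the $L^p$ bound. Split $\int_B|\Phi(u)|^p = \int_E |u|^p + \int_F |u_E|^p = \int_E|u|^p + |F|\,|u_E|^p$. By Jensen's (or H\"older's) inequality, $|u_E|^p \le \tfrac1{|E|}\int_E|u|^p$, hence $\int_B|\Phi(u)|^p \le \bigl(1+\tfrac{|F|}{|E|}\bigr)\int_E|u|^p = c_1\int_E|u|^p$, which is exactly the claimed inequality.

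Next the double-integral bound, which is the main point. Decompose $B^2 = E^2 \cup (E\times F)\cup (F\times E)\cup F^2$ and estimate each piece. On $E^2$ the integrand is $|u(x)-u(y)|^p$, contributing exactly $\int_{E^2}|u(x)-u(y)|^p$. On $F^2$ the integrand vanishes since $\Phi(u)$ is constant there. On $E\times F$ (and symmetrically $F\times E$) the integrand is $|u(x)-u_E|^p$, so this contributes $2|F|\int_E |u(x)-u_E|^p\,dx$; applying the first inequality of Lemma~\ref{LSE} with the roles $A\leadsto E$, $B\leadsto E$ gives $\int_E|u(x)-u_E|^p\,dx \le \tfrac1{|E|}\int_{E^2}|u(x)-u(y)|^p\,dx\,dy$. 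Summing, $\int_{B^2}|\Phi(u)(x)-\Phi(u)(y)|^p \le \bigl(1+\tfrac{2|F|}{|E|}\bigr)\int_{E^2}|u(x)-u(y)|^p = c_2\int_{(A\cap B)^2}|u(x)-u(y)|^p$, as required. I do not foresee a genuine obstacle here; the only mild care needed is keeping the bookkeeping of the four regions straight and invoking Lemma~\ref{LSE} in the degenerate form $A=B=E$, which is licit since $|E|>0$. The case $p=1$ is covered by the same computation (Jensen still applies), and measurability of $\Phi(u)$ is clear.
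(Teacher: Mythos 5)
Your proposal is correct and follows essentially the same route as the paper: the same piecewise-constant extension by the mean $u_{A\cap B}$, the Jensen/H\"older argument for the $L^p$ bound, and the same four-region decomposition of $B^2$ with Lemma \ref{LSE} (applied with both sets equal to $A\cap B$) handling the mixed terms. No gaps.
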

\begin{proof}
	We define 
	\begin{equation*}
		\Phi(u)(x):=
		\begin{dcases}
			u(x), & x\in A\cap B, \\
			u_{A\cap B}, & x\in  B\setminus A.
		\end{dcases}
	\end{equation*}
	Using Jensen's inequality we have:
	\begin{align*}
		\int_{ B} |\Phi(u)|^p\, dx
		= \int_{ A\cap B} |\Phi(u)|^p\, dx+ \int_{ B \backslash A} |\Phi(u)|^p\,dx
		=  \int_{ A\cap B} |u|^p\, dx+ | B \backslash A| |	u_{A\cap B}|^p \\  \leq     \int_{ A\cap B} |u|^p\, dx+\frac{| B \backslash A|}{|A \cap  B|} \int_{A \cap  B} |u|^p \, dx
		= \left(1+\frac{| B \backslash A|}{|A \cap  B|}\right)  \int_{A \cap  B} |u|^p \, dx.
	\end{align*} 	
	Next we prove \eqref{3}. Observing that
	$$  B^2= (A \cap  B)^2\cup \left((A \cap  B) \times (  B \backslash A)\right)\cup \left(( B \backslash A)\times  (A \cap  B)\right) \cup ( B \backslash A)^2,  $$
	we split the integral over $ B^2$ in four parts:
	\begin{equation} \label{int1}
		\int_{(A\cap  B)^2} |\Phi(u)(x)-\Phi(u)(y)|^p \, dx\,dy= 	\int_{(A\cap  B)^2} |u(x)-u(y)|^p \, dx\,dy;
	\end{equation}
	\begin{equation}
		\int_{( B\backslash A)^2} |\Phi(u)(x)-\Phi(u)(y)|^p \, dx\,dy=0;
	\end{equation}
	Applying Lemma \ref{LSE} we conclude:
	\begin{equation}	
	\begin{aligned}
		\label{int3}
		&\int_{( B \backslash A)\times(A \cap  B) }|\Phi(u)(x)-\Phi(u)(y)|^p
		= \int_{(A \cap  B)\times ( B \backslash A)}|\Phi(u)(x)-\Phi(u)(y)|^p \, dx\,dy
		\\
	&	= | B \backslash A|\int_{A \cap  B} |u(x)-u_{A \cap   B}|^p \,dx
		\leq \frac{ | B \backslash A|}{|A \cap   B|} \int_{(A \cap  B)^2} |u(x)-u(y)|^p \,dx \,dy.
	\end{aligned}
\end{equation} 	
	Now \eqref{3} follows immediately from \eqref{int1}-\eqref{int3}.
	
\end{proof}

We now provide an extension result for a general open (not necessarily periodic) set   $\M \subset \R^d$. We make the following geometric assumption, which in case of a periodic set $Y_\stiff^\#$ is equivalent to Assumption \ref{assumptionmisja1}.

\begin{assumption}\label{assumptionmisja}
	\begin{enumerate}
		\item There exists a radius $r_0>0$ and a constant $\kappa_0>0$ such that
		\begin{equation*}
			  \frac{|\M \cap B_{r_0}(x)|}{|B_{r_0}|} \geq \kappa_0 \qquad \forall x\in \M;
		\end{equation*}
		
		\item there exists $r_1, m>0$ and $k, \overline{N}\in \N$, $k \ge r_0 + m$,  such that for any $x\in \R^d$ the set $\M \cap \square_x^m$ is not empty and for any two points $\eta', \eta''\in \M \cap \square_x^m$ there exists a discrete path from $\eta'$ to $\eta''$ contained in $\M\cap \square^k_x$, \textit{i.e.} a set of points
		\begin{equation*}
			\{\eta_0=\eta', \eta_1, \dots, \eta_N, \eta_{N+1}=\eta''\} \subset \M\cap \square^k_x,
		\end{equation*}
		such that $N\le \overline{N}$ and 	 $|\eta_{j+1}-\eta_{j}|\leq r_1$, for $j=0,1,\dots, N$.
	\end{enumerate}
\end{assumption}

\begin{remark}\label{rA.4}
Note that if Assumption \ref{assumptionmisja} b. holds for a given $m$ then it also holds for any larger $m$, possibly at the cost of enlarging $k$ and $\overline{N}$.
\end{remark}

\begin{lemma}
	\label{lemma:crucialest}
	If $\M$ satisfies Assumption \ref{assumptionmisja},
	then for any $r\ge 2 r_0 + r_1$ there exists a constant $c(r)>0$  such that the following inequality holds
	\begin{equation}
		\label{stimadiag2}
		\int_{(\M\cap \square_x^m)^2} |u(x)-u(y)|^pdx dy\leq c(r) \int_{(\M\cap \square^{2k}_x)^2\cap D_r} |u(x)-u(y)|^pdxdy, \quad \forall x\in\R^d.
	\end{equation}
	(The constant $c(r)$ also depends on the parameters from Assumption \ref{assumptionmisja}.)
\end{lemma}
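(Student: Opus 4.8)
The plan is to prove a chained Poincar\'e-type estimate: the left-hand side of \eqref{stimadiag2} involves differences of $u$ over arbitrary pairs of points in the small cube $\M\cap\square_x^m$, whereas the right-hand side controls only differences over pairs at distance less than $r$. The bridge between the two is the discrete path supplied by Assumption \ref{assumptionmisja}(b), which joins any two points of $\M\cap\square_x^m$ by at most $\overline N+1$ hops of length at most $r_1$ inside $\M\cap\square_x^k$, together with the density bound of Assumption \ref{assumptionmisja}(a), which guarantees that the local averaging balls carry a definite amount of mass. Throughout, $x$ is fixed, and I would write $E:=\int_{(\M\cap\square_x^{2k})^2\cap D_r}|u(z)-u(w)|^p\,dz\,dw$ for the right-hand side, and, for $\eta\in\M$, $\langle u\rangle_\eta$ for the mean of $u$ over $\M\cap B_{r_0}(\eta)$ (a well-defined set of measure $\ge\kappa_0|B_{r_0}|$ by Assumption \ref{assumptionmisja}(a)).

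The first key step is a per-edge bound. If $\eta',\eta''\in\square_x^k$ with $|\eta'-\eta''|\le r_1$, then every pair $(z,w)\in(\M\cap B_{r_0}(\eta'))\times(\M\cap B_{r_0}(\eta''))$ satisfies $|z-w|<2r_0+r_1\le r$ and $z,w\in\square_x^{k+r_0}\subseteq\square_x^{2k}$ (here $k\ge r_0+m$ is used), so such a pair lies in $(\M\cap\square_x^{2k})^2\cap D_r$. Applying the second inequality of Lemma \ref{LSE} with $A=\M\cap B_{r_0}(\eta')$ and $B=\M\cap B_{r_0}(\eta'')$ and using the density bound then gives $|\langle u\rangle_{\eta'}-\langle u\rangle_{\eta''}|^p\le(\kappa_0|B_{r_0}|)^{-2}E$.

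Next, for arbitrary $\xi',\xi''\in\M\cap\square_x^m$ I would take the discrete path $\xi'=\eta_0,\dots,\eta_{N+1}=\xi''$ from Assumption \ref{assumptionmisja}(b), with all $\eta_j\in\M\cap\square_x^k$ and $N\le\overline N$, apply the $\ell^p$-triangle inequality over the $N+1$ edges (which costs a factor $(N+1)^{p-1}$), and invoke the per-edge bound termwise to obtain $|\langle u\rangle_{\xi'}-\langle u\rangle_{\xi''}|^p\le(\overline N+1)^p(\kappa_0|B_{r_0}|)^{-2}E$, uniformly in $\xi',\xi''$. Then I would split $u(\xi')-u(\xi'')=(u(\xi')-\langle u\rangle_{\xi'})+(\langle u\rangle_{\xi'}-\langle u\rangle_{\xi''})+(\langle u\rangle_{\xi''}-u(\xi''))$, so that $|u(\xi')-u(\xi'')|^p\le 3^{p-1}$ times the sum of the $p$-th powers, and integrate over $(\M\cap\square_x^m)^2$. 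The middle term contributes at most $|\M\cap\square_x^m|^2(\overline N+1)^p(\kappa_0|B_{r_0}|)^{-2}E$; for each of the two oscillation terms, Jensen's inequality gives $|u(\xi')-\langle u\rangle_{\xi'}|^p\le(\kappa_0|B_{r_0}|)^{-1}\int_{\M\cap B_{r_0}(\xi')}|u(\xi')-u(w)|^p\,dw$, and since $\{(\xi',w):\xi'\in\M\cap\square_x^m,\ w\in\M\cap B_{r_0}(\xi')\}\subseteq(\M\cap\square_x^{2k})^2\cap D_r$ (again using $k\ge r_0+m$ and $r_0\le r$), integrating in $\xi'$ and then in $\xi''$ bounds this term by $|\M\cap\square_x^m|(\kappa_0|B_{r_0}|)^{-1}E$. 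Collecting all constants — each controlled solely by $m,k,r_0,\kappa_0,\overline N,d,p$, hence independent of $x$ (and in fact nondecreasing in, but otherwise insensitive to, $r\ge 2r_0+r_1$) — yields \eqref{stimadiag2}.

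The computations are essentially routine; the main obstacle is purely bookkeeping, namely checking at each step that the pairs of points being integrated over genuinely lie in $(\M\cap\square_x^{2k})^2\cap D_r$ — this is precisely where the hypotheses $r\ge 2r_0+r_1$ and $k\ge r_0+m$ enter — and keeping the $\ell^p$-triangle-inequality constants explicit so the final constant does not degenerate. One small point worth noting is that the averaging balls are centred at points of $\M$ (the path vertices, including the endpoints $\xi',\xi''\in\M\cap\square_x^m$), so Assumption \ref{assumptionmisja}(a) is applicable at each of them.
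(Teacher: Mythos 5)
Your proof is correct. You verify the geometric bookkeeping carefully (the inclusions $\mathcal B_i\times\mathcal B_{i+1}\subset(\M\cap\square_x^{2k})^2\cap D_r$ both for the path edges and for the boundary oscillation pairs, using $k\ge r_0+m$ and $r\ge 2r_0+r_1$), the density bound enters exactly where needed, and the final constant depends only on $m,k,r_0,\kappa_0,\overline N,d,p$.

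The route is genuinely different from the paper's, though built on the same two ideas (chaining along the discrete path, and the density bound on $\M\cap B_{r_0}$). The paper never introduces the means $\langle u\rangle_\eta$ as scalar quantities. Instead, for a pair $\eta',\eta''$ it works directly with the double integral $\int_{\mathcal B_0\times\mathcal B_{N+1}}|u(\xi_0)-u(\xi_{N+1})|^p$, inserts averages over the intermediate balls $\mathcal B_1,\dots,\mathcal B_N$, telescopes, and lands on a sum of per-edge double integrals $\int_{\mathcal B_j\times\mathcal B_{j+1}}$ each contained in $(\M\cap\square^{2k})^2\cap D_r$. It then finishes by covering $\M\cap\square^m$ with a bounded number of balls of radius $r_0$ and summing the resulting inequality over all pairs of balls. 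Your version instead establishes a scalar per-edge bound $|\langle u\rangle_{\eta'}-\langle u\rangle_{\eta''}|^p\le(\kappa_0|B_{r_0}|)^{-2}E$ via Lemma \ref{LSE}, chains it, and then obtains the integrated estimate \emph{directly} by splitting $u(\xi')-u(\xi'')$ through $\langle u\rangle_{\xi'}$ and $\langle u\rangle_{\xi''}$ and controlling the two oscillation terms by Jensen. What your route buys is that it dispenses with the final covering step, which the paper treats rather tersely (one has to check that the covering can be realised by a uniformly bounded number of balls whose centres lie in $\M$); your version replaces it with an explicit pointwise-in-$(\xi',\xi'')$ decomposition that is then integrated, at the modest price of two extra Jensen applications for the oscillations. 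One cosmetic remark: the constant you obtain is in fact independent of $r$ (for $r\ge 2r_0+r_1$), so your parenthetical ``nondecreasing in $r$'' is unnecessary; a larger $r$ only enlarges the right-hand side, so the same constant serves for all admissible $r$.
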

\begin{proof}
	The following argument is an adaptation of \cite[Lemma 3.3]{BCE2021}.
	
	Without loss of generality we set $x=0$. Chose two arbitrary points  $\eta', \eta'' \in \M\cap \square^m$ and let $\eta_0=\eta', \eta_1, \dots, \eta_N, \eta_{N+1}=\eta''$ be a discrete path as in the Assumption \ref{assumptionmisja} b. Denote $\mathcal B_i :=\M\cap B_{r_0}(\eta_i)$, $i=0,\dots,N+1$, and let $\xi_i$ stand for the integration variable in the set $\mathcal B_i$. Note that $B_{r_0}(\eta_i)\subset \square^{2k}$. Thanks to the Jensen inequality we have
	\begin{align}
		&\int_{\mathcal B_0\times \mathcal B_{N+1}} |u(\xi_0)-u(\xi_{N+1})|^pd\xi_0d\xi_{N+1} \notag\\
		&=\prod_{i=1}^N \frac{1}{|\mathcal B_i|} \int_{\mathcal B_0} \cdots \int_{\mathcal B_{N+1}} |u(\xi_0)-u(\xi_1)+u(\xi_1)-\dots\notag\\ &\hspace{7cm}-u(\xi_N)+u(\xi_N)-u(\xi_{N+1})|^pd\xi_0 \dots d\xi_{N+1} \notag\\
		&\leq (N+1)^{p-1}\prod_{i=1}^N \frac{1}{|\mathcal B_i|} \int_{\mathcal B_0} \cdots \int_{\mathcal B_{N+1}} \sum_{j=1}^{N+1}|u(\xi_j)-u(\xi_{j-1})|^p d\xi_0 \dots d\xi_{N+1}\notag\\
		&=(N+1)^{p-1}\Bigg[ \frac{|\mathcal B_{N+1}|}{|\mathcal B_{1}|} \int_{\mathcal B_0 \times \mathcal B_1} |u(\xi_0)-u(\xi_1)|^p d\xi_0d\xi_1 \notag
		\\
		&+ \sum_{j=1}^{N-1}\frac{|\mathcal B_0||\mathcal B_{N+1}|}{|\mathcal B_{j}||\mathcal B_{j+1}|} \int_{\mathcal B_j \times \mathcal B_{j+1}} |u(\xi_j)-u(\xi_{j+1})|^p d\xi_j d\xi_{j+1}\notag
		\\
		&+\frac{|\mathcal B_{0}|}{|\mathcal B_{N}|} \int_{\mathcal B_N \times \mathcal B_{N+1}} |u(\xi_N)-u(\xi_{N+1})|^p d\xi_N d\xi_{N+1} \Bigg]\notag
		\\
		&\le \frac{(N+1)^{p-1} }{\kappa_0^2}\sum_{j=0}^{N} \int_{\mathcal B_j \times \mathcal B_{j+1}} |u(\xi_j)-u(\xi_{j+1})|^p d\xi_j d\xi_{j+1}.\label{firstine}
	\end{align}
	By construction, for $\xi_i \in \mathcal B_i$, $i=0,\dots,N$,   we have
	\begin{align*}
		|\xi_i-\xi_{i+1}|  \le 2r_0+r_1
	\end{align*}
	which implies that $\mathcal B_i\times \mathcal B_{i+1} \subset (\M \cap \square^{2k})^2\cap D_r $ . Then from \eqref{firstine}, in view of Assumption \eqref{assumptionmisja} b., we get
	\begin{align*}
		& \int_{\mathcal B_0\times \mathcal B_{N+1}} |u(\xi_0)-u(\xi_{N+1})|^pd\xi_0d\xi_{N+1}
		\le \frac{(\overline{N}+1)^{p} }{\kappa_0^2} \int_{(\M \cap \square^{2k})^2\cap D_r} |u(\xi)-u(\eta)|^p d\xi d\eta.
	\end{align*}
	Covering $\M \cap \square^m$ with a finite number of balls of radius $r_0$ and summing up the last inequality over all pairs of these balls gives the desired estimate \eqref{stimadiag2}.
\end{proof}
\begin{lemma}\label{lemma2.7}
Let $\M$ satisfy Assumption \ref{assumptionmisja}.
	Then there exists a continuous linear   operator
	$$
	\ext\cdot \hspace{0.02cm}:\hspace{0.02cm} L^p(\M)\rightarrow L^p(\R^d)
	$$
	such that for all $r\geq 2r_0+r_1$ and for all $u\in L^p(\M)$ we have
	\begin{equation}
		\label{Iestimate}
		\ext u=u,\qquad\mbox{a.e.~in} \hspace{0.3cm} \M,
	\end{equation}
	\begin{equation}
		\label{IIestimate}
		\int_{\R^d} |\ext u|^pdx\leq c_1\int_{\M} |u|^pdx,
	\end{equation}
	\begin{equation}
		\label{stimagrad}
		\int_{ D_{m+r_0}} |\ext u(x)-\ext u(y)|^p\,dx \,dy\leq c_2 \int_{\M^2\cap D_{r}}|u(x)-u(y)|^p\,dx \,dy,
	\end{equation}
	where $c_1=1+\sup_x\frac{|\square_x^{m+r_0} \backslash \M|}{|\M \cap \square_x^{m+r_0}|}$, and $c_2=c_2(c_1, k, \overline{N},m, r)$.

\end{lemma}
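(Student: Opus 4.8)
The plan is to construct the extension operator $\ext\cdot$ by gluing together the local extension operators from Lemma \ref{lemma2.6ver}, using a partition of $\R^d$ into translated cubes that is adapted to Assumption \ref{assumptionmisja}. First I would fix a covering of $\R^d$ by cubes $\square_{z}^{m}$, $z \in (2m/\sqrt d)\Z^d$ or similar, so that they have bounded overlap and every cube $\square_z^m$ meets $\M$ (possible by Assumption \ref{assumptionmisja} b.). On each such cube, apply Lemma \ref{lemma2.6ver} with $A = \M$ and $B = \square_z^{m+r_0}$ to obtain a local extension $\Phi_z u \in L^p(\square_z^{m+r_0})$ agreeing with $u$ on $\M \cap \square_z^{m+r_0}$, with the $L^p$ bound governed by $c_1 = 1 + |\square_z^{m+r_0}\setminus \M|/|\M\cap\square_z^{m+r_0}|$. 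Then I would set $\ext u := \sum_z \psi_z \Phi_z u$ for a smooth (or merely Lipschitz, or even just a measurable) partition of unity $\{\psi_z\}$ subordinate to the slightly enlarged cubes. Properties \eqref{Iestimate} (agreement on $\M$) and \eqref{IIestimate} (the global $L^p$ bound) then follow immediately from the local bounds and the finite-overlap property.

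The substance is \eqref{stimagrad}, the control of the convolution-type energy over $D_{m+r_0}$. For $(x,y) \in D_{m+r_0}$, i.e.\ $|x-y| < m+r_0$, both points lie in a common enlarged cube $\square_{z}^{K}$ with $K$ of order $k$ (here one uses that the covering cubes have side $\sim m$ and that $r_0 \le k$), so that $\ext u(x) - \ext u(y)$ can be written as a bounded combination of differences $\Phi_{z'} u(x) - \Phi_{z'} u(y)$ over the finitely many $z'$ whose enlarged cubes contain $x$ or $y$, plus terms $\Phi_{z'}u(y)(\psi_{z'}(x) - \psi_{z'}(y))$ coming from the partition of unity. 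The first type of term is controlled by \eqref{3} of Lemma \ref{lemma2.6ver}, which bounds $\iint_{(\square_{z'}^{m+r_0})^2}|\Phi_{z'}u(x)-\Phi_{z'}u(y)|^p$ by $c_2 \iint_{(\M\cap\square_{z'}^{m+r_0})^2}|u(x)-u(y)|^p$ --- but this integrates the difference over \emph{all} pairs in the cube, not just nearby ones, so to close the estimate I would invoke Lemma \ref{lemma:crucialest} (with the set $\M$ and $m$ replaced by $m+r_0$, legitimate by Remark \ref{rA.4}), which re-expresses the full double integral over a cube in terms of the diagonal-localised integral $\iint_{(\M\cap\square^{2k})^2\cap D_r}|u(x)-u(y)|^p$. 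The partition-of-unity terms are handled crudely: $|\psi_{z'}(x)-\psi_{z'}(y)| \le C|x-y|/m \le C$ is bounded, and since $\supp(\nabla\psi_{z'})$ and $\ran \psi_{z'}$ are contained in a fixed cube, $\iint_{D_{m+r_0}} |\Phi_{z'}u(y)|^p|\psi_{z'}(x)-\psi_{z'}(y)|^p \le C \int |\Phi_{z'}u|^p \le C\int_{\M\cap\square_{z'}^{m+r_0}}|u|^p$, and this in turn is dominated by the energy term via Lemma \ref{lemma:crucialest} applied to the constant-free part (using that $\int_{\M\cap\square^{m+r_0}}|u - u_{\M\cap\square^{m+r_0}}|^p$ is controlled by the energy, plus the fact that the cross terms telescope).

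The main obstacle I anticipate is bookkeeping the overlap constants uniformly in $\R^d$ and making sure the cube sizes line up: one needs $k \ge r_0 + m$ (assumed) so that a pair at distance $< m + r_0$ always sits inside a controlled dilate of one covering cube, and one needs the number of covering cubes meeting any fixed cube, and the number of partition functions not vanishing at a point, to be bounded by a constant depending only on $d$, $m$, $k$. A secondary subtlety is that the "vanishing constant" subtraction in Lemma \ref{lemma:crucialest} only gives control modulo constants on each cube, so when combining neighbouring cubes one must check that the local mean values $u_{\M\cap\square_z^{m+r_0}}$ of adjacent cubes differ by a quantity controlled by the energy --- this is exactly the second inequality of Lemma \ref{LSE} applied with $A,B$ the two overlapping cube-intersections, whose overlap has positive measure by construction. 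Once these combinatorial-geometric constants are pinned down, the estimate \eqref{stimagrad} falls out by summing the local contributions, with $c_2 = c_2(c_1,k,\overline N, m, r)$ as claimed.
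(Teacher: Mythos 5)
You use the same three lemmas (\ref{lemma2.6ver}, \ref{lemma:crucialest}, \ref{LSE}) as the paper, but glue the local extensions by a partition of unity, whereas the paper glues by a \emph{disjoint} tiling. Concretely, the paper tiles $\R^d$ with pairwise disjoint cubes $\square_i$, $i\in 2\Z^d$, and sets $\ext u=\sum_i \Phi^i(u)\mathbf{1}_{\square_i}$, where $\Phi^i$ fills $\square_i\setminus\M$ with the constant $u_{\M\cap\square_i}$. The advantage of this is the cross-cube estimate \eqref{wanted1}: for \emph{any} two tiles $\square_i$, $\square_j$,
$$
\int_{\square_i\times\square_j}|\ext u(x)-\ext u(y)|^p\,dx\,dy\;\le\; c_1^2\int_{(\M\cap\square_i)\times(\M\cap\square_j)}|u(x)-u(y)|^p\,dx\,dy,
$$
proved by decomposing $\square_i\times\square_j$ into $(\M\cap\square_i)\times(\M\cap\square_j)$, $(\square_i\setminus\M)\times(\M\cap\square_j)$, etc., and applying Lemma \ref{LSE} piece by piece. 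The jump $|u_{\M\cap\square_i}-u_{\M\cap\square_j}|$ that occurs across a tile boundary is absorbed in the same stroke; there is no separate boundary or ``commutator'' contribution to track. Then \eqref{stimagrad} follows by covering $D_{m+r_0}$ with $\bigcup_i\square_i\times\square_i^3$, applying the cross-cube estimate and Lemma \ref{lemma:crucialest}, and using finite overlap.

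Your partition-of-unity route generates precisely the commutator terms $\Phi_{z'}u(y)\,(\psi_{z'}(x)-\psi_{z'}(y))$ that the paper avoids, and your treatment of them has a genuine gap. The displayed chain terminates with $C\int_{\M\cap\square_{z'}^{m+r_0}}|u|^p$, and this quantity is \emph{not} dominated by $\int_{\M^2\cap D_r}|u(x)-u(y)|^p$ --- a nonzero constant $u$ gives a positive left side and zero right side. Your parenthetical about ``cross terms telescope'' points at the correct fix: since $\sum_{z'}[\psi_{z'}(x)-\psi_{z'}(y)]=0$ one may subtract a single constant $c$ from every $\Phi_{z'}u(y)$, and then control $\Phi_{z'}u-c$ by the local oscillation of $u$ on $\square_{z'}^{m+r_0}$ plus differences of adjacent local means, each energy-controlled via Lemma \ref{LSE} and Lemma \ref{lemma:crucialest}. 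But as written the step does not close, and carrying it out by hand just reconstructs what \eqref{wanted1} gives for free. There is also a secondary mismatch of scales: if $\Phi_{z'}u$ is defined only on $\square_{z'}^{m+r_0}$ and $\psi_{z'}$ is supported there as well, a pair $(x,y)\in D_{m+r_0}$ with $\psi_{z'}(x)\neq 0$ can have $y\notin\square_{z'}^{m+r_0}$, so $\Phi_{z'}u(y)$ is undefined; the local extensions need to live on a somewhat larger cube (e.g.\ $\square_{z'}^{2(m+r_0)}$) for your decomposition of $\ext u(x)-\ext u(y)$ to make sense.
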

\begin{proof}  
	
	According to Assumption \ref{assumptionmisja} for any $x\in \R^d$ the set  $\M\cap \square_x^{m+r_0}$ is non-empty. Moreover, $|\M\cap \square_x^{m+r_0}| \geq \kappa_0 |B_{r_0}|$. In what follows we will assume without loss of generality that $m+r_0 = 1$. If it is not the case we simply can scale $\M$ by $(m+r_0)^{-1}$. 
	
	For each $i \in 2\Z^d$ we define the operator
	$\Phi^{i} :L^p(\M \cap \square_i)\to L^p(\square_i)$ as in Lemma \ref{lemma2.6ver} with $A=\M$ and $B=\square_i$:
	\begin{equation}
		\label{defop1}
		\Phi^{i} (u)(x):=
		\begin{dcases}
			u(x), & x\in \M\cap \square_i, \\
			u_{\M\cap \square_i }, & x\in \square_i \setminus \M.
		\end{dcases}
	\end{equation}
	By Lemma \ref{lemma2.6ver}
	\begin{equation}\label{2rep2}
		\int_{\square_i} |\Phi^{i} (u)|^p\, dx\le c_1\int_{\M \cap \square_i}|u|^p\, dx.
	\end{equation}
	For $u \in L^p(\M)$ we define
	$$ \ext u=\sum_{i \in \mathcal{I}} \Phi^{i} (u|_{\M \cap \square_i})\mathbf{1}_{\square_i}. $$
	Properties \eqref{Iestimate}, \eqref{IIestimate} directly follow from  \eqref{defop1}, \eqref{2rep2}.
	It remains to check \eqref{stimagrad}. First we prove that for
	any $i,j \in \Z^d$ we have
	\begin{equation} \label{wanted1}
		\int_{\square_i \times \square_j} |\ext u(x)-\ext u(y)|^p \,dx \,dy \leq  c_1^2 	\int_{(\M \cap \square_i) \times (\M \cap \square_j)} |u(x)-u(y)|^p \,dx\,dy,
	\end{equation} 	
	Similarly to Lemma \ref{lemma2.6ver}, we decompose the set $\square_i \times \square_j$ as follows:
	\begin{multline*}
		\square_i \times \square_j=((\M \cap \square_i) \times (\M \cap \square_j)) \cup ((\square_i \backslash \M) \times (\M \cap \square_j))
		\\
		\cup ((\M \cap \square_i) \times (\square_j\backslash \M))\cup ((\square_i\backslash \M) \times (\square_j\backslash \M)).
	\end{multline*}
	We have:
	\begin{equation} \label{wanted2}
		\int_{(\M \cap \square_i) \times (\M \cap \square_j )}|\ext u(x)-\ext u(y)|^p \,dx \,dy =	\int_{(\M \cap \square_i) \times (\M \cap \square_j)} |u(x)-u(y)|^p \,dx\,dy.
	\end{equation}  	
	Applying Lemma \ref{LSE} we obtain
	\begin{eqnarray}  
		\int_{(\square_i \backslash \M)\times (\M \cap \square_j) } |\ext u(x)-\ext u(y)|^p \,dx\,dy & = &|\square_i \backslash \M|\int_{\M \cap \square_j} |u(x)  - u_{\M \cap \square_i}|^p \,dx\\ &\leq&    c 	\int_{(\M \cap \square_i) \times (\M \cap \square_j)} |u(x)-u(y)|^p \,dx\,dy,
	\end{eqnarray} 
where	$c:=\sup_x\frac{|\square_x^{m+r_0} \backslash \M|}{|\M \cap \square_x^{m+r_0}|}$.	
Swapping the roles of $i$ and $j$ we get
	\begin{equation}
		\int_{(\M \cap \square_i)\times (\square_j \backslash \M) } |\ext u(x)-\ext u(y)|^p \,dx\,dy \leq    c 	\int_{(\M \cap \square_i) \times (\M \cap \square_j)} |u(x)-u(y)|^p \,dx\,dy.
	\end{equation} 	
	Finally, resorting to Lemma \ref{LSE} once again  we have
	\begin{eqnarray} \nonumber
		\int_{(\square_i \backslash \M)\times ( \square_j\backslash \M) } |\ext u(x)-\ext u(y)|^p \,dx\,dy & = &|\square_i \backslash \M||\square_j \backslash \M| |u_{\M \cap \square_i}-u_{\M \cap \square_j} |^p \\ &\leq& \label{wanted4}  c^2 	\int_{(\M \cap \square_i) \times (\M \cap \square_j)} |u(x)-u(y)|^p \,dx\,dy.
	\end{eqnarray} 	
	Combining \eqref{wanted2}-\eqref{wanted4} we arrive at \eqref{wanted1}.

	We have from \eqref{wanted1} and Lemma \ref{lemma:crucialest} (with $m=3$, cf. Remark \ref{rA.4}):
	\begin{multline*}
		\int_{  D_{1}} |\ext u(x)-\ext u(y)|^p\,dx\, dy\leq \sum_{i \in 2\Z^d} \int_{\square_i \times \square_i^3} |\ext u(x)-\ext u(y)|^p\,dx\, dy 
		\\   
		\leq  c_1^2		\sum_{i \in 2\Z^d} \int_{(\M \cap \square_i) \times (\M \cap \square_i^3 )}|u(x)-u(y)|^p\,dx\, dy 
		 \leq   	c_1^2		\sum_{i \in 2\Z^d} \int_{(\M \cap \square_i^3)^2} \,\dots
		 \\
		 \leq   c(r) c_1^2\sum_{i \in 2\Z^d}\int_{(\M\cap \square_i^{2k})^2 \cap D_r} \,\dots 
		 \leq  c_2\int_{\M^2 \cap D_r} |u(x)-u(y)|^p\,dx\,dy.
	\end{multline*}
	In the last inequality we have used the fact that each $x \in \R^d$ is contained in at most $(2k)^d$ sets $\square_i^{2k}$, $i \in 2\Z^d$.
\end{proof}

In conclusion of this part of the Appendix we state the following straightforward result.
\begin{lemma}\label{rA.7}
	Let $\mathcal{M}$ be a non-empty open $\Z^d$-periodic set and $n \in \mathbf{N}$. If $\phi$ is a (quasi-)periodic function on $ \mathcal{M} \cap nY$, then there exists a (quasi-)periodic extension $\tilde{\phi}$ on $nY$ that satisfies
	\begin{eqnarray*}
		\int_Y |\ext{\phi}|^p \,dy &\leq& c \int_{ \mathcal{M}\cap Y} 	|\phi(y)|^p \,dy; \\
		\int_{(nY)^2} |\ext{\phi}(x)-\ext{\phi} (y)|^p\,dx\,dy &\leq& c^2  \int_{(\mathcal{M}\cap nY)^2} |\phi(x)-\phi (y)|^p\,dx\,dy,
	\end{eqnarray*} 	
	where $c=1+\frac{|Y \backslash \mathcal{M}|}{|\mathcal{M} \cap Y|}$.
\end{lemma}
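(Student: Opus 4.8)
The plan is to build $\ext\phi$ cell by cell, applying the local extension operator $\Phi$ of Lemma~\ref{lemma2.6ver} on the unit cell $Y$ and then propagating the result (quasi-)periodically over $nY$. Set $\mathcal M_Y:=\mathcal M\cap Y$, which has positive measure since $\mathcal M$ is open, non-empty and $\Z^d$-periodic, so that $c=1+\frac{|Y\setminus\mathcal M|}{|\mathcal M_Y|}<\infty$. Applying Lemma~\ref{lemma2.6ver} with $A=\mathcal M$ and $B=Y$ produces $\psi:=\Phi(\phi|_{\mathcal M_Y})\in L^p(Y)$ with $\psi=\phi$ on $\mathcal M_Y$, $\psi\equiv\phi_{\mathcal M_Y}$ on $Y\setminus\mathcal M$, and $\int_Y|\psi|^p\le c\int_{\mathcal M_Y}|\phi|^p$. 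Let $Q_1,\dots,Q_{n^d}$ be the translated cells $Y+e_i$, $e_i\in\{0,\dots,n-1\}^d$, which tile $nY$ (one of which is $Y$ itself), and define $\ext\phi$ on $nY$ by $\ext\phi(y):=\mathrm e^{\rmi\theta\cdot e_i}\psi(y-e_i)$ for $y\in Q_i$ (writing $\phi(\cdot+k)=\mathrm e^{\rmi\theta\cdot k}\phi$, $k\in\Z^d$, in the quasi-periodic case, and taking $\theta=0$ in the periodic case); this is the (quasi-)periodic extension of $\psi$, it agrees with $\phi$ a.e.\ on $\mathcal M\cap nY$, and the first claimed bound is just $\int_Y|\ext\phi|^p=\int_Y|\psi|^p\le c\int_{\mathcal M\cap Y}|\phi|^p$.

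The key point for the double-integral estimate is that on each cell the extension is again piecewise constant outside $\mathcal M$ in the right way. Since $\mathcal M$ is $\Z^d$-periodic and $|Q_i\cap\mathcal M|=|\mathcal M_Y|$, the change of variables $z\mapsto z+e_i$ gives $\mathrm e^{\rmi\theta\cdot e_i}\phi_{\mathcal M_Y}=\frac1{|Q_i\cap\mathcal M|}\int_{Q_i\cap\mathcal M}\phi=:\phi_{Q_i\cap\mathcal M}$, hence $\ext\phi\equiv\phi_{Q_i\cap\mathcal M}$ on $Q_i\setminus\mathcal M$ and $\ext\phi=\phi$ on $Q_i\cap\mathcal M$. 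Thus, for any pair $i,j$ we split $Q_i\times Q_j$ into the four pieces $(Q_i\cap\mathcal M)\times(Q_j\cap\mathcal M)$, $(Q_i\setminus\mathcal M)\times(Q_j\cap\mathcal M)$, $(Q_i\cap\mathcal M)\times(Q_j\setminus\mathcal M)$ and $(Q_i\setminus\mathcal M)\times(Q_j\setminus\mathcal M)$; on the first the integrand $|\ext\phi(x)-\ext\phi(y)|^p$ is exactly $|\phi(x)-\phi(y)|^p$, while on the other three it equals $|\phi_{Q_i\cap\mathcal M}-\phi(y)|^p$, $|\phi(x)-\phi_{Q_j\cap\mathcal M}|^p$ and $|\phi_{Q_i\cap\mathcal M}-\phi_{Q_j\cap\mathcal M}|^p$ respectively. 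Estimating the latter three via Lemma~\ref{LSE} applied with $A=Q_i\cap\mathcal M$, $B=Q_j\cap\mathcal M$, and using $\frac{|Q_i\setminus\mathcal M|}{|Q_i\cap\mathcal M|}=\frac{|Y\setminus\mathcal M|}{|\mathcal M_Y|}=c-1$, we obtain
\[
\int_{Q_i\times Q_j}|\ext\phi(x)-\ext\phi(y)|^p\,dx\,dy\le\bigl(1+2(c-1)+(c-1)^2\bigr)\int_{(Q_i\cap\mathcal M)\times(Q_j\cap\mathcal M)}|\phi(x)-\phi(y)|^p\,dx\,dy,
\]
and $1+2(c-1)+(c-1)^2=c^2$. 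Summing over all $n^{2d}$ pairs $(i,j)$, and noting that the products $Q_i\times Q_j$ tile $(nY)^2$ while the products $(Q_i\cap\mathcal M)\times(Q_j\cap\mathcal M)$ tile $(\mathcal M\cap nY)^2$ up to null sets, gives the second claimed bound.

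There is no genuine obstacle; the only step needing attention is the identification $\ext\phi|_{Q_i\setminus\mathcal M}\equiv\phi_{Q_i\cap\mathcal M}$, which is what legitimises the cell-by-cell use of Lemma~\ref{LSE}. It also explains why the constant is $c^2$ rather than the $c_2=1+2(c-1)$ of Lemma~\ref{lemma2.6ver}: the extra term $(c-1)^2$ comes from the pieces where both variables lie outside $\mathcal M$ in different cells, which is invisible within a single cell. The periodic and quasi-periodic cases are handled simultaneously, since the phase factors $\mathrm e^{\rmi\theta\cdot e_i}$ cancel in every difference that appears.
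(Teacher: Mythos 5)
Your proof is correct and is exactly the argument the paper has in mind: the paper's proof is a one-line pointer ("apply Lemma \ref{lemma2.6ver} on every cube $i+Y\subset nY$"), and you reconstruct it faithfully, correctly recognising that the cross-cell pairs $Q_i\times Q_j$, $i\neq j$, must be handled by going back to Lemma \ref{LSE} rather than Lemma \ref{lemma2.6ver} itself, which is precisely what produces the constant $c^2=(1+(c-1))^2$ rather than the $c_2=1+2(c-1)$ of the single-cell lemma.
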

\begin{proof}
	The argument goes by applying the extension of Lemma \ref{lemma2.6ver} on every cube $i+Y \subset nY$ for $i \in \Z^d$.
\end{proof}

\section{Compactness result}\label{a2}

In this section we revisit the compactness theorem of \cite{BrP2021}.

\begin{theorem}\label{th compactness}
	Let $\Omega$ be an open set with Lipschitz boundary, and assume that for a bounded in $L^2(\Omega)$ family $\{u_\e\}$ the estimate
	\begin{equation}\label{1}
		\int_{\Omega_{k\e}} \int_{B_r} \left| \frac{u_\e(x+\e\xi) - u_\e(x)}\e \right|^2 d\xi dx \leq M
	\end{equation}
	is satisfied with some $k\geq r>0$. Then there exists a bounded in $H^1(\Omega_{k\e})$ family $\{w_\e\} \subset C^\infty(\Omega_{k\e})$ such that
	\begin{eqnarray}\label{3a}
		\|u_\e - w_\e\|_{L^2(\Omega_{k\e})}&\leq C\sqrt{M}\e,\quad \|w_\e \|_{H^1(\Omega_{k\e})} \leq C(\sqrt{M}+\|u_\e \|_{L^2(\RR^d)}),
	\end{eqnarray}
	where $C$ doesn't depend on $\e$ and $M$.
	In particular, we have (up to a subsequence)
	\begin{eqnarray*}
		u_\e &\to u& \mbox{ strongly in } L^2(\Omega'),
		\\
		w_\e &\to u& \mbox{ strongly in } L^2(\Omega') \mbox{ and weakly in } H^1(\Omega'),
	\end{eqnarray*}
	for some $u\in H^1(\Omega)$. Here $\Omega'$ is and arbitrary open set satisfying $\Omega'\Subset \Omega$, i.e. $\Omega'$ is bounded and compactly contained in $\Omega$.
\end{theorem}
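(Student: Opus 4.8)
The plan is to follow the regularisation-by-mollification idea. First I would define $w_\e := u_\e * \rho_{\e r/2}$, where $\rho_\delta(x) = \delta^{-d}\rho(x/\delta)$ is a standard mollifier supported in $B_1$, so that $w_\e$ is supported (essentially) on a slightly enlarged set and lies in $C^\infty$. The crucial point is that, since $\rho_{\e r/2}$ is supported in $B_{\e r/2}$, for $x \in \Omega_{k\e}$ we can write
\begin{equation*}
	u_\e(x) - w_\e(x) = \int_{B_{\e r/2}} \rho_{\e r/2}(z)\, (u_\e(x) - u_\e(x-z))\, dz,
\end{equation*}
and after the substitution $z = \e\xi$ the inner difference is exactly of the form appearing in \eqref{1}. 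Applying Jensen's (or Cauchy--Schwarz) inequality in $z$ and integrating over $x \in \Omega_{k\e}$ gives $\|u_\e - w_\e\|_{L^2(\Omega_{k\e})}^2 \le C \e^2 M$, which is the first estimate in \eqref{3a}. The same representation applied to $\nabla w_\e = u_\e * \nabla\rho_{\e r/2}$, using $\int \nabla\rho_{\e r/2} = 0$ so that one may subtract the constant $u_\e(x)$ inside the convolution, yields $\|\nabla w_\e\|_{L^2(\Omega_{k\e})}^2 \le C M$ after bounding $\|\nabla\rho_{\e r/2}\|_{L^1} \le C/\e$ and again recognizing the finite-difference quotient; combined with the $L^2$ bound on $u_\e$ this gives $\|w_\e\|_{H^1(\Omega_{k\e})} \le C(\sqrt{M} + \|u_\e\|_{L^2})$.

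Next I would extract the limit. For any fixed $\Omega' \Subset \Omega$ we have $\Omega' \subset \Omega_{k\e}$ for $\e$ small, so $\{w_\e\}$ is bounded in $H^1(\Omega')$; by Rellich--Kondrachov, along a subsequence $w_\e \to u$ strongly in $L^2(\Omega')$ and weakly in $H^1(\Omega')$ for some $u \in H^1_{\mathrm{loc}}(\Omega)$. The strong convergence $u_\e \to u$ in $L^2(\Omega')$ then follows from the triangle inequality and the first bound in \eqref{3a}. To see $u \in H^1(\Omega)$ globally rather than merely locally, one exhausts $\Omega$ by an increasing sequence $\Omega'_n \Subset \Omega$, runs a diagonal argument to get a single subsequence working on all of them, and observes that the $H^1(\Omega'_n)$ norms of $w_\e$ (hence of the weak limit $u$) are bounded uniformly in $n$ by $C(\sqrt{M}+\sup_\e\|u_\e\|_{L^2(\Omega)})$, independent of $n$; letting $n\to\infty$ gives $u \in H^1(\Omega)$ with the corresponding norm bound.

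The main technical obstacle is bookkeeping with the domains: the difference quotient in \eqref{1} involves points $x + \e\xi$ with $|\xi| \le r \le k$, and the mollification at scale $\e r/2$ at a point $x \in \Omega_{k\e}$ only samples $u_\e$ at points within $\e r/2 \le \e k$ of $x$, which still lie in $\Omega$; one must check carefully that all the shifted arguments remain inside the region where \eqref{1} is assumed, and that the passage from $\Omega_{k\e}$ to a fixed $\Omega'$ is legitimate for small $\e$. A secondary point requiring care is the gradient estimate: the cancellation $\int\nabla\rho = 0$ is what converts the naive bound $\|\nabla w_\e\|_{L^2} \lesssim \e^{-1}\|u_\e\|_{L^2}$ (useless) into the bound in terms of $\sqrt{M}$, and this must be written out honestly. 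Everything else is a routine application of Jensen's inequality, Fubini, and the Rellich--Kondrachov theorem.
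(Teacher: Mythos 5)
Your proposal is correct and follows essentially the same route as the paper: mollify $u_\e$ at scale $\e$ (the paper takes $w_\e(x)=\int_{B_r}u_\e(x+\e\xi)\varphi(\xi)\,d\xi$ with a radially symmetric mollifier), obtain the $L^2$ closeness and the gradient bound by recognizing the finite-difference quotient from \eqref{1}, and conclude by Rellich--Kondrachov on sets $\Omega'\Subset\Omega$. The only (harmless) variation is in the gradient estimate: you exploit $\int\nabla\rho=0$ to subtract $u_\e(x)$ directly, whereas the paper uses the oddness of $\partial_i\varphi$ to produce symmetric differences $u_\e(x+\e\xi)-u_\e(x-\e\xi)$ and then the triangle inequality --- both yield the same bound $\|\nabla w_\e\|_{L^2(\Omega_{k\e})}\le C\sqrt M$.
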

\begin{proof}
	The following is a straightforward adaptation of the argument used in \cite[Theorem 2]{CC2015} in a similar context.
	
	Let $\varphi$ be a radially symmetric mollifier, {\it i.e.} $\varphi\geq 0$, $\varphi \in C^\infty_0(B_r)$, $\int_{B_r}\varphi\,dx=1$. We define
	\begin{equation} \label{can1}
		w_\e(x):= \int_{B_r} u_\e(x+\e\xi) \varphi(\xi)d\xi,
	\end{equation}
	where
	applying the Cauchy-Schwartz inequality  and utilising  \eqref{1} yields \eqref{3a}:
	\begin{equation*}
		\begin{aligned}
			\|u_\e - w_\e\|_{L^2(\Omega_{k\e})}^2 =	\int_{\Omega_{k\e}}\left[  \int_{B_r} \varphi(\xi) (u_\e(x+\e\xi) - u_\e(x))d\xi \right]^2 dx
			\\
			\leq \int_{\Omega_{k\e}} \left[ \int_{B_r} \varphi^2(\xi) d\xi  \int_{B_r
			}(u_\e(x+\e\xi) - u_\e(x))^2d\xi \right] dx \leq CM\e^2.
		\end{aligned}
	\end{equation*}
	This proves the first inequality in \eqref{3a}.
	Next we show that $\nabla w_\e$ is bounded. First we observe that since the mollifier is radially symmetric, its partial derivative $\partial_i \varphi(x)$ is an odd function of $x_i$   and is even with respect to all other variables (the second property is not essential, but will simplify the notation). Therefore we can write
	\begin{equation*}
		\begin{aligned}
			\partial_i w_\e(x)  =&	 \e^{-1} \int_{B_r} \partial_{\xi_i} u_\e(x+\e\xi) \varphi(\xi)d\xi = -  \e^{-1} \int_{B_r} u_\e(x+\e\xi) \partial_{\xi_i} \varphi(\xi)d\xi
			\\
			= &-  \e^{-1} \int_{B_r \cap \{\xi_i>0\}} (u_\e(x+\e\xi) - u_\e(x-\e\xi))\partial_{\xi_i} \varphi(\xi)d\xi.
		\end{aligned}
	\end{equation*}
	Using Minkowski's integral inequality in step two, applying the triangle inequality to the term $u_\e(x+\e\xi) - u_\e(x) + u_\e(x) - u_\e(x-\e\xi)$ in step three, and then the Cauchy-Schwartz inequality, we obtain:
	\begin{equation} \label{can2}
		\begin{aligned}
			\|\partial_i w_\e\|_{L^2(\Omega_{k\e})}=\e^{-1}\left[	\int_{\Omega_{k\e}}\left[  \int_{B_r\cap \{\xi_i>0\}} (u_\e(x+\e\xi) - u_\e(x-\e\xi))\partial_{\xi_i} \varphi(\xi) d\xi \right]^2 dx \right]^{1/2}
			\\
			\leq \e^{-1}\int_{B_r\cap \{\xi_i>0\}} |\partial_{\xi_i} \varphi(\xi)|\left( \int_{\Omega_{ k\e}} (u_\e(x+\e\xi) - u_\e(x-\e\xi))^2 dx \right)^{1/2} d\xi
			\\
			\leq \sqrt{2}\e^{-1} \| |\nabla\varphi| \|\big._{L^\infty(B_r)}
			\int_{B_r}  \left( \int_{\Omega_{k\e}} (u_\e(x+\e\xi) - u_\e(x))^2 dx \right)^{1/2} d\xi
			\\
			\leq \sqrt{2}\e^{-1} \| |\nabla\varphi| \|\big._{L^\infty(B_r)}
			|B_r|^{1/2}  \left( \int_{\Omega_{k\e}} \int_{B_r} (u_\e(x+\e\xi) - u_\e(x))^2 d\xi dx \right)^{1/2} \leq CM.
		\end{aligned}
	\end{equation}
	As a consequence of \eqref{can1}, Young's inequality and \eqref{can2} we obtain the second inequality of \eqref{3a}.
	The convergence properties are the direct consequences of the boundedness of $w_\e$ in $H^1(\Omega_{k\e})$ and \eqref{3a}.
\end{proof}
\begin{remark} \label{marin1}
	From the construction above we see that if $u_\e =0$ outside some compact set $K$ then $w_\e=0$ in $K^{r\e}$.
\end{remark} 	
Combining Lemma \ref{lemma2.7} and Theorem \ref{th compactness} we arrive at the following corollary.
\begin{corollary} \label{corcan3}
	Let $Y$, $Y_\stiff$, $Y_\soft $, $a_\e $ be as in Section \ref{secpbmsetting},
	and assume that $u \in L^2(\RR^d)$. Then for every $\varepsilon>0$ the function $u$ admits the following decomposition:
	$$
	u=\bar{u}_\e  +\varepsilon \hat{u}_\e +z_\e , $$
	where
	$\bar{u}_\e  \in H^1(\RR^d)\cap C^\infty(\R^d)$, $\hat{u}_\e  \in L^2(\RR^d)$ and $z_\e \in L^2(\e Y_\soft^\#)$ and the following estimates are valid:
	\begin{eqnarray*}
		& & \|\bar{u}_\e \|^2_{H^1(\RR^d)} \leq C(a_\e (u,u)+\|u\|^2_{L^2(\RR^d)}), \quad  \|\hat{u}_\e \|^2_{L^2(\RR^d)} \leq C(a_\e (u,u)+\|u\|^2_{L^2(\RR^d)}), \\ & & \hspace{+5ex}  \|z_\e \|^2_{L^2(\RR^d)} \leq C(a_\e (u,u)+\|u\|^2_{L^2(\RR^d)}),
	\end{eqnarray*}
	where $C>0$ is independent of $\varepsilon>0$.
	
	Furthermore, if $\supp u \subset S$, then $\supp z_\e \subset S\cap \e Y_\soft^\#$, and $\supp \bar{u}_\e, \,\supp\hat{u}_\e \subset S^{k\e}$.
\end{corollary}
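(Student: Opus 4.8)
The statement to prove is Corollary~\ref{corcan3}: every $u\in L^2(\R^d)$ splits as $u=\bar u_\e+\e\hat u_\e+z_\e$ with $\bar u_\e\in H^1\cap C^\infty$, $\hat u_\e\in L^2$, $z_\e\in L^2(\e Y^\#_\soft)$, all controlled by $a_\e(u,u)+\|u\|^2_{L^2}$, and with the stated support properties. The plan is to first isolate the ``stiff'' part of $u$ by a scaled version of the extension Lemma~\ref{lemma2.7}, then apply the compactness-type regularisation of Theorem~\ref{th compactness} to the extended function, and finally read off $z_\e$ as the remainder supported on the soft component.

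\textbf{Step 1: reduction to the $\e=1$ lattice and extraction of the stiff part.} By the rescaling $x\mapsto x/\e$ it suffices to produce the decomposition for the unscaled operator (the form $a_\e$ scales homogeneously so that all constants are $\e$-independent), so I would work with the bilinear form on the $\Z^d$-periodic sets $Y_\stiff^\#$, $Y_\soft^\#$. Apply Lemma~\ref{lemma2.7} with $\M=Y_\stiff^\#$ (which satisfies Assumption~\ref{assumptionmisja} thanks to Assumption~\ref{assumptionmisja1}; note $p=2$ there) to the restriction $u|_{Y_\stiff^\#}$, obtaining $\ext u=:u_\stiff\in L^2(\R^d)$ with $\ext u=u$ on $Y_\stiff^\#$, $\|u_\stiff\|^2_{L^2}\le c_1\|u\|^2_{L^2(Y_\stiff^\#)}$, and, crucially, the convolution-energy bound \eqref{stimagrad}: $\int_{D_{m+r_0}}|u_\stiff(x)-u_\stiff(y)|^2\,dx\,dy\le c_2\int_{(Y_\stiff^\#)^2\cap D_{r_a}}|u(x)-u(y)|^2\,dx\,dy$. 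The right-hand side is controlled by $a_\e(u,u)$: indeed, using $a\ge c_a$ on $|x|<r_a$ and $\Lambda_\e\ge \alpha_1$ on the relevant region (here $\Lambda_\e\ge \e^2\alpha_1$ on $Y_\stiff^\#\times Y_\stiff^\#$ comes from $\Lambda_0\ge\alpha_1$, compare \eqref{43}), one gets $\int_{(Y_\stiff^\#)^2\cap D_{r_a}}|u(x)-u(y)|^2\le C\,a_\e(u,u)$ after undoing the scaling.

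\textbf{Step 2: regularisation of the stiff part.} The function $u_\stiff$ now satisfies a bound of the form \eqref{1} (with $\Omega=\R^d$, $r=m+r_0$, $k$ of the order of $2k$ from Assumption~\ref{assumptionmisja}), because $\int_{D_{m+r_0}}|u_\stiff(x)-u_\stiff(y)|^2\le C(a_\e(u,u)+\|u\|^2_{L^2})$ translates, after scaling back to the $\e$-lattice, into exactly the left-hand side of \eqref{1} with $M=C(a_\e(u,u)+\|u\|^2_{L^2})$. Apply Theorem~\ref{th compactness} to $u_\stiff$ to get $w_\e=:\bar u_\e\in C^\infty(\R^d)\cap H^1(\R^d)$ with $\|u_\stiff-\bar u_\e\|_{L^2}\le C\sqrt M\,\e$ and $\|\bar u_\e\|^2_{H^1}\le C(M+\|u_\stiff\|^2_{L^2})\le C(a_\e(u,u)+\|u\|^2_{L^2})$. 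Set $\hat u_\e:=\e^{-1}(u_\stiff-\bar u_\e)$, so $\|\hat u_\e\|^2_{L^2}\le C(a_\e(u,u)+\|u\|^2_{L^2})$ and $u_\stiff=\bar u_\e+\e\hat u_\e$.

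\textbf{Step 3: identification of the soft part and supports.} Define $z_\e:=u-u_\stiff=u-\bar u_\e-\e\hat u_\e$. On $\e Y^\#_\stiff$ we have $u_\stiff=u$ by \eqref{Iestimate}, hence $z_\e=0$ there, i.e. $z_\e\in L^2(\e Y^\#_\soft)$; moreover $\|z_\e\|_{L^2}\le \|u\|_{L^2}+\|u_\stiff\|_{L^2}\le C\|u\|_{L^2}\le C(a_\e(u,u)+\|u\|^2_{L^2})^{1/2}$, giving the last required bound. For the support statement: if $\supp u\subset S$, then restricting to $S$ and noting that the extension $\ext\cdot$ of Lemma~\ref{lemma2.7} is built cube-by-cube on squares $\square_i$ of the ($\e$-scaled) lattice meeting $Y^\#_\stiff$, one gets $\supp u_\stiff\subset S^{k\e}$ for the appropriate $k$; the mollification in Theorem~\ref{th compactness}, by Remark~\ref{marin1}, only enlarges the support by an extra $r\e$, so that $\supp\bar u_\e,\supp\hat u_\e\subset S^{k\e}$ after absorbing constants into $k$, and $\supp z_\e=\supp(u-u_\stiff)\subset S\cap\e Y^\#_\soft$ since $z_\e$ vanishes outside $S$ (where $u=0$ and $u_\stiff=0$ because the generating cubes don't reach beyond $S^{k\e}$—more carefully, $z_\e=u$ outside $S^{k\e}$, but $u=0$ there, and inside one uses $z_\e=0$ on $\e Y^\#_\stiff$) — I would state this slightly more carefully in the write-up using the explicit cube construction.

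\textbf{Main obstacle.} The only genuinely delicate point is bookkeeping the chain of constants and, above all, verifying that the hypothesis \eqref{1} of Theorem~\ref{th compactness} really follows from the output \eqref{stimagrad} of Lemma~\ref{lemma2.7} after the $\e$-rescaling: one must check that $D_{m+r_0}$ (difference set at the unit scale) rescales to $D_{(m+r_0)\e}$ and that this sits inside the ``$B_r$ with $k\ge r$'' framework of \eqref{1}, choosing $r=(m+r_0)$ and $k=2k_{\mathrm{geom}}$; and that the coercivity constant lost to the high contrast ($\e^2$ factor in $\Lambda_\e$) is exactly compensated by the $\e^{-(d+2)}$ in the definition of $a_\e$, so the final bounds are $\e$-independent. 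This is the same mechanism already used in \eqref{43} and in the proof of Theorem~\ref{thmandrei1}, so it is routine but needs care. Everything else is a direct concatenation of the two appendix results.
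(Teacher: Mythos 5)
The core decomposition (your Steps 1--2) is essentially the paper's own proof: extend $u$ from the stiff set via Lemma \ref{lemma2.7}, control the extension's convolution energy by the stiff--stiff part of $a_\e(u,u)$, smooth with Theorem \ref{th compactness}, and set $\hat u_\e=\e^{-1}(u_\stiff-\bar u_\e)$, $z_\e=u-u_\stiff$. The genuine gap is in Step 3, in the claim $\supp z_\e\subset S\cap\e Y_\soft^\#$. With your choice $\M=Y_\stiff^\#$ the extension is built cube by cube, and on any cube of the $\e$-lattice straddling $\partial S$ the soft part of the cube --- including its portion \emph{outside} $S$ --- receives the mean of $u$ over the stiff part of that cube, which is nonzero in general. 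Hence $u_\stiff\neq 0$ on parts of $(S^{k\e}\setminus S)\cap\e Y_\soft^\#$, so $z_\e=u-u_\stiff=-u_\stiff\neq 0$ there; your parenthetical argument only yields $\supp z_\e\subset S^{k\e}\cap\e Y_\soft^\#$, not the stated inclusion (which is needed later, e.g. in the proofs of Theorems \ref{th2.6} and \ref{krnj1}, where $z_\e$ must vanish outside $S$). The paper avoids this by running the extension, for the support statement, with the modified set $\M:=(\e^{-1}S\cap Y_\stiff^\#)\cup(\R^d\setminus\e^{-1}S)$, for which Assumption \ref{assumptionmisja} still holds with the same parameters: since $u\equiv 0$ on $\R^d\setminus\e^{-1}S$, the extension is then modified only on $\e^{-1}S\cap Y_\soft^\#$, so the extended function is supported in $S$, $z_\e$ is supported exactly in $S\cap\e Y_\soft^\#$, and Remark \ref{marin1} gives $\supp\bar u_\e,\,\supp\hat u_\e\subset S^{k\e}$. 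Your write-up needs this (or an equivalent) modification.

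A second point to correct is the scaling discussion. On $Y_\stiff^\#\times Y_\stiff^\#$ one has $\Lambda_\e=\Lambda_0\ge\alpha_1$ with \emph{no} factor $\e^2$, and this order-one lower bound is what makes Step 2 close: the prefactor $\e^{-(d+2)}$ in $a_\e$ is entirely consumed in producing the difference quotient $\e^{-2}|u(x+\e\xi)-u(x)|^2$ at the $\e^{-d}$ volume scale, so hypothesis \eqref{1} holds with $M=C(a_\e(u,u)+\|u\|^2_{L^2(\RR^d)})$ precisely because the stiff--stiff weight carries no $\e^2$. If one only had $\Lambda_\e\ge\e^2\alpha_1$ on those pairs, as your Step 1 parenthetical and the ``main obstacle'' paragraph suggest, then $M$ would pick up a factor $\e^{-2}$ and the $\e$-uniform constants of the corollary would be lost; the $\e^2$-weighted (soft) pairs only ever yield $L^2$-level information, which is exactly why the remainder $z_\e$ appears at all. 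This is a slip in the write-up rather than in the plan (your restriction to $\M=Y_\stiff^\#$ indeed uses only stiff--stiff pairs, cf. \eqref{43}), but it should be fixed.
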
 	
\begin{proof}
	If $u=0$, the statement is trivial. If $u\not= 0$, we define $u_\e:=(a_\e (u,u)+\|u\|^2_{L^2(\RR^d)})^{-\frac12} u$. From assumption \eqref{a-2} we immediately have that
	\begin{equation*}
		\int\limits_{ \e  Y_\stiff^{\#} \times  \e  Y_\stiff^{\#} \cap \{|x-\eta|\leq \e r_a\}} \left(\frac{ u_\e(x) - u_\e(\eta)}\e \right)^2 dx d\eta \leq C.
	\end{equation*}
	Applying Lemma \ref{lemma2.7} to the restriction of $u_\e(\e \cdot)$ to $Y^\#_{\rm stiff}$ and rescaling back we obtain its extension  $\widetilde{u}_\e $ with
	\begin{align*}
		&\int\limits_{\R^d\times \R^d \cap \{|x-\eta|\leq \e r'\}}
		\left(\frac{\widetilde u_\e(x) -\widetilde u_\e(\eta)}\e \right)^2 dx d\eta  \leq  C
	\end{align*}
	for some $r'>0$. Applying  Theorem \ref{th compactness} to $\widetilde{u}_\e$ we have the following decomposition:
	$$
	u_\e= \widetilde u_\e+ u_\e-\widetilde u_\e=w_\e +(\widetilde{u}_\e -w_\e )+\check z_\e ,
	$$
	where the sequence $w_\e$ is bounded in $H^1(\R^d)$, $\|\widetilde{u}_\e -w_\e\|_{L^2(\R^d)} \leq C\e$, and $\check z_\e := u_\e-\widetilde{u}_\e $ vanishes on $Y^\#_{\rm stiff}$ by construction. Letting
	$$
	\begin{gathered}
		\bar{u}_\e :=(a_\e (u,u)+\|u\|^2_{L^2(\RR^d)})^\frac12w_\e , \quad \hat{u}_\e: =(a_\e (u,u)+\|u\|^2_{L^2(\RR^d)})^\frac12\frac{\widetilde{u}_\e -w_\e }\e,
		\\
		z_\e :=(a_\e (u,u)+\|u\|^2_{L^2(\RR^d)})^\frac12\check z_\e,
	\end{gathered}
	$$
	we obtain the first claim of the Corollary.
	
	The second claim follows by applying to $u$ the extension Lemma \ref{lemma2.7} with $\M:= (\e^{-1}S\cap Y_\stiff^\#) \cup (\R^d\setminus\e^{-1}S)$ for which Assumption \ref{assumptionmisja} holds with the same parameters. In this way we have that $\supp \ext{u}_\e \subset S$ by construction, thus $\supp z_\e \subset S\cap \e Y_\soft^\#$, and $\supp \bar{u}_\e ,\supp\hat{u}_\e \subset S^{k\e}$ by Remark \ref{marin1}.
\end{proof}	

\section{Two-scale convergence for convolution energies}\label{a3}
In this section we provide technical statements on two-scale convergence  that will be used in the derivation of the limit two-scale operator.

\begin{lemma} \label{twoscalelimit1}
	Let $a \in L^1(\R^d)$, $\Gamma \in L^{\infty}_\#(Y \times Y), $ $\Gamma_\e(\cdot,\cdot): = \Gamma(\cdot/\e,\cdot/\e)$,  and  $\mu$ be a bounded (signed)  measure on $[0,1]$. 	Let  $({u}_\e )_{\e>0}$ be a bounded sequence in $L^2(\R^d)$ such that
	$u_\e  \xrightharpoonup{2} u(x,y)\in L^2(\R^d\times Y)$.
	Then
	\begin{multline}\label{37}
		\mathcal{I}_\e:=	\int_{\R^d} \int_{\R^d} a(\xi)\Gamma_\e({x},{x+\e\xi} )\int_0^1 u_\e (x+t\e \xi) d \mu(t) \varphi(x) b({x}/\e ) d\xi\, dx
		\\
		\to \mathcal{I}:=\int_{\R^d} \int_{Y}\int_{\R^d} a(\xi) \Gamma (y,y+\xi)  \int_0^1 u(x,y+t\xi) d \mu(t) \varphi(x) b(y) d\xi\,dy\,dx,
	\end{multline}
	\begin{multline}\label{38a}
		\mathcal{J}_\e:=	\int_{\R^d} \int_{\R^d} a(\xi)\Gamma_\e({x},{x+\e\xi} )\int_0^1 u_\e (x+t\e \xi) d \mu(t) \varphi(x+\e\xi) b({x/\e+\xi} ) d\xi\, dx
		\\
		\to \int_{\R^d} \int_{Y}\int_{\R^d} a(\xi) \Gamma (y,y+\xi)  \int_0^1 u(x,y+t\xi) d \mu(t) \varphi(x) b(y+\xi) d\xi\,dy\,dx
	\end{multline}
	as $\e\to 0$ for all $\varphi \in C_0^\infty(\R^d)$, $b \in C_\#(Y)$.
\end{lemma}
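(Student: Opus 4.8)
\textbf{Proof proposal for Lemma \ref{twoscalelimit1}.}

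The plan is to treat the two convergence statements \eqref{37} and \eqref{38a} in parallel, since they differ only in whether the ``test factor'' $\varphi(x)b(x/\e)$ is evaluated at $x$ or at $x+\e\xi$. First I would reduce everything to the standard two-scale convergence machinery by Fubini's theorem: substituting $\xi' = t\xi$ inside the $t$-integral is awkward because of the $\mu$-integration, so instead I would keep $\xi$ fixed, change the spatial variable $x \mapsto x - t\e\xi$ in the inner term, and rewrite $\mathcal I_\e$ as an iterated integral $\int_0^1 \int_{\R^d} a(\xi)\, \big[\int_{\R^d} u_\e(w)\,\psi_\e^{\xi,t}(w)\,dw\big]\,d\xi\,d\mu(t)$, where $\psi_\e^{\xi,t}(w) := \Gamma_\e(w - t\e\xi,\, w - t\e\xi + \e\xi)\,\varphi(w - t\e\xi)\, b((w-t\e\xi)/\e)$. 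For fixed $\xi$ and $t$ this is precisely a test function of the oscillating form $\varphi_\e(w) b_\e(w/\e)$ up to the $O(\e)$ shift $-t\e\xi$ in the macroscopic argument of $\varphi$ and the periodic shift in $\Gamma$ and $b$; by the definition of weak two-scale convergence $u_\e \wtto u$, the bracketed quantity converges to $\int_{\R^d}\int_Y u(x,y)\,\Gamma(y,y+\xi)\,\varphi(x)\,b(y)\,dy\,dx$ (the $t$-shift disappears since $\varphi$ is continuous and the oscillating part is $b(y-t\xi)$ evaluated at the shifted cell — here one must be careful: the cell variable becomes $y+\xi$ after accounting for $\Gamma_\e$'s second argument, which is why the limit features $\Gamma(y,y+\xi)$ and, in \eqref{38a}, $b(y+\xi)$). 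I would record this pointwise-in-$(\xi,t)$ convergence as the core lemma, invoking the elementary two-scale convergence facts (test functions of product type, continuity of $\varphi$, periodicity of $b$ and $\Gamma$) that are standard and stated/used earlier in the paper.

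The second, and more delicate, step is to justify interchanging the limit $\e\to0$ with the $\int_0^1\int_{\R^d}\cdots a(\xi)\,d\xi\,d\mu(t)$ integration, i.e. to pass from pointwise convergence of the inner brackets to convergence of $\mathcal I_\e$ and $\mathcal J_\e$. This is where a dominated-convergence argument is needed. The natural bound is: for each fixed $\xi,t$,
\[
\Big|\int_{\R^d} u_\e(w)\,\psi_\e^{\xi,t}(w)\,dw\Big| \le \|u_\e\|_{L^2(\R^d)}\,\|\Gamma\|_{L^\infty}\,\|b\|_{L^\infty}\,\|\varphi\|_{L^2(\R^d)} \le C,
\]
uniformly in $\e,\xi,t$, with $C$ depending only on $\sup_\e\|u_\e\|_{L^2}$, $\Gamma$, $b$, $\varphi$. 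Since $a\in L^1(\R^d)$ and $\mu$ is a bounded measure on $[0,1]$, the function $(\xi,t)\mapsto C\,a(\xi)$ is integrable against $a(\xi)\,d\xi\,d|\mu|(t)$, so the dominated convergence theorem applies and yields $\mathcal I_\e \to \mathcal I$. The same bound with $\varphi(x+\e\xi)$ in place of $\varphi(x)$ — still controlled by $\|\varphi\|_{\infty}$ times the measure of a fixed compact neighbourhood of $\supp\varphi$, or simply by $\|\varphi\|_{L^2}$ after the shift — handles $\mathcal J_\e$.

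I expect the main obstacle to be bookkeeping of the cell variable: correctly tracking that the microscopic argument of $u_\e$ is $x + t\e\xi$ (not $x+\e\xi$), so after the change of variables the oscillation seen by $u_\e$ is at scale $1/\e$ with cell-shift $\xi$ relative to the $\Gamma$ and $b$ factors, which is exactly what produces $\Gamma(y,y+\xi)$, $b(y)$ in \eqref{37} and $\Gamma(y,y+\xi)$, $b(y+\xi)$ in \eqref{38a}; a sign error or an off-by-$\xi$ shift here would give the wrong limit. A secondary technical point is that $\varphi(x+\e\xi)$ (in $\mathcal J_\e$) is not compactly supported uniformly in $\xi$, but since $a\in L^1$ the tail $\{|\xi|>R\}$ contributes at most $C\int_{|\xi|>R}a(\xi)\,d\xi$, which is small uniformly in $\e$; combined with the continuity of $\varphi$ on the remaining compact $\xi$-range, this lets the $\varphi(x+\e\xi)\to\varphi(x)$ replacement go through. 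Everything else is a routine application of the elementary properties of (weak) two-scale convergence together with Fubini and dominated convergence.
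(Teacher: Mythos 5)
Your overall route is essentially the paper's: change variables so that $u_\e$ is evaluated at the integration variable, absorb the resulting $O(\e)$ shift in $\varphi$ using the uniform $L^2$ bound on $u_\e$, and pass to the two-scale limit in the main term, integrating in $(\xi,t)$ against $a(\xi)\,d\xi\,d|\mu|(t)$ by dominated convergence. The only structural difference is the order of operations: you take the two-scale limit pointwise in $(\xi,t)$ and then integrate, whereas the paper first integrates out $(\xi,t)$, collapsing the whole oscillating factor into a single periodic function $\tilde b(y)=\int_0^1\int_{\R^d} a(\xi)\Gamma(y-t\xi,y+(1-t)\xi)b(y-t\xi)\,d\xi\,d\mu(t)$ and applying two-scale convergence once with the test function $\varphi(x)\tilde b(x/\e)$ (and $\hat b$ for $\mathcal J_\e$). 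Both variants work; yours additionally needs joint measurability of the bracket in $(\xi,t)$, which is immediate, and both need the (standard, but worth stating) fact that $\varphi(x)\beta(y)$ with $\varphi\in C_0^\infty(\R^d)$ and $\beta\in L^\infty_\#(Y)$ is an admissible two-scale test function for $L^2$-bounded sequences, since $\Gamma$ is only $L^\infty$.

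There is, however, a concrete error in the one step you yourself identify as the crux. After the substitution $w=x+t\e\xi$ the bracket is $\int_{\R^d} u_\e(w)\,\Gamma\bigl(w/\e-t\xi,\,w/\e+(1-t)\xi\bigr)\,b(w/\e-t\xi)\,\varphi(w-t\e\xi)\,dw$, whose two-scale limit (for fixed $\xi,t$) is $\int_{\R^d}\int_Y u(x,y)\,\Gamma(y-t\xi,y+(1-t)\xi)\,b(y-t\xi)\,\varphi(x)\,dy\,dx$; shifting the periodic cell variable $y\mapsto y+t\xi$ turns this into $\int_{\R^d}\int_Y u(x,y+t\xi)\,\Gamma(y,y+\xi)\,b(y)\,\varphi(x)\,dy\,dx$. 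The $t\xi$-shift does not disappear: it migrates onto the cell argument of $u$. Your stated pointwise limit, $\int_{\R^d}\int_Y u(x,y)\,\Gamma(y,y+\xi)\,\varphi(x)\,b(y)\,dy\,dx$, has no $t$-dependence, so integrating it against $d\mu(t)$ would yield $\mu([0,1])$ times a $t$-independent quantity, which is not $\mathcal I$ (the target keeps $u(x,y+t\xi)$ under the $\mu$-integral); the same off-by-$t\xi$ issue affects your description of \eqref{38a}. The fix is purely local — replace $u(x,y)$ by $u(x,y+t\xi)$ in the claimed pointwise limit (and carry the corresponding $b(y+\xi)$ for $\mathcal J_\e$) — after which your uniform bound $C\,a(\xi)$, the dominated convergence step, and the treatment of $\varphi(x+\e\xi)$ (harmless because $\|\varphi(\cdot+\e\xi)-\varphi\|_{L^2}\to0$ while the difference is dominated uniformly, exactly as in the paper's handling of the error terms) go through and give \eqref{37} and \eqref{38a}.
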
 	
\begin{proof}
	Using a change of variables, we have
	\begin{multline}\label{ex10}
		\mathcal{I}_\e
		=  \int_0^1 \int_{\R^d} a(\xi) \int_{\R^n} u_\e (x) \Gamma_\e({x-t\e\xi} ,{x+(1-t)\e \xi} ) \varphi(x)b(x/\e-t\xi) dx\, d\xi\, d\mu(t) \\ +
		\int_0^1 \int_{\R^d} a(\xi) \int_{\R^d} u_\e (x) \Gamma_\e({x-t\e\xi} ,{x+(1-t)\e \xi} )[\varphi(x-t\e \xi)-\varphi(x)]b(x/\e-t\xi) dx\, d\xi\,d\mu(t). 	
	\end{multline}
	Note that for every $\xi \in \R^d$, $t \in [0,1]$ one has
	\begin{multline}\label{38}
		\left|  \int_{\R^d} u_\e (x) \Gamma_\e({x-t\e\xi} ,{x+(1-t)\e \xi} ) [\varphi(x-t\e \xi)-\varphi(x)]b(x/\e-t\xi) dx \right|^2
		\\
		\leq 2\|\Gamma\|^2_{L^{\infty}}\|u_\e \|^2_{L^2}\|\varphi\|^2_{L^2}\|b\|^2_{L^{\infty}}.
	\end{multline}
	Furthermore, the quantity on the left-hand  side of \eqref{38} vanishes   as $\e\to 0$ for every $\xi \in \R^d$, $t \in [0,1]$. Therefore, by the Lebesgue dominated convergence theorem, the second term on the right-hand  side of \eqref{ex10} converges to zero. It remains to deal with the first term therein. 	Denoting
	$$\tilde{b}(y):= \int_0^1 \int_{\R^d} a(\xi) \Gamma(y-t\xi,y+(1-t)\xi)b(y-t\xi) \,d\xi\,d\mu(t),$$
	we rewrite the first term on the right-hand  side of \eqref{ex10} and pass to the limit as $\e\to 0$:
	\begin{equation}\label{13}
		\int_{\R^d}  u_\e (x) \varphi(x)\tilde{b}(x/\e)\, dx
		\to 	 \int_{\R^d} \int_{Y}u(x,y)\varphi(x)\tilde{b}(y) \,dy\,dx = \mathcal{I}.
	\end{equation}

	Using the same change of variables we write
	\begin{multline} 	\label{jos11}
		\mathcal{J}_\e =
		\int_{\R^d} u_\e  (x) \varphi(x) \int_0^1 \int_{\R^d} a(\xi)\Gamma_\e({x-t\e\xi} ,{x+(1-t)\e \xi} ) b({x/\e+(1-t)\xi}) \,d\xi\,d\mu(t)\,dx
		\\
		+ \int_{\R^d} a(\xi) \int_0^1 \int_{\R^d} \Gamma_\e({x-t\e\xi} ,{x+(1-t)\e \xi} ) [\varphi(x+(1-t)\e \xi)-\varphi(x)]u_\e (x) b({x/\e+(1-t)\xi})dx d\mu (t)d\xi.
	\end{multline} 	
	Arguing as before, we conclude that the second term on the right-hand  side of \eqref{jos11} converges to zero. Furthermore, treating
	$$\hat{b}(y):= \int_0^1 \int_{\R^d} a(\xi) \Gamma(y-t\xi,y+(1-t)\xi)b(y+(1-t)\xi) \,d\mu(t)\,d\xi, $$
	as  a test function,  we can pass to the limit  similarly to \eqref{13} in the first term on the right-hand  side of \eqref{jos11}. This completes the proof.
\end{proof}

Taking $\mu$ to be the Dirac measure supported either at $0$ or $1$  in each of the convergence statements \eqref{37} and \eqref{38a}, yields the following

\begin{corollary} \label{corandrei1}
	Under the assumptions of Lemma \ref{twoscalelimit1} one has, as $\e\to 0$,
	\begin{multline*}
		\int_{\R^d} \int_{\R^d} a(\xi)\Gamma_\e({x},{x+\e\xi} )u_\e (x) \varphi(x) b({x}/\e ) d\xi\, dx
		\\
		\to \int_{\R^d} \int_{Y}\int_{\R^d} a(\xi) \Gamma (y,y+\xi)   u(x,y)\varphi(x) b(y) d\xi\,dy\,dx,
	\end{multline*}
	\begin{multline*}
		\int_{\R^d} \int_{\R^d} a(\xi)\Gamma_\e({x},{x+\e\xi} )u_\e (x+\e \xi) \varphi(x) b({x}/\e ) d\xi\, dx
		\\
		\to\int_{\R^d} \int_{Y}\int_{\R^d} a(\xi) \Gamma (y,y+\xi) u (x,y+\xi) \varphi(x) b(y) d\xi\,dy\,dx,
	\end{multline*}
	\begin{multline*}
		\int_{\R^d} \int_{\R^d} a(\xi)\Gamma_\e({x},{x+\e\xi} )u_\e (x)  \varphi(x+\e\xi) b({x/\e+\e\xi} ) d\xi\, dx
		\\
		\to \int_{\R^d} \int_{Y}\int_{\R^d} a(\xi) \Gamma (y,y+\xi) u (x,y) \varphi(x) b(y+\xi) d\xi\,dy\,dx,
	\end{multline*}
	\begin{multline*}
		\int_{\R^d} \int_{\R^d} a(\xi)\Gamma_\e({x},{x+\e\xi} )u_\e (x+\e \xi)\varphi(x+\e\xi) b({x/\e+\e\xi} ) d\xi\, dx
		\\
		\to \int_{\R^d} \int_{Y}\int_{\R^d} a(\xi) \Gamma (y,y+\xi)  u (x,y+\xi)\varphi(x) b(y+\xi) d\xi\,dy\,dx.
	\end{multline*}
	\end{corollary}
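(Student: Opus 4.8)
The plan is to obtain each of the four convergences as an immediate specialization of Lemma \ref{twoscalelimit1}. The key observation is that the Dirac masses $\delta_0$ and $\delta_1$ are bounded (indeed, positive) signed measures on $[0,1]$, so Lemma \ref{twoscalelimit1} applies verbatim with $\mu=\delta_0$ or $\mu=\delta_1$; what then remains is simply to read off the resulting identities.

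First I would recall that $\int_0^1 g(t)\,d\delta_{t_0}(t)=g(t_0)$ for any bounded measurable $g$ on $[0,1]$. Applied with $g(t)=u_\e(x+t\e\xi)$, this replaces the inner average $\int_0^1 u_\e(x+t\e\xi)\,d\mu(t)$ by $u_\e(x)$ when $\mu=\delta_0$ and by $u_\e(x+\e\xi)$ when $\mu=\delta_1$; likewise $\int_0^1 u(x,y+t\xi)\,d\mu(t)$ becomes $u(x,y)$ for $\mu=\delta_0$ and $u(x,y+\xi)$ for $\mu=\delta_1$. I would then pair up the statements: the first (resp.\ second) displayed convergence of Corollary \ref{corandrei1} is exactly \eqref{37} with $\mu=\delta_0$ (resp.\ $\mu=\delta_1$), while the third (resp.\ fourth) is \eqref{38a} with $\mu=\delta_0$ (resp.\ $\mu=\delta_1$); the test functions involved are precisely those already appearing in \eqref{37} and \eqref{38a}, so no reindexing or further identification is needed.

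There is essentially no obstacle here, since all of the analytic work — the change of variables, the dominated-convergence step discarding the increments of $\varphi$, and the passage to the two-scale limit against the effective test functions $\tilde b$ and $\hat b$ — has already been carried out inside the proof of Lemma \ref{twoscalelimit1}. The only point to check is that $\delta_0$ and $\delta_1$ meet the hypothesis imposed on $\mu$ in that lemma, which is trivial. I would state this and conclude.
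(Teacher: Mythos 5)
Your proposal is correct and is precisely the paper's own argument: the corollary is obtained by taking $\mu=\delta_0$ and $\mu=\delta_1$ in \eqref{37} and \eqref{38a} of Lemma \ref{twoscalelimit1}, exactly as the sentence preceding the corollary in the paper indicates. Nothing further is needed.
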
 	
\end{appendices}

\end{document}